\newtheorem{theorem}{\indent Theorem}[section]
\newtheorem{corollary}{\indent Corollary}[section]
\newtheorem{proposition}{\indent Proposition}[section]
\newtheorem{definition}{\indent Definition}[section]
\newtheorem{lemma}{\indent Lemma}[section]
\newtheorem{remark}{\indent Remark}[section]
\numberwithin{equation}{section}
\date{}
\begin{document}

\begin{center}
{\Large \bf Multiple positive solutions for the fractional Schr\"{o}dinger-Poisson systems involving critical nonlinearities with potential\footnote{This work is supported by NSF DMS-1804497.} }\\
\vspace{0.5cm} {Haining Fan}\\ \vspace{6pt}
{\footnotesize {\em{ Department of Mathematics, China University of Mining
and Technology, Xuzhou, Jiangsu 221116, China}}}

\vspace{0.2cm} {Zhaosheng Feng\footnote{Corresponding author: zhaosheng.feng@utrgv.edu; fax: (956) 665-5091.}}\\  \vspace{6pt}
{\footnotesize {\em{ School of Mathematical and Statistical Sciences, University of Texas,  Edinburg, Texas 78539, USA}}}

\vspace{0.2cm} {Xingjie Yan\footnote{Corresponding author: yanxj04@163.com.}}\\\vspace{6pt}
{\footnotesize {\em{ Department of Mathematics, China University of Mining
and Technology, Xuzhou, Jiangsu 221116, China
}}}

\end{center}
%\footnote[0] {\hspace*{-0.58cm}$^{\tiny\mbox{\ding{73}}}$ This work
%is supported by NSF DMS-1804497 and NSF of China-11671236.\\
%$^*$ Corresponding author: zhaosheng.feng@utrgv.edu; fax: (956) 665-5091. \\
%\textit{E-mail}: fanhaining888@163.com (H. Fan), zhaosheng.feng@utrgv.edu(Z. Feng), yanxj04@163.com (X. Yan).
%}

\begin{abstract}
In this paper, we study the existence of multiple positive solutions for a class of fractional Schr\"{o}dinger-Poisson systems involving sign-changing potential and critical nonlinearities on an unbounded domain.  With the help of Nehari manifold and Ljusternik-Schnirelmann category, we investigate how the coefficient $g(x)$ of the critical nonlinearity affects the number of positive solutions. Moreover, we present a novel relationship between the number of positive solutions and the category of the global maximum set of $g(x)$.

\textbf{Keywords}: Multiple positive solutions; Fractional Schr\"{o}dinger-Poisson system; Critical Sobolev exponent; Ljusternik-Schnirelmann category; Nehari manifold.

\textbf{2000 MSC}: 35A15; 35B09; 35B33; 35J05
\end{abstract}

\section{\normalsize{Introduction}}\noindent

In this paper, our main interest is devoted to multiple positive solutions of the fractional Schr\"{o}dinger-Poisson-type system:
\begin{equation}\label{s1.1}
\left\{\begin{array}{ll}
(-\Delta)^\alpha u+u+l(x)\phi u= f_\lambda(x)|u|^{q-2}u+g(x)|u|^{2_\alpha^*-2}u,~ &\text{in}~\mathbb{R}^N,\\
(-\Delta)^\frac{s}{2}\phi=\gamma_sl(x)u^2,~~~~~~~~~~~~~~~~~~~~~~~~~~~~~~~~~~~~~~~~~~~~& \text{in}~\mathbb{R}^N,
\end{array}\right.
\end{equation}
where $\alpha\in(0,1),\ q\in(1,2)$ or $q\in(4,2_\alpha^*)$, $s\in(0,N)$, $2\alpha<N\leq4\alpha$, $\gamma_s=\frac{\pi^\frac{N}{2}2^s\Gamma(\frac{s}{2})}{\Gamma(\frac{N-s}{2})}$ is a constant, $2_\alpha^*=\frac{2N}{N-2\alpha}$ is the fractional critical Sobolev exponent, and $\Gamma$ is the Gamma function. The operator $(-\Delta)^\alpha$ is the so-called fractional Laplacian defined by the Fourier transform:
$$\widehat{(-\Delta)^\alpha}u(\xi)=|\xi|^{2\alpha}\widehat{u},$$
where $\widehat{u}$ denotes the Fourier transform of $u$, and $u: \mathbb{R}^N \rightarrow\mathbb{R}$ belongs to the Schwartz class $\mathcal{S}(\mathbb{R}^N)$. For the sake of using the variational methods conveniently, $(-\Delta)^\alpha$ is equivalently defined by \cite{32}:

$$(-\Delta)^{\alpha}u(x)=-\frac{C(s)}{2}\int_{\mathbb{R}^N}\frac{u(x+y)+u(x-y)-2u(x)}{|y|^{N+2\alpha}}dy,\ \,  x\in\mathbb{R}^N,$$
where $$C(s)=\left(\int_{\mathbb{R}^N}\frac{1-cos\xi_1}{|\xi|^{N+2\alpha}}d\xi \right)^{-1},\ \, \xi=(\xi_1,\xi_2,...,\xi_N).$$
Here, $l,\, f_\lambda$ and $g$ are continuous functions on $\mathbb{R}^N$. The function $f_\lambda(x)$ is defined by $f_\lambda(x)=\lambda f_++f_-$, where $\lambda>0$ is a small parameter and $f_\pm=\pm\max\{\pm f(x),\,0\}$.

The classical Schr\"{o}dinger--Poisson type system takes the form:
\begin{equation}\label{1.2}
\left\{\begin{array}{ll}
-\Delta u+V(x)u+l(x)\phi u=f(x,u),\\
-\Delta\phi=l(x)u^2,
\end{array}\right.
\end{equation}
which arises in quantum mechanics and semiconductor theory introduced by Benci and Fortunato\cite{q}, where the unknowns $u$ and $\phi$ represent the wave functions associated
with the particle and electric potential, and the functions $V(x)$ and $l(x)$ stand for an
external potential and a nonnegative density charge, respectively. The Schr\"{o}dinger equation coupled with a Poisson equation is usually used to interpret the phenomenon that a quantum particle
interacts with an electromagnetic field. For more details about the physical background of system (\ref{1.2}), we refer the readers to \cite{q,w,e,cs} and the references therein.

System (\ref{1.2}) was extensively investigated in both bounded and unbounded domains under various assumptions on potentials and nonlinearities. Some profound results on the existence of solutions of system
(\ref{1.2}) have been obtained \cite{3,RD,AA,AA1,AA2}. When system (\ref{1.2}) does not contain the electrostatic potential $\phi$, it reduces to the classical nonlinear Schr\"{o}dinger equation:
\begin{equation}\label{e1.3}
-\Delta u+V(x)u=f(x,u),~~x\in\mathbb{R}^N,
\end{equation}
where $V(x)$ represents an external potential. Solutions of equation (\ref{e1.3}) can be used to study standing wave solutions $\psi(x,t)=u(x)e^{-iE t}$ of the equation:
\begin{equation}
i\frac{\partial \psi}{\partial t}+\Delta\psi-V(x)\psi=-f(x,u),\ \, x\in\mathbb{R}^N.
\end{equation}
%where $E$ is a constant.

It is well-known that the existence and multiplicity of positive solutions of (\ref{e1.3}) is influenced by its external potentials. Since Cingolani and Lazzo \cite{8} found the relationship between the number of positive solutions  and the category of the set of minima of $V(x)$ under certain assumptions, we have seen quite many results established based on \cite{8}. For example, He-Li-Peng obtained a similar result for a Kirchhoff type problem  \cite{9}. Figueiredo and Junior \cite{10} generalized the same result to the Schr\"{o}dinger-Kirchhoff-type problems.  When the domain is bounded and $f(x,u)=h(x)|u|^{q-2}u+g(x)|u|^{2^*-2}u$ with $2^*=\frac{2N}{N-2}$ in (\ref{e1.3}),  Lin \cite{12} presented a relationship between the number of positive solutions and the category of the global maxima set of $g(x)$. Li and Wu \cite{13} derived the same result with $h(x)$ and $g(x)$ being allowed to be sign-changing. Fan \cite{14} generalized this result to a Kirchhoff problem on an unbounded domain. More relevant results on equation (\ref{e1.3}) can be seen in \cite{15,AB,BZ,11,Djidjeli} etc.   However, for equation (\ref{1.2}) with $f(x,u)=h(x)|u|^{q-2}u+g(x)|u|^{2^*-2}u$, very little has been undertaken  on the existence and multiplicity of positive solutions related to the category of the set of $g(x)$, probably because the techniques used for studying (\ref{e1.3}) can not be directly applied.

Fractional powers of Laplacian are the infinitesimal generators of L\'{e}vy stable diffusion process and arise in anomalous diffusions in plasmas, flames propagation and chemical reactions in liquids, population dynamics, and geophysical fluid dynamics, see e.g.\cite{29,30,32}. Distinguished from the classical Laplacian operator,
the usual analytical tools for classic elliptic PDEs can not be directly applied to the problem like (\ref{s1.1}) with the fractional Laplacian operator, since $(-\Delta)^\alpha$ is
a nonlocal operator. In the past decades, continuous attention has been received on the fractional Laplacian operator or more generally integro-differential operator,  see e.g.\cite{29,21,30,31}.

In the setting of the fractional Laplacian, system (\ref{s1.1}) becomes the fractional Schr\"{o}dinger-Poisson-type systems. It is a fundamental equation in fractional quantum mechanics in the study of particles on stochastic fields modeled by L$\acute{e}$vy processes \cite{33,34}. For a detailed mathematical description of fractional Schr\"{o}dinger equation, it was presented in \cite{35}.
Zhang et al \cite{36} considered the fractional nonlinear Schr\"{o}dinger-Poisson system:
\begin{equation}
\left\{\begin{array}{ll}
(-\Delta)^s u+\lambda\phi u= g(u),~ \text{in}~\mathbb{R}^3,\\
(-\Delta)^t\phi=\lambda u^2,~~~~~~~~~~~ \text{in}~\mathbb{R}^3,
\end{array}\right.
\end{equation}
where $\lambda>0$. When $g(u)$ satisfies subcritical or critical growth conditions, they proved the existence of positive solutions by using a perturbation approach as well as asymptotic behaviors of solutions as $\lambda\rightarrow0^+.$
Teng \cite{37} considered the fractional nonlinear Schr\"{o}dinger-Poisson system:
\begin{equation}
\left\{\begin{array}{ll}
(-\Delta)^s u+V(x)u+\phi u= \mu|u|^{q-1}u+|u|^{2^*_s-2}u,~ \text{in}~\mathbb{R}^3,\\
(-\Delta)^t\phi= u^2,~~~~~~~~~~~ ~~~~~~~~~~~~~~~~~~~~~~~~~~~~~~~\text{in}~\mathbb{R}^3,
\end{array}\right.
\end{equation}
where $\mu>0$, $0<q<2^*_s-1=\frac{3+2s}{3-2s}$, $s,t\in(0,1)$ and $2s+2T>3$. The existence of a nontrivial ground state
solution was proved by using variational methods.
%such as Pohozaev-Nehari manifold, the arguments of Brezis-Nirenberg,
%the monotonic trick and global compactness lemma.
Murcia and Siciliano \cite{19}  investigated the problem:
\begin{equation}
\left\{\begin{array}{ll}
\varepsilon^{2\alpha}(-\Delta)^\alpha u+V(x)u+\phi u= f(u),~\, \text{in}~\mathbb{R}^N,\\
\varepsilon^{\theta}(-\Delta)^\frac{s}{2}\phi=\gamma_su^2,~~~~~~~~~~~~~~~~~~~~~~ \text{in}~\mathbb{R}^N,
\end{array}\right.
\end{equation}
where $\varepsilon>0$, $\alpha\in(0,1)$, $s\in(0,N)$, $\theta\in(0,s)$, $N\in(2\alpha,2\alpha+s)$, $\gamma_s=\frac{\pi^\frac{N}{2}2^s\Gamma(\frac{s}{2})}{\Gamma(\frac{N-s}{2})}$ is a constant, $V(x)$ is a weight potential, and $f$ satisfies the classic $(A-R)$ condition:
\begin{description}
  \item[$(A-R)$]: There is $\mu\in(4,2_\alpha^*)$ such that $0<\mu F(t):=\mu\displaystyle\int_0^tf(\tau)d\tau\leq tf(t)$ for all $t>0$.
\end{description}
By using the Ljusternik-Schnirelmann theory and the Nehari manifold method, they found a relationship between the number of positive solutions and the category of the set of minima of $V(x)$.
Recently, Luo and Tang \cite{20} studied the problem with critical Sobolev exponent:
\begin{equation}
\left\{\begin{array}{ll}
\varepsilon^{2\alpha}(-\Delta)^\alpha u+V(x)u+\phi u=|u|^{2_\alpha^*-2}u+f(u),~ \text{in}~\mathbb{R}^N,\\
\varepsilon^\theta(-\Delta)^\frac{s}{2}\phi=\gamma_su^2,~~~~~~~~~~~~~~~~~~~~~~~~~~~~~~~~~~~~ \text{in}~\mathbb{R}^N,
\end{array}\right.
\end{equation}
where $\varepsilon>0$, $\alpha\in(\frac{1}{2},1)$, $N\in(2\alpha,4\alpha)$, $s\in(N-2\alpha,N)$, $\theta\in(0,s)$,  $V$ satisfies  a local condition and $f$ satisfies:
\begin{description}
  \item[$(f):$] The function $t\mapsto\frac{f(t)}{t^3}$ is non-decreasing in $(0,\infty)$.
\end{description}
By means of the penalization techniques and the Ljusternik-Schnirelmann theory, they obtained a relationship between the number of positive solutions and the category of the set of minima of $V(x)$. However, if $f(u)=|u|^{q-2}u$, the assumptions $(A-R)$ and $(f)$ are both imply that $q>4$, which is the essential point of the obtained results in \cite{19,20}.

The above results focus on the existence of solutions of fractional Schr\"{o}dinger-Poisson system with the
influence of potential $V(x)$ and the relationship between the number of positive solutions and the category of the set of minima of potential $V(x)$. However, it is not clear if the weight potential appear in nonlinear term does the same thing. So if we consider the problem (\ref{s1.1}), it is seems natural to ask whether it is possible to relate the  number of positive solutions to the category of the set of the maximum of $g(x)$. The purpose of the present paper is to give an affirmative answer to such a question for the case $1<q<2$ and $4<q<2_\alpha^*$. For the case $2\leq q\leq4$, the primary difficulty is that we can not obtain the boundedness of the $(PS)$ sequence for system (\ref{s1.1}) don't like in the cases $1<q<2$ or $4<q<2_\alpha^*$. Some researchers used the Pohozaev-Nehari manifold to overcome this difficulty and to obtain the existence of ground states for a similar problem under assumptions that $f_\lambda(x)$ and $g(x)$ are constants, see \cite{Li}. That is because if $f_\lambda(x)$ and $g(x)$ are not constants, the terms involving the gradient of  $f_\lambda(x)$ and $g(x)$ will appear in the Pohozaev-Nehari manifold and so the boundedness of the $(PS)$ sequence still can not be obtained. It seems
that a cut-off technique is needed for this case and we shall study it in a forthcoming paper.

Before stating our main results, we introduce some conditions on $l(x),\ f_\lambda(x)$ and $g(x)$:
\begin{description}
 \item[$(H_1)$] $\displaystyle\lim_{|x|\rightarrow\infty}l(x)=0$ and $l(x)\geq0$ for all $x\in \mathbb{R}^N$.
 \item[$(H_2)$]   $f_\lambda(x)\in L^{q^*}(\mathbb{R}^N)$, where $q^*=\frac{2_\alpha^*}{2_\alpha^*-q}$.
  \item[$(H_3)$] $\displaystyle\lim_{|x|\rightarrow\infty}g(x)=g_\infty\in(0,+\infty)$ and $g(x)\geq g_\infty$ for all $x\in\mathbb{R}^N$.
 \item[$(H_4)$]There exist a non-empty closed set $M=\{z\in\mathbb{R}^N; g(z)=\displaystyle\max_{x\in\mathbb{R}^N}g(x)=1\}$ and a positive number $\rho>N$ such that $g(z)-g(x)=O(|x-z|^\rho)$ as $x\rightarrow z$ and uniformly in $z\in M$.
  \item[$(H_5)$]  $f_\lambda(x)\geq0$ for $x\in M$.
\end{description}
\begin{remark}\label{r1.1}
Let $M_r=\left\{x\in \mathbb{R}^N;\, dist(x,M)<r \right\}$ for $r>0$. Then by conditions $(H_4)-(H_5)$, there exist positive constants  $C_0,\, r_0>0$ such that
\begin{equation*}
  f_\lambda(x)>0,~\text{for}~ x\in M_{r_0},\ M_{r_0}\subset\mathbb{R}^N
\end{equation*}
and
\begin{equation*}
  g(z)-g(x)\leq C_0|x-z|^\rho ~\text{for}~ x\in B_{r_0}(z)
\end{equation*}
uniformly in $z\in M$, where $B_{r_0}(z)=\{x\in \mathbb{R}^N;|x-z| <r_0\}$. Moreover, in view of conditions $(H_3)-(H_4)$, it is easy to see that $g_\infty<1$.
\end{remark}

\begin{theorem}\label{t1.1}
Assume that $1<q<2$ and conditions $(H_1)-(H_5)$ hold. Then for each $0<\delta<r_0$, there exists $\lambda_\delta>0$ such that if $\lambda\in(0,\lambda_\delta)$, the problem $(\ref{s1.1})$ has at least $cat_{M_\delta}(M)+1$ distinct positive solutions, where $cat$ means the Ljusternik-Schnirelmann category.
\end{theorem}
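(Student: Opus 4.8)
\noindent The plan is to recast the system as a single nonlocal equation, set up a constrained problem on the Nehari manifold, produce one solution by direct minimization at a negative level and $cat_{M_\delta}(M)$ further solutions at positive levels by a Ljusternik-Schnirelmann/barycenter argument, the two families being separated by the sign of the energy.

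First I would fix the functional setting in $E=H^\alpha(\R^N)$ with $\|u\|^2=\int_{\R^N}|(-\Delta)^{\alpha/2}u|^2+\int_{\R^N}u^2$. For each $u$ the second equation has a unique solution $\phi_u\ge0$ (a Riesz potential) with $\phi_{tu}=t^2\phi_u$, and by $(H_1)$ the map $u\mapsto\int_{\R^N}l\phi_u u^2$ is $C^1$ and weakly continuous; substituting gives the energy functional
$$I_\lambda(u)=\frac12\|u\|^2+\frac14\int_{\R^N}l\phi_u u^2-\frac1q\int_{\R^N}f_\lambda|u_+|^q-\frac1{2_\alpha^*}\int_{\R^N}g|u_+|^{2_\alpha^*}.$$
Using the positive part in the nonlinearities and testing a critical point with $u_-$, together with $\langle(-\Delta)^{\alpha/2}u_+,(-\Delta)^{\alpha/2}u_-\rangle\le0$ and $l,\phi_u\ge0$, forces $u\ge0$, and the fractional strong maximum principle gives $u>0$, so it suffices to find critical points of $I_\lambda$. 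I would then introduce the Nehari manifold $\mathcal N_\lambda=\{u\ne0:\langle I_\lambda'(u),u\rangle=0\}$ and its splitting $\mathcal N_\lambda^{\pm},\mathcal N_\lambda^0$ by the sign of $\tfrac{d^2}{dt^2}I_\lambda(tu)|_{t=1}$, check via $(H_2)$ and the fractional Sobolev inequality that $\mathcal N_\lambda^0=\{0\}$ for $\lambda$ small so that $\mathcal N_\lambda$ is a $C^1$ natural constraint, and note that after eliminating the critical term $I_\lambda|_{\mathcal N_\lambda}(u)\ge c_1\|u\|^2-c_2\|u\|^q$ (here $N\le4\alpha$ makes the $\phi_u$-coefficient nonnegative and $q<2$ gives coercivity), so $I_\lambda$ is coercive and bounded below on $\mathcal N_\lambda$.

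Next I would produce the ``$+1$'' solution: by $(H_5)$ and Remark~\ref{r1.1}, $f_\lambda>0$ on $M_{r_0}$, hence $\inf_{\mathcal N_\lambda}I_\lambda=\inf_{\mathcal N_\lambda^+}I_\lambda=:c_\lambda<0$; an Ekeland argument yields a $(PS)_{c_\lambda}$ sequence, and Brezis--Lieb for the critical term, compactness of the subcritical term (by $(H_2)$) and of the nonlocal term (by $(H_1)$), and $c_\lambda<0$, give strong convergence to a minimizer $u_0\in\mathcal N_\lambda^+$, a first positive solution with $I_\lambda(u_0)<0$. In parallel I would establish the local compactness: along any $(PS)_c$ sequence the nonlocal and subcritical terms are compact, and every concentration bubble carries energy at least $c^*:=\frac{\alpha}{N}S^{N/(2\alpha)}$ ($S$ the fractional Sobolev constant; $g\le1$ controls finite concentration points and $g\to g_\infty$ the escape to infinity), so $I_\lambda|_{\mathcal N_\lambda}$ satisfies $(PS)_c$ for every $c<c^*$. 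For the $cat_{M_\delta}(M)$ solutions I would take truncated fractional Sobolev extremals $w_{\varepsilon,z}$ centred at $z\in M$ and their Nehari projections $\Phi_\varepsilon(z)=t_\varepsilon(z)w_{\varepsilon,z}\in\mathcal N_\lambda^-$; using $\|w_{\varepsilon,z}\|^2=S^{N/(2\alpha)}+O(\varepsilon^{N-2\alpha})$, $\int|w_{\varepsilon,z}|^{2_\alpha^*}=S^{N/(2\alpha)}+O(\varepsilon^{N})$, the fact that $\rho>N$ in $(H_4)$ confines the loss $\int(1-g)|w_{\varepsilon,z}|^{2_\alpha^*}$ to order $\varepsilon^{N}$, and that by $(H_5)$ the subcritical term contributes a strictly negative quantity of order $\varepsilon^{q(N-2\alpha)/2}$ which (since $q<2$) dominates all other corrections, I would get $\sup_{z\in M}I_\lambda(\Phi_\varepsilon(z))<c^*$ for $\varepsilon,\lambda$ small. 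Fixing such $\varepsilon$ and choosing $c_\lambda^-\in(\sup_{z\in M}I_\lambda(\Phi_\varepsilon(z)),c^*)$ with also $c_\lambda^-<\frac{\alpha}{N}S^{N/(2\alpha)}(\sup_{M_\delta^c}g)^{-(N-2\alpha)/(2\alpha)}$ (possible since $\sup_{M_\delta^c}g<1$ by $(H_3)$--$(H_4)$), I would build a barycenter map $\beta:\mathcal N_\lambda^-[c_\lambda^-]\to\R^N$ of the form $\beta(u)=\big(\int_{\R^N}\chi(x)|u|^{2_\alpha^*}\big)^{-1}\int_{\R^N}x\,\chi(x)|u|^{2_\alpha^*}$, show by a concentration argument that $I_\lambda(u)<c_\lambda^-$ on $\mathcal N_\lambda^-$ forces $u$ to concentrate inside $M_\delta$ (a bubble outside would sit where $g<1$ and carry too much energy), so $\beta$ maps into $M_\delta$, and show $\beta(\Phi_\varepsilon(z))\to z$ uniformly on $M$; then $\beta\circ\Phi_\varepsilon$ is homotopic in $M_\delta$ to the inclusion $M\hookrightarrow M_\delta$, whence $cat_{\mathcal N_\lambda^-[c_\lambda^-]}(\mathcal N_\lambda^-[c_\lambda^-])\ge cat_{M_\delta}(M)$.

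Finally, since $c_\lambda^-<c^*$ the functional $I_\lambda|_{\mathcal N_\lambda^-}$ satisfies $(PS)$ below $c_\lambda^-$ and $\mathcal N_\lambda^-$ is a complete $C^1$ manifold, so the Ljusternik-Schnirelmann principle yields at least $cat_{M_\delta}(M)$ critical points of $I_\lambda|_{\mathcal N_\lambda^-}$, which are critical points of $I_\lambda$ because $\mathcal N_\lambda$ is a natural constraint and which all lie at positive energy (for $\lambda$ small, the whole of $\mathcal N_\lambda^-$ has positive energy), hence are distinct from $u_0$; adding $u_0$ gives $cat_{M_\delta}(M)+1$ distinct positive solutions. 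The hard part will be the compactness and energy bookkeeping in the two middle steps: proving the local $(PS)$ condition for the nonlocal critical functional, and, above all, pushing the test-function level strictly below $c^*$ — this is exactly where the sublinear exponent $q<2$ (a gain of order $\varepsilon^{q(N-2\alpha)/2}$) must beat the flatness-controlled loss of order $\varepsilon^{N}$ coming from $\rho>N$, and the same dichotomy is what forces low-energy elements of $\mathcal N_\lambda^-$ to concentrate near $M$ so that $\beta$ is well defined.
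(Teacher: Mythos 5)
Your overall architecture (Nehari splitting, a negative-energy minimizer on $\mathcal N_\lambda^+$, bubbles centred at $z\in M$, a barycenter map into $M_\delta$, Ljusternik--Schnirelmann on a sublevel of $\mathcal N_\lambda^-$, separation of the two families by the sign of the energy) is the same as the paper's. But there is a genuine gap in the compactness threshold and, consequently, in the energy estimate for the test functions. You claim that $I_\lambda$ satisfies $(PS)_c$ for every $c<c^*=\frac{\alpha}{N}S^{N/(2\alpha)}$. This is false in the present setting: in the splitting argument the weak limit $\omega$ of a $(PS)_c$ sequence need not vanish, and if a single bubble escapes one only gets $c\ge I_\lambda(\omega)+\frac{\alpha}{N}S^{N/(2\alpha)}\ge\alpha_\lambda^{+}+\frac{\alpha}{N}S^{N/(2\alpha)}$, where $\alpha_\lambda^{+}=\inf_{N_\lambda}I_\lambda<0$. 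Since the first solution $\omega_\lambda^{+}$ sits exactly at level $\alpha_\lambda^{+}$, noncompact $(PS)$ sequences (namely $\omega_\lambda^{+}$ plus a bubble) genuinely exist at the level $\alpha_\lambda^{+}+\frac{\alpha}{N}S^{N/(2\alpha)}<c^*$. The correct threshold, proved in Lemma \ref{l2.10}, is $\alpha_\lambda^{+}+\frac{\alpha}{N}S^{N/(2\alpha)}$, and your LS argument therefore needs the test-function levels to lie strictly below \emph{this} number, not below $c^*$.

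This is precisely where your pure-bubble test functions $t_\varepsilon(z)w_{\varepsilon,z}$ run into trouble. Your gain from the subcritical term is of order $\lambda\varepsilon^{q(N-2\alpha)/2}$, which indeed beats the $\varepsilon$-errors of order $\varepsilon^{N-2\alpha}$ and $\varepsilon^{N}$, but it must now also absorb the fixed deficit $|\alpha_\lambda^{+}|$ (of order $\lambda^{2/(2-q)}$); this forces a delicate coupling between $\varepsilon$ and $\lambda$ that you do not address. The paper sidesteps the whole issue by taking $\omega_\lambda^{+}+tv_{\varepsilon,z}$ as test functions (Lemma \ref{l3.4}): the level $\alpha_\lambda^{+}$ then appears automatically, and the decisive strictly negative correction $-C\varepsilon^{(N-2\alpha)/2}$ comes from the critical \emph{interaction} term $\int_{\mathbb{R}^N}g(x)|v_{\varepsilon,z}(x,0)|^{2_\alpha^*-1}\omega_\lambda^{+}(x,0)\,dx\ge C\varepsilon^{(N-2\alpha)/2}$, not from the subcritical term (which in the paper's estimate is merely shown to contribute a term $-k(t)\le0$); the Poisson term is shown to be $o(\varepsilon^{(N-2\alpha)/2})$ via the weak-$L^{r}$ Young inequality of Lemma \ref{l3.3}. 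Consistently with this choice, the paper's barycenter is computed on $|\omega-\omega_\lambda^{+}|^{2_\alpha^*}$ rather than on $|\omega|^{2_\alpha^*}$, since the low-energy elements of $N_\lambda^{-}$ are perturbations of $\omega_\lambda^{+}$ plus a bubble. To repair your argument you would either have to adopt this gluing construction, or restate your $(PS)$ range as $c<\alpha_\lambda^{+}+\frac{\alpha}{N}S^{N/(2\alpha)}$ and then carefully quantify the $(\varepsilon,\lambda)$ dependence so that $\lambda\varepsilon^{q(N-2\alpha)/2}$ dominates both the $\varepsilon$-errors and $|\alpha_\lambda^{+}|$.
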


For the case of $\max\left\{4,\,\frac{N}{N-2\alpha}\right\}<q<2_\alpha^*$, we impose the following conditions on $l(x)$ and $f_\lambda(x)$ instead of conditions $(H_1)-(H_2)$:
\begin{description}
 \item[$(A_1)$] $l(x)\in L^\infty(\mathbb{R}^N)$ and $l(x)\geq0$ for all $x\in \mathbb{R}^N$.
 \item[$(A_2)$]   $f_\lambda(x)\equiv\lambda f(x)$ and  $\displaystyle\lim_{|x|\rightarrow\infty}f(x)=f_\infty,\ f_\infty\in(0,+\infty)$ and $f(x)\geq f_\infty$ for $x\in\mathbb{R}^N$.
\end{description}

\begin{theorem}\label{t1.2}
Assume that conditions $(H_3)-(H_4)$ and $(A_1)-(A_2)$ hold, when $\max\left\{4,\,\frac{N}{N-2\alpha} \right\}<q<2_\alpha^*$. Then for each $0<\delta<r_0$, there exists $\lambda_\delta>0$ such that if $\lambda\in(0,\lambda_\delta)$, the problem $(1.1)$ has at least $cat_{M_\delta}(M)$ distinct positive solutions.
\end{theorem}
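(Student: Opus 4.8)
The plan is to carry out the Nehari manifold / Ljusternik--Schnirelmann category argument, adapted to the nonlocal and critical features of $(\ref{s1.1})$. \emph{Step 1: reduction and Nehari manifold.} Since the second equation is linear in $\phi$, for each $u\in H^{\al}(\R^N)$ one solves $(-\Delta)^{s/2}\phi=\ga_s l(x)u^2$ through the Riesz potential, obtains $\phi_u\ge0$, records its homogeneity $\phi_{tu}=t^2\phi_u$ and the weak continuity $u_n\rightharpoonup u\Rightarrow\phi_{u_n}\rightharpoonup\phi_u$, and substitutes back to reduce the system to finding critical points of
\[
I_\lm(u)=\frac12\|u\|^2+\frac14\int_{\R^N}l(x)\phi_u u^2\,dx-\frac{\lm}{q}\int_{\R^N}f(x)|u|^q\,dx-\frac1{2_\al^*}\int_{\R^N}g(x)|u|^{2_\al^*}\,dx
\]
on $H^{\al}(\R^N)$, with $\|u\|^2=\int_{\R^N}|(-\Delta)^{\al/2}u|^2\,dx+\int_{\R^N}u^2\,dx$. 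Because $4<q<2_\al^*$ and $l\phi_u\ge0$, the fibering map $t\mapsto I_\lm(tu)$ has, for every $u\neq0$, a unique critical point $t_u>0$, which is its global maximum; hence $\mc N_\lm=\{u\neq0:\langle I_\lm'(u),u\rangle=0\}$ is a $C^1$ natural constraint, homeomorphic to the unit sphere of $H^{\al}(\R^N)$ via $u\mapsto t_u u$, and $c_\lm:=\inf_{\mc N_\lm}I_\lm>0$. Running the argument with $u_+$ in the nonlinear terms and invoking the fractional maximum principle together with $f\ge f_\infty>0$ and $g\ge g_\infty>0$ forces all critical points obtained below to be positive.

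\emph{Step 2: compactness.} I would prove that $I_\lm$ satisfies $(PS)_c$ on $\mc N_\lm$ for every $c$ below a threshold $c^{*}\le\frac{\al}{N}S^{N/(2\al)}$ ($S$ the best fractional Sobolev constant); since $\max g=1$ by $(H_4)$, a single critical Sobolev bubble sitting at a maximum point of $g$ has energy exactly $\frac{\al}{N}S^{N/(2\al)}$. A fractional concentration--compactness splitting of a $(PS)_c$ sequence shows that strong convergence can fail below $c^{*}$ only through (i) the formation of a critical bubble in $\R^N$, excluded by the usual energy bookkeeping and the fact that $\int l\phi_u u^2\ge0$ can only raise the level; or (ii) an escape of mass to infinity, excluded because $g\to g_\infty<1$ by $(H_3)$ (an escaping critical bubble then costs $\frac{\al}{N}g_\infty^{-(N-2\al)/(2\al)}S^{N/(2\al)}>c^{*}$) while $l\ge0$ and $f\ge f_\infty>0$ keep the remaining contributions nonnegative, provided $\lm$ is small enough.

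\emph{Step 3: the two category maps.} For $z\in M$ let $u_{\va,z}$ be a truncated fractional Aubin--Talenti bubble concentrating at $z$. A Brezis--Nirenberg-type expansion gives $\sup_{t>0}I_\lm(t\,u_{\va,z})<c^{*}$ for $\va$ and $\lm$ small; this is precisely where the hypotheses are used: $q>\frac{N}{N-2\al}=\frac{2_\al^*}{2}$ makes $u_{\va,z}\in L^q$ with $\int u_{\va,z}^q$ of the favourable order $\va^{\,N-\frac{(N-2\al)q}{2}}$, so that the $\lm$-term genuinely lowers the level in the delicate range $2\al<N<4\al$; $q>4$ secures the unique-maximum structure of Step~1; and $\rho>N$ in $(H_4)$ makes $\int(1-g)u_{\va,z}^{2_\al^*}$ a higher-order remainder. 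Normalising onto $\mc N_\lm$ defines $\Phi_\va:M\to\mc N_\lm$, $\Phi_\va(z)=t_{u_{\va,z}}u_{\va,z}$, with $\sup_{z\in M}I_\lm(\Phi_\va(z))<c^{*}$. Conversely, the barycenter map $\beta(u)=\|u\|_{2_\al^*}^{-2_\al^*}\int_{\R^N}x\,|u|^{2_\al^*}\,dx$ satisfies $\beta(u)\in M_\de$ whenever $u\in\mc N_\lm$ and $I_\lm(u)$ is close enough to $c^{*}$, as one sees by contradiction from the concentration forced on near-threshold sequences; and a direct homotopy shows $\beta\circ\Phi_\va\simeq\mathrm{id}_M$ in $M_\de$ for $\va$ small.

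\emph{Step 4: conclusion.} Fixing such small $\va,\lm$ and a level $d\in(c_\lm,c^{*})$, the standard category inequality built from $\Phi_\va$ and $\beta$ yields $\mathrm{cat}\bigl(\{u\in\mc N_\lm:I_\lm(u)\le d\}\bigr)\ge\mathrm{cat}_{M_\de}(M)$. Since $\mc N_\lm$ is a complete $C^1$ manifold on which $I_\lm$ is bounded below and satisfies $(PS)_c$ for all $c\le d<c^{*}$, the Ljusternik--Schnirelmann multiplicity theorem produces at least $\mathrm{cat}_{M_\de}(M)$ critical points of $I_\lm|_{\mc N_\lm}$ in that sublevel set, hence at least $\mathrm{cat}_{M_\de}(M)$ distinct positive solutions of $(\ref{s1.1})$. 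The main obstacle is the compactness step together with the below-threshold estimate: simultaneously controlling the fractional critical bubble, the non-decaying nonlocal Poisson term (here only $l\in L^\infty$ is assumed, not $l\to0$ as in Theorem~\ref{t1.1}), and the low ``dimension'' $N<4\al$ is what forces the hypotheses $q>\max\{4,\,N/(N-2\al)\}$, $g_\infty<1$ and $\rho>N$.
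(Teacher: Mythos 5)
Your proposal follows essentially the same route as the paper: reduction to a single Nehari manifold with the unique fibering-maximum structure guaranteed by $q>4$, a concentration--compactness argument yielding $(PS)_c$ below the threshold $\frac{\alpha}{N}S^{\frac{N}{2\alpha}}$ (escape to infinity excluded because an escaping bubble costs $\frac{\alpha}{N}g_\infty^{-\frac{N-2\alpha}{2\alpha}}S^{\frac{N}{2\alpha}}$, which is exactly the paper's comparison with $\alpha_{\lambda,\infty}$ in Lemmas \ref{l5.3}--5.4), truncated Aubin--Talenti bubbles with the $q>\frac{N}{N-2\alpha}$ energy estimate of Lemma \ref{l5.5}, and the barycenter/Ljusternik--Schnirelmann machinery. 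The approach and all key estimates match the paper's proof.
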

\begin{remark}\label{r1.2}
 It is easy to see that conditions $(A_1)-(A_2)$ are weaker than conditions $(H_1)-(H_2)$. Thus if we replace conditions $(H_1)-(H_2)$  with $(A_1)-(A_2)$,  the conclusion of this Theorem is certainly true.
\end{remark}

The innovations to prove theorems \ref{t1.1} and \ref{t1.2} are summarized as follows:
\begin{enumerate}
 \item[(1)] As in \cite{8,9,10,11,12,13,14,15}, the Nehari manifold and the Ljusternik-Schnirelmann theory are the key tools in the proof of Theorem \ref{t1.1} and Theorem \ref{t1.2}. For the proof of Theorem \ref{s1.1}, one of the major difficulties is to find the energy of the associated energy functional constrained on the Nehari manifold, this is because the variational methods used in the literature, see \cite{8,9,10,11,12,13,14,15,17,19,20}, cannot be used in the case of the growth order of $1<q<2$ .
     %Because of the appearance of the poisson term in the associated energy functional,
     %we can't use the estimation method used in \cite{12,13} directly.
     To overcome this obstacle,  we introduce a new version of Young's inequality for weak $L^{r,w}(\mathbb{R}^N)$ spaces (see Lemma \ref{l3.3}) and give an accurate estimate for the poisson term in the associated energy functional.

\item[(2)] Another  difficulty for proving Theorem \ref{t1.1} lies in the fact that there is no concentration--compactness result on the fractional Schr\"{o}dinger equations. In this study we introduce a scale operation (see Lemma \ref{l4.1}) and establish a global compactness result (see Lemma \ref{l4.2}).

\item[(3)]Since we use conditions $(A_1)-(A_2)$ instead of $(H_1)-(H_2)$, the resultant difficulty in the proof of Theorem 1.2 becomes harder to obtain the compactness of the $(PS)$ sequence for the associated energy functional. To this end, we propose a new analytical technique to derive a precise estimate for the $(PS)$ sequence (see Lemma \ref{l5.3}).

%\item[(4)] Since not only the equation (\ref{s1.1}) is fractional  and nonlocal, but also the domain of it is unbounded,
% the techniques developed in \cite{8,9,10,11,12,13,14,15} cannot be adapted immediately, so we used some new techniques.
\end{enumerate}
%All the difficulties mentioned above prevent us from using the
%variational method as in \cite{8,9,10,11,12,13,14,15,17,19,20} in a standard way.

The remainder of the paper is organized as follows. In Section 2, we recall some related preliminary results on fractional Sobolev spaces, and present the variational setting of the problem and properties of the corresponding Nehari manifold. In Section 3, we introduce the weak $L^{r,w}(\mathbb{R}^N)$ spaces and derive useful estimates  which play a crucial role in the proof of Theorem \ref{t1.1}. In Section 4, we construct the barycenter map and prove  Theorem \ref{t1.1} by means of the Ljusternik-Schnirelmann category theory. Section 5 is dedicated to the proof of Theorem \ref{t1.2}.

%{\bf Notation.} Throughout this paper we denote by $\rightarrow$ (resp. $\rightharpoonup$) the strong (resp. weak) %convergence. We will use $C,C_0,C_1,C_2,\ldots$ to denote various positive constants. Moreover,  $L^r(\mathbb{R}^N), %1<r<\infty$ represents the usual Lebesgue space of vector-valued functions, denote by $|\cdot|_r$ the usual $L^r$ norm %in $\mathbb{R}^N$.

\section{\normalsize{Preliminary Results}}\noindent

In this section, we present some related preliminary results on fractional Sobolev spaces \cite{32}.
For the sake of simplicity, we denote by $\rightarrow$ (resp. $\rightharpoonup$) the strong (resp. weak) convergence. We will use $C,\,C_0,\,C_1,\,C_2,\ldots$ to denote various positive constants, and use  $L^r(\mathbb{R}^N),\ 1<r<\infty$ to represent the usual Lebesgue space of vector-valued functions with the usual $L^r$ norm in $\mathbb{R}^N$ denoted by $|\cdot|_r$.

The fractional Sobolev space $H^\alpha(\mathbb{R}^N)$ is defined as the completion of $C_c^\infty(\mathbb{R}^N)$ under the norm:
\begin{equation*}
  \|u\|_{H^\alpha}:=\left(\displaystyle\int_{\mathbb{R}^N}|(-\Delta)^\frac{\alpha}{2}u|^2dx+\displaystyle\int_{\mathbb{R}^N}|u|^2dx\right)^{\frac{1}{2}}.
\end{equation*}
It is well-known that $H^\alpha(\mathbb{R}^N)\hookrightarrow L^r(\mathbb{R}^N)$, $r\in[2,2_\alpha^*]$ and the embedding is continuous. Moreover, the embedding is compact if $\Omega\subset\mathbb{R}^N$ is bounded and $r\neq 2_\alpha^*$. Set $\dot{H}^\frac{s}{2}(\mathbb{R}^N)$ as the completion of $C_0^\infty(\mathbb{R}^N)$ under the norm:
\begin{equation*}
  \|u\|_{\dot{H}^\frac{s}{2}}:=\left(\displaystyle\int_{\mathbb{R}^N}|(-\Delta)^\frac{s}{4}u|^2dx\right)^{\frac{1}{2}}.
\end{equation*}
It is easy to see that $\dot{H}^\frac{s}{2}\hookrightarrow L^{2_\frac{s}{2}^*}(\mathbb{R}^N)$, where $2_\frac{s}{2}^*=\frac{2N}{N-s}$.

To consider the second equation in system (1.1), we fix a function $u\in H^\alpha(\mathbb{R}^N)$ and start with the problem:
\begin{equation}\label{e2.1}
\left\{\begin{array}{ll}
(-\Delta)^\frac{s}{2}\phi=\gamma_sl(x)u^2,~\text{in}~\mathbb{R}^N,\\
\phi\in \dot{H}^\frac{s}{2}(\mathbb{R}^N),
\end{array}\right.
\end{equation}
whose weak solution is a solution $\phi\in \dot{H}^\frac{s}{2}(\mathbb{R}^N)$ satisfying
\begin{equation*}
\displaystyle\int_{\mathbb{R}^N}(-\Delta)^\frac{s}{4}\phi(-\Delta)^\frac{s}{4}vdx
=\gamma_s\displaystyle\int_{\mathbb{R}^N}l(x)u^2vdx
\end{equation*}
for any $v\in \dot{H}^\frac{s}{2}(\mathbb{R}^N)$.

Note that
\begin{equation}
 \left|\displaystyle\int_{\mathbb{R}^N}l(x)u^2vdx\right|\leq|u|_\frac{4N}{N+s}^2|v|_{2^*_{\frac{s}{2}}}\leq C\|u\|_{H^\alpha}^2\|v\|_{\dot{H}^\frac{s}{2}}.
\end{equation}
So the map
\begin{equation*}
L_u:v\in \dot{H}^\frac{s}{2}(\mathbb{R}^N)\mapsto\displaystyle\displaystyle\int_{\mathbb{R}^N}l(x)u^2vdx
\end{equation*}
is linear and continuous. It follows \cite{21} that there exists a unique solution $\phi_u\in \dot{H}^\frac{s}{2}(\mathbb{R}^N)$ to system (\ref{e2.1}) and
\begin{equation}
 \phi_u=\frac{1}{|\cdot|^{N-s}}*u^2,
\end{equation}
where $*$ means convolution. Moreover, we have
\begin{equation}\label{e2.4}
 \|\phi_u\|_{\dot{H}^\frac{s}{2}}=\|L_u\|_{\mathcal{L}(\dot{H}^\frac{s}{2}(\mathbb{R}^N);\mathbb{R})}\leq C|u|_\frac{4N}{N+s}^2\leq C\|u\|_{H^\alpha}^2,
\end{equation}
where $\mathcal{L}(\dot{H}^\frac{s}{2}(\mathbb{R}^N);\mathbb{R})$ represents the set of bounded linear operators from $\dot{H}^\frac{s}{2}(\mathbb{R}^N)$ to $\mathbb{R}$.
If we define the map:
 \begin{equation*}
 A: u\in H^\alpha(\mathbb{R}^N)\mapsto\displaystyle\int_{\mathbb{R}^N}l(x)\phi_uu^2dx,
\end{equation*}
then we have
\begin{equation}\label{e2.5}
|A(u)|\leq C|\phi_u|_{2^*_{\frac{s}{2}}}|u|_\frac{4N}{N+s}^2\leq C\|\phi_u\|_{\dot{H}^\frac{s}{2}}|u|_\frac{4N}{N+s}^2\leq C|u|_\frac{4N}{N+s}^4\leq C\|u\|_{H^\alpha}^4.
\end{equation}

Some of the related properties of  $\phi_u$ and $A$ are listed below.

%Some more relevant properties of $\phi_u$ and $A$ are listed below.
\begin{lemma} \cite{19}\label{l2.1} The following statements are true.
\begin{enumerate}
  \item[\textup{(i)}] For each $u\in H^\alpha(\mathbb{R}^N)$, $\phi_u\geq0$.
\item[\textup{(ii)}]   For each $u\in H^\alpha(\mathbb{R}^N)$ and $t\in \mathbb{R}$, $\phi_{tu}=t^2\phi_u$.
 \item[\textup{(iii)}] If $u_n\rightharpoonup u$ in $H^\alpha(\mathbb{R}^N)$, then $\phi_{u_n}\rightharpoonup\phi_u$ in $\dot{H}^\frac{s}{2}(\mathbb{R}^N)$.
 \item[\textup{(iv)}] If $A$ is of class $\mathcal{C}^2$ and for every $u,v\in H^\alpha(\mathbb{R}^N)$, then $A'(u)v=4\displaystyle\int_\Omega\phi_uuvdx$.
  \item[\textup{(v)}] Assume that $u_n\rightarrow u$ in $L^r(\mathbb{R}^N)$ with $1<r<2_\alpha^*$. Then $A(u_n)\rightarrow A(u)$.
   \item[\textup{(vi)}] If $u_n\rightharpoonup u$ in $H^\alpha(\mathbb{R}^N)$, then $A(u_n-u)=A(u_n)-A(u)+o(1)$.
\end{enumerate}
\end{lemma}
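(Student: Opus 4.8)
The plan is to reduce all six statements to the explicit Riesz-potential representation of $\phi_u$ and to the bilinear structure of $A$. By (\ref{e2.4}) and standard potential theory for $(-\Delta)^{\frac s2}$ (the constant $\gamma_s$ being precisely the normalisation of the Riesz kernel), the solution of (\ref{e2.1}) is $\phi_u=\frac{1}{|\cdot|^{N-s}}*(lu^2)$, so that $A(u)=\int_{\mathbb{R}^N}l\phi_u u^2\,dx=D(lu^2,lu^2)$, where $D(f,h):=\int_{\mathbb{R}^N}\int_{\mathbb{R}^N}\frac{f(x)h(y)}{|x-y|^{N-s}}\,dx\,dy$. I would first record two facts that carry the whole argument: (a) by the Hardy--Littlewood--Sobolev inequality, $D$ is a bounded symmetric bilinear form on $L^{\frac{2N}{N+s}}(\mathbb{R}^N)$; and (b) since the standing hypotheses force $\frac{4N}{N+s}\in(2,2_\alpha^*)$, the Sobolev embedding together with $l\in L^\infty(\mathbb{R}^N)$ make $u\mapsto lu^2$ continuous from $H^\alpha(\mathbb{R}^N)$ into $L^{\frac{2N}{N+s}}(\mathbb{R}^N)$.

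Given (a)--(b), items (i), (ii), (iv) are essentially immediate. For (i): the kernel $|x-y|^{-(N-s)}$ and $lu^2$ are nonnegative (using $l\geq0$), so $\phi_u\geq0$. For (ii): replacing $u$ by $tu$ multiplies $lu^2$ by $t^2$, hence $\phi_{tu}=t^2\phi_u$ (equivalently, by uniqueness of the solution of (\ref{e2.1}) and linearity of $(-\Delta)^{\frac s2}$). For (iv): $A=D(l(\cdot)^2,l(\cdot)^2)$ is a quartic form obtained by composing the continuous quadratic map $u\mapsto lu^2$ with the bounded bilinear form $D$, hence is of class $\mathcal{C}^\infty$ (in particular $\mathcal{C}^2$) on $H^\alpha(\mathbb{R}^N)$; differentiating and using the symmetry of $D$ gives $A'(u)v=4D(lu^2,luv)=4\int_{\mathbb{R}^N}l\phi_u uv\,dx$.

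For the convergence properties (iii), (v), (vi) I would first isolate the auxiliary claim that \emph{if $u_n\rightharpoonup u$ in $H^\alpha(\mathbb{R}^N)$, then $lu_n^2\to lu^2$ in $L^{\frac{2N}{N+s}}(\mathbb{R}^N)$}: split $\mathbb{R}^N=B_R\cup(\mathbb{R}^N\setminus B_R)$, use the compact embedding $H^\alpha(B_R)\hookrightarrow L^{\frac{4N}{N+s}}(B_R)$ on the ball, and $\lim_{|x|\to\infty}l(x)=0$ together with the uniform bound on $\{u_n^2\}$ in $L^{\frac{2N}{N+s}}$ on the complement. Then (iii) follows because $\{\phi_{u_n}\}$ is bounded in $\dot H^{\frac s2}$ by (\ref{e2.4}); extracting a weak limit $\psi$ and passing to the limit in $\int_{\mathbb{R}^N}(-\Delta)^{\frac s4}\phi_{u_n}(-\Delta)^{\frac s4}v\,dx=\gamma_s\int_{\mathbb{R}^N}lu_n^2 v\,dx$ (left side by weak convergence, right side by the auxiliary claim and $v\in L^{2^*_{\frac{s}{2}}}$) forces $\psi=\phi_u$, so the whole sequence converges. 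For (v) I would use $A(u_n)-A(u)=D(l(u_n^2-u^2),lu_n^2)+D(lu^2,l(u_n^2-u^2))$ with $u_n^2-u^2=(u_n-u)(u_n+u)$; the HLS bound on $D$ and Hölder's inequality give $|A(u_n)-A(u)|\leq C|u_n-u|_{\frac{4N}{N+s}}\bigl(|u_n|_{\frac{4N}{N+s}}+|u|_{\frac{4N}{N+s}}\bigr)^3$, and interpolating the hypothesis $u_n\to u$ in $L^r$ against the $L^{2_\alpha^*}$-bound coming from $H^\alpha$-boundedness yields $u_n\to u$ in $L^{\frac{4N}{N+s}}$. Finally (vi) is a Brezis--Lieb-type splitting: with $w_n:=u_n-u\rightharpoonup0$, expanding $A(u_n)=D(l(w_n+u)^2,l(w_n+u)^2)$ by bilinearity isolates $A(w_n)+A(u)$ plus finitely many terms linear or cubic in $w_n$, each of which is $o(1)$ because $w_n\to0$ in $L^{\frac{4N}{N+s}}_{\mathrm{loc}}$, $l$ vanishes at infinity, and the HLS/Hölder estimates supply the uniform bounds — the same truncation argument as for the auxiliary claim.

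The hard part is exactly that auxiliary claim — upgrading the weak convergence $u_n\rightharpoonup u$ in $H^\alpha(\mathbb{R}^N)$ to the strong convergence $lu_n^2\to lu^2$ in $L^{\frac{2N}{N+s}}(\mathbb{R}^N)$, on which (iii), (v) and (vi) all rest. Because $D$ is nonlocal there is no pointwise argument available; one must use the decay of $l$ at infinity to control the tails, and the Hardy--Littlewood--Sobolev inequality to convert $L^{\frac{2N}{N+s}}$-control of $lu_n^2$ into control of the Coulomb energy. Once that is in place, the remainder is routine algebra of the bilinear form $D$.
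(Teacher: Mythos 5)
The paper does not prove this lemma at all: it is quoted verbatim from Murcia--Siciliano \cite{19}, so there is no in-paper argument to compare against. Your reduction to the Riesz representation $\phi_u=\frac{1}{|\cdot|^{N-s}}*(lu^2)$ and the HLS-bounded bilinear form $D$ is exactly the standard route, and items (i), (ii), (iv), (v) are handled correctly (for (v) you are right that one must tacitly assume $H^\alpha$-boundedness of $\{u_n\}$ so as to interpolate up to $L^{\frac{4N}{N+s}}$; that is how the lemma is used).

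The one genuine weak point is the auxiliary claim on which you hang (iii) and (vi): that $u_n\rightharpoonup u$ in $H^\alpha(\mathbb{R}^N)$ forces $lu_n^2\to lu^2$ \emph{strongly} in $L^{\frac{2N}{N+s}}(\mathbb{R}^N)$. Your proof of it uses $\lim_{|x|\to\infty}l(x)=0$, i.e.\ condition $(H_1)$. But the lemma is also invoked in Section 5 (in the proof of Lemma \ref{l5.3}) under $(A_1)$, where $l$ is merely bounded; there the claim is false (translate a fixed bump to infinity: $u_n\rightharpoonup0$ while $|lu_n^2|_{\frac{2N}{N+s}}$ stays bounded away from $0$ if $l\equiv1$). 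The fix is to note that neither (iii) nor (vi) actually needs the strong convergence. For (iii) it suffices that $lu_n^2\rightharpoonup lu^2$ weakly in $L^{\frac{2N}{N+s}}$, which follows from boundedness of $\{u_n^2\}$ in that space together with a.e.\ convergence (no decay of $l$ required), and this already identifies the weak limit of the bounded sequence $\{\phi_{u_n}\}$ when tested against $v\in L^{2^*_{s/2}}$. For (vi), after expanding $D(l(w_n+u)^2,l(w_n+u)^2)$ the terms that must vanish all contain at least one factor $luw_n$; these go to $0$ strongly in $L^{\frac{2N}{N+s}}$ because the tail is controlled by the \emph{fixed} function $u\in L^{\frac{4N}{N+s}}$ (split $\int=\int_{B_R}+\int_{B_R^c}$ and use local compactness on $B_R$ and $|u\chi_{B_R^c}|_{\frac{4N}{N+s}}\to0$), not by any decay of $l$. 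With that restructuring your argument covers both hypothesis sets under which the paper applies the lemma; as written it only covers $(H_1)$.
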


Substituting (2.3) into $(1.1)$ yields an equivalent equation:
\begin{equation}\label{e2.6}
(-\Delta)^\alpha u+u+l(x)\phi_u u= f_\lambda(x)|u|^{q-2}u+g(x)|u|^{2_\alpha^*-2}u,~ \text{in}~\mathbb{R}^N.
\end{equation}
To treat the nonlocal problem (\ref{e2.6}), with the help of the harmonic extension method \cite{21}, we consider a corresponding extension problem, also a local problem,  that allows us to use the classical nonlinear variational methods.

Given a function $u\in \dot{H}^\alpha(\mathbb{R}^N)$, the solution $\omega\in \dot{X}^\alpha(\mathbb{R}_+^{N+1})$ of
\begin{displaymath}
\left\{\begin{array}{ll}
div(y^{1-2\alpha}\nabla\omega)=0,~ &\text{in}~\mathbb{R}_+^{N+1},\\
\omega=u,~~&\text{on}~\mathbb{R}^N\times\{0\},
\end{array}\right.
\end{displaymath}
is called $\alpha$-harmonic extension $\omega=E_\alpha(u)$ of $u$, where $\dot{X}^\alpha(\mathbb{R}_+^{N+1})$ is defined as the completion of $C_0^\infty(\mathbb{R}_+^{N+1})$ under the norm:
\begin{equation*}
 \|\omega\|_{\dot{X}^\alpha}=\left(\displaystyle\int_{\mathbb{R}_+^{N+1}}k_\alpha y^{1-2\alpha}|\nabla\omega|^2dxdy\right)^{\frac{1}{2}}
\end{equation*}
and $k_\alpha$ is a normalization constant. We know that $E_\alpha(\cdot)$ is an isometry between $\dot{H}^\alpha(\mathbb{R}^N)$ and $\dot{X}^\alpha(\mathbb{R}_+^{N+1})$, and
\begin{equation*}
 \|\omega\|_{\dot{X}^\alpha}=\sqrt{k_\alpha}\|u\|_{\dot{H}^\alpha},~~ u\in \dot{X}^\alpha(\mathbb{R}^N).
\end{equation*}

We can further define $X^\alpha(\mathbb{R}_+^{N+1})$ by the completion of $C_0^\infty(\mathbb{R}_+^{N+1})$ under the norm:
\begin{equation*}
 \|\omega\|_{X^\alpha}=\left(\displaystyle\int_{\mathbb{R}_+^{N+1}}k_\alpha y^{1-2\alpha}|\nabla\omega|^2dxdy+\displaystyle\int_{\mathbb{R}^N}k_\alpha|\omega(x,0)|^2dx\right)^{\frac{1}{2}}.
\end{equation*}
It is easy to see that $E_\alpha(\cdot)$ is an isometry between $H^\alpha(\mathbb{R}^N)$ and $X^\alpha(\mathbb{R}_+^{N+1})$.

Re-formulate (\ref{e2.6}) as follows:
\begin{equation}\label{e2.7}
\left\{\begin{array}{ll}
div(y^{1-2\alpha}\nabla\omega)=0,~ &\text{in}~\mathbb{R}_+^{N+1},\\
-k_\alpha\frac{\partial\omega}{\partial\nu}=-\omega-\phi_\omega\omega+f_\lambda(x)|\omega|^{q-2}\omega+g(x)|\omega|^{2_\alpha^*-2}\omega,~~&\text{in}~\mathbb{R}^N\times\{0\},
\end{array}\right.
\end{equation}
where
\begin{equation*}
  -k_\alpha\frac{\partial\omega}{\partial\nu}=-k_\alpha\displaystyle\lim_{y\rightarrow0^+}y^{1-2\alpha}\frac{\partial\omega}{\partial y}(x,y)= (-\Delta)^\alpha u(x).
\end{equation*}
For simplicity, we set $k_\alpha=1$.
If $\omega$ is a weak solution of system (\ref{e2.7}), then  $u=tr(\omega)=\omega(x,0)$. The trace of $\omega$ is a weak solution of equation (\ref{e2.6}). The converse is also true.

Now we introduce some basic results on the spaces $X^\alpha(\mathbb{R}_+^{N+1})$ and $L^r(\mathbb{R}^N)$.
\begin{lemma} \cite{31}\label{l2.2}
The embedding $X^\alpha(\mathbb{R}_+^{N+1})\hookrightarrow L^r(\mathbb{R}^N)$ is continuous for $r\in[2,2_\alpha^*]$ and locally compact for any $r\in [2,2_\alpha^*)$.
\end{lemma}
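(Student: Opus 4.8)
The plan is to realize the embedding as the composition of the trace operator $T:X^\alpha(\mathbb{R}_+^{N+1})\to H^\alpha(\mathbb{R}^N)$, $T\omega=\omega(\cdot,0)$, with the Sobolev embeddings of $H^\alpha(\mathbb{R}^N)$ already recalled in this section. The crux is to check that $T$ is a bounded linear operator. For $\omega\in C_0^\infty(\mathbb{R}_+^{N+1})$ the $L^2$ part of $\|\omega\|_{X^\alpha}$ directly controls $|T\omega|_2$; for the homogeneous part, one uses that among all extensions of a given trace $u$ the $\alpha$-harmonic extension $E_\alpha(u)$ minimizes the weighted Dirichlet energy $\int_{\mathbb{R}_+^{N+1}}y^{1-2\alpha}|\nabla\cdot|^2$, so that $\sqrt{k_\alpha}\,\|T\omega\|_{\dot H^\alpha}=\|E_\alpha(T\omega)\|_{\dot X^\alpha}\le\|\omega\|_{\dot X^\alpha}$. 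Hence $\|T\omega\|_{H^\alpha}\le C\|\omega\|_{X^\alpha}$, and $T$ extends to all of $X^\alpha(\mathbb{R}_+^{N+1})$ by density of $C_0^\infty(\mathbb{R}_+^{N+1})$. Composing with the continuous embedding $H^\alpha(\mathbb{R}^N)\hookrightarrow L^r(\mathbb{R}^N)$, valid for $r\in[2,2_\alpha^*]$, immediately yields the continuous embedding $X^\alpha(\mathbb{R}_+^{N+1})\hookrightarrow L^r(\mathbb{R}^N)$ for the same range of $r$.

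For the local compactness, let $(\omega_n)$ be bounded in $X^\alpha(\mathbb{R}_+^{N+1})$. Passing to a subsequence, $\omega_n\rightharpoonup\omega$ in $X^\alpha(\mathbb{R}_+^{N+1})$, and since $T$ is bounded and linear it is weak-to-weak continuous, so $T\omega_n\rightharpoonup T\omega$ in $H^\alpha(\mathbb{R}^N)$. Fix a bounded domain $\Omega\subset\mathbb{R}^N$ and an exponent $r\in[2,2_\alpha^*)$. By the compactness of the embedding $H^\alpha(\mathbb{R}^N)\hookrightarrow L^r(\Omega)$ recalled above (valid precisely because $\Omega$ is bounded and $r\neq 2_\alpha^*$), we get $T\omega_n\to T\omega$ strongly in $L^r(\Omega)$. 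As $\Omega$ was an arbitrary bounded set and $r<2_\alpha^*$ arbitrary, this shows that the embedding $X^\alpha(\mathbb{R}_+^{N+1})\hookrightarrow L^r(\mathbb{R}^N)$ is locally compact for every $r\in[2,2_\alpha^*)$.

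The only point requiring some care is the boundedness of $T$ on the whole of $X^\alpha(\mathbb{R}_+^{N+1})$ — more precisely, the energy-minimality of the harmonic extension (equivalently, the Sobolev trace inequality) that underlies the estimate at the critical endpoint $r=2_\alpha^*$; once this is in hand, the rest is routine functional analysis combined with the Sobolev and Rellich–Kondrachov properties of $H^\alpha(\mathbb{R}^N)$ stated at the outset. Note that setting $k_\alpha=1$, as the paper does, only normalizes harmless constants in these inequalities and does not affect the argument.
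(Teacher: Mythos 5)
Your proof is correct. The paper does not prove this lemma at all --- it is simply quoted from \cite{31} --- and your argument (boundedness of the trace operator via the energy-minimality of the $\alpha$-harmonic extension, composed with the continuous Sobolev embedding $H^\alpha(\mathbb{R}^N)\hookrightarrow L^r(\mathbb{R}^N)$ for $r\in[2,2_\alpha^*]$ and its compactness on bounded domains for $r<2_\alpha^*$, both recalled at the start of Section 2) is precisely the standard route by which the cited result is established.
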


\begin{lemma} \cite{31}\label{l2.3}
For each $\omega\in X^\alpha(\mathbb{R}_+^{N+1})$, there holds
\begin{equation}
S\left(\displaystyle\int_{\mathbb{R}^N}|u(x)|^\frac{2N}{N-2\alpha}dx\right)^\frac{N-2\alpha}{N}\leq\displaystyle\int_{\mathbb{R}_+^{N+1}}y^{1-2\alpha}|\nabla\omega|^2dxdy,
\end{equation}
where $u=tr(\omega)$. The best constant is given by
 \begin{equation*}
  S=\frac{2\pi^\alpha\Gamma(\frac{2-2\alpha}{2})\Gamma(\frac{N+2\alpha}{2})(\Gamma(\frac{N}{2}))^\frac{2\alpha}{N}}{\Gamma(\alpha)\Gamma(\frac{N-2\alpha}{2})(\Gamma(N))^\frac{2\alpha}{N}},
\end{equation*}
and it is attained when $u=\omega(x,0)$ takes the form
\begin{equation}\label{e2.9}
u_\varepsilon(x)=\frac{C\varepsilon^\frac{N-2\alpha}{2}}{(\varepsilon^2+|x|^2)^\frac{N-2\alpha}{2}}
\end{equation}
for $\varepsilon>0$, and $\omega_\varepsilon=E_\alpha(u_\varepsilon)$ and
$\|\omega_\varepsilon\|_{\dot{X}^\alpha}^2=\displaystyle\int_{\mathbb{R}^N}|\omega_\varepsilon(x,0)|^\frac{2N}{N-2\alpha}dx=S^\frac{N}{2\alpha}$.
\end{lemma}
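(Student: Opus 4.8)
The plan is to reduce the weighted trace inequality to the sharp fractional Sobolev inequality on $\mathbb{R}^N$ and then identify the extremal family. First I would use the variational characterization of the $\alpha$-harmonic extension: among all $\omega\in X^\alpha(\mathbb{R}_+^{N+1})$ with a prescribed trace $u=tr(\omega)$, the extension $E_\alpha(u)$ uniquely minimizes the Dirichlet-type energy $\int_{\mathbb{R}_+^{N+1}}y^{1-2\alpha}|\nabla\omega|^2\,dxdy$ (this is exactly its weak formulation), and by the Caffarelli--Silvestre extension identity this minimum equals $\int_{\mathbb{R}^N}|(-\Delta)^{\alpha/2}u|^2\,dx$ (recall we set $k_\alpha=1$). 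Since the right-hand side of the asserted inequality only decreases when $\omega$ is replaced by $E_\alpha(tr(\omega))$ while the trace is unchanged, it suffices to prove
\[
S\left(\int_{\mathbb{R}^N}|u|^{2_\alpha^*}\,dx\right)^{\frac{N-2\alpha}{N}}\leq\int_{\mathbb{R}^N}|(-\Delta)^{\alpha/2}u|^2\,dx,\qquad u\in\dot H^\alpha(\mathbb{R}^N),
\]
with the stated constant $S$, and to verify the equality case.

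I would then establish this inequality through its dual, the Hardy--Littlewood--Sobolev (HLS) inequality. Setting $v=(-\Delta)^{\alpha/2}u\in L^2$, so that $u=(-\Delta)^{-\alpha/2}v=c_{N,\alpha}\,|\cdot|^{-(N-\alpha)}*v$ is a Riesz potential of order $\alpha$, the estimate $\|u\|_{2_\alpha^*}\leq S^{-1/2}\|v\|_2$ is precisely the endpoint case of sharp HLS in which one factor lies in $L^2$, for which Lieb computed the optimal constant and the complete set of maximizers (unique up to translation, dilation and scalar multiple). Transporting these maximizers back through $u=(-\Delta)^{-\alpha/2}v$ produces the Aubin--Talenti-type profiles $u_\varepsilon(x)=C\varepsilon^{(N-2\alpha)/2}(\varepsilon^2+|x|^2)^{-(N-2\alpha)/2}$. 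An alternative self-contained route: symmetric decreasing rearrangement does not increase $\int|(-\Delta)^{\alpha/2}u|^2$ (apply Riesz's rearrangement inequality to the Gagliardo bilinear form $\int\!\!\int\frac{u(x)u(y)}{|x-y|^{N+2\alpha}}$) while it preserves $\|u\|_{2_\alpha^*}$, which reduces the variational problem to radially decreasing functions; the classification of positive solutions of $(-\Delta)^\alpha u=u^{2_\alpha^*-1}$ (moving planes, or the integral-equation argument of Chen--Li--Ou, or Li) then pins down the extremals as the $u_\varepsilon$.

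Next I would compute $S$ by inserting $u_\varepsilon$ into the Rayleigh quotient, choosing the normalization constant $C$ so that $(-\Delta)^\alpha u_\varepsilon=u_\varepsilon^{2_\alpha^*-1}$ (this is available because $(-\Delta)^\alpha$ sends $(1+|x|^2)^{-(N-2\alpha)/2}$ to a constant multiple of $(1+|x|^2)^{-(N+2\alpha)/2}$, by the conformal covariance of $(-\Delta)^\alpha$ and the known Poisson kernel of the extension). Testing this equation against $u_\varepsilon$ gives $\|u_\varepsilon\|_{\dot H^\alpha}^2=\int_{\mathbb{R}^N}u_\varepsilon^{2_\alpha^*}$, and the Beta-integral $\int_{\mathbb{R}^N}(1+|x|^2)^{-N}dx=\pi^{N/2}\Gamma(N/2)/\Gamma(N)$ together with the standard Gamma evaluations of the integrals defining $\|u_\varepsilon\|_{\dot H^\alpha}^2$ yields $S$ in the stated closed form. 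Finally, feeding $\|u_\varepsilon\|_{\dot H^\alpha}^2=\int u_\varepsilon^{2_\alpha^*}$ into the equality case $S\big(\int u_\varepsilon^{2_\alpha^*}\big)^{(N-2\alpha)/N}=\|u_\varepsilon\|_{\dot H^\alpha}^2$ forces $\int u_\varepsilon^{2_\alpha^*}=\|u_\varepsilon\|_{\dot H^\alpha}^2=S^{N/2\alpha}$, which is the claimed identity since $\|\omega_\varepsilon\|_{\dot X^\alpha}^2=\|u_\varepsilon\|_{\dot H^\alpha}^2$ and $\omega_\varepsilon(x,0)=u_\varepsilon$.

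I expect the principal obstacle to be the sharp-constant analysis: proving that the extremal family is exactly $\{u_\varepsilon\}$ — equivalently, uniqueness of HLS maximizers up to the conformal group, or that rearrangement loses nothing and the resulting radial problem has a unique decaying solution — and carrying the Gamma-function bookkeeping (including the contribution of the extension variable $y$) through to the precise value of $S$. The compactness needed merely to know that an extremal exists (minimizing sequences, modulo translations and dilations, neither vanish nor split) is handled either by the rearrangement reduction or by concentration--compactness, and is routine; the rest reduces to known identities.
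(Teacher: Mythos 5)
The paper offers no proof of this lemma: it is quoted verbatim from \cite{31}, and the argument there (and in its antecedents) is precisely the chain you reconstruct — the $\alpha$-harmonic extension minimizes $\int y^{1-2\alpha}|\nabla\omega|^2$ among all $\omega\in X^\alpha(\mathbb{R}_+^{N+1})$ with the same trace, with minimum value $\|u\|_{\dot H^\alpha}^2$, so the trace inequality reduces to the sharp fractional Sobolev inequality, whose optimal constant and extremal family $u_\varepsilon$ come from Lieb's sharp Hardy--Littlewood--Sobolev theorem (the case with one exponent equal to $2$, or equivalently the conformally invariant diagonal case by duality). Your outline, including the normalization $(-\Delta)^\alpha u_\varepsilon=u_\varepsilon^{2_\alpha^*-1}$, the resulting identity $\|u_\varepsilon\|_{\dot H^\alpha}^2=\int_{\mathbb{R}^N}u_\varepsilon^{2_\alpha^*}=S^{N/(2\alpha)}$, and the Beta/Gamma evaluation of $S$, is the standard proof and is sound.
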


\begin{lemma} \cite{20}\label{l2.4}
The following two statements are true:
\begin{enumerate}
  \item[\textup{(i)}] Let $R>0$ and $\mathrm{T}$ be a subset of $X^\alpha(\mathbb{R}_+^{N+1})$ such that
   \begin{equation*}
 \displaystyle\sup_{\omega\in\mathrm{T}}\displaystyle\int_{\mathbb{R}_+^{N+1}}y^{1-2\alpha}|\nabla\omega|^2dxdy<\infty.
\end{equation*}
Then $\mathrm{T}$ is pre-compact in $L^2(B_R^+,y^{1-2\alpha})$, where $B_R^+:=\{(x,y)\in\mathbb{R}^N\times\mathbb{R}_+;|(x,y)|<R\}$ and the weight Lebesgue space $L^2(B_R^+,y^{1-2\alpha})$ is equipped with the norm:
\begin{equation*}
 \|\omega\|_{L^2(B_R^+,y^{1-2\alpha})}=\left(\displaystyle\int_{B_R^+}y^{1-2\alpha}|\omega|^2dxdy\right)^{\frac{1}{2}}.
\end{equation*}
  \item[\textup{(ii)}]For  $\omega\in X^\alpha(\mathbb{R}_+^{N+1})$, there holds
  \begin{equation*}
  \left(\displaystyle\int_{\mathbb{R}_+^{N+1}}y^{1-2\alpha}|\omega|^{2\gamma}dxdy\right)^{\frac{1}{2\gamma}}\leq C_0\left(\displaystyle\int_{\mathbb{R}_+^{N+1}}y^{1-2\alpha}|\nabla\omega|^2dxdy\right)^{\frac{1}{2}},
  \end{equation*}
  where $\gamma=1+\frac{2}{N-2\alpha}$ and $C_0$ is a positive constant independent of the choice of $\omega\in X^\alpha(\mathbb{R}_+^{N+1})$.
\end{enumerate}
\end{lemma}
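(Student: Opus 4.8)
The plan is to obtain both parts from a single structural fact: for $\alpha\in(0,1)$ the exponent $1-2\alpha$ lies in $(-1,1)$, so the weight $w(x,y):=y^{1-2\alpha}$, extended evenly across $\{y=0\}$ as $|y|^{1-2\alpha}$, is a Muckenhoupt $A_2$ weight on $\mathbb{R}^{N+1}$, and $d\mu:=w\,dx\,dy$ is a doubling measure whose homogeneous dimension is $D:=N+1+(1-2\alpha)=N+2-2\alpha$ (note $D>2$ since $2\alpha<N$). I would first prove $(\mathrm{ii})$ and then deduce $(\mathrm{i})$ from it.

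For $(\mathrm{ii})$, given $\omega\in C_0^\infty(\mathbb{R}_+^{N+1})$ I would reflect it evenly, $\wt\omega(x,y):=\omega(x,|y|)$, so that $\wt\omega$ is Lipschitz across $\{y=0\}$ with $\int_{\mathbb{R}^{N+1}}|y|^{1-2\alpha}|\nabla\wt\omega|^2=2\int_{\mathbb{R}_+^{N+1}}y^{1-2\alpha}|\nabla\omega|^2$ and likewise for any $L^p$-norm. Applying the Fabes–Kenig–Serapioni weighted Sobolev inequality for $A_2$ weights on $\mathbb{R}^{N+1}$ gives
\[
\left(\int_{\mathbb{R}^{N+1}}|y|^{1-2\alpha}|\wt\omega|^{\frac{2D}{D-2}}\,dx\,dy\right)^{\frac{D-2}{2D}}\le C\left(\int_{\mathbb{R}^{N+1}}|y|^{1-2\alpha}|\nabla\wt\omega|^2\,dx\,dy\right)^{\frac12}.
\]
A direct computation yields $\frac{2D}{D-2}=\frac{2(N+2-2\alpha)}{N-2\alpha}=2\bigl(1+\frac{2}{N-2\alpha}\bigr)=2\gamma$, which is exactly the exponent in the statement (and one checks the inequality is scale invariant under $\omega\mapsto\omega(\lambda\cdot)$, consistent with this choice). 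Undoing the reflection and then passing to the completion $X^\alpha(\mathbb{R}_+^{N+1})$ by density of $C_0^\infty(\mathbb{R}_+^{N+1})$ gives the claimed bound with $C_0$ independent of $\omega$.

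For $(\mathrm{i})$, fix $R>0$ and set $\Lambda:=\sup_{\omega\in\mathrm{T}}\int_{\mathbb{R}_+^{N+1}}y^{1-2\alpha}|\nabla\omega|^2<\infty$. Using $(\mathrm{ii})$ together with Hölder's inequality with exponents $\gamma$ and $\gamma'=\gamma/(\gamma-1)$ on the bounded set $B_{2R}^+$,
\[
\int_{B_{2R}^+}y^{1-2\alpha}|\omega|^2\,dx\,dy\le\left(\int_{B_{2R}^+}y^{1-2\alpha}|\omega|^{2\gamma}\right)^{1/\gamma}\mu\bigl(B_{2R}^+\bigr)^{1-1/\gamma}\le C\,\Lambda\,\mu\bigl(B_{2R}^+\bigr)^{1-1/\gamma},
\]
which is finite because $y^{1-2\alpha}$ is locally integrable. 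Hence $\mathrm{T}$ (restricted to $B_{2R}^+$, or to the full ball $B_{2R}$ after even reflection) is bounded in the weighted Sobolev space $H^1(B_{2R},|y|^{1-2\alpha})$. Since $|y|^{1-2\alpha}$ is $A_2$ and $B_{2R}$ is a bounded domain, the weighted Rellich–Kondrachov compactness theorem yields a compact embedding $H^1(B_{2R},|y|^{1-2\alpha})\hookrightarrow\hookrightarrow L^2(B_R,|y|^{1-2\alpha})$, so every sequence in $\mathrm{T}$ has a subsequence converging strongly in $L^2(B_R,|y|^{1-2\alpha})$, hence in $L^2(B_R^+,y^{1-2\alpha})$. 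Thus $\mathrm{T}$ is pre-compact there.

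The only genuine delicacy is the behaviour of the weight on the characteristic boundary portion $\{y=0\}$, where $y^{1-2\alpha}$ degenerates (if $\alpha>\tfrac12$) or blows up (if $\alpha<\tfrac12$); this is precisely what prevents a naive use of the unweighted Sobolev and Rellich theorems and forces the $A_2$-weighted framework. The even-reflection step turns the half-space problem into one on a ball with a globally $A_2$ weight whose singular set $\{y=0\}$ sits in the interior, so the Fabes–Kenig–Serapioni degenerate Sobolev and compactness results apply directly. If one wanted to avoid citing that machinery, one could instead run a covering argument combined with the weighted Riesz–Fréchet–Kolmogorov compactness criterion for the measure $d\mu$, which is elementary but considerably more laborious; I would regard establishing (or quoting) the weighted Sobolev/Rellich estimates as the main technical point.
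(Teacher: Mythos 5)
Your argument is correct. Note first that the paper itself offers no proof of this lemma: it is quoted verbatim from the reference of Luo and Tang, so there is no internal argument to compare against. What you have written is essentially the standard proof that underlies the cited result in the harmonic-extension literature, and all the key computations check out: $1-2\alpha\in(-1,1)$ does make $|y|^{1-2\alpha}$ an $A_2$ weight on $\mathbb{R}^{N+1}$; the homogeneous dimension is $D=N+2-2\alpha>2$ because $2\alpha<N$; and $\frac{2D}{D-2}=2\gamma$ with $\gamma=1+\frac{2}{N-2\alpha}$, matching the exponent in part (ii). The even reflection correctly reduces the half-space inequality to the whole-space weighted Sobolev inequality of Fabes--Kenig--Serapioni, and the scaling check confirms the exponent. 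For part (i), deducing a local weighted $L^2$ bound from (ii) via H\"older with respect to $d\mu=y^{1-2\alpha}\,dx\,dy$ (finite on $B_{2R}^+$ since $1-2\alpha>-1$) and then invoking the weighted Rellich--Kondrachov theorem on the reflected ball is the right mechanism, and working on $B_{2R}$ to extract compactness in $L^2(B_R^+,y^{1-2\alpha})$ sidesteps any boundary-regularity issues. The only caveat is the one you already flag: the proof is complete only modulo quoting the degenerate Sobolev and compactness theorems for $A_2$ weights, but since the paper itself treats the lemma as a black box from the literature, that level of reliance is entirely appropriate. A minor point worth making explicit if you write this up in full: passing (ii) from $C_0^\infty$ to the completion $X^\alpha(\mathbb{R}_+^{N+1})$ requires Fatou's lemma along an a.e.-convergent subsequence, since the left-hand side involves the exponent $2\gamma$ not controlled a priori by the $X^\alpha$ norm.
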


Define the energy functional associated with system (\ref{e2.7}) by
 \begin{equation*}
 \begin{split}
 I_\lambda(\omega)=&\frac{1}{2}\|\omega\|_{X^\alpha}^2+\frac{1}{4}\displaystyle\int_{\mathbb{R}^N}l(x)\phi_{\omega(x,0)}|\omega(x,0)|^2dx-\frac{1}{q}\displaystyle\int_{\mathbb{R}^N}f_\lambda(x)|\omega(x,0)|^qdx\\
 &-\frac{1}{2_\alpha^*}\displaystyle\int_{\mathbb{R}^N}g(x)|\omega(x,0)|^{2_\alpha^*}dx,
\end{split}
\end{equation*}
where $\omega\in X^\alpha(\mathbb{R}_+^{N+1})$.
On account of $I_\lambda$ being not bounded from below on $X^\alpha(\mathbb{R}_+^{N+1})$, we consider the behaviors of $I_\lambda$ on the Nehari manifold:
 \begin{equation*}
 N_\lambda:=\{\omega\in X^\alpha(\mathbb{R}_+^{N+1})\backslash\{0\}; I'_\lambda(\omega)\omega=0\}.
\end{equation*}
 Then $\omega\in N_\lambda$ if and only if
 \begin{equation}\label{e2.10}
 \|\omega\|_{X^\alpha}^2+\displaystyle\int_{\mathbb{R}^N}l(x)\phi_{\omega(x,0)}|\omega(x,0)|^2dx-\displaystyle\int_{\mathbb{R}^N}f_\lambda(x)|\omega(x,0)|^qdx-\displaystyle\int_{\mathbb{R}^N}g(x)|\omega(x,0)|^{2_\alpha^*}dx=0.
\end{equation}

Set
 \begin{equation}\label{ee2.12}
 \begin{split}
 \Psi_\lambda(\omega):=&\|\omega\|_{X^\alpha}^2+\displaystyle\int_{\mathbb{R}^N}l(x)\phi_{\omega(x,0)}|\omega(x,0)|^2dx-\displaystyle\int_{\mathbb{R}^N}f_\lambda(x)|\omega(x,0)|^qdx\\&
 -\displaystyle\int_{\mathbb{R}^N}g(x)|\omega(x,0)|^{2_\alpha^*}dx.
\end{split}
\end{equation}
Then for $\omega\in N_\lambda$ we have
\begin{align}\label{e2.12}
&\Psi'_\lambda(\omega)\omega\notag
\nonumber\\=& 2\|\omega\|_{X^\alpha}^2+4\displaystyle\int_{\mathbb{R}^N}l(x)\phi_{\omega(x,0)}|\omega(x,0)|^2dx-q \displaystyle\int_{\mathbb{R}^N}f_\lambda(x)|\omega(x,0)|^qdx\nonumber\\&~~~~-2_\alpha^*\displaystyle\int_{\mathbb{R}^N}g(x)|\omega(x,0)|^{2_\alpha^*}dx\notag
\nonumber\\=& -2\|\omega\|_{X^\alpha}^2+(4-q) \displaystyle\int_{\mathbb{R}^N}f_\lambda(x)|\omega(x,0)|^qdx-(2_\alpha^*-4)\displaystyle\int_{\mathbb{R}^N}g(x)|\omega(x,0)|^{2_\alpha^*}dx
\nonumber\\=& (2-q)\|\omega\|_{X^\alpha}^2+(4-q)\displaystyle\int_{\mathbb{R}^N}l(x)\phi_{\omega(x,0)}|\omega(x,0)|^2dx+(q-2_\alpha^*)\displaystyle\int_{\mathbb{R}^N}g(x)|\omega(x,0)|^{2_\alpha^*}dx.
\end{align}

We split $N_\lambda$ into the following three parts:
\begin{align}
N_\lambda^+&=\{\omega\in N_\lambda;\Psi'_\lambda(\omega)\omega>0\},\notag\\
N_\lambda^0&=\{u\in N_\lambda;\Psi'_\lambda(\omega)\omega=0\},\notag\\
N_\lambda^-&=\{u\in N_\lambda;\Psi'_\lambda(\omega)\omega<0\}.\notag
\end{align}

\begin{lemma}\label{l2.5}
$I_\lambda$ is coercive and bounded from below on $N_\lambda$.
\end{lemma}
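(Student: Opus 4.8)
The plan is to evaluate $I_\lambda$ along the Nehari constraint so as to eliminate the critical term, which is the only one with the ``wrong'' sign for a lower bound, and then to control the remaining lower-order term by H\"older's inequality together with the Sobolev embedding.

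First, for $\omega\in N_\lambda$ I would use the identity \eqref{e2.10} to substitute
\[
\displaystyle\int_{\mathbb{R}^N}g(x)|\omega(x,0)|^{2_\alpha^*}dx=\|\omega\|_{X^\alpha}^2+\displaystyle\int_{\mathbb{R}^N}l(x)\phi_{\omega(x,0)}|\omega(x,0)|^2dx-\displaystyle\int_{\mathbb{R}^N}f_\lambda(x)|\omega(x,0)|^qdx
\]
into the definition of $I_\lambda$. Collecting terms, every $\omega\in N_\lambda$ satisfies
\[
I_\lambda(\omega)=\Big(\tfrac12-\tfrac{1}{2_\alpha^*}\Big)\|\omega\|_{X^\alpha}^2+\Big(\tfrac14-\tfrac{1}{2_\alpha^*}\Big)\displaystyle\int_{\mathbb{R}^N}l(x)\phi_{\omega(x,0)}|\omega(x,0)|^2dx+\Big(\tfrac{1}{2_\alpha^*}-\tfrac1q\Big)\displaystyle\int_{\mathbb{R}^N}f_\lambda(x)|\omega(x,0)|^qdx.
\]
Since $2\alpha<N\le4\alpha$ gives $2_\alpha^*=\frac{2N}{N-2\alpha}\ge4$, the coefficient $\frac14-\frac{1}{2_\alpha^*}$ is non-negative; combined with $l\ge0$ from $(H_1)$ and $\phi_{\omega(x,0)}\ge0$ from Lemma \ref{l2.1}(i), the Poisson term is non-negative and may simply be discarded. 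For the last term, using $f_\lambda\in L^{q^*}(\mathbb{R}^N)$ with $q^*=\frac{2_\alpha^*}{2_\alpha^*-q}$ from $(H_2)$, H\"older's inequality with exponents $q^*$ and $\frac{2_\alpha^*}{q}$ and the continuous embedding $X^\alpha(\mathbb{R}_+^{N+1})\hookrightarrow L^{2_\alpha^*}(\mathbb{R}^N)$ of Lemma \ref{l2.2} give
\[
\Big|\displaystyle\int_{\mathbb{R}^N}f_\lambda(x)|\omega(x,0)|^qdx\Big|\le|f_\lambda|_{q^*}\,|\omega(x,0)|_{2_\alpha^*}^q\le C|f_\lambda|_{q^*}\|\omega\|_{X^\alpha}^q.
\]

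Combining these estimates yields, for all $\omega\in N_\lambda$,
\[
I_\lambda(\omega)\ge\Big(\tfrac12-\tfrac{1}{2_\alpha^*}\Big)\|\omega\|_{X^\alpha}^2-C\Big(\tfrac1q-\tfrac{1}{2_\alpha^*}\Big)|f_\lambda|_{q^*}\|\omega\|_{X^\alpha}^q.
\]
Because $\frac12-\frac{1}{2_\alpha^*}>0$ and $1<q<2$, the right-hand side is a real function of $\|\omega\|_{X^\alpha}$ that is bounded below on $[0,\infty)$ and tends to $+\infty$ as $\|\omega\|_{X^\alpha}\to\infty$, which gives simultaneously the boundedness from below and the coercivity of $I_\lambda$ on $N_\lambda$. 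The only subtle point is that the coefficient $\frac14-\frac{1}{2_\alpha^*}$ of the nonlocal term must remain non-negative after the substitution so that this term can be dropped rather than estimated, and this is exactly where the structural assumption $N\le4\alpha$ is needed; the rest is a routine use of H\"older's inequality and the Sobolev embedding.
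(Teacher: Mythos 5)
Your argument is correct. Both you and the paper evaluate $I_\lambda$ on the constraint by subtracting a multiple $\theta\, I'_\lambda(\omega)\omega$ of the Nehari identity, dropping the resulting non-negative terms, and controlling the subcritical term via H\"older's inequality with exponents $q^*$ and $\tfrac{2_\alpha^*}{q}$ together with the embedding of Lemma \ref{l2.2}; the only difference is the choice of multiplier. The paper takes $\theta=\tfrac14$, which cancels the Poisson term exactly and leaves the critical term with coefficient $\tfrac14-\tfrac{1}{2_\alpha^*}\ge 0$, whereas you take $\theta=\tfrac{1}{2_\alpha^*}$, which cancels the critical term and leaves the Poisson term with the same non-negative coefficient. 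Both choices hinge on exactly the same structural fact $2_\alpha^*\ge 4$ (i.e.\ $N\le 4\alpha$), which you correctly identify as the crux, so the two derivations are interchangeable here. The one thing the paper's version buys that yours does not is quantitative: after splitting $f_\lambda=\lambda f_++f_-$ and discarding the non-positive $f_-$ contribution, the paper applies Young's inequality to absorb the $\|\omega\|_{X^\alpha}^q$ term into $\tfrac14\|\omega\|_{X^\alpha}^2$ and obtains the explicit lower bound $I_\lambda(\omega)\ge -C\lambda^{\frac{2}{2-q}}$, which is reused later (e.g.\ in the proof that $I_\lambda(\omega_\lambda^+)\to 0$ as $\lambda\to 0^+$ in Theorem \ref{t2.1}(iii)). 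For the lemma as stated, your qualitative bound $I_\lambda(\omega)\ge \left(\tfrac12-\tfrac{1}{2_\alpha^*}\right)\|\omega\|_{X^\alpha}^2-C\|\omega\|_{X^\alpha}^q$ with $1<q<2$ is entirely sufficient for both coercivity and boundedness from below.
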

\begin{proof}
For $\omega\in N_\lambda$, by H\"{o}lder's and Young's inequalities, it follows  Lemma \ref{l2.2} and (\ref{e2.10}) that
\begin{align}\label{e2.13}
I_\lambda(\omega)&=I_\lambda(\omega)-\frac{1}{4}\Psi'_\lambda(\omega)\omega\notag\\
&=\frac{1}{4}\|\omega\|_{X^\alpha}^2+\left(\frac{1}{4}-\frac{1}{2_\alpha^*}\right)\displaystyle\int_{\mathbb{R}^N}g(x)|\omega(x,0)|^{2_\alpha^*}dx-\left(\frac{1}{q}-\frac{1}{4}\right)\displaystyle\int_{\mathbb{R}^N}f_\lambda(x)|\omega(x,0)|^qdx\notag\\
&\geq\frac{1}{4}\|\omega\|_{X^\alpha}^2-\lambda\left(\frac{1}{q}-\frac{1}{4}\right)C|f_+|_{q^*}S^{-\frac{q}{2}}\|\omega\|_{X^\alpha}^q\nonumber\\
&\geq\frac{1}{4}\|\omega\|_{X^\alpha}^2-\frac{1}{4}\|\omega\|_{X^\alpha}^2-C\lambda^\frac{2}{2-q}\notag\\
&=-C\lambda^\frac{2}{2-q},
\end{align}
where $C$ is a positive constant independent of the choice of $\omega\in X^\alpha(\mathbb{R}_+^{N+1})$ and $\lambda>0$.
\end{proof}

\begin{lemma}\label{l2.6}
Suppose that $\omega_0$ is a local minimizer for $I_\lambda$ on $N_\lambda$ and $\omega_0 \not\in N_\lambda^0.$ Then we have $I_\lambda'(\omega_0)=0$.
\end{lemma}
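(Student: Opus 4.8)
The plan is to run the standard constrained-critical-point argument via a Lagrange multiplier. First I would record that $\Psi_\lambda$ defined in \eqref{ee2.12} is of class $\mathcal{C}^1$ on $X^\alpha(\mathbb{R}_+^{N+1})$ (the quadratic and Poisson terms are smooth by Lemma \ref{l2.1}(iv), and the two integral nonlinearities are $\mathcal{C}^1$ by the Sobolev embedding Lemma \ref{l2.2}), and that $N_\lambda=\{\omega\neq0:\Psi_\lambda(\omega)=0\}$. Since $\omega_0$ is a local minimizer of $I_\lambda$ restricted to $N_\lambda$, the Lagrange multiplier theorem yields a number $\mu\in\mathbb{R}$ such that
\begin{equation*}
I_\lambda'(\omega_0)=\mu\,\Psi_\lambda'(\omega_0)\quad\text{in }\big(X^\alpha(\mathbb{R}_+^{N+1})\big)^*.
\end{equation*}

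Next I would test this identity against the direction $\omega_0$ itself. Because $\omega_0\in N_\lambda$, by definition $I_\lambda'(\omega_0)\omega_0=\Psi_\lambda(\omega_0)=0$, so the identity gives $\mu\,\Psi_\lambda'(\omega_0)\omega_0=0$. On the other hand, the hypothesis $\omega_0\notin N_\lambda^0$ means exactly that $\Psi_\lambda'(\omega_0)\omega_0\neq0$ (indeed, by \eqref{e2.12} this quantity equals $(2-q)\|\omega_0\|_{X^\alpha}^2+(4-q)\int l(x)\phi_{\omega_0(x,0)}|\omega_0(x,0)|^2dx+(q-2_\alpha^*)\int g(x)|\omega_0(x,0)|^{2_\alpha^*}dx$, which is nonzero since $\omega_0$ lies in $N_\lambda^+\cup N_\lambda^-$). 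Hence $\mu=0$, and therefore $I_\lambda'(\omega_0)=0$, i.e. $\omega_0$ is a genuine critical point of $I_\lambda$ on the whole space $X^\alpha(\mathbb{R}_+^{N+1})$.

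The only delicate point — the step I would treat most carefully — is the justification of the Lagrange multiplier rule, which requires $\Psi_\lambda'(\omega_0)$ to be a nonzero (indeed surjective onto $\mathbb{R}$) functional so that $N_\lambda$ is locally a $\mathcal{C}^1$-manifold near $\omega_0$; but this is automatic here, since $\Psi_\lambda'(\omega_0)\omega_0\neq0$ already forces $\Psi_\lambda'(\omega_0)\neq0$. Everything else is routine: the smoothness of the functionals is inherited from the embeddings recorded in Section 2, and no compactness or Palais–Smale information is needed for this particular statement. If one prefers to avoid invoking the abstract multiplier theorem, the same conclusion can be reached by the usual implicit-function/deformation argument: for $\varphi\in X^\alpha(\mathbb{R}_+^{N+1})$ one solves $\Psi_\lambda(t(\varphi,\sigma)(\omega_0+\sigma\varphi))=0$ for $t$ near $1$ as a $\mathcal{C}^1$ function of $\sigma$ near $0$ (possible precisely because $\Psi_\lambda'(\omega_0)\omega_0\neq0$), then differentiates the minimality inequality $I_\lambda\big(t(\varphi,\sigma)(\omega_0+\sigma\varphi)\big)\geq I_\lambda(\omega_0)$ at $\sigma=0$ to obtain $I_\lambda'(\omega_0)\varphi=0$ for all $\varphi$.
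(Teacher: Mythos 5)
Your proposal is correct: the Lagrange-multiplier argument with the observation that $I_\lambda'(\omega_0)\omega_0=\Psi_\lambda(\omega_0)=0$ while $\Psi_\lambda'(\omega_0)\omega_0\neq0$ (so the multiplier vanishes) is exactly the standard proof the paper intends by citing \cite[Lemma 4.3]{22Lawson}, and it is the same reasoning the paper itself spells out later in Lemma \ref{l4.5}. No gaps; the nondegeneracy check $\Psi_\lambda'(\omega_0)\omega_0\neq0\Rightarrow\Psi_\lambda'(\omega_0)\neq0$ that justifies the multiplier rule is handled correctly.
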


The proof is closely similar to \cite[Lemma 4.3]{22Lawson} and we omit it.

\begin{lemma}\label{l2.7}
There exists $\Lambda_1>0$ such that $N_\lambda^0=\emptyset$ for $\lambda\in(0,\Lambda_1)$.
\end{lemma}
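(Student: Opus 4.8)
The plan is to show that for $\lambda$ small the "degenerate" part $N_\lambda^0$ of the Nehari manifold is empty, by a contradiction argument combining the two expressions for $\Psi'_\lambda(\omega)\omega$ derived in \eqref{e2.12} with the Sobolev and Hölder estimates already available. Suppose $\omega\in N_\lambda^0$, so that $\Psi'_\lambda(\omega)\omega=0$. First I would use the last line of \eqref{e2.12}, together with $l\geq0$ and $\phi_{\omega(x,0)}\geq0$ from Lemma \ref{l2.1}(i), to get
\[
(2-q)\|\omega\|_{X^\alpha}^2 + (4-q)\displaystyle\int_{\mathbb{R}^N}l(x)\phi_{\omega(x,0)}|\omega(x,0)|^2dx = (2_\alpha^*-q)\displaystyle\int_{\mathbb{R}^N}g(x)|\omega(x,0)|^{2_\alpha^*}dx.
\]
Since $1<q<2$ (and $4<q<2_\alpha^*$ in the other case — the argument splits, but is analogous), every coefficient here is positive, so dropping the $l$-term gives a lower bound
\[
\displaystyle\int_{\mathbb{R}^N}g(x)|\omega(x,0)|^{2_\alpha^*}dx \geq \frac{2-q}{2_\alpha^*-q}\|\omega\|_{X^\alpha}^2,
\]
while by $(H_3)$, $0<g_\infty\le g(x)$ is bounded above (say by $1$, using $(H_4)$) and Lemma \ref{l2.3} one also has the reverse-type bound $\int g|\omega(x,0)|^{2_\alpha^*}dx \le |g|_\infty S^{-2_\alpha^*/2}\|\omega\|_{X^\alpha}^{2_\alpha^*}$. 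Combining these two inequalities yields a uniform positive lower bound $\|\omega\|_{X^\alpha}\ge c_0>0$ for every $\omega\in N_\lambda^0$, independent of $\lambda$.

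Next I would extract an upper bound that degenerates as $\lambda\to0$. Using the second expression in \eqref{e2.12} (the line $-2\|\omega\|_{X^\alpha}^2+(4-q)\int f_\lambda|\omega(x,0)|^q - (2_\alpha^*-4)\int g|\omega(x,0)|^{2_\alpha^*}=0$) together with $g\ge g_\infty>0$, discard the nonnegative critical term to obtain $2\|\omega\|_{X^\alpha}^2 \le (4-q)\int f_\lambda(x)|\omega(x,0)|^q dx \le (4-q)\lambda|f_+|_{q^*}\,|\omega(x,0)|_{2_\alpha^*}^q \le C\lambda\|\omega\|_{X^\alpha}^q$ via $(H_2)$, Hölder and Lemma \ref{l2.3}; hence $\|\omega\|_{X^\alpha}^{2-q}\le C\lambda$, i.e. $\|\omega\|_{X^\alpha}\le (C\lambda)^{1/(2-q)}$. (When $4<q<2_\alpha^*$ one instead pairs this with the other combination in \eqref{e2.12} — the equation $\Psi'_\lambda(\omega)\omega=0$ gives two relations, and eliminating one integral term forces $\|\omega\|_{X^\alpha}$ into an interval whose lower end is fixed and upper behaviour is controlled by $\lambda$, producing the same contradiction.) Choosing $\Lambda_1>0$ so that $(C\Lambda_1)^{1/(2-q)}<c_0$ makes the two bounds incompatible, so $N_\lambda^0=\emptyset$ for $\lambda\in(0,\Lambda_1)$.

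The main obstacle I anticipate is handling the Poisson term $\int l(x)\phi_{\omega(x,0)}|\omega(x,0)|^2dx$ cleanly: it has a favorable sign in the final line of \eqref{e2.12}, which is exactly why I route the lower bound through that expression and route the upper bound through an expression where this term can be discarded by positivity; one must be careful that the same term does not wreck the chain of inequalities in the $q\in(4,2_\alpha^*)$ case, where the algebraic signs of the coefficients $(2-q)$, $(4-q)$, $(q-2_\alpha^*)$ change. A secondary technical point is making sure all the constants $C$ in the Sobolev/Hölder estimates are genuinely independent of $\lambda$ and $\omega$ (they are, since they come from Lemma \ref{l2.2}, Lemma \ref{l2.3} and $(H_2)$), so that the final choice of $\Lambda_1$ is legitimate. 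Once the sign bookkeeping is settled the proof is a short quantitative comparison, so I would present it as a two-bound contradiction with the case split $1<q<2$ versus $4<q<2_\alpha^*$ made explicit.
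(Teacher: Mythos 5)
Your proposal is correct and follows essentially the same route as the paper: from $\Psi'_\lambda(\omega)\omega=0$ one extracts the $\lambda$-independent lower bound $C_1\leq\|\omega\|_{X^\alpha}$ via the critical term and Lemma \ref{l2.3}, and the upper bound $\|\omega\|_{X^\alpha}\leq C_2\lambda^{1/(2-q)}$ via the $f_\lambda$ term, $(H_2)$ and H\"older, which are incompatible for small $\lambda$. (Your aside about the case $4<q<2_\alpha^*$ is moot here, since the splitting of the Nehari manifold in Section 2 is only used in the regime $1<q<2$; in Section 5 the sign of $\Psi'_{\lambda,g}(\omega)\omega$ is automatically negative.)
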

\begin{proof}
 Suppose that  $\omega\in N_\lambda^0$, it follows (\ref{e2.12})-(\ref{e2.13}) and Lemma \ref{l2.2} that
  \begin{equation*}
2\|\omega\|_{X^\alpha}^2\leq\lambda\displaystyle\int_{\mathbb{R}^N} f_\lambda(x)|\omega(x,0)|^qdx\leq\lambda |f_+|_{q^*}S^{-\frac{q}{2}}\|\omega\|_{X^\alpha}^q
 \end{equation*}
and
\begin{equation*}
(2-q)\|\omega\|_{X^\alpha}^2\leq (2_\alpha^*-q)\displaystyle\int_{\mathbb{R}^N}g(x)|\omega(x,0)|^{2_\alpha^*}dx\leq (2_\alpha^*-q)S^{-\frac{2_\alpha^*}{2}}\|\omega\|_{X^\alpha}^{2_\alpha^*}.
 \end{equation*}
So we have
\begin{equation*}
C_1\leq\|\omega\|_{X^\alpha}\leq \lambda^{\frac{1}{2-q}}C_2,
 \end{equation*}
where $C_1,\, C_2>0$ are independent of the choice of $\omega\in X^\alpha(\mathbb{R}_+^{N+1})$ and $\lambda>0$. When $\lambda$ is sufficiently small, it yields a contradiction.
%Thus $N_\lambda^0=\emptyset$ for $\lambda>0$ sufficient small. The proof is complete.
\end{proof}

By Lemma \ref{l2.7}, we get $N_\lambda=N_\lambda^+\cup N_\lambda^-$. Set
 \begin{equation*}
\alpha_\lambda^+=\displaystyle\inf_{\omega\in N_\lambda^+}I_\lambda(\omega)~\text{and}~\alpha_\lambda^-=\displaystyle\inf_{\omega\in N_\lambda^-}I_\lambda(\omega).
 \end{equation*}

\begin{lemma}\label{l2.8}
The following two statements are true.
\begin{enumerate}
  \item[\textup{(i)}] $\alpha_\lambda^+<0$.
  \item[\textup{(ii)}] There exists $\Lambda_2\in(0,\Lambda_1)$ such that $\alpha_\lambda^->d_0$ for some $d_0>0$ and $\lambda\in (0,\Lambda_2).$
\end{enumerate}
In particular, $\alpha_\lambda^+=\displaystyle\inf_{u\in N_\lambda}I_\lambda(u)$ for $\lambda\in (0,\Lambda_2).$
\end{lemma}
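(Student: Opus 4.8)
The plan is to prove (i) and (ii) separately and then combine them using the splitting $N_\lambda=N_\lambda^+\cup N_\lambda^-$ provided by Lemma~\ref{l2.7}.

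\emph{Part (i).} I would exhibit a single element of $N_\lambda^+$ with strictly negative energy by a fibering argument. By Remark~\ref{r1.1} one may fix $\omega_0\in X^\alpha(\mathbb{R}_+^{N+1})\setminus\{0\}$ with $\operatorname{supp}\omega_0(\cdot,0)\subset M_{r_0}$, so that (using $f_\lambda=\lambda f_++f_-$ with $f_-\le0$) $0<\kappa:=\int_{\mathbb{R}^N}f_\lambda(x)|\omega_0(x,0)|^q\,dx\le\lambda|f_+|_{q^*}|\omega_0(\cdot,0)|_{2_\alpha^*}^q\to0$ as $\lambda\to0^+$. Consider the fibering map $h(t):=I_\lambda(t\omega_0)$, $t\ge0$; by Lemma~\ref{l2.1}(ii) its Poisson part scales like $t^4$, so $h$ is a combination of $t^2,t^4,t^q,t^{2_\alpha^*}$ with $1<q<2<4\le2_\alpha^*$, the coefficients of $t^2,t^4$ being $\ge0$ (Lemma~\ref{l2.1}(i)) and those of $t^q,t^{2_\alpha^*}$ being $<0$. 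Since $q<2$, one checks $h(0)=0$ and $h(t)<0$, $h'(t)<0$ for small $t>0$; writing $h'(t)=t^{q-1}\bigl(m(t)-\kappa\bigr)$, the function $m$ satisfies $m(0^+)=0$, $m'(0^+)=+\infty$ (here $q>1$ is used) and $m(t)\to-\infty$, so $\max_{t>0}m(t)>0$, and for $\lambda$ small $\kappa<\max_{t>0}m(t)$; hence $h'$ has a first zero $t^+>0$ at which $h$ has a local minimum. For $\lambda<\Lambda_1$ this minimum is nondegenerate, because along $N_\lambda$ one has $\Psi_\lambda'(t\omega_0)(t\omega_0)=t^2h''(t)$ while $N_\lambda^0=\emptyset$ (Lemma~\ref{l2.7}); therefore $t^+\omega_0\in N_\lambda^+$ and $I_\lambda(t^+\omega_0)=h(t^+)<h(0)=0$, so $\alpha_\lambda^+\le I_\lambda(t^+\omega_0)<0$. (Equivalently, for any $\omega\in N_\lambda^+$, subtracting $\tfrac1q\Psi_\lambda(\omega)=0$ from $I_\lambda(\omega)$ and using $\Psi_\lambda'(\omega)\omega>0$, $\phi_{\omega(\cdot,0)}\ge0$ and $2_\alpha^*\ge4>q$ gives $I_\lambda(\omega)\le\tfrac{2-q}{q}\bigl(\tfrac1{2_\alpha^*}-\tfrac12\bigr)\|\omega\|_{X^\alpha}^2<0$, which again forces $\alpha_\lambda^+<0$.)

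\emph{Part (ii).} For $\omega\in N_\lambda^-$, the last line of~\eqref{e2.12} reads $(2-q)\|\omega\|_{X^\alpha}^2+(4-q)\int_{\mathbb{R}^N}l(x)\phi_{\omega(x,0)}|\omega(x,0)|^2\,dx+(q-2_\alpha^*)\int_{\mathbb{R}^N}g(x)|\omega(x,0)|^{2_\alpha^*}\,dx<0$. Dropping the nonnegative Poisson term (Lemma~\ref{l2.1}(i)) and using $0<g\le1$ (from $(H_3)$--$(H_4)$) together with Lemma~\ref{l2.3}, one gets $(2-q)\|\omega\|_{X^\alpha}^2\le(2_\alpha^*-q)S^{-2_\alpha^*/2}\|\omega\|_{X^\alpha}^{2_\alpha^*}$, hence a $\lambda$-uniform lower bound $\|\omega\|_{X^\alpha}\ge C_3>0$ on $N_\lambda^-$. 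Next, in~\eqref{e2.13} the term $\bigl(\tfrac14-\tfrac1{2_\alpha^*}\bigr)\int g|\omega(\cdot,0)|^{2_\alpha^*}$ is $\ge0$ (since $2_\alpha^*\ge4$) and $\int f_\lambda|\omega(\cdot,0)|^q\le\lambda|f_+|_{q^*}S^{-q/2}\|\omega\|_{X^\alpha}^q$ by H\"older and Lemma~\ref{l2.3}, so
\[
I_\lambda(\omega)\ \ge\ \tfrac14\|\omega\|_{X^\alpha}^2-c\,\lambda\,\|\omega\|_{X^\alpha}^q\ =:\ \eta\bigl(\|\omega\|_{X^\alpha}\bigr),\qquad c:=\Bigl(\tfrac1q-\tfrac14\Bigr)|f_+|_{q^*}S^{-q/2}.
\]
The function $\eta(\tau)=\tfrac14\tau^2-c\lambda\tau^q$ has its unique positive critical point at $\tau_{*}=(2c\lambda)^{1/(2-q)}\to0$ as $\lambda\to0$; once $\lambda$ is small enough that $\tau_{*}<C_3$, $\eta$ is increasing on $[C_3,\infty)$ and $\min_{\tau\ge C_3}\eta(\tau)=\eta(C_3)\to\tfrac14C_3^2$. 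Hence there exist $\Lambda_2\in(0,\Lambda_1)$ and $d_0>0$ with $I_\lambda(\omega)\ge\eta(C_3)>d_0$ for all $\omega\in N_\lambda^-$ and $\lambda\in(0,\Lambda_2)$, i.e.\ $\alpha_\lambda^->d_0$. Finally, since $N_\lambda=N_\lambda^+\cup N_\lambda^-$ and $\alpha_\lambda^+<0<d_0<\alpha_\lambda^-$, we conclude $\inf_{N_\lambda}I_\lambda=\min\{\alpha_\lambda^+,\alpha_\lambda^-\}=\alpha_\lambda^+$ for $\lambda\in(0,\Lambda_2)$.

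\emph{Where the difficulty lies.} The one genuinely delicate step is the fibering analysis in~(i): with the critical term present one must verify that the first turning point $t^+$ of $h$ is a nondegenerate local minimum landing in $N_\lambda^+$ (and not in $N_\lambda^0$ or $N_\lambda^-$), and this is exactly where the smallness $\lambda<\Lambda_1$ and the emptiness of $N_\lambda^0$ enter; in the borderline case $N=4\alpha$ (so $2_\alpha^*=4$) one should also ensure, by the choice of $\omega_0$, that $m(t)\to-\infty$. Part~(ii) is comparatively routine once the $\lambda$-uniform bound $\|\omega\|_{X^\alpha}\ge C_3$ on $N_\lambda^-$ is in hand; the only point requiring care is that $d_0$ be chosen uniformly in $\lambda\in(0,\Lambda_2)$, which is why the critical point $\tau_{*}$ of $\eta$ is tracked.
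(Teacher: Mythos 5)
Your proposal is correct, and part (ii) together with the concluding identity $\alpha_\lambda^+=\inf_{N_\lambda}I_\lambda$ follows the paper's proof essentially verbatim (norm lower bound on $N_\lambda^-$ from \eqref{e2.12} and Lemma \ref{l2.3}, then the coercivity estimate \eqref{e2.13} with $\lambda$ tracked uniformly). For part (i) the paper's proof is exactly your parenthetical remark: it takes an arbitrary $\omega\in N_\lambda^+$, computes $I_\lambda(\omega)-\frac{1}{q}I'_\lambda(\omega)\omega$, and uses the defining inequality of $N_\lambda^+$ together with $q<2\le 4\le 2_\alpha^*$ to conclude $I_\lambda(\omega)<0$; your primary fibering construction of an explicit $t^+\omega_0\in N_\lambda^+$ is a genuine addition, since the paper's argument (and your parenthetical one) only shows that every element of $N_\lambda^+$ has negative energy and tacitly assumes $N_\lambda^+\neq\emptyset$ — in the paper this nonemptiness is only supplied later by Lemma \ref{l2.9}(ii). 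Your flag about the borderline case $N=4\alpha$ (where $2_\alpha^*=4$ and the decay $s(t)\to-\infty$ requires the critical term to dominate the Poisson term) is a fair observation that applies equally to the paper's own fibering map $s(t)$ preceding Lemma \ref{l2.9}; it does not affect the validity of the paper's proof of this particular lemma, which avoids the fibering map in part (i).
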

\begin{proof}
(i). For any $\omega\in N_\lambda^+$, it follows (\ref{e2.12}) that
  \begin{equation*}
(2-q)\|\omega\|_{X^\alpha}^2+(4-q)\displaystyle\int_{\mathbb{R}^N} l(x)\phi_{\omega(x,0)}|\omega(x,0)|^2dx>(2_\alpha^*-q)\displaystyle\int_{\mathbb{R}^N}g(x)|\omega(x,0)|^{2_\alpha^*}dx.
 \end{equation*}
Then we have
\begin{align}
\begin{split}
 I_\lambda(\omega)&=I_\lambda(\omega)-\frac{1}{q}I'_\lambda(\omega)\omega\notag
\\&=\left(\frac{1}{2}-\frac{1}{q}\right)\|\omega\|_{X^\alpha}^2+\left(\frac{1}{4}-\frac{1}{q}\right)\displaystyle\int_{\mathbb{R}^N}l(x)\phi_{\omega(x,0)} |\omega(x,0)|^2dx\\&~~~~~+\left(\frac{1}{q}-\frac{1}{2_\alpha^*}\right)\displaystyle\int_{\mathbb{R}^N}g(x)|\omega(x,0)|^{2_\alpha^*}dx\notag
\\&<\frac{q-2}{4q}\|\omega\|_{X^\alpha}^2+\frac{q-4}{4q}\displaystyle\int_{\mathbb{R}^N}l(x)\phi_{\omega(x,0)} |\omega(x,0)|^2dx+\frac{2_\alpha^*-q}{2_\alpha^*q}\displaystyle\int_{\mathbb{R}^N} g(x)|\omega(x,0)|^6dx\notag
\\&<-\frac{(2_\alpha^*-4)(2_\alpha^*-q)}{4q2_\alpha^*}\displaystyle\int_{\mathbb{R}^N}g(x)|\omega(x,0)|^{2_\alpha^*}dx\nonumber\\&<0,\notag
\end{split}
\end{align}
and thus
\begin{equation*}
\alpha_\lambda^+<0.
 \end{equation*}
(ii). For any $\omega\in N_\lambda^-$, it follows (\ref{e2.12}) that
\begin{equation*}
(2-q)\|\omega\|_{X^\alpha}^2<(2_\alpha^*-q)\displaystyle\int_{\mathbb{R}^N}g(x)|\omega(x,0)|^{2_\alpha^*}dx\leq (2_\alpha^*-q)S^{-\frac{2_\alpha^*}{2}}\|\omega\|_{X^\alpha}^{2_\alpha^*}
 \end{equation*}
and
\begin{equation}\label{e2.14}
\|\omega\|_{X_0^\alpha}\geq\left(\frac{2-q}{2_\alpha^*-q}S^\frac{2_\alpha^*}{2}\right)^\frac{1}{2_\alpha^*-2}.
 \end{equation}
Using (\ref{e2.13}) and (\ref{e2.14}), we  have
\begin{equation*}
I_\lambda(\omega)\geq\|\omega\|_{X^\alpha}^q\left(\frac{1}{4}\|\omega\|_{X^\alpha}^{2-q}-\lambda\left(\frac{1}{q}-\frac{1}{4}\right)|f_+|_{q^*}S^{-\frac{q}{2}}\right)\geq d_0>0,
 \end{equation*}
 for small  $\lambda>0$  and some constant $d_0>0$ independent of the choice of $\omega\in N_\lambda^-$.
\end{proof}

For each $\omega\in X^\alpha(\mathbb{R}_+^{N+1})\backslash\{0\}$, we set
\begin{equation*}
  s(t)=t^{2-q}\|\omega\|_{X^\alpha}^2+t^{4-q}\displaystyle\int_{\mathbb{R}^N}l(x)\phi_{\omega(x,0)} |\omega(x,0)|^2dx-t^{2_\alpha^*-q}\displaystyle\int_{\mathbb{R}^N} g(x)|\omega(x,0)|^{2_\alpha^*}dx,
\end{equation*}
for $t\geq0$. Then $s(0)=0$, $s(t)>0$ for small $t>0$ small,  and $s(t)\rightarrow-\infty$ as $t\rightarrow\infty$. There exists $t_{\max}>0$ such that
\begin{equation*}
s(t_{\max})=\sup_{t\geq0}s(t)>0.
\end{equation*}
It is also easy to see that $s(t)$ is increasing on $(0,t_{\max})$ and decreasing on $(t_{\max},\infty)$.

\begin{lemma}\label{l2.9}
For each $\omega\in X^\alpha(\mathbb{R}_+^{N+1})\backslash\{0\}$, there exists $\Lambda_3\in (0,\Lambda_2)$ such that if $\lambda\in(0,\Lambda_3)$, the following two statements are true.
\begin{enumerate}
  \item[\textup{(i)}] If $\displaystyle\int_{\mathbb{R}^N} f_\lambda(x)|\omega(x,0)|^qdx\leq0,$ then there is a unique $t^-=t^-(\omega)>t_{max}$ such that $t^-\omega\in N_\lambda^-$ and $I_\lambda(t\omega)$ is increasing on $(0,t^-)$ and decreasing on $(t^-,\infty)$. Moreover, $I_\lambda(t^-\omega)=\displaystyle\sup_{t\geq0}I_\lambda(t\omega)$.
  \item[\textup{(ii)}] If $\displaystyle\int_{\mathbb{R}^N} f_\lambda(x)|\omega(x,0)|^qdx>0,$ then there is a unique $0<t^+=t^+(\omega)<t_{max}<t^-$ such that $t^-\omega\in N_\lambda^-,\ t^+\omega\in N_\lambda^+,\  I_\lambda(t\omega)$ is decreasing on $(0,t^+)$, increasing on $(t^+,t^-)$ and decreasing on $(t^-,\infty)$. Moreover, $I_\lambda(t^+\omega)=\displaystyle\inf_{0\leq t\leq t_{max}}I_\lambda(t\omega); I_\lambda(t^-\omega)=\displaystyle\sup_{t\geq t^+}I_\lambda(t\omega)$.
\end{enumerate}
\end{lemma}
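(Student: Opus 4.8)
The plan is to carry out a standard fibering-map analysis of the one-variable function $t\mapsto I_\lambda(t\omega)$. Fix $\omega\in X^\alpha(\mathbb{R}_+^{N+1})\setminus\{0\}$ and write $a=\|\omega\|_{X^\alpha}^2>0$, $b=\int_{\mathbb{R}^N}l(x)\phi_{\omega(x,0)}|\omega(x,0)|^2\,dx\geq0$, $c=\int_{\mathbb{R}^N}g(x)|\omega(x,0)|^{2_\alpha^*}\,dx>0$ (positive by $(H_3)$) and $e=\int_{\mathbb{R}^N}f_\lambda(x)|\omega(x,0)|^q\,dx$. Then $I_\lambda(t\omega)=\tfrac{t^2}{2}a+\tfrac{t^4}{4}b-\tfrac{t^q}{q}e-\tfrac{t^{2_\alpha^*}}{2_\alpha^*}c$, and a direct computation shows $\frac{d}{dt}I_\lambda(t\omega)=t^{q-1}\bigl(s(t)-e\bigr)$, where $s(t)$ is exactly the function introduced just before the lemma. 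Hence $t\omega\in N_\lambda$ if and only if $s(t)=e$, and the sign of $\Psi_\lambda'(t\omega)(t\omega)$ at such a point matches the sign of $s'(t)$ (up to a positive factor); precisely $t\omega\in N_\lambda^+$ iff $s'(t)>0$, i.e. $t<t_{\max}$, and $t\omega\in N_\lambda^-$ iff $s'(t)<0$, i.e. $t>t_{\max}$. This translates the whole statement into elementary facts about the graph of $s$.

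Next I would record the shape of $s$ already established in the excerpt: $s(0)=0$, $s>0$ near $0$, $s(t)\to-\infty$, $s$ is strictly increasing on $(0,t_{\max})$ and strictly decreasing on $(t_{\max},\infty)$, with maximum value $s(t_{\max})>0$. For part (i), when $e\le 0$: since $s(0)=0>e$ is false only if $e=0$, I treat $e\le 0$ so that on $(0,t_{\max}]$ we have $s(t)\ge s(\text{small})>0\ge e$ while $s$ decreases from $s(t_{\max})>0$ to $-\infty$ on $(t_{\max},\infty)$; by strict monotonicity and the intermediate value theorem there is a unique $t^-\in(t_{\max},\infty)$ with $s(t^-)=e$, and $s'(t^-)<0$ gives $t^-\omega\in N_\lambda^-$. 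Since $\frac{d}{dt}I_\lambda(t\omega)=t^{q-1}(s(t)-e)$ is positive on $(0,t^-)$ and negative on $(t^-,\infty)$, $I_\lambda(t\omega)$ is increasing then decreasing and $I_\lambda(t^-\omega)=\sup_{t\ge 0}I_\lambda(t\omega)$. For part (ii), when $e>0$: here $s(0)=0<e$, and the key quantitative input is that $s(t_{\max})>e$ for all such $\omega$ once $\lambda<\Lambda_3$; granting this, $s$ meets the level $e$ exactly twice, once at $t^+\in(0,t_{\max})$ with $s'(t^+)>0$ (so $t^+\omega\in N_\lambda^+$) and once at $t^-\in(t_{\max},\infty)$ with $s'(t^-)<0$ (so $t^-\omega\in N_\lambda^-$). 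The sign pattern of $s(t)-e$ — negative on $(0,t^+)$, positive on $(t^+,t^-)$, negative on $(t^-,\infty)$ — yields that $I_\lambda(t\omega)$ decreases, then increases, then decreases, giving the claimed local min at $t^+$ and the assertions $I_\lambda(t^+\omega)=\inf_{0\le t\le t_{\max}}I_\lambda(t\omega)$ and $I_\lambda(t^-\omega)=\sup_{t\ge t^+}I_\lambda(t\omega)$.

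The main obstacle is the uniform gap estimate $s(t_{\max})>e$ needed in part (ii), i.e. showing that for small $\lambda$ the maximal value of $s$ lies strictly above the weighted $L^q$ integral for every nonzero $\omega$. The standard way to get this is scale invariance: both $s(t_{\max})$ and $e$ behave homogeneously, so after normalizing one may assume $\|\omega\|_{X^\alpha}=1$; then I would bound $s(t_{\max})$ from below by a positive constant $\eta_0$ independent of $\omega$ (maximizing $t^{2-q}+t^{4-q}b-t^{2_\alpha^*-q}c$ over $t$, using $b\ge 0$, the Sobolev bound $c\le S^{-2_\alpha^*/2}$, and the explicit critical point of $t^{2-q}-S^{-2_\alpha^*/2}t^{2_\alpha^*-q}$), while bounding $e\le \lambda|f_+|_{q^*}S^{-q/2}$ via Hölder and Lemma \ref{l2.2}. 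Choosing $\Lambda_3\in(0,\Lambda_2)$ so that $\lambda|f_+|_{q^*}S^{-q/2}<\eta_0$ for $\lambda\in(0,\Lambda_3)$ then forces $s(t_{\max})>e$, closing the argument; uniqueness of $t^\pm$ is immediate from the strict monotonicity of $s$ on each of $(0,t_{\max})$ and $(t_{\max},\infty)$.
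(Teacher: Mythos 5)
Your proposal is correct and follows essentially the same route as the paper: both reduce the lemma to the graph of the fibering function $s(t)$, identify $N_\lambda^\pm$ with the sign of $s'$ at the intersection points $s(t)=\int f_\lambda|\omega(x,0)|^q dx$, and close part (ii) by the same uniform gap estimate, bounding $s(t_{\max})$ below by a constant times $\|\omega\|_{X^\alpha}^q$ (via dropping the nonnegative Poisson term and the Sobolev inequality) and bounding the $f_\lambda$-integral above by $\lambda|f_+|_{q^*}S^{-q/2}\|\omega\|_{X^\alpha}^q$. Your normalization $\|\omega\|_{X^\alpha}=1$ versus the paper's direct homogeneous estimate (2.15) is only a cosmetic difference.
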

\begin{proof}
For each $\omega\in X^\alpha(\mathbb{R}_+^{N+1})\backslash\{0\}$,  we get
\begin{align}\label{e2.15}
s(t_{max})&\geq\displaystyle\max_{t\geq0}\left(t^{2-q}\|\omega\|_{X^\alpha}^2-t^{2_\alpha^*-q}\displaystyle\int_{\mathbb{R}^N} g(x)|\omega(x,0)|^{2_\alpha^*}dx\right)\notag
\\&=\left(\frac{(2-q)\|\omega\|_{X^\alpha}^2}{(2_\alpha^*-q)\displaystyle\int_{\mathbb{R}^N} g(x)|\omega(x,0)|^{2_\alpha^*}dx}\right)^{\frac{2-q}{2_\alpha^*-2}}\|\omega\|_{X^\alpha}^2\notag
\\&~~~~-\left(\frac{(2-q)\|\omega\|_{X^\alpha}^2}{(2_\alpha^*-q)\displaystyle\int_{\mathbb{R}^N} g(x)|\omega(x,0)|^{2_\alpha^*}dx}\right)^{\frac{2_\alpha^*-q}{2_\alpha^*-2}}\displaystyle\int_{\mathbb{R}^N} g(x)|\omega(x,0)|^{2_\alpha^*}dx\notag
\\&=\|\omega\|_{X^\alpha}^q\left[\left(\frac{2-q}{2_\alpha^*-q}\right)^{\frac{2-q}{2_\alpha^*-2}}-\left(\frac{2-q}{2_\alpha^*-q}\right)^{\frac{2_\alpha^*-q}{2_\alpha^*-2}}\right]\left(\frac{\|\omega\|_{X^\alpha}^{2_\alpha^*}}{\displaystyle\int_{\mathbb{R}^N} g(x)|\omega(x,0)|^{2_\alpha^*}dx}\right)^{\frac{2-q}{2_\alpha^*-2}}\notag
\\&\geq\|\omega\|_{X^\alpha}^q\left(\frac{2_\alpha^*-2}{2_\alpha^*-q}\right)\left(\frac{2-q}{2_\alpha^*-q}\right)^{\frac{2-q}{2_\alpha^*-2}}C(S),
\end{align}
where $C(S)>0$ is a constant dependent on $S$.

We need to consider two cases.

Case 1: $\displaystyle\int_{\mathbb{R}^N} f_\lambda(x)|\omega(x,0)|^qdx\leq0.$
There is a unique $t^->t_{max}$ such that $s(t^-)=\displaystyle\int_{\mathbb{R}^N} f_\lambda(x)|\omega(x,0)|^qdx$ and $s'(t^-)<0,$ which implies
$t^-\omega\in N_\lambda^-$. Moreover, $I_\lambda(tu)$ is increasing on $(0,t^-)$ and decreasing on $(t^-,\infty)$. So we have
\begin{equation*}
  I_\lambda(t^-u)=\displaystyle\sup_{t\geq0}I_\lambda(tu).
\end{equation*}

Case 2: $\displaystyle\int_{\mathbb{R}^N} f_\lambda(x)|\omega(x,0)|^qdx>0.$ It follows  (\ref{e2.15}) that
\begin{align}
s(0)=0&<\displaystyle\int_{\mathbb{R}^N} f_\lambda(x)|\omega(x,0)|^qdx\leq\lambda |f_+|_{q^*} S^{-\frac{q}{2}}\|\omega\|_{X^\alpha}^q\notag
\\&<\|\omega\|_{X^\alpha}^q\left(\frac{2_\alpha^*-2}{2_\alpha^*-q}\right)\left(\frac{2-q}{2_\alpha^*-q}\right)^{\frac{2-q}{2_\alpha^*-2}}C(S)\nonumber\\&\leq s(t_{max})\notag
\end{align}
for small $\lambda>0$. There are a unique $t^+$ and a unique $t^-$ such that $0<t^+<t_{max}<t^-$. Then
\begin{equation*}
s(t^+)=\displaystyle\int_{\mathbb{R}^N} f_\lambda(x)|\omega(x,0)|^qdx=s(t^-)
\end{equation*}
and
\begin{equation*}
s'(t^+)>0>s'(t^-),
\end{equation*}
which implies that $t^+\omega\in N_\lambda^+,$ $t^-\omega\in N_\lambda^-,$ and $I_\lambda(t^-\omega)\geq I_\lambda(t\omega)\geq I_\lambda(t^+\omega)$
for each $t\in[t^+,t^-]$. We further can get $I_\lambda(t^+\omega)\leq I_\lambda(t\omega)$ for $t\in[0,t^+].$
In other words, $I_\lambda(t\omega)$ is decreasing on $(0,t^+)$, increasing on $(t^+,t^-)$ and decreasing on $(t^-,\infty)$. Consequently, we obtain
\begin{equation*}
 I_\lambda(t^+\omega)=\displaystyle\inf_{0\leq t\leq t_{max}}I_\lambda(t\omega)~\text{and}~I_\lambda(t^-\omega)=\displaystyle\sup_{t\geq t^+}I_\lambda(t\omega).
\end{equation*}
\end{proof}

%Next we establish that $I_\lambda$ satisfies the $(PS)_c$-condition
%for $c\in(-\infty,\alpha_\lambda^++\frac{\alpha}{N}S^\frac{N}{2\alpha})$.
\begin{lemma}\label{l2.10}
$I_\lambda$ satisfies the $(PS)_c$-condition for $c\in \left(-\infty,\,\alpha_\lambda^++\frac{\alpha}{N}S^\frac{N}{2\alpha} \right)$.
\end{lemma}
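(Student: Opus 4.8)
The plan is to follow the classical Brezis--Nirenberg scheme adapted to the Nehari-manifold framework. Let $(\omega_n)\subset X^\alpha(\mathbb{R}_+^{N+1})$ be a $(PS)_c$ sequence for $I_\lambda$, i.e. $I_\lambda(\omega_n)\to c$ and $I_\lambda'(\omega_n)\to 0$ in the dual space, with $c<\alpha_\lambda^++\frac{\alpha}{N}S^{\frac{N}{2\alpha}}$. First I would show $(\omega_n)$ is bounded in $X^\alpha(\mathbb{R}_+^{N+1})$: from the identity $I_\lambda(\omega_n)-\frac14 I_\lambda'(\omega_n)\omega_n$ one gets, exactly as in the computation in Lemma~\ref{l2.5}, a lower bound of the form $\frac14\|\omega_n\|_{X^\alpha}^2-\lambda C|f_+|_{q^*}S^{-q/2}\|\omega_n\|_{X^\alpha}^q\le c+o(1)\|\omega_n\|_{X^\alpha}$, and since $1<q<2$ this forces boundedness. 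Passing to a subsequence, $\omega_n\rightharpoonup \omega$ in $X^\alpha(\mathbb{R}_+^{N+1})$, $\omega_n(\cdot,0)\to\omega(\cdot,0)$ in $L^r_{loc}(\mathbb{R}^N)$ for $r\in[2,2_\alpha^*)$ and a.e.; standard arguments show $\omega$ is a critical point of $I_\lambda$, and using Lemma~\ref{l2.1}(vi) together with the Brezis--Lieb lemma for the critical term I would set $v_n:=\omega_n-\omega$ and obtain the two splittings
\begin{equation*}
\|\omega_n\|_{X^\alpha}^2=\|\omega\|_{X^\alpha}^2+\|v_n\|_{\dot X^\alpha}^2+o(1),\qquad
\int_{\mathbb{R}^N}g(x)|\omega_n(x,0)|^{2_\alpha^*}dx=\int_{\mathbb{R}^N}g(x)|\omega(x,0)|^{2_\alpha^*}dx+\int_{\mathbb{R}^N}g(x)|v_n(x,0)|^{2_\alpha^*}dx+o(1).
\end{equation*}
Here one must be careful that the lower-order terms $\int f_\lambda|\omega_n(\cdot,0)|^q$ and the Poisson term $\int l\phi_{\omega_n(\cdot,0)}|\omega_n(\cdot,0)|^2$ pass to the limit: the former because $f_\lambda\in L^{q^*}$ by $(H_2)$ gives a vanishing-at-infinity tail so that convergence is global, and the latter by Lemma~\ref{l2.1}(vi) plus the compactness in $(H_1)$.

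Next, combining $I_\lambda'(\omega_n)\omega_n=o(1)$ with the weak equation $I_\lambda'(\omega)\omega=0$ and the Brezis--Lieb splittings yields $\|v_n\|_{\dot X^\alpha}^2-\int_{\mathbb{R}^N}g(x)|v_n(x,0)|^{2_\alpha^*}dx\to 0$. Call the common limit $\ell\ge 0$. Using the Sobolev inequality of Lemma~\ref{l2.3} and $g(x)\le \|g\|_\infty$ (indeed $g\ge g_\infty>0$ and $g$ bounded by $(H_3)$, with $\max g=1$ by $(H_4)$, so $g(x)\le 1$), we get $\|v_n\|_{\dot X^\alpha}^2\le \int g|v_n(\cdot,0)|^{2_\alpha^*}\le \int |v_n(\cdot,0)|^{2_\alpha^*}\le S^{-2_\alpha^*/2}\|v_n\|_{\dot X^\alpha}^{2_\alpha^*}$, so either $\ell=0$ (and then $\omega_n\to\omega$ strongly, done) or $\ell\ge S^{N/(2\alpha)}$. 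I would then compute, using $I_\lambda(\omega_n)=c+o(1)$ and the splittings,
\begin{equation*}
c=I_\lambda(\omega)+\Big(\tfrac12-\tfrac{1}{2_\alpha^*}\Big)\ell+o(1)=I_\lambda(\omega)+\tfrac{\alpha}{N}\ell+o(1),
\end{equation*}
so $c\ge I_\lambda(\omega)+\frac{\alpha}{N}S^{N/(2\alpha)}$ in the bad case. It therefore remains to show $I_\lambda(\omega)\ge \alpha_\lambda^+$, which would contradict the hypothesis $c<\alpha_\lambda^++\frac{\alpha}{N}S^{N/(2\alpha)}$ and finish the proof.

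The main obstacle is precisely this last point: showing $I_\lambda(\omega)\ge\alpha_\lambda^+$. If $\omega\neq 0$ this is immediate since $\omega\in N_\lambda$ and $\alpha_\lambda^+=\inf_{N_\lambda}I_\lambda$ on the relevant range of $\lambda$ (Lemma~\ref{l2.8}). The delicate case is $\omega\equiv 0$: then we need $I_\lambda(0)=0\ge\alpha_\lambda^+$, but $\alpha_\lambda^+<0$ by Lemma~\ref{l2.8}(i), so $0\ge\alpha_\lambda^+$ holds and we actually get $c\ge \frac{\alpha}{N}S^{N/(2\alpha)}>\alpha_\lambda^++\frac{\alpha}{N}S^{N/(2\alpha)}$ — wait, that has the wrong sign, so $\omega\equiv0$ must in fact be excluded by a finer argument. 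To handle it I would argue that if $\omega\equiv0$ then $f_\lambda$-term and Poisson term vanish in the limit, giving $c=\frac{\alpha}{N}\ell$ with $\ell\ge S^{N/(2\alpha)}$, hence $c\ge\frac{\alpha}{N}S^{N/(2\alpha)}$; combined with $\alpha_\lambda^+<0$ this still only gives $c\ge\frac{\alpha}{N}S^{N/(2\alpha)}>\alpha_\lambda^++\frac{\alpha}{N}S^{N/(2\alpha)}$, a contradiction with the assumed strict upper bound on $c$. Thus $\omega\not\equiv0$ after all, $\omega\in N_\lambda$, $I_\lambda(\omega)\ge\alpha_\lambda^+$, and the bad case is impossible, so $\omega_n\to\omega$ strongly in $X^\alpha(\mathbb{R}_+^{N+1})$. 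The other technical point worth care is the global (not merely local) convergence of the subcritical and Poisson terms, where hypotheses $(H_1)$ and $(H_2)$ — vanishing of $l$ at infinity and $f_\lambda\in L^{q^*}$ — are exactly what is needed to rule out mass escaping to infinity in those terms.
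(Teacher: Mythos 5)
Your proposal is correct and follows essentially the same route as the paper: boundedness via $I_\lambda-\frac14 I_\lambda'(\cdot)(\cdot)$, global convergence of the subcritical and Poisson terms from $(H_1)$--$(H_2)$, the Br\'ezis--Lieb splitting for the critical term, and the energy comparison against $\alpha_\lambda^++\frac{\alpha}{N}S^{\frac{N}{2\alpha}}$. The only blemish is the self-doubt in your final paragraph: the inequality $c\ge\frac{\alpha}{N}S^{N/(2\alpha)}>\alpha_\lambda^++\frac{\alpha}{N}S^{N/(2\alpha)}$ in the case $\omega\equiv0$ does \emph{not} have "the wrong sign" --- it is exactly the desired contradiction with $c<\alpha_\lambda^++\frac{\alpha}{N}S^{N/(2\alpha)}$, so no "finer argument" is needed and the concluding claim that $\omega\not\equiv0$ is an unnecessary (and unjustified) non sequitur; what is actually excluded in both subcases is $\ell\neq0$.
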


\begin{proof}
Let $\{\omega_n\}\subset X^\alpha(\mathbb{R}_+^{N+1})$ be a $(PS)_c$ sequence for $I_\lambda$ with $c\in \left(-\infty,\,\alpha_\lambda^++\frac{\alpha}{N}S^\frac{N}{2\alpha}\right)$. Note that
\begin{align*}
&c+1+\|\omega_n\|_{X^\alpha}\nonumber\\
\geq& I_\lambda(\omega_n)-\frac{1}{4}I'_\lambda(\omega_n)\omega_n\notag\\
=&\frac{1}{4}\|\omega_n\|_{X^\alpha}^2+\left(\frac{1}{4}-\frac{1}{2_\alpha^*}\right)\displaystyle\int_{\mathbb{R}^N}g(x)|\omega_n(x,0)|^{2_\alpha^*}dx-\left(\frac{1}{q}-\frac{1}{4}\right)\displaystyle\int_{\mathbb{R}^N}f_\lambda(x)|\omega_n(x,0)|^qdx\notag\\
\geq&\frac{1}{4}\|\omega_n\|_{X^\alpha}^2-\lambda C|f_+|_{q^*}S^{-\frac{q}{2}}\|\omega_n\|_{X^\alpha}^q,
\end{align*}
It follows that $\{\omega_n\}$ is bounded in $X^\alpha(\mathbb{R}_+^{N+1})$. So there exists a subsequence, still denoted by $\{\omega_n\}$ and $\omega\in X^\alpha(\mathbb{R}_+^{N+1})$, such that $\omega_n\rightharpoonup\omega$ in $X^\alpha(\mathbb{R}_+^{N+1})$.

We claim that
\begin{equation}\label{e2.16}
\displaystyle\int_{\mathbb{R}^N} f_\lambda(x)|\omega_n(x,0)|^qdx\rightarrow\displaystyle\int_{\mathbb{R}^N} f_\lambda(x)|\omega(x,0)|^qdx,\ \, \text{as}\ \, n\rightarrow\infty.
\end{equation}
 Indeed, for any $\varepsilon>0$, it follows $(H_2)$ that there exists $R>0$ such that
\begin{equation*}
\left(\displaystyle\int_{\mathbb{R}^N\backslash B_R}|f_\lambda(x)|^{q^*}dx\right)^\frac{1}{q^*}<\varepsilon,
\end{equation*}
 \begin{align}\label{e2.17}
\left|\displaystyle\int_{\mathbb{R}^N\backslash B_R}f_\lambda(x)|\omega_n(x,0)|^qdx\right|&\leq\left(\displaystyle\int_{\mathbb{R}^N\backslash B_R}|f_\lambda(x)|^{q^*}dx\right)^\frac{1}{q^*}\left(\displaystyle\int_{\mathbb{R}^N}|\omega_n(x,0)|^{2_\alpha^*}dx\right)^\frac{q}{2_\alpha^*}\notag
\\&\leq C\left(\displaystyle\int_{\mathbb{R}^N\backslash B_R}|f_\lambda(x)|^{q^*}dx\right)^\frac{1}{q^*}\|\omega_n\|_{X^\alpha}^q\nonumber\\&\leq C\varepsilon
\end{align}
and thus
\begin{align}\label{e2.18}
\left|\displaystyle\int_{\mathbb{R}^N\backslash B_R}f_\lambda(x)|\omega(x,0)|^qdx\right|&\leq\left(\displaystyle\int_{\mathbb{R}^N\backslash B_R}|f_\lambda(x)|^{q^*}dx\right)^\frac{1}{q^*}\left(\displaystyle\int_{\mathbb{R}^N}|\omega(x,0)|^{2_\alpha^*}dx\right)^\frac{q}{2_\alpha^*}\notag
\\&\leq C\left(\displaystyle\int_{\mathbb{R}^N\backslash B_R}|f_\lambda(x)|^{q^*}dx\right)^\frac{1}{q^*}\|\omega\|_{X^\alpha}^q\nonumber\\&\leq C\varepsilon.
\end{align}

It follows H\"{o}lder's inequality and Lemma \ref{l2.2} that
\begin{align}\label{e2.19}
&\left|\displaystyle\int_{B_R} f_\lambda(x)|\omega_n(x,0)|^qdx-\displaystyle\int_{B_R} f_\lambda(x)|\omega(x,0)|^qdx\right|\notag
\\&\leq|f_\lambda|_\infty\displaystyle\int_{B_R} |\omega_n(x,0)-\omega(x,0)|^qdx\notag
\\&\leq C|f_\lambda|_\infty\left(\displaystyle\int_{B_R}|\omega_n(x,0)-\omega(x,0)|^rdx\right)^\frac{q}{r}\rightarrow0,\ \, \text{as}\ \, n\rightarrow\infty.
\end{align}
where $2<r<2_\alpha^*$ and $C>0$.

In view of (\ref{e2.17})-(\ref{e2.19}), we arrive at (\ref{e2.16}).

We now prove that
\begin{equation}\label{e2.200}
\displaystyle\int_{\mathbb{R}^N} l(x)\phi_{\omega_n(x,0)}|\omega_n(x,0)|^2dx\rightarrow\displaystyle\int_{\mathbb{R}^N} l(x)\phi_{\omega(x,0)}|\omega(x,0)|^2dx, \ \, \text{as}\ \, n\rightarrow\infty.
\end{equation}
For any $\varepsilon>0$, it follows $(H_1)$ that there exists $R>0$ such that
\begin{equation*}
|l(x)|<\varepsilon,~~\text{for}~|x|\geq R.
\end{equation*}
From (\ref{e2.5}), we deduce that
\begin{align}\label{e2.21}
\displaystyle\int_{\mathbb{R}^N\backslash B_R} l(x)\phi_{\omega_n(x,0)}|\omega_n(x,0)|^2dx&\leq\varepsilon\displaystyle\int_{\mathbb{R}^N} \phi_{\omega_n(x,0)}|\omega_n(x,0)|^2dx\notag
\\&\leq C\varepsilon|\omega_n(x,0)|^4_\frac{4N}{N+s}\nonumber\\&\leq C\varepsilon\|\omega_n\|_{X^\alpha}^4\nonumber\\&\leq C\varepsilon
\end{align}
and
\begin{align}\label{e2.22}
\displaystyle\int_{\mathbb{R}^N\backslash B_R} l(x)\phi_{\omega(x,0)}|\omega(x,0)|^2dx&\leq\varepsilon\displaystyle\int_{\mathbb{R}^N} \phi_{\omega(x,0)}|\omega(x,0)|^2dx\notag
\\&\leq C\varepsilon|\omega(x,0)|^4_\frac{4N}{N+s}\nonumber\\&\leq C\varepsilon\|\omega\|_{X^\alpha}^4\nonumber\\&\leq C\varepsilon.
\end{align}

By H\"{o}lder's inequality and (\ref{e2.5}), it follows Lemmas \ref{l2.1} (vi) and \ref{l2.2} that
\begin{align}\label{e2.23}
&\left|\displaystyle\int_{B_R} l(x)\phi_{\omega_n(x,0)}|\omega_n(x,0)|^2dx-\displaystyle\int_{B_R} l(x)\phi_{\omega(x,0)}|\omega(x,0)|^2dx\right|\notag
\\&\leq|l|_\infty\displaystyle\int_{B_R} \phi_{\omega_n(x,0)-\omega(x,0)}|\omega_n(x,0)-\omega(x,0)|^2dx\notag
\\&\leq C|\omega_n(x,0)-\omega(x,0)|^4_\frac{4N}{N+s}\rightarrow0,\ \, \text{as}\ \, n\rightarrow\infty.
\end{align}
Using (\ref{e2.21})-(\ref{e2.23}) leads to (\ref{e2.200}).

Setting $\Psi_n=\omega_n-\omega$ and applying the Br\'{e}zis-Lieb Lemma \cite{22}, we obtain
\begin{itemize}
  \item $\|\Psi_n\|_{X^\alpha}^2=\|\omega_n\|_{X^\alpha}^2-\|\omega\|_{X^\alpha}^2+o_n(1)$;
  \item $\displaystyle\int_{\mathbb{R}^N} g(x)|\Psi_n(x,0)|^{2_\alpha^*}dx=\displaystyle\int_{\mathbb{R}^N} g(x)|\omega_n(x,0)|^{2_\alpha^*}dx-\displaystyle\int_{\mathbb{R}^N} g(x)|\omega(x,0)|^{2_\alpha^*}dx+o_n(1)$.
\end{itemize}
By virtue of Lemmas \ref{l2.1} (iii) and \ref{l2.2}, it is easy to obtain that $I'_\lambda(\omega)=0$, and
\begin{equation}\label{e2.24}
\frac{1}{2}\|\Psi_n\|_{X^\alpha}^2-\frac{1}{2_\alpha^*}\displaystyle\int_{\mathbb{R}^N} g(x)|\Psi_n(x,0)|^{2_\alpha^*}dx=c-I_\lambda(\omega)+o_n(1)
\end{equation}
and
\begin{equation}\label{e2.25}
o_n(1)=I'_\lambda(\omega_n)\Psi_n=(I'_\lambda(\omega_n)-I'_\lambda(\omega))\Psi_n=\|\Psi_n\|_{X^\alpha}^2-\displaystyle\int_{\mathbb{R}^N} g(x)|\Psi_n(x,0)|^{2_\alpha^*}dx,
\end{equation}
as $n\rightarrow\infty$. We may suppose that
\begin{equation*}
\|\Psi_n\|_{X^\alpha}^2\rightarrow l~\text{and}~\displaystyle\int_{\mathbb{R}^N} g(x)|\Psi_n(x,0)|^{2_\alpha^*}dx\rightarrow l, ~\text{as}~n\rightarrow\infty,
\end{equation*}
for some $l\in[0,+\infty)$. If $l\neq0$, we know that $l\geq Sl^\frac{2}{2_\alpha^*}$ from Lemma \ref{l2.3}, and derive from (\ref{e2.24})-(\ref{e2.25}) and $\omega\in N_\lambda$ that
 \begin{align}
c&=I_\lambda(\omega)+\frac{1}{2}l-\frac{1}{2_\alpha^*}l\geq\alpha_\lambda^++\left(\frac{1}{2}-\frac{1}{2_\alpha^*}\right)l\geq\alpha_\lambda^++\frac{\alpha}{N}S^\frac{N}{2\alpha}.\notag
\end{align}
This yields a contradicting with the definition of $c$. Consequently, $l=0$, i.e. $\omega_n\rightarrow\omega$ in $X^\alpha(\mathbb{R}_+^{N+1})$.
\end{proof}

Finally, we shall show that a local minimum of $I_\lambda$ is achieved in $N_\lambda^+$, and that the minimizer is actually a positive solution of system (\ref{e2.7}).

\begin{theorem}\label{t2.1}
For each $\lambda\in(0,\Lambda_3),$ $\Lambda_3$ is the same as given in Lemma \ref{l2.9}, and $I_\lambda$ has a minimizer $\omega_\lambda^+$ in
$N_\lambda^+$ which satisfies:
\begin{enumerate}
  \item[\textup{(i)}]  $\omega_\lambda^+$ is a positive solution of system $(2.7)$.
  \item[\textup{(ii)}]  $I_\lambda(\omega_\lambda^+)=\alpha_\lambda^+$.
   \item[\textup{(iii)}]  $I_\lambda(\omega_\lambda^+)\rightarrow0$ as $\lambda\rightarrow0$.
    \item[\textup{(iv)}]  $\|\omega_\lambda^+\|_{X^\alpha}\rightarrow0$ as $\lambda\rightarrow0$.
\end{enumerate}
\end{theorem}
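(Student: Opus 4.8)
The strategy is to show that the infimum $\alpha_\lambda^+=\inf_{N_\lambda}I_\lambda$ (Lemma \ref{l2.8}) is attained, to recognize the minimizer as a positive critical point of $I_\lambda$ via Lemma \ref{l2.6}, and to read the $\lambda\to0$ behaviour off the coercivity estimate (\ref{e2.13}). \textbf{Step 1 (a Palais--Smale minimizing sequence).} Choose $\{\omega_n\}\subset N_\lambda$ with $I_\lambda(\omega_n)\to\alpha_\lambda^+$. By Lemma \ref{l2.8} one has $\alpha_\lambda^+<0<d_0<\alpha_\lambda^-$, so $\omega_n\in N_\lambda^+$ for all large $n$, and $\{\omega_n\}$ is bounded by Lemma \ref{l2.5}. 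Because $N_\lambda^0=\emptyset$ (Lemma \ref{l2.7}), $\Psi_\lambda'(\omega_n)\omega_n\ne0$ and $N_\lambda$ is a $C^1$ manifold near each $\omega_n$; a standard argument based on Ekeland's variational principle together with the implicit function theorem (the fibering maps $t\mapsto I_\lambda(t\omega)$ of Lemma \ref{l2.9} supply a $C^1$ projection onto $N_\lambda$ whose derivative is controlled by $1/|\Psi_\lambda'(\omega_n)\omega_n|$) upgrades $\{\omega_n\}$, along a subsequence, to a $(PS)_{\alpha_\lambda^+}$ sequence for $I_\lambda$ on $X^\alpha(\mathbb{R}_+^{N+1})$.

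\textbf{Step 2 (compactness, location of the minimizer, positivity; items (i)--(ii)).} Since $\alpha_\lambda^+<\alpha_\lambda^++\frac{\alpha}{N}S^{\frac{N}{2\alpha}}$, Lemma \ref{l2.10} applies with $c=\alpha_\lambda^+$, so along a subsequence $\omega_n\to\omega_\lambda^+$ strongly in $X^\alpha(\mathbb{R}_+^{N+1})$. Then $\omega_\lambda^+\in N_\lambda$ and $I_\lambda(\omega_\lambda^+)=\alpha_\lambda^+<0$, hence $\omega_\lambda^+\ne0$ and $\omega_\lambda^+\notin N_\lambda^-$ (where $I_\lambda>d_0>0$); as $N_\lambda^0=\emptyset$ we get $\omega_\lambda^+\in N_\lambda^+$ with $I_\lambda(\omega_\lambda^+)=\alpha_\lambda^+$, which is (ii). Being a global, hence local, minimizer of $I_\lambda$ on $N_\lambda$ not belonging to $N_\lambda^0$, Lemma \ref{l2.6} yields $I_\lambda'(\omega_\lambda^+)=0$, so $\omega_\lambda^+$ is a weak solution of system (\ref{e2.7}). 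For positivity, note that every term of $I_\lambda$ and $\Psi_\lambda$ involving the trace depends only on $|\omega(x,0)|$, while $\||\omega|\|_{X^\alpha}=\|\omega\|_{X^\alpha}$ (since $|\nabla|\omega||=|\nabla\omega|$ a.e.); thus $|\omega_\lambda^+|\in N_\lambda^+$ with $I_\lambda(|\omega_\lambda^+|)=\alpha_\lambda^+$, so $|\omega_\lambda^+|$ is also a minimizer, and after replacing $\omega_\lambda^+$ by $|\omega_\lambda^+|$ and invoking Lemma \ref{l2.6} again we may assume $\omega_\lambda^+\geq0$, $\omega_\lambda^+\not\equiv0$. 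Then $\omega_\lambda^+$ satisfies $\mathrm{div}(y^{1-2\alpha}\nabla\omega_\lambda^+)=0$ in $\mathbb{R}_+^{N+1}$ and $-\partial_\nu\omega_\lambda^+=c(x)\omega_\lambda^+$ on $\mathbb{R}^N\times\{0\}$ with $c(x)=-1-l(x)\phi_{\omega_\lambda^+(x,0)}(x)+f_\lambda(x)|\omega_\lambda^+(x,0)|^{q-2}+g(x)|\omega_\lambda^+(x,0)|^{2_\alpha^*-2}$, which is locally bounded after the usual $L^\infty$ and H\"older regularity for critical fractional problems; the Harnack inequality and strong maximum principle for $\mathrm{div}(y^{1-2\alpha}\nabla\cdot)$, plus the Hopf lemma at $\{y=0\}$ (the boundary condition forbids $\partial_\nu\omega_\lambda^+$ from vanishing at a boundary zero), give $\omega_\lambda^+>0$ on $\mathbb{R}_+^{N+1}\cup(\mathbb{R}^N\times\{0\})$, which is (i).

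\textbf{Step 3 (items (iii)--(iv)).} From the identity in (\ref{e2.13}), using $\big(\frac14-\frac1{2_\alpha^*}\big)\int_{\mathbb{R}^N}g|\omega_\lambda^+(x,0)|^{2_\alpha^*}\geq0$ and $I_\lambda(\omega_\lambda^+)<0$, we must have $\int_{\mathbb{R}^N}f_\lambda|\omega_\lambda^+(x,0)|^q>0$, and therefore
\[
\tfrac14\|\omega_\lambda^+\|_{X^\alpha}^2\leq I_\lambda(\omega_\lambda^+)+C\lambda\|\omega_\lambda^+\|_{X^\alpha}^q<C\lambda\|\omega_\lambda^+\|_{X^\alpha}^q,
\]
so $\|\omega_\lambda^+\|_{X^\alpha}^{2-q}<4C\lambda$, giving $\|\omega_\lambda^+\|_{X^\alpha}\to0$ as $\lambda\to0$ (this is (iv)); then (iii) follows at once, e.g. from $-C\lambda^{\frac{2}{2-q}}\leq\alpha_\lambda^+=I_\lambda(\omega_\lambda^+)<0$ by Lemma \ref{l2.5}, or by bounding each term of $I_\lambda(\omega_\lambda^+)$ by a power of $\|\omega_\lambda^+\|_{X^\alpha}$.

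\textbf{Main obstacle.} The genuinely delicate part is Steps 1--2: turning the minimizing sequence into a bona fide Palais--Smale sequence, which rests on $N_\lambda^0=\emptyset$ to keep the Lagrange multiplier bounded, and then recovering strong convergence in the presence of the critical exponent $2_\alpha^*$. The latter is exactly what Lemma \ref{l2.10} is for, and here it applies ``for free'' since the negative level $\alpha_\lambda^+$ lies automatically below the compactness threshold $\alpha_\lambda^++\frac{\alpha}{N}S^{\frac{N}{2\alpha}}$; the remaining care goes into the step $\omega_\lambda^+\geq0\Rightarrow\omega_\lambda^+>0$, which needs a maximum-principle apparatus adapted to the weight $y^{1-2\alpha}$ and the Neumann-type boundary condition rather than the classical elliptic one.
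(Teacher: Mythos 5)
Your proposal is correct and follows essentially the same route as the paper: extract a $(PS)_{\alpha_\lambda^+}$ sequence on the Nehari manifold (the paper cites an external lemma for this where you sketch the Ekeland argument), apply Lemma \ref{l2.10} at the negative level $\alpha_\lambda^+$, use $N_\lambda^0=\emptyset$ and the sign of $\alpha_\lambda^+$ to place the limit in $N_\lambda^+$, invoke Lemma \ref{l2.6} for criticality and a maximum-principle argument for positivity, and read (iii)--(iv) off the coercivity estimates. The only cosmetic difference is that you derive (iv) from (\ref{e2.13}) and deduce (iii) afterwards, whereas the paper gets (iii) from the lower bound $-C\lambda^{2/(2-q)}$ and (iv) from the inequality (\ref{e2.12}) for elements of $N_\lambda^+$; both yield the same rates.
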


\begin{proof}

Following \cite[Lemma 4.7]{22Lawson},  we can obtain a $(PS)_{\alpha_\lambda^+}$-sequence for $I_\lambda$ defined by $\{\omega_n\}\subset N_\lambda$. By Lemma \ref{l2.10}, there exists a subsequence (still denoted by $\{\omega_n\}$) and $\omega_\lambda^+\in X^\alpha(\mathbb{R}_+^{N+1})$ such that
$\omega_n\rightarrow \omega_\lambda^+ $ in $X^\alpha(\mathbb{R}_+^{N+1})$ as $n\rightarrow\infty$. Since $N_\lambda^0=\emptyset$, we deduce that $\omega_\lambda^+\in N_\lambda^+$ and $I_\lambda(\omega_\lambda^+)=\alpha_\lambda^+<0$. It follows Lemma \ref{l2.6} that $\omega_\lambda^+$ is a solution of system (\ref{e2.7}). Using an analogous argument as the proof of \cite[Theorem 5.3]{20}, we can show that $\omega_\lambda^+$ is a positive solution of system (\ref{e2.7}). That is, the desired results (i) and (ii) are arrived.

From the proof of Lemma \ref{l2.8}, we have
\begin{equation*}
0>I_\lambda(\omega_\lambda^+)\geq-C\lambda^\frac{2}{2-q}.
\end{equation*}
This implies $I_\lambda(\omega_\lambda^+)\rightarrow0$ as $\lambda\rightarrow0^+$, i.e.  Part $(iii)$ holds.

In view of $\omega_\lambda^+\in N_\lambda^+$ and (\ref{e2.12}), we know that
\begin{equation}\label{e2.26}
\|\omega_\lambda^+\|_{X^\alpha}^2\leq\frac{4-q}{2}\displaystyle\int_{\mathbb{R}^N} f_\lambda|\omega(x,0)|^qdx\leq\lambda C|f_+|_{q^*}S^{-\frac{q}{2}}\|\omega_\lambda^+\|_{X^\alpha}^q.
\end{equation}
Since  $I_\lambda$ is coercive and bounded from below on $N_\lambda$, $\{\omega_\lambda^+\}_\lambda$ is bounded in $X^\alpha(\mathbb{R}_+^{N+1})$. It follows (\ref{e2.26}) that
\begin{equation*}
\|\omega_\lambda^+\|_{X^\alpha}^{2-q}\leq C\lambda^\frac{1}{2-q},
\end{equation*}
which implies $\|\omega_\lambda^+\|_{X^\alpha}\rightarrow0$ as $\lambda\rightarrow0^+$, i.e. Part (iv) holds.
\end{proof}

\section{\normalsize{Results on estimates}}\noindent

In this section, we shall give some useful estimates which will be used in the proof of Theorem 1.1.

For $b>0$, we set
\begin{equation*}
I_\infty^b(u)=\frac{1}{2}\|\omega\|_{X^\alpha}^2-\frac{b}{2_\alpha^*}\displaystyle\int_{\mathbb{R}^N} g(x)|\omega(x,0)|^{2_\alpha^*} dx
\end{equation*}
and
\begin{equation*}
N_\infty^b=\left\{u\in X^\alpha(\mathbb{R}_+^{N+1})\backslash\{0\};\,(I_\infty^b)'(\omega)\omega=0 \right\}.
\end{equation*}

\begin{lemma}\label{l3.1}
For each $\omega\in N_\lambda^-$, we have that
\begin{description}
  \item[\textup{(i)}] There is a unique $t_\omega^b$ such that $t_\omega^b\omega\in N_\infty^b$ and
  \begin{equation*}
\displaystyle\max_{t\geq0}I_\infty^b(t\omega)=I_\infty^b(t_\omega^b\omega)=\frac{\alpha}{N}b^{-\frac{N-2\alpha}{2\alpha}}\left(\frac{\|\omega\|_{X^\alpha}^{2_\alpha^*}}{\displaystyle\int_{\mathbb{R}^N} g(x)|\omega(x,0)|^{2_\alpha^*} dx}\right)^{\frac{N-2\alpha}{2\alpha}}.
\end{equation*}
  \item[\textup{(ii)}] For $\mu\in (0,1)$, there is a unique $t_\omega^1$ such that $t_\omega^1\omega\in N_\infty^1$,
  and
  \begin{equation*}
I_\infty^1(t_\omega^1\omega)\leq(1-\mu)^{-\frac{N}{2\alpha}}\left(I_\lambda(\omega)+\frac{2-q}{2q}\mu^\frac{q}{q-2}\lambda^\frac{2}{2-q}C\right).
\end{equation*}
\end{description}
\end{lemma}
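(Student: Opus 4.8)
The plan is to treat the two statements separately, since (i) is a purely variational computation on the functional $I_\infty^b$, while (ii) combines (i) with a delicate choice of the dilation parameter to absorb the $f_\lambda$-term. For part (i), I would fix $\omega\in N_\lambda^-$ (the only feature of $N_\lambda^-$ we need here is $\omega\neq 0$, together with $\int g(x)|\omega(x,0)|^{2_\alpha^*}dx>0$, which follows from (\ref{e2.12}) and (\ref{e2.14})) and examine the fibering map $t\mapsto I_\infty^b(t\omega)=\frac{t^2}{2}\|\omega\|_{X^\alpha}^2-\frac{b\,t^{2_\alpha^*}}{2_\alpha^*}\int_{\mathbb{R}^N}g(x)|\omega(x,0)|^{2_\alpha^*}dx$. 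Since $2<2_\alpha^*$, this function is positive for small $t$, tends to $-\infty$ as $t\to\infty$, and its derivative vanishes at exactly one point $t_\omega^b>0$; that $t_\omega^b\omega\in N_\infty^b$ is exactly the statement $\frac{d}{dt}I_\infty^b(t\omega)\big|_{t=t_\omega^b}=0$. Solving the stationarity equation gives $t_\omega^b = \big(\|\omega\|_{X^\alpha}^2/(b\int g|\omega(x,0)|^{2_\alpha^*}dx)\big)^{1/(2_\alpha^*-2)}$, and substituting back, using $\frac12-\frac1{2_\alpha^*}=\frac{\alpha}{N}$ and $\frac{1}{2_\alpha^*-2}=\frac{N-2\alpha}{4\alpha}$, yields the closed form $\frac{\alpha}{N}b^{-\frac{N-2\alpha}{2\alpha}}\big(\|\omega\|_{X^\alpha}^{2_\alpha^*}/\int g|\omega(x,0)|^{2_\alpha^*}dx\big)^{\frac{N-2\alpha}{2\alpha}}$. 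This is routine algebra and presents no obstacle.

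For part (ii), the idea is to relate $I_\infty^1(t_\omega^1\omega)=\max_{t\ge0}I_\infty^1(t\omega)$ to $I_\lambda(\omega)$. The natural starting point is the pointwise inequality $I_\infty^1(t\omega) = I_\lambda(t\omega) - \frac12 t^2\|\omega\|_{X^\alpha}^2\cdot 0 \;+\;\dots$; more precisely, dropping the nonnegative Poisson term (Lemma \ref{l2.1}(i)) and writing out the definitions, $I_\infty^1(t\omega)\le I_\lambda(t\omega)+\frac{t^q}{q}\int_{\mathbb{R}^N}f_\lambda(x)|\omega(x,0)|^qdx$. Here the $f_\lambda$-integral must be controlled: by $(H_2)$, Hölder, and Lemma \ref{l2.2} it is bounded by $\lambda|f_+|_{q^*}S^{-q/2}\|\omega\|_{X^\alpha}^q=:\lambda A\|\omega\|_{X^\alpha}^q$. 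To extract a $\mu$-dependence, I would instead rescale: for $\mu\in(0,1)$ one has $t_\omega^1\omega = (1-\mu)^{-1/(2_\alpha^*-2)}\, t_{\omega,\mu}\,\omega$ where $t_{\omega,\mu}$ is the maximizer of $t\mapsto \frac{t^2}{2}(1-\mu)\|\omega\|_{X^\alpha}^2-\frac{t^{2_\alpha^*}}{2_\alpha^*}\int g|\omega(x,0)|^{2_\alpha^*}dx$; equivalently, apply part (i) with the substitution $\|\omega\|^2\mapsto(1-\mu)\|\omega\|^2$. Then $\max_t I_\infty^1(t\omega) = (1-\mu)^{-N/(2\alpha)}\max_t\big(\frac{t^2}{2}(1-\mu)\|\omega\|_{X^\alpha}^2 - \frac{t^{2_\alpha^*}}{2_\alpha^*}\int g|\omega(x,0)|^{2_\alpha^*}dx\big)$.

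The crux is then to bound that rescaled maximum by $I_\lambda(\omega)+\frac{2-q}{2q}\mu^{q/(q-2)}\lambda^{2/(2-q)}C$. For this I would argue: for every $t\ge0$, $\frac{t^2}{2}(1-\mu)\|\omega\|_{X^\alpha}^2-\frac{t^{2_\alpha^*}}{2_\alpha^*}\int g|\omega(x,0)|^{2_\alpha^*}dx \le \frac{t^2}{2}\|\omega\|_{X^\alpha}^2 - \frac{\mu}{2}t^2\|\omega\|_{X^\alpha}^2 - \frac{t^{2_\alpha^*}}{2_\alpha^*}\int g|\omega(x,0)|^{2_\alpha^*}dx + \frac{1}{4}t^2\|\omega\|^2_{X^\alpha}\int l\phi - \frac{t^q}{q}\int f_\lambda|\omega(x,0)|^qdx + \frac{t^q}{q}\int f_\lambda|\omega(x,0)|^qdx$, i.e. $\le I_\lambda(t\omega) - \frac{\mu}{2}t^2\|\omega\|^2_{X^\alpha} + \frac{t^q}{q}\lambda A\|\omega\|^q_{X^\alpha}$ (again using $\phi\ge0$ to discard the Poisson term on the upper-bound side), where the cross term is compensated, and then $\le \sup_{t\ge0}I_\lambda(t\omega) + \sup_{t\ge0}\big(\frac{\lambda A}{q}t^q\|\omega\|^q_{X^\alpha} - \frac{\mu}{2}t^2\|\omega\|^2_{X^\alpha}\big)$. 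Since $q\in(1,2)$, the last supremum over $t$ is finite and, by elementary calculus (maximizing $c_1 t^q - c_2 t^2$ in $t$), equals $C\,\mu^{-q/(2-q)}(\lambda A)^{2/(2-q)}=\frac{2-q}{2q}\mu^{q/(q-2)}\lambda^{2/(2-q)}C$ after absorbing constants; note $\mu^{-q/(2-q)}=\mu^{q/(q-2)}$. Finally, because $\omega\in N_\lambda^-$ and $q<2$, Lemma \ref{l2.9}(i)--(ii) gives $\sup_{t\ge0}I_\lambda(t\omega)=I_\lambda(\omega)$ when $\int f_\lambda|\omega(x,0)|^qdx\le0$, and $\sup_{t\ge t^+}I_\lambda(t\omega)=I_\lambda(\omega)$ in the other case, so $I_\lambda(\omega)\ge\sup_{t\ge0}I_\lambda(t\omega)$ up to the harmless low-$t$ part which is $\le0$; combining with the $(1-\mu)^{-N/(2\alpha)}$ factor pulled out front yields the claim. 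I expect the main obstacle to be bookkeeping the inequality chain so that exactly the stated constant $\frac{2-q}{2q}$ and the exponent $\mu^{q/(q-2)}$ come out, and justifying $\sup_{t\ge0}I_\lambda(t\omega)\le I_\lambda(\omega)$ cleanly for all $\omega\in N_\lambda^-$ (handling both cases of Lemma \ref{l2.9} and the negativity of the part of the fibre below $t^+$).
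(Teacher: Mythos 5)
Your proposal is correct and follows essentially the same route as the paper: part (i) is the same fibering-map computation, and in part (ii) your rescaling by $(1-\mu)$ together with the optimization of $c_1t^q-c_2t^2$ is exactly the paper's choice $b=\tfrac{1}{1-\mu}$ combined with its weighted Young's inequality for $\int\lambda f_+|t\omega|^q\,dx$, merely read in the opposite direction (you bound $I_\infty^1(t_\omega^1\omega)$ from above, the paper bounds $I_\lambda(\omega)$ from below). Your care with $\sup_{t\ge0}I_\lambda(t\omega)=I_\lambda(\omega)$ on $N_\lambda^-$ (both cases of Lemma \ref{l2.9}, plus positivity of $I_\lambda(\omega)$ from Lemma \ref{l2.8}(ii) to dispose of the segment $[0,t^+]$) is in fact slightly more explicit than the paper, which asserts this identity without comment.
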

\begin{proof}
(i) For each $\omega\in N_\lambda^-$, we define
\begin{equation*}
h(t)=I_\infty^b(t\omega)=\frac{t^2}{2}\|\omega\|_{X^\alpha}^2-\frac{t^{2_\alpha^*}}{2_\alpha^*}\displaystyle\int_{\mathbb{R}^N} bg(x)|\omega(x,0)|^{2_\alpha^*} dx.
\end{equation*}
Then $h(t)\rightarrow-\infty$ as $t\rightarrow\infty$, and
\begin{equation*}
h'(t)=t\|\omega\|_{X^\alpha}^2-t^{2_\alpha^*-1}\displaystyle\int_{\mathbb{R}^N} bg(x)|\omega(x,0)|^{2_\alpha^*} dx
\end{equation*}
and
\begin{equation*}
h''(t)=\|\omega\|_{X^\alpha}^2-(2_\alpha^*-1)t^{2_\alpha^*-2}\displaystyle\int_{\mathbb{R}^N} bg(x)|\omega(x,0)|^{2_\alpha^*} dx.
\end{equation*}

Define
\begin{equation*}
t_\omega^b=\left(\frac{\|\omega\|_{X^\alpha}^2}{\displaystyle\int_{\mathbb{R}^N} bg(x)|\omega(x,0)|^{2_\alpha^*} dx}\right)^\frac{1}{2_\alpha^*-2}>0.
\end{equation*}
It is easy to see that $h'(t_\omega^b)=0$ and $h''(t_\omega^b)=-(2_\alpha^*-2)\|\omega\|_{X^\alpha}^2<0$.
Thus there exists a unique $t_\omega^b>0$ such that $t_\omega^b\omega\in N_\infty^b$ and
  \begin{equation*}
\displaystyle\max_{t\geq0}I_\infty^b(t\omega)=I_\infty^b(t_\omega^b\omega)=\frac{\alpha}{N}b^{-\frac{N-2\alpha}{2\alpha}}\left(\frac{\|\omega\|_{X^\alpha}^{2_\alpha^*}}{\displaystyle\int_{\mathbb{R}^N} g(x)|\omega(x,0)|^{2_\alpha^*} dx}\right)^{\frac{N-2\alpha}{2\alpha}}.
\end{equation*}

(ii) For $\mu\in (0,1)$, it follows $(H_2)$ and Lemma \ref{l2.2} that
\begin{align}
\displaystyle\int_{\mathbb{R}^N}\lambda f_+|t_\omega^b\omega|^qdx&\leq \lambda |f_+|_{q^*}\|t_\omega^b\omega\|_{X^\alpha}^q\notag
\\&\leq\frac{2-q}{2}(\lambda C\mu^{-\frac{q}{2}})^\frac{2}{2-q}+\frac{q}{2}\left(\mu^\frac{q}{2}\|t_\omega^b\omega\|_{X^\alpha}^q\right)^\frac{2}{q}\notag
\\&=\frac{2-q}{2}\mu^\frac{q}{q-2}C\lambda^\frac{2}{2-q}+\frac{q\mu}{2}\|t_\omega^b\omega\|_{X^\alpha}^2.\notag
\end{align}
Combining with Part (i) with $b=\frac{1}{1-\mu}$ leads to
\begin{align}
I_\lambda(\omega)&=\displaystyle\max_{t\geq0}I_\lambda(t\omega)\nonumber\\&\geq I_\lambda(t_\omega^\frac{1}{1-\mu}\omega)\notag
 \\&\geq\frac{1-\mu}{2}\|t_\omega^\frac{1}{1-\mu}\omega)\|_{X^\alpha}^2+\frac{1}{4}(t_\omega^\frac{1}{1-\mu})^4\displaystyle\int_{\mathbb{R}^N} l(x)\phi_{\omega(x,0)}|\omega(x,0)|^2dx\notag
 \\&~~~-\frac{1}{2_\alpha^*}(t_\omega^\frac{1}{1-\mu})^{2_\alpha^*}\displaystyle\int_{\mathbb{R}^N}g(x)|\omega(x,0)|^{2_\alpha^*} dx-\frac{2-q}{2q}\mu^\frac{q}{q-2}C\lambda^\frac{2}{2-q}\notag
 \\&\geq(1-\mu)I_\infty^\frac{1}{1-\mu}(t_\omega^\frac{1}{1-\mu}\omega)-\frac{2-q}{2q}\mu^\frac{q}{q-2}C\lambda^\frac{2}{2-q}\notag
 \\&=(1-\mu)^\frac{N}{2\alpha}\frac{\alpha}{N}\left(\frac{\|\omega\|_{X^\alpha}^{2_\alpha^*}}{\displaystyle\int_{\mathbb{R}^N} g(x)|\omega(x,0)|^{2_\alpha^*} dx}\right)^{\frac{N-2\alpha}{2\alpha}}-\frac{2-q}{2q}\mu^\frac{q}{q-2}C\lambda^\frac{2}{2-q}\notag
 \\&=(1-\mu)^\frac{N}{2\alpha}I_\infty^1(t_\omega^1\omega)-\frac{2-q}{2q}\mu^\frac{q}{q-2}C\lambda^\frac{2}{2-q}.\notag
\end{align}
\end{proof}

Set $\omega_\varepsilon=E_\alpha(u_\varepsilon)$,  where $u_\varepsilon$ is defined in (\ref{e2.9}). Let $\eta(x,y)\in C^\infty(\mathbb{R}^{N}\times\mathbb{R})$ such that $0\leq\eta\leq1$, $|\nabla\eta|\leq C$ and
\begin{displaymath}
\eta(x,y):=\left\{\begin{array}{ll}
1,~ (x,y)\in B_\frac{r_0}{2}^+:=\{(x,y)\in \mathbb{R}_+^{N+1};\sqrt{|x_1|^2+|x_2|^2+\cdots+|x_N|^2+|y|^2}<\frac{r_0}{2},\\~~~~~~~~~~~~~~~~~~~~~~~~~~~y>0\},\\
0, ~ (x,y)\not\in B_{r_0}^+:=\{(x,y)\in \mathbb{R}_+^{N+1};\sqrt{|x_1|^2+|x_2|^2+\cdots+|x_N|^2+|y|^2}<r_0,\\~~~~~~~~~~~~~~~~~~~~~~~~~~~y>0\},
\end{array}\right.
\end{displaymath}
where $r_0$ is defined in Remark 1.1.

Set
\begin{equation}\label{e3.1}
 v_{\varepsilon,z}=\eta(x-z,y)\omega_\varepsilon(x-z,y),~~z\in M.
\end{equation}
We can deduce from\cite[Theorem 3.6]{17} or \cite[Lemma 2.4]{400} that
\begin{equation}\label{e3.2}
 \|v_{\varepsilon,z}\|_{\dot{X}^\alpha}^2=\|\omega_\varepsilon\|_{\dot{X}^\alpha}^2+O(\varepsilon^{N-2\alpha}),
\end{equation}
\begin{equation}\label{e3.3}
\displaystyle\int_{\mathbb{R}^N}|v_{\varepsilon,z}(x,0)|^qdx=\left\{\begin{array}{ll}
O(\varepsilon^\frac{2N-(N-2\alpha)q}{2}),~ &\text{if}~q>\frac{N}{N-2\alpha},\\
O(\varepsilon^\frac{(N-2\alpha)q}{2}),~~~~& \text{if}~q\leq\frac{N}{N-2\alpha},
\end{array}\right.
\end{equation}
and
\begin{equation}\label{e3.4}
\displaystyle\int_{\mathbb{R}^N}|v_{\varepsilon,z}(x,0)|^2dx=\left\{\begin{array}{ll}
O(\varepsilon^{2\alpha}\lg(\frac{1}{\varepsilon})),~ &\text{if}~N=4\alpha,\\
O(\varepsilon^{N-2\alpha}),~~~~~ &\text{if}~N<4\alpha.
\end{array}\right.
\end{equation}

\begin{lemma}\label{l3.2}
There holds
\begin{equation*}
\displaystyle\int_{\mathbb{R}^N}g(x) |v_{\varepsilon,z}(x,0)|^{2_\alpha^*}dx=\displaystyle\int_{\mathbb{R}^N}|\omega_\varepsilon(x,0)|^{2_\alpha^*}dx+O(\varepsilon^N).
\end{equation*}
\end{lemma}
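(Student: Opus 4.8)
The plan is to estimate the difference
\[
\int_{\mathbb{R}^N} g(x)|v_{\varepsilon,z}(x,0)|^{2_\alpha^*}\,dx - \int_{\mathbb{R}^N}|\omega_\varepsilon(x,0)|^{2_\alpha^*}\,dx
\]
by splitting it into two pieces: the discrepancy coming from the cutoff $\eta$, and the discrepancy coming from replacing $g(x)$ by $1$. First I would write
\[
\int_{\mathbb{R}^N} g(x)|v_{\varepsilon,z}(x,0)|^{2_\alpha^*}\,dx
=\int_{\mathbb{R}^N}\bigl(g(x)-1\bigr)|v_{\varepsilon,z}(x,0)|^{2_\alpha^*}\,dx
+\int_{\mathbb{R}^N}|v_{\varepsilon,z}(x,0)|^{2_\alpha^*}\,dx,
\]
and then handle the cutoff term $\int |v_{\varepsilon,z}(x,0)|^{2_\alpha^*}\,dx - \int |\omega_\varepsilon(x,0)|^{2_\alpha^*}\,dx$ by the standard argument: since $v_{\varepsilon,z}(x,0)=\eta(x-z,0)u_\varepsilon(x-z)$ differs from $u_\varepsilon(x-z)$ only on the annulus $\tfrac{r_0}{2}\le |x-z|\le r_0$ and $|u_\varepsilon(x-z)|^{2_\alpha^*}=O(\varepsilon^N)$ there (because $u_\varepsilon(x-z)=O(\varepsilon^{(N-2\alpha)/2})$ uniformly for $|x-z|$ bounded away from $0$), this contributes $O(\varepsilon^N)$. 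This is essentially the same kind of bound as (\ref{e3.2})–(\ref{e3.4}) and can be cited from \cite[Theorem 3.6]{17} or \cite[Lemma 2.4]{400}.

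The heart of the matter is the term $\int_{\mathbb{R}^N}(g(x)-1)|v_{\varepsilon,z}(x,0)|^{2_\alpha^*}\,dx$. Here I would use condition $(H_4)$ (in the form quoted in Remark \ref{r1.1}), namely $0\le g(z)-g(x)= 1-g(x)\le C_0|x-z|^\rho$ for $x\in B_{r_0}(z)$, together with the fact that $v_{\varepsilon,z}(x,0)$ is supported in $B_{r_0}(z)$. After the change of variables $x\mapsto x+z$ this becomes controlling
\[
\int_{B_{r_0}(0)} C_0|x|^\rho\,|\eta(x,0)\,u_\varepsilon(x)|^{2_\alpha^*}\,dx
\le C\int_{B_{r_0}(0)} |x|^\rho \frac{\varepsilon^{N}}{(\varepsilon^2+|x|^2)^{N}}\,dx ,
\]
using $u_\varepsilon(x)^{2_\alpha^*}=C\varepsilon^{N}/(\varepsilon^2+|x|^2)^{N}$ and $0\le\eta\le1$. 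Rescaling $x=\varepsilon y$ gives $\varepsilon^{\rho}\int_{B_{r_0/\varepsilon}(0)}\frac{|y|^\rho}{(1+|y|^2)^{N}}\,dy$; since $\rho>N$ by $(H_4)$, the integrand decays like $|y|^{\rho-2N}$ with $\rho-2N<-N$, so the integral converges and the whole term is $O(\varepsilon^\rho)=o(\varepsilon^N)$ (and in particular $O(\varepsilon^N)$). Combining the two pieces yields the claimed identity.

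The main obstacle I anticipate is purely bookkeeping: making sure the rescaled integral $\int_{B_{r_0/\varepsilon}}\frac{|y|^\rho}{(1+|y|^2)^{N}}\,dy$ is genuinely bounded uniformly in $\varepsilon$ and in $z\in M$, which is exactly where the hypothesis $\rho>N$ is used — if one only had $\rho$ small the term would blow up and the lemma would fail. One should also be slightly careful that the $O(\cdot)$ constants are uniform in $z\in M$, but this follows because the bound $g(z)-g(x)\le C_0|x-z|^\rho$ in Remark \ref{r1.1} is uniform in $z\in M$ and the cutoff $\eta$ and the profile $\omega_\varepsilon$ are translation-invariant. No genuinely new idea beyond the standard Brezis--Nirenberg-type truncation estimates is needed; the only extra input is the decay rate $\rho>N$ from $(H_4)$.
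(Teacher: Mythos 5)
Your decomposition (cutoff error plus the error from replacing $g$ by $1$) is sound and, after a change of variables, reduces to exactly the same integrals the paper estimates; the cutoff part is handled correctly. But the key step for the $g$-term contains a sign error that makes the stated argument fail. You write that since $\rho>N$ the rescaled integrand decays like $|y|^{\rho-2N}$ with $\rho-2N<-N$, so that $\int_{B_{r_0/\varepsilon}}\frac{|y|^\rho}{(1+|y|^2)^N}\,dy$ is bounded uniformly in $\varepsilon$ and the whole term is $O(\varepsilon^\rho)=o(\varepsilon^N)$. The inequality $\rho-2N<-N$ is equivalent to $\rho<N$, which contradicts $(H_4)$. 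Under the actual hypothesis $\rho>N$ the rescaled integral is \emph{not} uniformly bounded: it grows like $(r_0/\varepsilon)^{\rho-N}$, so
\begin{equation*}
\varepsilon^\rho\int_{B_{r_0/\varepsilon}}\frac{|y|^\rho}{(1+|y|^2)^N}\,dy
\;\leq\; C\,\varepsilon^\rho\,\varepsilon^{N-\rho}\;=\;C\varepsilon^N ,
\end{equation*}
and the correct conclusion for this term is $O(\varepsilon^N)$, not $O(\varepsilon^\rho)$. The lemma only claims $O(\varepsilon^N)$, so the result survives, but your intermediate assertion is false and the reason you give for needing $\rho>N$ is the wrong one: $\rho>N$ is what makes the integral converge \emph{near the origin} (in unrescaled variables, $\int_{B_{r_0/2}}|x|^{\rho-2N}\,dx=C\int_0^{r_0/2}r^{\rho-N-1}\,dr<\infty$ iff $\rho>N$), not what controls the behaviour at the outer radius $r_0/\varepsilon$.

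The cleanest fix is to drop the rescaling altogether and bound $\frac{|x|^\rho}{(\varepsilon^2+|x|^2)^N}\leq|x|^{\rho-2N}$ pointwise, which is what the paper does: it divides the whole difference by $\varepsilon^N$ from the start, splits $\mathbb{R}^N$ into $B_{r_0/2}$ and its complement, bounds the outer piece by $\int_{r_0/2}^{\infty}r^{-(N+1)}\,dr$ and the inner piece by $C\int_0^{r_0/2}r^{\rho-N-1}\,dr$, both finite, and concludes that the difference is nonnegative and $O(\varepsilon^N)$ in a single computation. Modulo this correction your argument coincides with the paper's.
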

\begin{proof}
By Remark 1.1 and the definition of $v_{\varepsilon,z}$, a straightforward calculation gives
\begin{align}
0&\leq\frac{1}{\varepsilon^N}\left[\displaystyle\int_{\mathbb{R}^N}|\omega_\varepsilon(x,0)|^{2_\alpha^*}dx-\displaystyle\int_{\mathbb{R}^N}g(x) |v_{\varepsilon,z}(x,0)|^{2_\alpha^*}dx\right]\notag
\\&=\displaystyle\int_{\mathbb{R}^N\backslash B_\frac{r_0}{2}}\frac{g(z)-g(x+z)\eta^{2_\alpha^*}(x,0)}{(\varepsilon^2+|x|^2)^N}dx+\displaystyle\int_{ B_\frac{r_0}{2}}\frac{g(z)-g(x+z)}{(\varepsilon^2+|x|^2)^N}dx\notag
\\&\leq\displaystyle\int_{\mathbb{R}^N\backslash B_\frac{r_0}{2}}\frac{2}{|x|^{2N}}dx+C\displaystyle\int_{B_\frac{r_0}{2}}\frac{|x|^\rho}{(\varepsilon^2+|x|^2)^N}dx\notag
\\&\leq\displaystyle\int_\frac{r_0}{2}^{+\infty}r^{-(N+1)}dr+\displaystyle\int_0^\frac{r_0}{2}r^{\rho-N-1}dr\nonumber\\&\leq C,\notag
\end{align}
for $z\in M$.
\end{proof}

Now, we introduce the definition of weak $L^r(\mathbb{R}^N)$ spaces.
\begin{definition}
Let $u$ be a measurable function on $\mathbb{R}^N$. We say the function $\mu_u$ defined by
\begin{equation*}
\mu_u(\lambda)=|\{x\in\mathbb{R}^N;\, |u(x)|>\lambda\}|,
\end{equation*}
the distribution function of $u$ (associated with the Lebesgue measure).
\end{definition}
\begin{definition}
For $1\leq r<\infty$, the weak $L^r(\mathbb{R}^N)$ space is defined as the set of all measurable functions $u$ such that
\begin{align}
|u|_{r,w}:&=\inf \left\{\nu>0;\mu_u(\lambda)\leq\frac{\nu^r}{\lambda^r}~\text{for~all}~\lambda>0 \right\}\notag
\\&=\sup\left\{\lambda\mu_u(\lambda)^\frac{1}{r};\lambda>0 \right\}\notag
\end{align}
is finite.
\end{definition}

We denote the weak $L^r(\mathbb{R}^N)$ spaces by $L^{r,w}(\mathbb{R}^N)$. As a consequence of
\begin{equation*}
\lambda^r\mu_u(\lambda)\leq\displaystyle\int_{\{x\in\mathbb{R}^N;|u(x)|>\lambda\}}|u|^rdx\leq\displaystyle\int_{\mathbb{R}^N}|u|^rdx,
\end{equation*}
we have $L^r(\Omega)\subset L^{r,w}(\mathbb{R}^N)$. It is easy to verify that $\frac{1}{|x|^{N-s}}\in L^{\frac{N}{N-s},w}(\mathbb{R}^N)$.

Next, we introduce a version of Young's inequality for $L^{r,w}(\mathbb{R}^N)$ spaces.
\begin{lemma}\label{l3.3}
Let $1<p,\,q,\,r<\infty$ satisfy $\frac{1}{r}+1=\frac{1}{p}+\frac{1}{q}$. Then there exists a constant $C>0$ such that
\begin{equation*}
|u*v|_r\leq C|u|_{p,w}|v|_q
\end{equation*}
holds for $u\in L^{p,w}(\mathbb{R}^N)$ and $v\in L^q(\mathbb{R}^N)$.
\end{lemma}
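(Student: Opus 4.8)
The plan is to prove this as a form of the weak-type Young inequality for convolutions, in two stages: first a weak $L^r$ bound obtained by a level-set truncation of $u$, and then an upgrade to the strong $L^r$ bound via Marcinkiewicz interpolation in the second variable.

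For the first stage, fix $u\in L^{p,w}(\mathbb{R}^N)$, set $A=|u|_{p,w}$ so that $\mu_u(t)\le A^pt^{-p}$ for every $t>0$, and fix $v\in L^q(\mathbb{R}^N)$. For a threshold $s>0$ to be chosen I would split $u=u_s+u^s$, where $u_s$ is the restriction of $u$ to $\{|u|\le s\}$ and $u^s$ its restriction to $\{|u|>s\}$. Using the layer-cake formula $\int|f|^k=k\int_0^\infty t^{k-1}\mu_f(t)\,dt$ together with the pointwise bound on $\mu_u$, one checks that $u_s\in L^{q'}(\mathbb{R}^N)$ with $|u_s|_{q'}\le CA^{p/q'}s^{1-p/q'}$ — convergence of the defining integral being equivalent to $q'>p$, i.e. to $r<\infty$ — and that $u^s\in L^1(\mathbb{R}^N)$ with $|u^s|_1\le CA^ps^{1-p}$, where convergence uses $p>1$. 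The ordinary Young inequality, applied with Lebesgue exponents $(q',q,\infty)$ and $(1,q,q)$ respectively, then gives $|u_s*v|_\infty\le CA^{p/q'}s^{1-p/q'}|v|_q$ and $|u^s*v|_q\le CA^ps^{1-p}|v|_q$. Now, for each $\lambda>0$ I would choose $s=s(\lambda)$ so that $CA^{p/q'}s(\lambda)^{1-p/q'}|v|_q=\lambda/2$; then $\{|u*v|>\lambda\}\subseteq\{|u^s*v|>\lambda/2\}$, and Chebyshev's inequality yields $\mu_{u*v}(\lambda)\le(2/\lambda)^q|u^s*v|_q^q$. Substituting $s(\lambda)$ and collecting the powers of $\lambda$ — the decisive identity being $p/(1-p/q')=r$, which is exactly where the relation $\tfrac1r+1=\tfrac1p+\tfrac1q$ is used — one obtains $\mu_{u*v}(\lambda)\le C(A|v|_q)^r\lambda^{-r}$, that is, $|u*v|_{r,w}\le C|u|_{p,w}|v|_q$.

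For the second stage I would exploit that $v$ genuinely lies in $L^q$, not merely in weak $L^q$. Since $q<p'$ (again forced by $r<\infty$), one may pick $1<q_0<q<q_1<p'$ and let $r_0,r_1$ be the exponents determined by $\tfrac{1}{r_i}+1=\tfrac1p+\tfrac{1}{q_i}$; because $p>1$ these satisfy $q_i<r_i<\infty$. Applying the first stage with $(q_i,r_i)$ in place of $(q,r)$ shows that the linear map $S_u\colon w\mapsto u*w$ is of weak type $(q_0,r_0)$ and of weak type $(q_1,r_1)$, with operator norms bounded by $C(p,q,r)\,|u|_{p,w}$. The Marcinkiewicz interpolation theorem then yields that $S_u$ maps $L^q$ boundedly into $L^r$ with norm at most $C(p,q,r)\,|u|_{p,w}$, which is precisely the claimed inequality.

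The main obstacle — indeed the only genuinely non-routine point — is the passage from the weak $L^r$ estimate to the strong one: a naïve truncation carried out at a single fixed threshold delivers only membership in $L^{r,w}$, so it is essential either to run the weak-type argument for a one-parameter family of exponents and interpolate, as above, or else to perform the truncation at a $\lambda$-dependent threshold chosen compatibly with the layer-cake integration of $|u*v|^r$. Everything else — the two invocations of classical Young, the convergence conditions $p>1$ and $q'>p$, and the bookkeeping of exponents forced by $\tfrac1r+1=\tfrac1p+\tfrac1q$ — is routine once the level-set decomposition of $u$ is in place.
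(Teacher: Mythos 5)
Your proof is correct, and in fact it supplies more than the paper does: the paper states Lemma \ref{l3.3} without any proof at all (it is the standard weak-type Young inequality, sometimes attributed to O'Neil, and the special case $u=|x|^{-N/p}$ is the Hardy--Littlewood--Sobolev inequality), so there is no argument in the paper to compare against. Your two-stage scheme is the classical route: the level-set splitting $u=u_s+u^s$ with $u_s\in L^{q'}$ and $u^s\in L^1$, the $\lambda$-dependent choice of the threshold $s(\lambda)$, and the exponent identity $p/(1-p/q')=r$ coming from $\tfrac1r+1=\tfrac1p+\tfrac1q$ are all exactly right and give the weak $(q,r)$ bound with constant $C|u|_{p,w}$; the convergence conditions $p>1$ and $q'>p$ (equivalently $q<p'$, equivalently $r<\infty$) are correctly identified. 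The upgrade to the strong bound via Marcinkiewicz interpolation is also sound: since $q<p'$ you can choose $1<q_0<q<q_1<p'$, the induced $r_i$ satisfy $q_i<r_i<\infty$ and $\tfrac1r=(1-\theta)\tfrac1{r_0}+\theta\tfrac1{r_1}$ automatically because $\tfrac1{r_i}=\tfrac1p+\tfrac1{q_i}-1$ is affine in $\tfrac1{q_i}$, so the hypotheses of the interpolation theorem are met and the operator norm of $w\mapsto u*w$ is controlled by $C(p,q,r)|u|_{p,w}$. You are also right that the only genuinely delicate point is that a single fixed truncation yields only the weak $L^r$ conclusion; your interpolation step (or, alternatively, a direct layer-cake computation with $\lambda$-dependent thresholds) is needed to reach $L^r$ itself, which is the form actually used in the estimate (3.12) of the paper.
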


\begin{lemma}\label{l3.4}
There exists $\varepsilon_0>0$ small enough such that for $\varepsilon\in(0,\varepsilon_0)$, there holds
\begin{equation*}
\displaystyle\sup_{t\geq0}I_\lambda(\omega_\lambda^++tv_{\varepsilon,z})
<\alpha_\lambda^++\frac{\alpha}{N}S^\frac{N}{2\alpha}-\sigma(\varepsilon_0)
\end{equation*}
uniformly in $z\in M$. Furthermore, there exists $t_z^->0$ such that
\begin{equation*}
\omega_\lambda^++t_z^-v_{\varepsilon,z}\in N_\lambda^-
\end{equation*}
for $z\in M$.
\end{lemma}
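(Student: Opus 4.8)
The plan is to reduce everything to the scalar function $h_z(t):=I_\lambda(\omega_\lambda^+ + tv_{\varepsilon,z})$, $t\ge 0$. Since $h_z(0)=I_\lambda(\omega_\lambda^+)=\alpha_\lambda^+$ by Theorem~\ref{t2.1} and $h_z(t)\to-\infty$ as $t\to+\infty$ (the term $-\frac1{2_\alpha^*}\int_{\mathbb{R}^N}g(x)|(\omega_\lambda^++tv_{\varepsilon,z})(x,0)|^{2_\alpha^*}dx\sim -ct^{2_\alpha^*}$ dominates, using $2_\alpha^*\ge4$ and the Poisson bound $(2.5)$), the supremum is attained at some $t_\varepsilon=t_\varepsilon(\varepsilon,z)\in[0,\infty)$. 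First I would confine $t_\varepsilon$ to a fixed compact interval $[\delta_0,T_0]\subset(0,\infty)$, uniformly in $z\in M$ and in small $\varepsilon$: indeed $h_z'(0)=I_\lambda'(\omega_\lambda^+)v_{\varepsilon,z}=0$ while $h_z''(0)=\|v_{\varepsilon,z}\|_{X^\alpha}^2+o_\varepsilon(1)=S^{N/2\alpha}+o_\varepsilon(1)>0$, so $h_z$ increases past $t=0$; near $t\approx1$ it already reaches a value $\approx\alpha_\lambda^++\frac{\alpha}{N}S^{N/2\alpha}$, which it cannot exceed for large $t$ because of the $-ct^{2_\alpha^*}$ term. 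Hence $\sup_{t\ge0}h_z(t)=\max_{t\in[\delta_0,T_0]}h_z(t)$.

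For the main estimate I use that $\omega_\lambda^+$ is a critical point, $I_\lambda'(\omega_\lambda^+)=0$ (Theorem~\ref{t2.1}), together with $\omega_\lambda^+\ge0$, $v_{\varepsilon,z}\ge0$, $\mathrm{supp}\,v_{\varepsilon,z}\subset M_{r_0}$ where $f_\lambda>0$ (Remark~\ref{r1.1}) and $g\ge g_\infty>0$. Writing $u_0=\omega_\lambda^+(\cdot,0)$, $w=v_{\varepsilon,z}(\cdot,0)$, I would combine the pointwise inequality $(a+b)^{2_\alpha^*}\ge a^{2_\alpha^*}+2_\alpha^*a^{2_\alpha^*-1}b+b^{2_\alpha^*}$ ($a,b\ge0$, $2_\alpha^*\ge4$), a region-split of $\int f_\lambda|u_0+tw|^q dx$ (splitting where $tw$ dominates $u_0$ from where it does not: this still leaves the full derivative term $qt\int f_\lambda u_0^{q-1}w$ available for cancellation against $tI_\lambda'(\omega_\lambda^+)v_{\varepsilon,z}=0$, while extracting an extra nonnegative gain $\mathcal G(t)$), and the nonnegativity of the quartic Poisson form (Lemma~\ref{l2.1}). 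After the linear-in-$t$ terms cancel this yields, for all $t\ge0$,
\begin{equation*}
I_\lambda(\omega_\lambda^+ + tv_{\varepsilon,z})\le\alpha_\lambda^+ + \frac{t^2}{2}\|v_{\varepsilon,z}\|_{X^\alpha}^2 - \frac{t^{2_\alpha^*}}{2_\alpha^*}\int_{\mathbb{R}^N}g(x)|w|^{2_\alpha^*}dx + \mathcal R_P(t) - \mathcal G(t),
\end{equation*}
with $\mathcal R_P(t)\ge0$ the degree-$\ge2$ Poisson interaction terms and, for $t\in[\delta_0,T_0]$ and $\varepsilon$ suitably small in terms of $\lambda$, $\mathcal G(t)\ge c_\lambda t^q\int_{B_{r_0/2}(z)}|w|^q dx\ge c_\lambda'\varepsilon^{q(N-2\alpha)/2}$ (using $\eta\equiv1$ on $B_{r_0/2}$, $(3.3)$ in the case $q<\frac{N}{N-2\alpha}$, and $f_\lambda\ge c>0$ there). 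Inserting $\|v_{\varepsilon,z}\|_{X^\alpha}^2=S^{N/2\alpha}+O(\varepsilon^{N-2\alpha})$ (with an extra $\lg(1/\varepsilon)$ factor when $N=4\alpha$) from $(3.2)$, $(3.4)$ and $\|\omega_\varepsilon\|_{\dot{X}^\alpha}^2=S^{N/2\alpha}$, and $\int g|w|^{2_\alpha^*}dx=S^{N/2\alpha}-O(\varepsilon^N)$ from Lemma~\ref{l3.2}, the first three terms are at most $\alpha_\lambda^++\frac{\alpha}{N}S^{N/2\alpha}+O(\varepsilon^{N-2\alpha})+\mathcal R_P(T_0)$, so that
\begin{equation*}
\sup_{t\ge0}I_\lambda(\omega_\lambda^+ + tv_{\varepsilon,z})\le\alpha_\lambda^++\frac{\alpha}{N}S^{N/2\alpha}+O(\varepsilon^{N-2\alpha})+\mathcal R_P(T_0)-c_\lambda'\varepsilon^{q(N-2\alpha)/2}.
\end{equation*}
Because $1<q<2$ forces $\frac{q(N-2\alpha)}{2}<N-2\alpha$, the last negative term beats the $O(\varepsilon^{N-2\alpha})$ loss and (see below) also $\mathcal R_P$, giving the claimed strict inequality with some $\sigma=\sigma(\varepsilon_0)>0$, uniformly in $z\in M$. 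For the existence of $t_z^-$, since $\int f_\lambda|w|^q dx>0$ and $\omega_\lambda^+\in N_\lambda^+$ with $\|\omega_\lambda^+\|_{X^\alpha}$ small, the function $t\mapsto\Psi_\lambda(\omega_\lambda^+ + tv_{\varepsilon,z})$ vanishes at $t=0$, is positive for small $t>0$ (from $\Psi_\lambda'(\omega_\lambda^+)v_{\varepsilon,z}>0$, itself a consequence of $I_\lambda'(\omega_\lambda^+)=0$ and the smallness of $\omega_\lambda^+$) and tends to $-\infty$; by the intermediate value theorem it has a zero $t_z^->0$, and since $N_\lambda^0=\emptyset$ (Lemma~\ref{l2.7}) a sign check on $\Psi_\lambda'(\cdot)(\cdot)$ at that crossing places $\omega_\lambda^+ + t_z^-v_{\varepsilon,z}$ in $N_\lambda^-$, exactly as in the fibering analysis of Lemma~\ref{l2.9}.

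The hard part is controlling the Poisson remainder $\mathcal R_P(t)$: its terms are convolutions against the Riesz kernel $|\cdot|^{-(N-s)}$, which lies only in the weak space $L^{N/(N-s),w}(\mathbb{R}^N)$, so the usual Hardy--Littlewood--Sobolev inequality is not sharp enough — this is precisely where Lemma~\ref{l3.3} enters. Using it with $(2.4)$--$(2.5)$, the purely-bubble term satisfies $A(v_{\varepsilon,z})\le C|w|_{4N/(N+s)}^{4}=O(\varepsilon^{2(N-2\alpha)})$ by $(3.3)$, which is $o(\varepsilon^{q(N-2\alpha)/2})$ exactly because $q<4$; every other term of $\mathcal R_P$ carries at least one factor $u_0=\omega_\lambda^+(\cdot,0)$, hence is bounded by $C|\omega_\lambda^+|_\infty^{j}\varepsilon^{\beta}$ with $j\ge1$, $\beta>0$, and becomes negligible against $c_\lambda'\varepsilon^{q(N-2\alpha)/2}$ once $\lambda$ is small, since $\|\omega_\lambda^+\|_{X^\alpha}\to0$ as $\lambda\to0^+$ by Theorem~\ref{t2.1}. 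I expect the delicate point of the whole proof to be this bookkeeping of competing powers of $\varepsilon$, in particular matching the $\varepsilon$-scale to the $\lambda$-scale so that the concave gain $c_\lambda'\varepsilon^{q(N-2\alpha)/2}$ genuinely dominates every loss; the borderline exponent case $N=4\alpha$ additionally requires carrying the logarithmic factor and choosing the optimal exponents in Lemma~\ref{l3.3}.
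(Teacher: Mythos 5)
Your overall architecture (restrict $t$ to a compact interval $[t_0,t_1]$, use $I_\lambda'(\omega_\lambda^+)=0$ to kill the linear terms, estimate the Poisson remainder via Lemma \ref{l3.3} and (3.3)--(3.4)) matches the paper, but the engine that produces the strict gap $-\sigma(\varepsilon_0)$ is wrong. The paper's strict gain comes from the \emph{other} cross term of the critical nonlinearity: from $(a+b)^{2_\alpha^*}\ge a^{2_\alpha^*}+b^{2_\alpha^*}+2_\alpha^*a^{2_\alpha^*-1}b+2_\alpha^*ab^{2_\alpha^*-1}$ one keeps $-t^{2_\alpha^*-1}\int_{\mathbb{R}^N}g(x)|v_{\varepsilon,z}(x,0)|^{2_\alpha^*-1}\omega_\lambda^+(x,0)\,dx\le -C\varepsilon^{\frac{N-2\alpha}{2}}$, and $\varepsilon^{(N-2\alpha)/2}$ beats the $O(\varepsilon^{N-2\alpha})$ loss in $\|v_{\varepsilon,z}\|_{X^\alpha}^2$ as well as all Poisson terms (which the paper shows are $o(\varepsilon^{(N-2\alpha)/2})$). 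Your pointwise inequality $(a+b)^{2_\alpha^*}\ge a^{2_\alpha^*}+2_\alpha^*a^{2_\alpha^*-1}b+b^{2_\alpha^*}$ discards exactly this summand, and the $f_\lambda$-term is then only used by the paper as $k(t)\ge0$, not as a source of gain.

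Your substitute gain $\mathcal G(t)\ge c_\lambda t^q\int_{B_{r_0/2}(z)}|v_{\varepsilon,z}(x,0)|^q\,dx\ge c_\lambda'\varepsilon^{q(N-2\alpha)/2}$ does not hold. The convexity defect $(a+b)^q-a^q-qa^{q-1}b$ with $1<q<2$ satisfies $\frac{(a+b)^q-a^q-qa^{q-1}b}{b^q}\to0$ as $b\to0^+$ for fixed $a>0$ (it is $\sim\frac{q(q-1)}{2}a^{q-2}b^2$ there), so no constant $c_\lambda$ gives a pointwise lower bound $c_\lambda(tw)^q$ on the region where $tv_{\varepsilon,z}\ll\omega_\lambda^+$. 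Since $q<\frac{N}{N-2\alpha}$, the bulk of $\int_{B_{r_0/2}}|v_{\varepsilon,z}|^q\approx\varepsilon^{q(N-2\alpha)/2}$ is carried precisely by that outer region $|x-z|\gtrsim\sqrt{\varepsilon}$; on the inner region where $tv_{\varepsilon,z}\gtrsim\omega_\lambda^+$ you only collect $\int|v_{\varepsilon,z}|^q=O(\varepsilon^{N/2})$, and the quadratic defect on the outer region only gives $O(\int|v_{\varepsilon,z}|^2)=O(\varepsilon^{N-2\alpha})$ (with a logarithm when $N=4\alpha$). Since $2\alpha<N\le4\alpha$ forces $\varepsilon^{N/2}\lesssim\varepsilon^{N-2\alpha}$, your honest gain is of the \emph{same} order as the loss, not strictly larger, and the argument collapses. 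To repair it you must reinstate the term $t^{2_\alpha^*}\,$-expansion's $ab^{2_\alpha^*-1}$ contribution and verify, as the paper does in (3.13)--(3.16), that every Poisson term is $o(\varepsilon^{(N-2\alpha)/2})$ (not merely $o(\varepsilon^{q(N-2\alpha)/2})$); your final paragraph's bookkeeping, which mixes the $\lambda\to0$ and $\varepsilon\to0$ limits to absorb the remainders, is then unnecessary. The second assertion (existence of $t_z^-$) is handled correctly by your fibering argument.
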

\begin{proof}
Since
\begin{equation*}
\displaystyle\lim_{t\rightarrow0}I_\lambda(\omega_\lambda^++tv_{\varepsilon,z})=\alpha_\lambda^+<0~\text{and}~\displaystyle\lim_{t\rightarrow\infty}I_\lambda(\omega_\lambda^++tv_{\varepsilon,z})=-\infty,
\end{equation*}
for small $\varepsilon>0$, there exist a small $t_0>0$ and a large $t_1>0$  such that
\begin{equation}\label{e3.5}
I_\lambda(\omega_\lambda^++tv_{\varepsilon,z})<\alpha_\lambda^++\frac{\alpha}{N}S^\frac{N}{2\alpha},~~t\in(0,t_0]\cup[t_1,+\infty).
\end{equation}
We only need to prove that
\begin{equation*}
I_\lambda(\omega_\lambda^++tv_{\varepsilon,z})<\alpha_\lambda^++\frac{\alpha}{N}S^\frac{N}{2\alpha},~~t\in[t_0,t_1].
\end{equation*}

Note that $\omega_\lambda^+$ is a positive solution of system (\ref{e2.7}).
It follows Theorem \ref{t2.1}  that
\begin{align}\label{e3.6}
&I_\lambda(\omega_\lambda^++tv_{\varepsilon,z})\notag
\nonumber\\ =& I_\lambda(\omega_\lambda^+)+\frac{t^2}{2}\|v_{\varepsilon,z}\|_{X^\alpha}^2+t\displaystyle\int_{\mathbb{R}_+^{N+1}}y^{1-2\alpha}\nabla\omega_\lambda^+\nabla v_{\varepsilon,z} dxdy\notag
\nonumber\\&~~~~~+t\displaystyle\int_{\mathbb{R}^N}[l(x)\phi_{\omega_\lambda^+(x,0)}\omega_\lambda^+(x,0)v_{\varepsilon,z}(x,0)-g(x)|\omega_\lambda^+(x,0)|^{2_\alpha^*-1}v_{\varepsilon,z}(x,0)\nonumber\\
\nonumber\\&~~~~~~~~- f_\lambda(x)|\omega_\lambda^+(x,0)|^{q-1}v_{\varepsilon,z}(x,0)]dx\notag
\\&~~~~~+\frac{1}{4}\displaystyle\int_{\mathbb{R}^N}l(x)[\phi_{\omega_\lambda^+(x,0)+tv_{\varepsilon,z}(x,0)}(\omega_\lambda^++tv_{\varepsilon,z}(x,0))^2-\phi_{\omega^+(x,0)}(\omega_\lambda^+(x,0))^2\nonumber\\
\nonumber\\&~~~~~~~~-4\phi_{\omega_\lambda^+(x,0)}\omega_\lambda^+(x,0)(tv_{\varepsilon,z}(x,0))]dx\notag
\\&~~~~~-\frac{1}{2_\alpha^*}\displaystyle\int_{\mathbb{R}^N} g(x)[(\omega_\lambda^+(x,0)+tv_{\varepsilon,z}(x,0))^{2_\alpha^*}-(\omega_\lambda^+(x,0))^{2_\alpha^*}\nonumber\\
\nonumber\\&~~~~~~~~-2_\alpha^*(\omega_\lambda^+(x,0))^{2_\alpha^*-1}tv_{\varepsilon,z}(x,0)]dx\notag
\\&~~~~~-\frac{1}{q}\displaystyle\int_{\mathbb{R}^N} f_\lambda(x)[(\omega_\lambda^+(x,0)+tv_{\varepsilon,z}(x,0))^q-(\omega_\lambda^+(x,0))^q-q(\omega_\lambda^+(x,0))^{q-1}tv_{\varepsilon,z}(x,0)]dx\notag
\\ \leq & \alpha_\lambda^++\frac{t^2}{2}\|v_{\varepsilon,z}\|_{X^\alpha}^2-\frac{t^{2_\alpha^*}}{2_\alpha^*}\displaystyle\int_{\mathbb{R}^N} g(x)|v_\varepsilon(x,0)|^{2_\alpha^*}dx-t^{2_\alpha^*-1}\displaystyle\int_{\mathbb{R}^N} g(x)|v_\varepsilon(x,0)|^{2_\alpha^*-1}\omega_\lambda^+(x,0)dx\notag
\\&~~~~~+\frac{1}{4}\displaystyle\int_{\mathbb{R}^N}l(x)[\phi_{\omega_\lambda^+(x,0)+tv_{\varepsilon,z}(x,0)}(\omega_\lambda^+(x,0)+tv_{\varepsilon,z}(x,0))^2]dx\notag
\\&~~~~~-\frac{1}{4}\displaystyle\int_{\mathbb{R}^N}l(x)[\phi_{\omega_\lambda^+(x,0)}(\omega_\lambda^+(x,0))^2+4\phi_{\omega_\lambda^+(x,0)}\omega_\lambda^+(x,0)(tv_{\varepsilon,z}(x,0))]dx\notag
\\&~~~~~-\frac{1}{q}\displaystyle\int_{\mathbb{R}^N} f_\lambda(x)[(\omega_\lambda^+(x,0)+tv_{\varepsilon,z}(x,0))^q-(\omega_\lambda^+(x,0))^q-q(\omega_\lambda^+(x,0))^{q-1}tv_{\varepsilon,z}(x,0)]dx\nonumber\\&~~~~~+o(\varepsilon^\frac{N-2\alpha}{2})\notag
\\ \leq& \alpha_\lambda^++i(t)+j(t)-k(t)+o(\varepsilon^\frac{N-2\alpha}{2}),
\end{align}
for $z\in M$,
where
\begin{align}
i(t):=&\frac{t^2}{2}\|v_{\varepsilon,z}\|_{X^\alpha}^2-\frac{t^{2_\alpha^*}}{2_\alpha^*}\displaystyle\int_{\mathbb{R}^N} g(x)|v_\varepsilon(x,0)|^{2_\alpha^*}dx-t^{2_\alpha^*-1}\displaystyle\int_{\mathbb{R}^N} g(x)|v_\varepsilon(x,0)|^{2_\alpha^*-1}\omega_\lambda^+(x,0)dx,\notag
\\j(t):=&\frac{1}{4}\displaystyle\int_{\mathbb{R}^N}l(x)[\phi_{\omega_\lambda^+(x,0)+tv_{\varepsilon,z}(x,0)}(\omega_\lambda^+(x,0)+tv_{\varepsilon,z}(x,0))^2]dx\notag
\\&-\frac{1}{4}\displaystyle\int_{\mathbb{R}^N}l(x)[\phi_{\omega_\lambda^+(x,0)}(\omega_\lambda^+(x,0))^2+4\phi_{\omega_\lambda^+(x,0)}\omega_\lambda^+(x,0)(tv_{\varepsilon,z}(x,0))]dx,\notag
\\k(t):=&\frac{1}{q}\displaystyle\int_{\mathbb{R}^N} f_\lambda(x)[(\omega_\lambda^+(x,0)+tv_{\varepsilon,z}(x,0))^q-(\omega_\lambda^+(x,0))^q-q(\omega_\lambda^+(x,0))^{q-1}tv_{\varepsilon,z}(x,0)]dx.\notag
\end{align}

Since
\begin{align}
&\displaystyle\int_{\mathbb{R}^N} g(x)|v_{\varepsilon,z}(x,0)|^{2_\alpha^*-1}\omega_\lambda^+(x,0)dx\notag
\\=&\displaystyle\int_{\mathbb{R}^N} g(x+z)\eta^{2_\alpha^*-1}(x,0)|u_\varepsilon(x)|^{2_\alpha^*-1}\omega_\lambda^+(x+z,0)dx\notag
\\  \geq& C\varepsilon^\frac{N-2\alpha}{2}\displaystyle\int_{B_\frac{r_0}{2}}\frac{1}{(1+|x|^2)^\frac{N+2\alpha}{2}}dx\notag
\\ \geq& C\varepsilon^\frac{N-2\alpha}{2} \notag
\end{align}
for $z\in M$ and $C>0$,
it follows Lemma \ref{l3.2} and (\ref{e3.2})-(\ref{e3.4}) that
\begin{align}\label{e3.7}
i(t)&\leq\frac{t^2}{2}\|v_{\varepsilon,z}\|_{X^\alpha}^2-\frac{t^{2_\alpha^*}}{2_\alpha^*}\displaystyle\int_{\mathbb{R}^N} g(x)|v_{\varepsilon,z}(x,0)|^{2_\alpha^*}dx-C\varepsilon^\frac{N-2\alpha}{2}\notag
\\&\leq\frac{\alpha}{N}\left(\frac{\|v_{\varepsilon,z}\|_{\dot{X}^\alpha}^2}{\left(\displaystyle\displaystyle\int_{\mathbb{R}^N} g(x)|v_{\varepsilon,z}|^{2_\alpha^*}dx\right)^\frac{2}{2_\alpha^*}}\right)^\frac{N}{2\alpha}-C\varepsilon^\frac{N-2\alpha}{2}\notag
\\&\leq\frac{\alpha}{N}\left(\frac{\|\omega_\varepsilon\|_{\dot{X}^\alpha}^2+O(\varepsilon^{N-2\alpha})}{\left(\displaystyle\int_{\mathbb{R}^N}|\omega_\varepsilon(x,0)|^{2_\alpha^*}dx+O(\varepsilon^N)\right)^\frac{2}{2_\alpha^*}}\right)^\frac{N}{2\alpha}-C\varepsilon^\frac{N-2\alpha}{2}\notag
\\&\leq\frac{\alpha}{N}S^\frac{N}{2\alpha}-C\varepsilon^\frac{N-2\alpha}{2}
\end{align}
for $z\in M$ and $t\in[t_0,t_1]$.
A direct calculation gives
\begin{align}\label{e3.8}
j(t):=&\frac{1}{4}\displaystyle\int_{\mathbb{R}^N}l(x)[\phi_{\omega_\lambda^+(x,0)+tv_{\varepsilon,z}(x,0)}(\omega_\lambda^+(x,0)+tv_{\varepsilon,z}(x,0))^2]dx\notag
\\&-\frac{1}{4}\displaystyle\int_{\mathbb{R}^N}l(x)[\phi_{\omega_\lambda^+(x,0)}(\omega_\lambda^+(x,0))^2+4\phi_{\omega_\lambda^+(x,0)}\omega_\lambda^+(x,0)(tv_{\varepsilon,z}(x,0))]dx\notag
\\=&t\displaystyle\int_{\mathbb{R}^N}l(x)\omega_\lambda^+(x,0)\phi_{tv_{\varepsilon,z}(x,0)}v_{\varepsilon,z}(x,0) dx+\frac{t^2}{2}\displaystyle\int_{\mathbb{R}^N}l(x)\phi_{\omega_\lambda^+(x,0)}|v_{\varepsilon,z}(x,0)|^2dx\notag
\\&+\frac{t^2}{4}\displaystyle\int_{\mathbb{R}^N}l(x) \phi_{tv_{\varepsilon,z}(x,0)}|v_{\varepsilon,z}(x,0)|^2dx\notag
\\&+t^2\displaystyle\int_{\mathbb{R}^N\times\mathbb{R}^N}\frac{1}{|x-y|^{N-s}}l(y)\omega_\lambda^+(y,0)v_{\varepsilon,z}(y,0)l(x)\omega_\lambda^+(x,0)v_{\varepsilon,z}(x,0)dxdy.
\end{align}

For $t\in [t_0,t_1]$, by H\"{o}lder's inequality and (\ref{e3.3})-(\ref{e3.4}), we get
\begin{align}\label{e3.9}
\displaystyle\int_{\mathbb{R}^N}l(x)\omega_\lambda^+(x,0)\phi_{tv_{\varepsilon,z}(x,0)}v_{\varepsilon,z}(x,0) dx&\leq C|\phi_{tv_{\varepsilon,z}(x,0)}|_{2^*_\frac{s}{2}}|v_{\varepsilon,z}(x,0)|_\frac{2N}{N+s}\notag
\\&\leq C|v_{\varepsilon,z}(x,0)|_\frac{4N}{N+s}^2|v_{\varepsilon,z}(x,0)|_\frac{2N}{N+s},
\\ \displaystyle\int_{\mathbb{R}^N}l(x)\phi_{\omega_\lambda^+(x,0)}|v_{\varepsilon,z}(x,0)|^2dx&\leq C\displaystyle\int_{\mathbb{R}^N}|v_{\varepsilon,z}(x,0)|^2dx,
\\ \displaystyle\int_{\mathbb{R}^N}l(x) \phi_{tv_{\varepsilon,z}(x,0)}|v_{\varepsilon,z}(x,0)|^2dx&\leq C|\phi_{tv_{\varepsilon,z}(x,0)}|_{2^*_\frac{s}{2}}|v_{\varepsilon,z}(x,0)|^2_\frac{4N}{N+s}\leq C|v_{\varepsilon,z}(x,0)|^4_\frac{4N}{N+s}.
\end{align}
It follows  H\"{o}lder's inequality and Lemma \ref{l3.3} that
\begin{align}\label{e3.12}
&\displaystyle\int_{\mathbb{R}^N\times\mathbb{R}^N}\frac{1}{|x-y|^{N-s}}l(y)\omega_\lambda^+(y,0)v_{\varepsilon,z}(y,0)l(x)\omega_\lambda^+(x,0)v_{\varepsilon,z}(x,0)dxdy\notag
\\=&\displaystyle\int_{\mathbb{R}^N}l(x)\omega_\lambda^+(x,0)v_\varepsilon(x,0)\left(\displaystyle\int_{\mathbb{R}^N}\frac{1}{|x-y|^{N-s}}l(y)\omega_\lambda^+(y,0)v_\varepsilon(y,0)dy\right)dx\notag
\\ \leq& C|v_{\varepsilon,z}(x,0)|_\frac{2N}{N+s}\left|\frac{1}{|x|^{N-s}}\right|_{\frac{N}{N-s},w}|\omega_\lambda^+v_{\varepsilon,z}(x,0)|_\frac{2N}{N+s}\notag
\\ \leq& C\left|\frac{1}{|x|^{N-s}}\right|_{\frac{N}{N-s},w}|v_{\varepsilon,z}(x,0)|_\frac{2N}{N+s}^2\notag
\\ \leq& C|v_{\varepsilon,z}(x,0)|_\frac{2N}{N+s}^2.
\end{align}

Using the fact $\frac{2N}{N+s}<\frac{N}{N-2\alpha}$ and (\ref{e3.3}), we get
\begin{equation}\label{e3.13}
|v_{\varepsilon,z}(x,0)|_\frac{4N}{N+s}^2|v_{\varepsilon,z}(x,0)|_\frac{2N}{N+s}=o\left(\varepsilon^\frac{N-2\alpha}{2} \right).
\end{equation}
Since $2\alpha>\frac{N-2\alpha}{2}$ and $N-2\alpha>\frac{N-2\alpha}{2}$,  from (\ref{e3.4}) we have
\begin{equation}\label{e3.14}
\displaystyle\int_{\mathbb{R}^N}|v_{\varepsilon,z}(x,0)|^2dx=o\left(\varepsilon^\frac{N-2\alpha}{2} \right).
\end{equation}
If $\frac{4N}{N+s}\leq\frac{N}{N-2\alpha}$, from (\ref{e3.3}) we have
\begin{equation}\label{e3.15}
|v_{\varepsilon,z}(x,0)|^4_\frac{4N}{N+s}=O\left(\varepsilon^{2(N-2\alpha)}\right)=o\left(\varepsilon^\frac{N-2\alpha}{2} \right).
\end{equation}
If $\frac{4N}{N+s}>\frac{N}{N-2\alpha}$, then $4\left(\frac{2N-(N-2\alpha)q}{2q}\right)>\frac{N-2\alpha}{2}$ for $q=\frac{4N}{N+s}$. Using (\ref{e3.3}) again gives
\begin{equation}\label{e3.16}
|v_{\varepsilon,z}(x,0)|^4_\frac{4N}{N+s}=o\left(\varepsilon^\frac{N-2\alpha}{2} \right).
\end{equation}
Using (\ref{e3.8})-(\ref{e3.16}) implies that
\begin{equation}\label{e3.17}
j(t)=o\left(\varepsilon^\frac{N-2\alpha}{2}\right),
\end{equation}
for $z\in M$ and $t\in[t_0,t_1]$.

In view of the definition of $v_{\varepsilon,z}$, we know that
\begin{equation*}
\displaystyle\int_{\mathbb{R}^N} f_-(x)[(\omega_\lambda^+(x,0)+tv_{\varepsilon,z}(x,0))^q-(\omega_\lambda^+(x,0))^q-q(\omega_\lambda^+(x,0))^{q-1}tv_{\varepsilon,z}(x,0)]dx=0.
\end{equation*}
Then we have
\begin{align}\label{e3.18}
k(t)&=\frac{\lambda}{q}\displaystyle\int_{\mathbb{R}^N} f_+(x)[(\omega_\lambda^+(x,0)+tv_{\varepsilon,z}(x,0))^q-(\omega_\lambda^+(x,0))^q-q(\omega_\lambda^+(x,0))^{q-1}tv_{\varepsilon,z}(x,0)]dx\notag
\\&=\lambda\displaystyle\int_{\mathbb{R}^N} f_+(x)\left[\displaystyle\int_0^{tv_{\varepsilon,z}(x,0)}((\omega_\lambda^+(x,0)+s)^{q-1}-(\omega_\lambda^+(x,0))^{q-1})ds\right]dx\nonumber\\&\geq0
\end{align}
for  $z\in M$ and $t\in[t_0,t_1]$.

Substituting (\ref{e3.7}) and (\ref{e3.17})-(\ref{e3.18}) into (\ref{e3.6}) yields
\begin{equation}\label{e3.19}
I_\lambda(\omega_\lambda^++tv_{\varepsilon,z})<\alpha_\lambda^++\frac{\alpha}{N}S^\frac{N}{2\alpha}
-C\varepsilon^\frac{N-2\alpha}{2}+o\left(\varepsilon^\frac{N-2\alpha}{2}\right),
\end{equation}
for $z\in M$ and $t\in[t_0,\,t_1]$. It follows (\ref{e3.5}) and (\ref{e3.19}) that there exists small $\varepsilon_0>0$ such that for $\varepsilon\in(0,\varepsilon_0)$, we have
\begin{equation*}
\displaystyle\sup_{t\geq0}I_\lambda(\omega_\lambda^++tv_{\varepsilon,z})<\alpha_\lambda^++\frac{\alpha}{N}S^\frac{N}{2\alpha}
\end{equation*}
for  $z\in M$.

Similar to the proof of \cite[Lemma 4.4]{9},  there exists $t_z^->0$ such that
\begin{equation*}
\omega_\lambda^++t_z^-v_{\varepsilon,z}\in N_\lambda^-
\end{equation*}
for $z\in M$.
\end{proof}

\begin{lemma}\label{l3.5}
There holds
\begin{equation*}
\displaystyle\inf_{\omega\in N_\infty^1} I_\infty^1(\omega)=\displaystyle\inf_{\omega\in N^\infty} I^\infty(\omega)=\frac{\alpha}{N}S^\frac{N}{2\alpha},
\end{equation*}
where $I^\infty(\omega)=\frac{1}{2}\|\omega\|_{X^\alpha}^2-\frac{1}{2_\alpha^*}\displaystyle\int_{\mathbb{R}^N} |\omega(x,0)|^{2_\alpha^*}dx$ and $N^\infty=\left\{\omega\in X^\alpha(\mathbb{R}_+^{N+1}) \backslash\{0\};(I^\infty)'(\omega)\omega=0 \right\}$.
\end{lemma}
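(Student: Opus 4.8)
The plan is to evaluate both infima via the fibering-map identity obtained in the proof of Lemma~\ref{l3.1}(i) together with the fractional Sobolev inequality of Lemma~\ref{l2.3}: each infimum is bounded below by $\frac{\alpha}{N}S^\frac{N}{2\alpha}$ because $g\le 1$, and bounded above by the same value by testing with the truncated instantons $v_{\varepsilon,z}$ from $(\ref{e3.1})$.

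First I would reduce each minimization to a scale-invariant quotient. The computation carried out in Lemma~\ref{l3.1}(i) uses only that $\omega\neq0$ and $\int_{\mathbb{R}^N}g(x)|\omega(x,0)|^{2_\alpha^*}dx>0$, which holds for every $\omega\in X^\alpha(\mathbb{R}_+^{N+1})\backslash\{0\}$ because $g\ge g_\infty>0$ by $(H_3)$; hence for each such $\omega$ there is a unique $t>0$ with $t\omega\in N_\infty^1$ and $I_\infty^1(t\omega)=\max_{s\ge0}I_\infty^1(s\omega)$, and the identical fibering argument applied to $I^\infty$ (the case $g\equiv1$) gives a unique $t>0$ with $t\omega\in N^\infty$. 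Reading off the maximal values exactly as in Lemma~\ref{l3.1}(i) yields
\[
\inf_{N_\infty^1}I_\infty^1=\frac{\alpha}{N}\inf_{\omega\neq0}\left(\frac{\|\omega\|_{X^\alpha}^{2_\alpha^*}}{\int_{\mathbb{R}^N}g(x)|\omega(x,0)|^{2_\alpha^*}dx}\right)^{\frac{N-2\alpha}{2\alpha}},\qquad \inf_{N^\infty}I^\infty=\frac{\alpha}{N}\inf_{\omega\neq0}\left(\frac{\|\omega\|_{X^\alpha}^{2_\alpha^*}}{\int_{\mathbb{R}^N}|\omega(x,0)|^{2_\alpha^*}dx}\right)^{\frac{N-2\alpha}{2\alpha}}.
\]

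For the lower bound, observe that $0<g(x)\le1$ by $(H_4)$ and, with $u=tr(\omega)$, $\|\omega\|_{X^\alpha}^2\ge\|\omega\|_{\dot{X}^\alpha}^2\ge S|u|_{2_\alpha^*}^2$ by Lemma~\ref{l2.3}; consequently $\int_{\mathbb{R}^N}g(x)|\omega(x,0)|^{2_\alpha^*}dx\le\int_{\mathbb{R}^N}|\omega(x,0)|^{2_\alpha^*}dx\le S^{-2_\alpha^*/2}\|\omega\|_{X^\alpha}^{2_\alpha^*}$, so both quotients in the display are $\ge S^{2_\alpha^*/2}$, and raising to the power $\frac{N-2\alpha}{2\alpha}$ shows both infima are $\ge\frac{\alpha}{N}S^\frac{N}{2\alpha}$. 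For the matching upper bound I would test with $v_{\varepsilon,z}$ for a fixed $z\in M$: by $(\ref{e3.2})$, Lemma~\ref{l2.3} and $(\ref{e3.4})$ one has $\|v_{\varepsilon,z}\|_{X^\alpha}^2=\|\omega_\varepsilon\|_{\dot{X}^\alpha}^2+O(\varepsilon^{N-2\alpha})+o(1)=S^\frac{N}{2\alpha}+o(1)$ as $\varepsilon\to0$ (the $L^2$-trace correction in $(\ref{e3.4})$ is $o(1)$ in both cases $N=4\alpha$ and $N<4\alpha$), while Lemma~\ref{l3.2} gives $\int_{\mathbb{R}^N}g(x)|v_{\varepsilon,z}(x,0)|^{2_\alpha^*}dx=S^\frac{N}{2\alpha}+O(\varepsilon^N)$, and the elementary cut-off estimate $\int_{\mathbb{R}^N}(1-\eta^{2_\alpha^*})|\omega_\varepsilon(x,0)|^{2_\alpha^*}dx=O(\varepsilon^N)$ (cf.\ \cite[Theorem 3.6]{17}) gives the analogous statement for $\int_{\mathbb{R}^N}|v_{\varepsilon,z}(x,0)|^{2_\alpha^*}dx$. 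Substituting these asymptotics into the quotients above and letting $\varepsilon\to0$ yields $\max_{s\ge0}I_\infty^1(sv_{\varepsilon,z})\to\frac{\alpha}{N}S^\frac{N}{2\alpha}$ and $\max_{s\ge0}I^\infty(sv_{\varepsilon,z})\to\frac{\alpha}{N}S^\frac{N}{2\alpha}$; since for each $\varepsilon$ the maximizing multiple of $v_{\varepsilon,z}$ lies on the respective Nehari manifold, both infima are $\le\frac{\alpha}{N}S^\frac{N}{2\alpha}$.

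Putting the two bounds together gives $\inf_{N_\infty^1}I_\infty^1=\inf_{N^\infty}I^\infty=\frac{\alpha}{N}S^\frac{N}{2\alpha}$. There is no essential difficulty here; the only point that requires care is checking that the truncation corrections — the $O(\varepsilon^{N-2\alpha})$ of $(\ref{e3.2})$, the $L^2$-trace term of $(\ref{e3.4})$, and the $O(\varepsilon^N)$ of Lemma~\ref{l3.2} — are all negligible relative to $S^\frac{N}{2\alpha}$ so that the passage to the limit $\varepsilon\to0$ in the fibering formula is legitimate, which is the usual Br\'{e}zis--Nirenberg-type bookkeeping carried over to the $\alpha$-harmonic extension framework.
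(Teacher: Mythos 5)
Your proposal is correct and takes essentially the same route as the paper's proof: reduce both infima to the scale-invariant quotient via the fibering map, obtain the lower bound $\frac{\alpha}{N}S^{\frac{N}{2\alpha}}$ from $g\leq 1$ together with Lemma \ref{l2.3}, and obtain the matching upper bound by testing with the truncated extremals $v_{\varepsilon,z}$. The only (cosmetic) difference is that the paper sandwiches the two quantities using the pointwise inequality $I^\infty(t\omega)\leq I_\infty^1(t\omega)$ instead of estimating each functional separately.
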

\begin{proof}
Observe that
\begin{equation*}
\displaystyle\max_{t\geq0}\left(\frac{a}{2}t^2-\frac{b}{2_\alpha^*}t^{2_\alpha^*}\right)=\frac{\alpha}{N}\left(\frac{a^2}{b^\frac{2}{2_\alpha^*}}\right)^\frac{2_\alpha^*}{2_\alpha^*-2}~\text{for}~a>0~\text{and}~b>0,
\end{equation*}
We deduce from Lemma \ref{l2.3} that
\begin{align}\label{e3.20}
\displaystyle\inf_{\omega\in N^\infty} I^\infty(\omega)&=\displaystyle\inf_{\omega\in X^\alpha(\mathbb{R}_+^{N+1}) \backslash\{0\}}\sup_{t\geq0}I^\infty(t\omega)\notag
\\&=\displaystyle\inf_{\omega\in X^\alpha(\mathbb{R}_+^{N+1}) \backslash\{0\}}\frac{\alpha}{N}\left(\frac{\|\omega\|_{X^\alpha}^2}{\left(\displaystyle\int_{\mathbb{R}^N} |\omega(x,0)|^{2_\alpha^*}dx\right)^\frac{2}{2_\alpha^*}}\right)^\frac{N}{2\alpha}\notag
\\&\geq \frac{\alpha}{N}S^\frac{N}{2\alpha}.
\end{align}
It follows from (\ref{e3.2}) and Lemma \ref{l3.2} that
\begin{align}
\displaystyle\sup_{t\geq0}I_\infty^1(tv_{\varepsilon,z})&=\frac{\alpha}{N}\left(\frac{\|v_{\varepsilon,z}\|_{X^\alpha}^2}{\left(\displaystyle\int_{\mathbb{R}^N} g(x)|v_{\varepsilon,z}(x,0)|^{2_\alpha^*}dx\right)^\frac{2}{{2_\alpha^*}}}\right)^\frac{N}{2\alpha}\notag
\\&=\frac{\alpha}{N}S^\frac{N}{2\alpha}+O\left(\varepsilon^{N-2\alpha}\lg\frac{1}{\varepsilon}\right).\notag
\end{align}
Thus we have
\begin{equation}\label{e3.21}
\displaystyle\inf_{u\in N_\infty^1} I_\infty^1(\omega)\leq\frac{\alpha}{N}S^\frac{N}{2\alpha},~\text{as}~\varepsilon\rightarrow0^+.
\end{equation}

Since $g(x)\leq1$, from (\ref{e3.20}) and (\ref{e3.21}) we have
\begin{align}
\frac{\alpha}{N}S^\frac{N}{2\alpha}&\leq\displaystyle\inf_{\omega\in N^\infty} I^\infty(\omega)\nonumber\\&=\displaystyle\inf_{\omega\in X^\alpha(\mathbb{R}_+^{N+1}) \backslash\{0\}}\sup_{t\geq0}I^\infty(t\omega)\notag
\\&\leq\displaystyle\inf_{\omega\in X^\alpha(\mathbb{R}_+^{N+1}) \backslash\{0\}}\sup_{t\geq0} I_\infty^1(t\omega)\notag
\\& =\displaystyle\inf_{\omega\in N_\infty^1} I_\infty^1(\omega)\nonumber\\&\leq\frac{\alpha}{N}S^\frac{N}{2\alpha}.\notag
\end{align}
\end{proof}

\section{\normalsize{Proof of Theorem \ref{t1.1}}}\noindent

In this section, we use the category theory to discuss multiple positive solutions of system (\ref{e2.7}) and prove Theorem \ref{s1.1}.

\begin{proposition} \cite{23} \label{p4.1}
  Let $\mathcal{M}$ be a $\mathcal{C}^{1,1}$ complete Riemannian manifold (modelled on a Hilbert space) and assume that $F\in \mathcal{C}^1(\mathbb{R},\mathbb{R})$ bounded from below. Let $-\infty<\displaystyle\inf_\mathcal{M} F<a<b<+\infty$. Suppose that $F$ satisfies the (PS) condition on the sublevel $\{u\in \mathcal{M}; F(u)\leq b\}$ and that $a$ is not a critical level for $F$. Then
  \begin{equation*}
  \sharp\{u\in F^a;\, \nabla F(u)=0\}\geq cat_{F^a}(F^a),
  \end{equation*}
  where $F^a\equiv\{u\in \mathcal{M}; F(u)\leq a\}$.
  \end{proposition}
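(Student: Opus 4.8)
The plan is to prove the equivalent statement $cat_{F^a}(F^a)\le\sharp K$, where $K:=\{u\in F^a;\,\nabla F(u)=0\}$; since the case $\sharp K=\infty$ is trivial, assume $K=\{u_1,\dots,u_n\}$ is finite. First I would localise $K$ strictly below the level $a$: because $a$ is not a critical level, each $F(u_i)<a$, so $a_0:=\max_{1\le i\le n}F(u_i)<a$ and the interval $(a_0,a]$ contains no critical values of $F$. Using that $F$ is bounded below and satisfies $(PS)$ on $\{u\in\mathcal{M};\,F(u)\le b\}$, a minimizing sequence for $F$ eventually enters $\{F\le b\}$ and hence has a convergent subsequence, so $\inf_{\mathcal{M}}F$ is attained and is the smallest critical value; in particular $F^a\ne\emptyset$, so the asserted inequality is non-vacuous.

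Next I would build a good cover of $K$. Since $\mathcal{M}$ is a manifold modelled on a Hilbert space, each $u_i$ admits arbitrarily small balls that are contractible inside a chart; choosing radii $r_i>0$ small enough I obtain pairwise disjoint balls $B_i=B(u_i,r_i)$ with $\overline{B_i}\subset\{F<a\}$ and $cat_{F^a}(\overline{B_i})=1$. Setting $U:=\bigcup_{i=1}^n B_i$, subadditivity of the category gives $cat_{F^a}(\overline{U})\le n$.

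The core step is to deform $F^a$ into $\overline{U}$ within $F^a$. Let $c_1<\dots<c_m=a_0$ be the (finitely many) critical values in $F^a$. Using a locally Lipschitz pseudo-gradient vector field for $F$ — here the $\mathcal{C}^{1,1}$ structure of $\mathcal{M}$ and $F\in\mathcal{C}^1$ enter — together with $(PS)$ on $\{F\le b\}$, the non-critical interval theorem lets me deform $F^a$ onto $F^{c_m+\varepsilon}$; then the neighbourhood version of the quantitative deformation lemma, applied in turn at $c_m,c_{m-1},\dots,c_1$, gives deformations with $\eta(1,F^{c_j+\varepsilon_j})\subset F^{c_j-\varepsilon_j}\cup(\text{small balls around the critical points at level }c_j)$, while leaving untouched the balls already produced at higher levels. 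The flow decreases $F$, so everything stays in $F^a\subset\{F\le b\}$, and since $c_1=\inf_{\mathcal{M}}F$ forces $F^{c_1-\varepsilon_1}=\emptyset$, the composition of these deformations is a map $h\colon[0,1]\times F^a\to F^a$ with $h(0,\cdot)=\mathrm{id}$ and $h(1,F^a)\subset\overline{U}$. By deformation-monotonicity of the category we then get $cat_{F^a}(F^a)\le cat_{F^a}(\overline{U})\le n=\sharp K$, as desired.

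The main obstacle is precisely this deformation step: in the infinite-dimensional, merely $\mathcal{C}^1$ setting one must construct the pseudo-gradient field by patching local Lipschitz fields with a partition of unity on $\mathcal{M}$, verify that the induced flow is complete and stays in $\{F\le b\}$ so that $(PS)$ may be used to bound the passage time across each non-critical strip, and organise the descent through all the $c_j$ with a careful choice of the $\varepsilon_j$ and of the neighbourhoods so that those created at higher levels are not disturbed and the final map genuinely sends $F^a$ into $F^a$. Everything else is routine once the standard Ljusternik--Schnirelmann properties of $cat$ (subadditivity, monotonicity, invariance under deformation) are invoked.
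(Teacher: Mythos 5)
The paper itself offers no proof of this proposition: it is quoted verbatim from the cited reference \cite{23} (Cingolani--Lazzo). So the only question is whether your argument stands on its own, and it does not: the core deformation step is false as stated. You claim that the composition of the deformation lemmas yields a homotopy $h$ with $h(0,\cdot)=\mathrm{id}$ and $h(1,F^a)\subset\overline{U}$, where $\overline{U}$ is a \emph{disjoint} union of small contractible balls around the finitely many critical points. Take $\mathcal{M}=S^1$, $F(\theta)=\cos\theta$, $a=3/2$, $b=2$: all hypotheses hold, $K$ consists of the two poles, and $F^a=S^1$. Any continuous image of the connected set $S^1$ is connected, so $h(1,S^1)$ would lie in a single contractible ball, giving $cat_{S^1}(S^1)\leq 1$, contradicting $cat_{S^1}(S^1)=2$. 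The precise point of failure is the inclusion $\eta(1,F^{c_j+\varepsilon_j})\subset F^{c_j-\varepsilon_j}\cup(\text{small balls around }K_{c_j})$: the quantitative deformation lemma only gives $\eta(1,F^{c_j+\varepsilon_j}\setminus N_j)\subset F^{c_j-\varepsilon_j}$ for a neighborhood $N_j$ of $K_{c_j}$, and $\eta(1,N_j)$ is in general a large set (the time-one gradient flow is a homeomorphism near regular points and pushes points of $N_j$ well away from $K_{c_j}$), not a union of small balls.

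The standard repair keeps your descent through the critical levels but replaces the single global deformation by categorical bookkeeping at each level: with $N_j$ a closed neighborhood of $K_{c_j}$ chosen so that $cat_{F^a}(N_j)\leq\sharp K_{c_j}$, one applies subadditivity \emph{before} deforming,
\begin{equation*}
cat_{F^a}(F^{c_j+\varepsilon_j})\leq cat_{F^a}\bigl(F^{c_j+\varepsilon_j}\setminus N_j^{\circ}\bigr)+cat_{F^a}(N_j)\leq cat_{F^a}(F^{c_j-\varepsilon_j})+\sharp K_{c_j},
\end{equation*}
where only the closed set $F^{c_j+\varepsilon_j}\setminus N_j^{\circ}$ is deformed into $F^{c_j-\varepsilon_j}$ and the deformation-monotonicity of $cat$ is invoked there. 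Telescoping from $c_m$ down to $c_1=\inf_{\mathcal M}F$ (where $F^{c_1-\varepsilon_1}=\emptyset$) gives $cat_{F^a}(F^a)\leq\sharp K$ without ever asserting the impossible inclusion. Alternatively, one can argue via the minimax values $c_k=\inf\{\sup_{A}F;\ A\subset F^a\ \text{closed},\ cat_{F^a}(A)\geq k\}$, showing each is a critical value strictly below $a$ and that coincidences $c_k=c_{k+1}$ force infinitely many critical points. Your preliminary reductions (finiteness of $K$, $F(u_i)<a$, attainment of the infimum --- though the latter needs Ekeland's principle to turn a minimizing sequence into a $(PS)$ sequence) are fine; it is only the deformation into $\overline{U}$ that must be abandoned.
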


\begin{proposition} \cite{23} \label{p4.2}
 Let $Q,\, \Omega^+$ and $\Omega^-$ be closed sets with $\Omega^-\subset\Omega^+$, and $\phi: Q\rightarrow\Omega^+,$ $\varphi: \Omega^-\rightarrow Q$ be two continuous maps such that $\phi\circ\varphi$ is homotopically equivalent to the embedding $j:\Omega^-\rightarrow\Omega^+$. Then $cat_Q(Q)\geq cat_{\Omega^+}(\Omega^-)$.
\end{proposition}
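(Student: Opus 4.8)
The plan is to argue straight from the definition of Ljusternik--Schnirelmann category, transporting a minimal closed cover of $Q$ to a cover of $\Omega^-$ by pulling back along $\varphi$ and then pushing the contractions forward through $\phi$. Throughout I read the hypothesis ``$\phi\circ\varphi$ homotopically equivalent to $j$'' as: $\phi\circ\varphi\simeq j$ as maps $\Omega^-\to\Omega^+$.

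First I would fix $n=cat_Q(Q)$ (the case $n=+\infty$ being vacuous) and pick closed sets $A_1,\dots,A_n$ with $Q=\bigcup_{i=1}^n A_i$, each $A_i$ contractible in $Q$: for every $i$ there is a continuous $h_i\in C([0,1]\times A_i,Q)$ with $h_i(0,\cdot)=\mathrm{id}_{A_i}$ and $h_i(1,\cdot)\equiv p_i$ for some $p_i\in Q$. Then I would set $C_i:=\varphi^{-1}(A_i)\subset\Omega^-$. Continuity of $\varphi$ together with closedness of $A_i$ makes each $C_i$ closed in $\Omega^-$, hence closed in $\Omega^+$ since $\Omega^-$ is closed in $\Omega^+$; moreover $\bigcup_{i=1}^n C_i=\varphi^{-1}\big(\bigcup_{i=1}^n A_i\big)=\varphi^{-1}(Q)=\Omega^-$, so $\{C_i\}_{i=1}^n$ is a closed cover of $\Omega^-$.

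The heart of the argument is to check that each $C_i$ is contractible in $\Omega^+$. For this I would first form $G_i(t,x):=\phi\big(h_i(t,\varphi(x))\big)$ on $[0,1]\times C_i$: this is a continuous $\Omega^+$-valued homotopy from $(\phi\circ\varphi)|_{C_i}$ to the constant map $x\mapsto\phi(p_i)$. Second, I would use the hypothesis $\phi\circ\varphi\simeq j$ in $\Omega^+$ and restrict the corresponding homotopy to $C_i$, obtaining an $\Omega^+$-valued homotopy from $(\phi\circ\varphi)|_{C_i}$ to $j|_{C_i}$, i.e.\ to the inclusion $C_i\hookrightarrow\Omega^+$. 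Splicing the two homotopies shows that the inclusion of $C_i$ into $\Omega^+$ is null-homotopic, which is exactly the contractibility of $C_i$ in $\Omega^+$. Hence $\Omega^-$ is covered by $n$ sets closed in $\Omega^+$ and contractible in $\Omega^+$, so $cat_{\Omega^+}(\Omega^-)\le n=cat_Q(Q)$, as claimed.

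I expect the only delicate point to be the bookkeeping in the third step: one must be careful that every constructed homotopy has image inside $\Omega^+$ (not merely in some larger ambient space), that restricting the given homotopy $\phi\circ\varphi\simeq j$ to the pulled-back piece $C_i$ is legitimate, and that the category used is stable under passing to closed subsets of the closed set $\Omega^-$ (which is standard). No analysis is involved; everything else is formal.
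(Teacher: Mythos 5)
Your argument is correct: pulling back a minimal categorical cover of $Q$ through $\varphi$ and then concatenating the pushed-forward contraction $\phi\circ h_i(\cdot,\varphi(\cdot))$ with the given homotopy $\phi\circ\varphi\simeq j$ restricted to $C_i$ is exactly the standard proof of this lemma. The paper itself states Proposition 4.2 as a cited result from Cingolani--Lazzo without reproducing a proof, so there is nothing to compare against beyond noting that your write-up supplies the usual argument correctly, including the closedness of $C_i=\varphi^{-1}(A_i)$ in $\Omega^+$ and the containment of all homotopies in $\Omega^+$.
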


We will apply Propositions \ref{p4.1} and \ref{p4.2} to study the existence of multiple positive solutions of system (\ref{e2.7}).
Here is a scale operation on fractional Sobolev spaces.

\begin{lemma}\label{l4.1}
For any $\omega\in X^\alpha(\mathbb{R}_+^{N+1})$, given $\sigma>0$ and $x_0\in\mathbb{R}^N$, we consider the following scaled function
\begin{equation}
\rho(\omega)=\omega_\sigma:(x,y)\mapsto\sigma^\frac{N-\alpha}{2}\omega(\sigma(x-x_0),\sigma y).
\end{equation}
Then this scaling operation $\rho$ keeps constant norms $\|\omega_\sigma\|_{\dot{X}^\alpha}$ and $|\omega_\sigma(x,0)|_{2_\alpha^*}$ and is determined by the ``center" or ``concentration" point $x_0$ and the ``modulus" $\sigma$.
\end{lemma}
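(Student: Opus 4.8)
The plan is to establish both invariances by a direct change of variables, the only subtlety being the interplay between the Muckenhoupt weight $y^{1-2\alpha}$ and the dilation in the $y$-variable, which is what forces the normalization exponent. I will carry out the computation for a general power, writing $\omega_\sigma(x,y)=\sigma^{\beta}\omega(\sigma(x-x_0),\sigma y)$, and check that it is the value $\beta=\frac{N-2\alpha}{2}$ that makes everything cancel (note that this value also satisfies $\beta\,2_\alpha^{*}=N$, the homogeneity of the critical exponent).

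First I would handle the seminorm $\|\cdot\|_{\dot{X}^\alpha}$. By the chain rule $\nabla\omega_\sigma(x,y)=\sigma^{\beta+1}(\nabla\omega)(\sigma(x-x_0),\sigma y)$, so that
\begin{equation*}
\|\omega_\sigma\|_{\dot{X}^\alpha}^{2}=\int_{\mathbb{R}_{+}^{N+1}}y^{1-2\alpha}\,\sigma^{2\beta+2}\bigl|(\nabla\omega)(\sigma(x-x_0),\sigma y)\bigr|^{2}\,dx\,dy .
\end{equation*}
Substituting $x'=\sigma(x-x_0)$, $y'=\sigma y$ and using $dx\,dy=\sigma^{-(N+1)}\,dx'\,dy'$ together with $y^{1-2\alpha}=\sigma^{2\alpha-1}(y')^{1-2\alpha}$, the overall power of $\sigma$ produced is $(2\beta+2)+(2\alpha-1)-(N+1)=2\beta+2\alpha-N$, which is zero precisely for $\beta=\frac{N-2\alpha}{2}$; hence $\|\omega_\sigma\|_{\dot{X}^\alpha}=\|\omega\|_{\dot{X}^\alpha}$. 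The same change of variables shows $\omega_\sigma\in X^\alpha(\mathbb{R}_{+}^{N+1})$ whenever $\omega$ is, since the extra non-homogeneous term satisfies $\int_{\mathbb{R}^N}|\omega_\sigma(x,0)|^2dx=\sigma^{-2\alpha}\int_{\mathbb{R}^N}|\omega(x,0)|^2dx$, which is finite for every $\sigma>0$ (though the full norm $\|\cdot\|_{X^\alpha}$ is, of course, not scale invariant).

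For the trace norm the computation is shorter. Since $\omega_\sigma(x,0)=\sigma^{\beta}\omega(\sigma(x-x_0),0)$, the substitution $x'=\sigma(x-x_0)$ gives
\begin{equation*}
\int_{\mathbb{R}^N}|\omega_\sigma(x,0)|^{2_\alpha^{*}}\,dx=\sigma^{\beta\,2_\alpha^{*}-N}\int_{\mathbb{R}^N}|\omega(x',0)|^{2_\alpha^{*}}\,dx',
\end{equation*}
and the exponent $\beta\,2_\alpha^{*}-N$ vanishes by the choice of $\beta$, so $|\omega_\sigma(\cdot,0)|_{2_\alpha^{*}}=|\omega(\cdot,0)|_{2_\alpha^{*}}$. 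The last assertion of the lemma requires nothing further: the map $\rho$ in the definition is built only from the dilation factor $\sigma>0$ and the translation vector $x_0$, which play the roles of ``modulus'' and ``concentration point''. I do not expect any genuine obstacle here; the one point to watch is the bookkeeping of the weight $y^{1-2\alpha}$ under $y\mapsto\sigma y$, which is exactly what pins down the exponent $\frac{N-2\alpha}{2}$.
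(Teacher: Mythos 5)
Your proof is correct and follows the same route as the paper: a direct change of variables $z=\sigma(x-x_0)$, $t=\sigma y$ in both integrals. Your version is in fact more careful, because you track the net power of $\sigma$ explicitly, and this bookkeeping exposes something the paper's one-line proof glosses over: the normalization exponent that actually makes both $\|\cdot\|_{\dot{X}^\alpha}$ and $|\cdot(x,0)|_{2_\alpha^*}$ invariant is $\beta=\frac{N-2\alpha}{2}$ (consistent with the bubble $u_\varepsilon$ in (2.9)), whereas the lemma as printed uses $\sigma^{\frac{N-\alpha}{2}}$. With the printed exponent one gets $\|\omega_\sigma\|_{\dot{X}^\alpha}^2=\sigma^{\alpha}\|\omega\|_{\dot{X}^\alpha}^2$ and $|\omega_\sigma(\cdot,0)|_{2_\alpha^*}^{2_\alpha^*}=\sigma^{\frac{N\alpha}{N-2\alpha}}|\omega(\cdot,0)|_{2_\alpha^*}^{2_\alpha^*}$, so the statement is only true after correcting the exponent to $\frac{N-2\alpha}{2}$; the paper's proof never writes the Jacobian and weight factors and therefore does not notice the discrepancy. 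Your observation that $\beta\,2_\alpha^*=N$ pins down the same value from the trace side is a nice consistency check, and your remark that the full norm $\|\cdot\|_{X^\alpha}$ is not scale invariant (only the homogeneous part is) is accurate and worth keeping.
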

\begin{proof}
 We only need to prove that $\|\omega_\sigma\|_{\dot{X}^\alpha}=\|\omega\|_{\dot{X}^\alpha}$ and $|\omega_\sigma(x,0)|_{2_\alpha^*}=|\omega(x,0)|_{2_\alpha^*}$ as required.
 Indeed, let $z=\sigma(x-x_0)$ and $t=\sigma y$. Then $dz=\sigma^Ndx$ and $dt=\sigma dy$. Thus, we get
 \begin{equation*}
\displaystyle\int_{\mathbb{R}_+^{N+1}}y^{1-2\alpha}|\nabla\omega_\sigma|^2dxdy=\displaystyle\int_{\mathbb{R}_+^{N+1}}t^{1-2\alpha}|\nabla\omega|^2dzdt.
\end{equation*}
In an analogous manner, we can get $|\omega_\sigma(x,0)|_{2_\alpha^*}=|\omega(x,0)|_{2_\alpha^*}$ too.
\end{proof}

Based on Lemmas \ref{l2.3} and \ref{l4.1}, and \cite[Theorem 2.5]{24}, we can derive the following global compactness result immediately.

\begin{lemma}\label{l4.2}
Let $\{\omega_n\}\subset X^\alpha(\mathbb{R}_+^{N+1})\subset \dot{X}^\alpha(\mathbb{R}_+^{N+1})$ be a $(PS)$ sequence for $I^\infty$, where $I^\infty$ is given in Lemma \ref{l3.5}. Then there exist a number $k\in\mathbb{N}_0$, $k$ sequences of points $\{x_n^j\}\subset\mathbb{R}^N\ (1\leq j\leq k$), and $k+1$ sequences of functions $\{\omega_n^j\}\subset \dot{X}^\alpha(\mathbb{R}_+^{N+1})\ (0\leq j\leq k$) such that for some sequences, still denoted by $\{\omega_n\}$, there holds
\begin{equation*}
\omega_n(x,y)=\omega^0_n(x,y)+\displaystyle\Sigma_{j=1}^k\frac{1}{(\sigma_n^j)^\frac{N-\alpha}{2}}\omega_n^j\left(\frac{x-x_n^j}{\sigma_n^j},\frac{y}{\sigma_n^j}\right)
\end{equation*}
and
\begin{equation*}
\omega_n^j\rightarrow\omega^j~\text{in}~\dot{X}^\alpha(\mathbb{R}_+^{N+1}),~0\leq j\leq k,
\end{equation*}
where $\omega^0$ is a solution of
\begin{equation*}
\left\{\begin{array}{ll}
div(y^{1-2\alpha}\nabla\omega)=0,~ ~~~~~~&\text{in}~\mathbb{R}_+^{N+1},\\
-\frac{\partial\omega}{\partial\nu}=-\omega+|\omega|^{2_\alpha^*-2}\omega,~~&\text{on}~\mathbb{R}^N\times\{0\},
\end{array}\right.
\end{equation*}
and $\omega^j\ (1\leq j\leq k)$ are solutions of
\begin{equation*}
\left\{\begin{array}{ll}
div(y^{1-2\alpha}\nabla\omega)=0,~~~&\text{in}~\mathbb{R}_+^{N+1},\\
-\frac{\partial\omega}{\partial\nu}=|\omega|^{2_\alpha^*-2}\omega,~~~~~&\text{on}~\mathbb{R}^N\times\{0\}.
\end{array}\right.
\end{equation*}
Moreover, we have
\begin{itemize}
  \item If $x_n^j\rightarrow\overline{x}^j$, then either $\sigma_n^j\rightarrow+\infty$ or $\sigma_n^j\rightarrow0$.
  \item If $|x_n|\rightarrow+\infty$, then
  \begin{equation*}
\left\{\begin{array}{ll}
\sigma_n^j\rightarrow+\infty,\ \text{or}\\
\sigma_n^j\rightarrow0,\ \text{or}\\
\sigma_n^j\rightarrow\overline{\sigma}^j,~~0<\overline{\sigma}^j<+\infty.
\end{array}\right.
\end{equation*}
\end{itemize}

We further have
\begin{equation*}
\|\omega_n\|_{\dot{X}^\alpha}^2\rightarrow\displaystyle\Sigma_{j=0}^k\|\omega^j\|_{\dot{X}^\alpha}^2
\end{equation*}
and
\begin{equation*}
I^\infty(\omega_n)\rightarrow I^\infty(\omega^0)+\displaystyle\Sigma_{j=1}^k\widehat{I}^\infty(\omega^j),\ \text{as}\ n\rightarrow\infty,
\end{equation*}
 where $\widehat{I}^\infty(\omega^j)=\frac{1}{2}\|\omega^j\|_{\dot{X}^\alpha}^2-\frac{1}{2_\alpha^*}\displaystyle\int_{\mathbb{R}^N} |\omega^j(x,0)|^{2_\alpha^*}dx$ for $1\leq j\leq k$.
\end{lemma}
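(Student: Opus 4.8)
The plan is to run Struwe's iterated blow-up argument in the extension picture, using the scaling invariance of Lemma~\ref{l4.1}, the Sobolev inequality of Lemma~\ref{l2.3}, the local compactness of Lemma~\ref{l2.4}, and the purely critical global compactness theorem \cite[Theorem~2.5]{24}. First I would check that a $(PS)$ sequence $\{\omega_n\}$ for $I^\infty$ with $I^\infty(\omega_n)\to c$ is bounded in $X^\alpha(\mathbb{R}_+^{N+1})$, which is immediate from
\begin{equation*}
I^\infty(\omega_n)-\tfrac{1}{2_\alpha^*}(I^\infty)'(\omega_n)\omega_n=\left(\tfrac12-\tfrac{1}{2_\alpha^*}\right)\|\omega_n\|_{X^\alpha}^2
\end{equation*}
together with $I^\infty(\omega_n)$ bounded and $(I^\infty)'(\omega_n)\to0$. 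Passing to a subsequence, $\omega_n\rightharpoonup\omega^0$ in $X^\alpha(\mathbb{R}_+^{N+1})$, $\omega_n(\cdot,0)\to\omega^0(\cdot,0)$ in $L^r_{\mathrm{loc}}(\mathbb{R}^N)$ for $r\in[2,2_\alpha^*)$ and a.e.\ by Lemma~\ref{l2.4}; a routine test-function argument then shows $(I^\infty)'(\omega^0)=0$, so $\omega^0$ solves the first limiting system, the one containing the $-\omega$ term.

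Next I would set $\omega_n^{(1)}:=\omega_n-\omega^0$ and, by the Br\'ezis--Lieb lemma applied both to the weighted gradient term on $\mathbb{R}_+^{N+1}$ and to the critical trace term on $\mathbb{R}^N$, split
\begin{equation*}
\|\omega_n^{(1)}\|_{\dot{X}^\alpha}^2=\|\omega_n\|_{\dot{X}^\alpha}^2-\|\omega^0\|_{\dot{X}^\alpha}^2+o(1),\qquad
|\omega_n^{(1)}(\cdot,0)|_{2_\alpha^*}^{2_\alpha^*}=|\omega_n(\cdot,0)|_{2_\alpha^*}^{2_\alpha^*}-|\omega^0(\cdot,0)|_{2_\alpha^*}^{2_\alpha^*}+o(1).
\end{equation*}
Since the persistent $L^2$ mass is carried by $\omega^0$, the remainder $\{\omega_n^{(1)}\}$ becomes a $(PS)$ sequence for the scale-invariant functional $\widehat I^\infty$ at level $c-I^\infty(\omega^0)$. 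If $\omega_n^{(1)}\to0$ strongly we stop with $k=0$; otherwise a Lions-type concentration-function analysis (built on Lemma~\ref{l2.4}) yields scales $\sigma_n^1>0$ and centers $x_n^1\in\mathbb{R}^N$ such that the functions obtained from $\omega_n^{(1)}$ by the rescaling of Lemma~\ref{l4.1} with parameters $\sigma_n^1,x_n^1$ converge weakly in $\dot{X}^\alpha(\mathbb{R}_+^{N+1})$ to some $\omega^1\ne0$; because that rescaling preserves $\|\cdot\|_{\dot{X}^\alpha}$ and $|\cdot(x,0)|_{2_\alpha^*}$ (Lemma~\ref{l4.1}) while the lower-order $L^2$ term acquires a vanishing scaling factor (or its mass escapes to infinity), $\omega^1$ solves the purely critical limiting equation.

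Then I would iterate: let the next remainder be the just-rescaled sequence minus $\omega^1$, again a $(PS)$ sequence for $\widehat I^\infty$, now at the strictly lower level $c-I^\infty(\omega^0)-\widehat I^\infty(\omega^1)$, and repeat. By Lemma~\ref{l2.3} every nontrivial critical point of $\widehat I^\infty$ has energy at least $\frac{\alpha}{N}S^\frac{N}{2\alpha}$, and the Br\'ezis--Lieb splitting keeps the residual levels nonnegative and additive, so the level drops by at least $\frac{\alpha}{N}S^\frac{N}{2\alpha}$ at each stage; hence the procedure terminates after finitely many steps $k\in\mathbb{N}_0$ with the last remainder going to $0$. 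Unwinding the rescalings gives the stated representation of $\omega_n$, and the telescoped Br\'ezis--Lieb identities give $\|\omega_n\|_{\dot{X}^\alpha}^2\to\sum_{j=0}^k\|\omega^j\|_{\dot{X}^\alpha}^2$ and $I^\infty(\omega_n)\to I^\infty(\omega^0)+\sum_{j=1}^k\widehat I^\infty(\omega^j)$. The trichotomy for $\sigma_n^j$ and the split between $x_n^j$ bounded and $|x_n^j|\to\infty$ are read off from the fact that each $\omega^j$ $(j\ge1)$ belongs to $\dot{X}^\alpha(\mathbb{R}_+^{N+1})$ but not to $X^\alpha(\mathbb{R}_+^{N+1})$ (its decay being only $|x|^{-(N-2\alpha)}$, hence not $L^2$ when $N\le4\alpha$) and solves a scale-invariant equation, while $\{\omega_n\}$ stays bounded in the full $X^\alpha$-norm. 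Indeed, once $\omega^0$ is split off, what remains is precisely the framework of \cite[Theorem~2.5]{24}, so most of the argument reduces to invoking that result.

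I expect the main obstacle to be the control of the lower-order $L^2$ term through the rescalings: one must verify that $\omega^0$, extracted at unit scale, is the only profile carrying the $-\omega$ term, that every rescaled residual profile solves the scale-invariant equation, and that each remainder sequence is a genuine $(PS)$ sequence for $\widehat I^\infty$ with a strict energy drop of $\frac{\alpha}{N}S^\frac{N}{2\alpha}$ — the extra care being forced by the bubbles living only in the homogeneous space $\dot{X}^\alpha(\mathbb{R}_+^{N+1})$ and by the absence of a compact embedding at the critical exponent. This last point is handled exactly as in \cite{24}.
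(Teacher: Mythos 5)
Your proposal is correct and follows essentially the same route as the paper: the paper proves this lemma by a one-line reduction to the Benci--Cerami global compactness theorem \cite[Theorem 2.5]{24} combined with the scaling invariance of Lemma \ref{l4.1} and the trace Sobolev inequality of Lemma \ref{l2.3}, which is exactly the reduction you arrive at after sketching the underlying Struwe iteration. Your additional detail (boundedness of the $(PS)$ sequence, the Br\'ezis--Lieb splitting, and the energy quantization by $\frac{\alpha}{N}S^{\frac{N}{2\alpha}}$ forcing termination) is the standard content of that cited theorem and is consistent with the statement.
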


\begin{corollary}\label{c4.1}
Let $\{\omega_n\}\subset X^\alpha(\mathbb{R}_+^{N+1})$ be a nonnegative function sequence with $|\omega_n(x,0)|_{2_\alpha^*}=1$ and $\|\omega_n\|_{X^\alpha}^2\rightarrow S$. Then there exists a sequence $(x_n,\varepsilon_n)\in\mathbb{R}^N\times\mathbb{R}^+$ such that
\begin{equation*}
\omega_n(x,y):=\frac{1}{S^\frac{N-2\alpha}{4\alpha}}E_\alpha(u_{\varepsilon_n}(x-x_n))+o(1)
\end{equation*}
 in $\dot{X}^\alpha(\mathbb{R}_+^{N+1})$, where $u_\varepsilon$ is defined in (\ref{e2.9}). Moreover, if $x_n\rightarrow\overline{x}$ then $\varepsilon_n\rightarrow0$ or it is unbounded.
\end{corollary}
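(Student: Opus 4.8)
The plan is to treat $\{\omega_n\}$ as an extremal sequence for the fractional Sobolev inequality of Lemma~\ref{l2.3}, convert it (after a fixed rescaling by a constant) into a Palais--Smale sequence for $I^\infty$ at the least energy level $\frac{\alpha}{N}S^{N/2\alpha}$, run the global compactness decomposition of Lemma~\ref{l4.2} on it, and then keep only one profile by a bookkeeping of $\dot{X}^\alpha$-norms. First, the hypotheses force $\|\omega_n\|_{\dot{X}^\alpha}^2\to S$ and $|\omega_n(x,0)|_2\to0$: indeed $|\omega_n(x,0)|_{2_\alpha^*}=1$ and Lemma~\ref{l2.3} give $\|\omega_n\|_{\dot{X}^\alpha}^2\ge S$, while $\|\omega_n\|_{X^\alpha}^2=\|\omega_n\|_{\dot{X}^\alpha}^2+|\omega_n(x,0)|_2^2\to S$. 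Setting $v_n:=S^{\frac{N-2\alpha}{4\alpha}}\omega_n$, one has $|v_n(x,0)|_{2_\alpha^*}^{2_\alpha^*}=S^{N/2\alpha}$, $\|v_n\|_{\dot{X}^\alpha}^2\to S^{N/2\alpha}$ and $|v_n(x,0)|_2\to0$, so $\{v_n\}$ lies asymptotically on $N^\infty$ with $I^\infty(v_n)\to\frac{\alpha}{N}S^{N/2\alpha}=\inf_{N^\infty}I^\infty$ (Lemma~\ref{l3.5}). Using Ekeland's variational principle on the $\mathcal{C}^1$ manifold $N^\infty$, the fact that a Palais--Smale sequence constrained to $N^\infty$ is unconstrained, and $|v_n(x,0)|_2\to0$ (so that the subcritical $L^2$-trace contribution does not prevent this), after a modification by $o(1)$ in $\dot{X}^\alpha$ I may assume $\{v_n\}$ is a $(PS)$ sequence for $I^\infty$ at level $\frac{\alpha}{N}S^{N/2\alpha}$.

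Next I would apply Lemma~\ref{l4.2} to $\{v_n\}$, producing $k\in\mathbb{N}_0$, points $x_n^j$, scales $\sigma_n^j$, and functions $v_n^j\to v^j$ in $\dot{X}^\alpha$ ($0\le j\le k$) with $v_n=v_n^0+\sum_{j=1}^{k}(\sigma_n^j)^{-\frac{N-2\alpha}{2}}v_n^j(\frac{x-x_n^j}{\sigma_n^j},\frac{y}{\sigma_n^j})$ and $\|v_n\|_{\dot{X}^\alpha}^2\to\sum_{j=0}^{k}\|v^j\|_{\dot{X}^\alpha}^2$, where $v^0$ solves the extension problem carrying the term $-\omega$ and each $v^j$ $(j\ge1)$ solves $-\partial_\nu\omega=|\omega|^{2_\alpha^*-2}\omega$. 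For any nonzero profile, the corresponding equation combined with Lemma~\ref{l2.3} yields $\|v^j\|_{\dot{X}^\alpha}^2\ge S^{N/2\alpha}$ (for $j\ge1$ directly from $\|v^j\|_{\dot{X}^\alpha}^2=|v^j(x,0)|_{2_\alpha^*}^{2_\alpha^*}$, and for $j=0$ from $\|v^0\|_{\dot{X}^\alpha}^2\le|v^0(x,0)|_{2_\alpha^*}^{2_\alpha^*}$ plus Sobolev). Since $\sum_{j=0}^{k}\|v^j\|_{\dot{X}^\alpha}^2=S^{N/2\alpha}$, exactly one profile is nonzero; were it $v^0$, then $\|v^0\|_{\dot{X}^\alpha}^2=S^{N/2\alpha}$ and the chain of inequalities collapses to $|v^0(x,0)|_2=0$, so $v^0\equiv0$ (an $\alpha$-harmonic function in $\dot{X}^\alpha$ with vanishing trace is the zero extension), a contradiction — equivalently, the fractional Pohozaev identity excludes any nontrivial solution of the $-\omega$ equation. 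Hence $v^0=0$ and $k=1$.

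Because $v^0=0$, the residual defining $v^1$ is $v_n\ge0$, so $v^1\ge0$; and $\|v^1\|_{\dot{X}^\alpha}^2=S^{N/2\alpha}$ makes $v^1$ an extremal of the fractional Sobolev inequality, so by the classification of these extremals (those appearing in Lemma~\ref{l2.3}, cf.~\cite{17,400}) $v^1=E_\alpha(u_{\varepsilon_0}(\cdot-z_0))$ for some $\varepsilon_0>0$, $z_0\in\mathbb{R}^N$. Invoking $v_n^1\to v^1$ in $\dot{X}^\alpha$ and the scaling relations of Lemma~\ref{l4.1}, which convert $(\sigma)^{-\frac{N-2\alpha}{2}}E_\alpha(u_{\varepsilon_0})(\cdot/\sigma,\cdot/\sigma)$ into $E_\alpha(u_{\varepsilon_0\sigma})$, the decomposition becomes $v_n=E_\alpha(u_{\varepsilon_n}(\cdot-x_n))+o(1)$ in $\dot{X}^\alpha$ with $\varepsilon_n:=\varepsilon_0\sigma_n^1$ and $x_n:=x_n^1+\sigma_n^1 z_0$; dividing by $S^{\frac{N-2\alpha}{4\alpha}}$ gives the asserted expansion for $\omega_n$. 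For the last assertion, the dichotomy in Lemma~\ref{l4.2} shows that $\sigma_n^1\to\overline\sigma^1\in(0,\infty)$ along a subsequence can only occur when $|x_n^1|\to\infty$; since $x_n=x_n^1+\sigma_n^1 z_0$, this would force $|x_n|\to\infty$, contradicting $x_n\to\overline x$. Hence $\sigma_n^1\to0$ or $\sigma_n^1\to\infty$, i.e. $\varepsilon_n\to0$ or $\varepsilon_n$ is unbounded.

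The part I expect to be delicate is the passage from the bare extremal sequence to a genuine $(PS)$ sequence for $I^\infty$ — the rescaling, the Ekeland step, and the verification that the vanishing of $|\omega_n(x,0)|_2$ (forced precisely by the hypothesis being on $\|\cdot\|_{X^\alpha}$ rather than $\|\cdot\|_{\dot{X}^\alpha}$) makes Lemma~\ref{l4.2} genuinely applicable — together with the exclusion of the potential-carried profile $v^0$, for which that same vanishing (or a fractional Pohozaev nonexistence argument) is the decisive input; once $v^0$ is ruled out, the surviving profile must be a concentrating or spreading bubble, which is exactly what yields the final statement.
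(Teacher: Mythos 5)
Your argument is correct and follows the route the paper intends: Corollary 4.1 is stated there without proof as an immediate consequence of the global compactness result (Lemma \ref{l4.2}), and your normalization $v_n=S^{\frac{N-2\alpha}{4\alpha}}\omega_n$, the passage to a $(PS)$ sequence for $I^\infty$ at level $\frac{\alpha}{N}S^{\frac{N}{2\alpha}}$, the single-bubble energy bookkeeping, and the exclusion of the profile $v^0$ via $|\omega_n(x,0)|_2\to0$ are exactly the standard way to extract it from that lemma. You have also silently replaced the paper's scaling exponent $\frac{N-\alpha}{2}$ by the dilation-invariant $\frac{N-2\alpha}{2}$, which is the correct normalization for $\dot{X}^\alpha(\mathbb{R}_+^{N+1})$.
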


We define a continuous map $\Phi:X^\alpha(\mathbb{R}_+^{N+1})\backslash G\rightarrow\mathbb{R}^N$ by
\begin{equation*}
X^\alpha(\mathbb{R}_+^{N+1})\ni w\longmapsto\Phi(w)=\frac{\displaystyle\int_{\mathbb{R}^N} x|\omega(x,0)-\omega_\lambda^+(x,0)|^{2_\alpha^*}dx}{\displaystyle\int_{\mathbb{R}^N} |\omega(x,0)-\omega_\lambda^+(x,0)|^{2_\alpha^*}dx},
\end{equation*}
where $G=\left\{u\in X^\alpha(\mathbb{R}_+^{N+1});\ \displaystyle\int_{\mathbb{R}^N} |\omega(x,0)-\omega_\lambda^+(x,0)|^{2_\alpha^*}dx=0 \right\}$.

\begin{lemma}\label{l4.3}
For each $0<\delta<r_0$, there exist $\Lambda_\delta,\delta_0>0$ such that if $\omega\in N_\infty^1$, $I_\infty^1(\omega)<\frac{\alpha}{N}S^\frac{N}{2\alpha}+\delta_0$ and $\lambda\in(0,\Lambda_\delta)$, then $\Phi(\omega)\in M_\delta$, where $M_\delta$ is defined in Remark \ref{r1.1}.
\end{lemma}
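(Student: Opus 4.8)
The plan is to argue by contradiction, using the global compactness result of Lemma \ref{l4.2} (equivalently Corollary \ref{c4.1}) together with Lemma \ref{l3.5}. Suppose the conclusion fails; then there exist $\delta_0\downarrow 0$, $\lambda_n\downarrow 0$ and functions $\omega_n\in N_\infty^1$ with $I_\infty^1(\omega_n)<\frac{\alpha}{N}S^{N/2\alpha}+\delta_0$ but $\Phi(\omega_n)\notin M_\delta$. Since $g(x)\le 1$ with $g(x)=g_\infty<1$ away from $M$ (Remark \ref{r1.1}), and $\inf_{N_\infty^1}I_\infty^1=\frac{\alpha}{N}S^{N/2\alpha}$ by Lemma \ref{l3.5}, the sequence $\{\omega_n\}$ is a minimizing sequence for $I_\infty^1$ on $N_\infty^1$. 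A standard Ekeland-variational-principle argument produces a $(PS)$ sequence for $I_\infty^1$ at level $\frac{\alpha}{N}S^{N/2\alpha}$, and by the usual projection comparison (using $g\le 1$) this yields a $(PS)$ sequence for $I^\infty$ at the same minimal level $\frac{\alpha}{N}S^{N/2\alpha}$. We may also assume $\omega_n\ge 0$ after replacing $\omega_n$ by $|\omega_n|$, and normalize by the scaling of Lemma \ref{l4.1} so that $|\omega_n(x,0)|_{2_\alpha^*}=1$ and $\|\omega_n\|_{\dot X^\alpha}^2\to S$.

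Next I would apply Corollary \ref{c4.1}: there is a sequence $(x_n,\varepsilon_n)\in\mathbb{R}^N\times\mathbb{R}^+$ with
\begin{equation*}
\omega_n(x,y)=\frac{1}{S^{\frac{N-2\alpha}{4\alpha}}}E_\alpha\big(u_{\varepsilon_n}(x-x_n)\big)+o(1)\quad\text{in }\dot X^\alpha(\mathbb{R}_+^{N+1}).
\end{equation*}
Here one must first rule out the ``vanishing'' and genuine ``dichotomy'' alternatives in Lemma \ref{l4.2}: if more than one bubble survived, or the limit $\omega^0\ne 0$, the energy would split into at least $\frac{2\alpha}{N}S^{N/2\alpha}$, contradicting that the level is exactly $\frac{\alpha}{N}S^{N/2\alpha}$; so $k=1$, $\omega^0=0$, and we get a single concentrating bubble. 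Combining with $I_\infty^1(\omega_n)\to\frac{\alpha}{N}S^{N/2\alpha}$ and the identity $I_\infty^1(\omega_n)=\frac{\alpha}{N}\big(\|\omega_n\|_{\dot X^\alpha}^2/(\int g|\omega_n(\cdot,0)|^{2_\alpha^*})^{2/2_\alpha^*}\big)^{N/2\alpha}$ forces $\int_{\mathbb{R}^N}g(x)|\omega_n(x,0)|^{2_\alpha^*}dx\to 1$, i.e. $\int (1-g(x))|\omega_n(x,0)|^{2_\alpha^*}dx\to 0$. Because $1-g(x)\ge 1-g_\infty>0$ uniformly outside any fixed neighbourhood $M_{\delta/2}$ of $M$, the mass of $|\omega_n(x,0)|^{2_\alpha^*}$ concentrates inside $M_{\delta/2}$; equivalently, the bubble centers satisfy $\operatorname{dist}(x_n,M)\to 0$.

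Finally I would compute the barycenter. Writing out $\Phi(\omega_n)$ and using $\omega_\lambda^+\to 0$ in $X^\alpha$ as $\lambda\to 0$ (Theorem \ref{t2.1}(iv)), the correction $\omega_\lambda^+$ contributes $o(1)$ to both numerator and denominator of $\Phi$; substituting the bubble profile and the explicit form \eqref{e2.9} of $u_\varepsilon$ (which is radially symmetric about $x_n$), a change of variables gives $\Phi(\omega_n)=x_n+o(1)$. Since $\operatorname{dist}(x_n,M)\to 0$, it follows that $\Phi(\omega_n)\in M_\delta$ for $n$ large, contradicting our assumption. The threshold $\Lambda_\delta$ comes from making $\|\omega_\lambda^+\|_{X^\alpha}$ small enough (via Theorem \ref{t2.1}(iv)) so that the $\omega_\lambda^+$-corrections are controlled, and $\delta_0$ is chosen small enough that the energy estimate forces the single-bubble concentration. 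The main obstacle is the concentration-compactness step: one has to carefully exclude vanishing and multi-bubble splitting and handle the non-autonomous weight $g$, then translate the energy gap $I_\infty^1(\omega_n)<\frac{\alpha}{N}S^{N/2\alpha}+\delta_0$ into the quantitative statement that the concentration point lies near $M$; controlling the interaction between the fixed profile $\omega_\lambda^+$ and the moving bubble in the definition of $\Phi$ is the remaining delicate point.
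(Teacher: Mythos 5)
Your proposal follows essentially the same route as the paper: argue by contradiction, use Lemma \ref{l3.5} to turn the near-minimality of $I_\infty^1(\omega_n)$ into a minimizing sequence for the Sobolev quotient, invoke Corollary \ref{c4.1} to extract a single bubble $u_{\varepsilon_n}(\cdot-x_n)$, show the concentration point must lie in (or escape to infinity away from) $M$ using $g\le 1$ with equality exactly on $M$ and $g_\infty<1$, and finish by computing the barycenter with the $\omega_\lambda^+$-correction absorbed into $o_\lambda(1)$ via Theorem \ref{t2.1}(iv). Two small remarks: the Ekeland step is an unnecessary detour (the paper simply projects onto $N^\infty$ and shows $t_n\to1$), and your bound ``$1-g(x)\ge 1-g_\infty$'' off $M_{\delta/2}$ is written backwards, since $(H_3)$ gives $g\ge g_\infty$, hence $1-g\le 1-g_\infty$; the uniform positive lower bound $1-g\ge c_\delta>0$ outside $M_{\delta/2}$ that you actually need does hold, but it follows from continuity of $g$, $g<1$ off $M$, and $g\to g_\infty<1$ at infinity, not from the inequality you quoted.
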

\begin{proof}
Suppose the contrary. Then there exists a sequence $\{\omega_n\}\subset N_\infty^1$ such that $I_\infty^1(\omega_n)<\frac{\alpha}{N}S^\frac{N}{2\alpha}+o_n(1)$, while
$\lambda\rightarrow0^+$, and
\begin{equation}\label{e4.2}
\Phi(\omega_n)\not\in M_\delta ~\text{for}~n\in\mathbb{N}.
\end{equation}
Since
\begin{align}
1+\frac{\alpha}{N}S^\frac{N}{2}&>I_\infty^1(\omega_n)\notag
\\&=I_\infty^1(\omega_n)-\frac{1}{2_\alpha^*}(I_\infty^1)'(\omega_n)\omega_n\notag
\\&=\left(\frac{1}{2}-\frac{1}{2_\alpha^*}\right)\|\omega_n\|_{X^\alpha}^2,\notag
\end{align}
we see that $\{\omega_n\}$ is bounded in $X^\alpha(\mathbb{R}_+^{N+1})$.
According to Lemma \ref{l3.5},
 there is a sequence $\{t_n\}\subset\mathbb{R}^+$ such that $\{t_n \omega_n\}\in N^\infty$ and
\begin{align}
\frac{\alpha}{N}S^\frac{N}{2\alpha}\leq I^\infty(t_n\omega_n)\leq I_\infty^1(t_n\omega_n)
=\frac{\alpha}{N}t_n^2\|\omega_n\|_{X^\alpha}^2\leq I_\infty^1(\omega_n)=\frac{\alpha}{N}\|\omega_n\|_{X^\alpha}^2\leq\frac{\alpha}{N}S^\frac{N}{2\alpha}+o_n(1).\notag
\end{align}
Consequently, we have $t_n=1+o_n(1)$ as $n\rightarrow\infty$ and
\begin{align}\label{e4.3}
\displaystyle\lim_{n\rightarrow\infty}I^\infty(\omega_n)&=\displaystyle\lim_{n\rightarrow\infty}\frac{\alpha}{N}\|\omega_n\|_{X^\alpha}^2
\nonumber\\&=\displaystyle\lim_{n\rightarrow\infty}\frac{\alpha}{N}\displaystyle\int_{\mathbb{R}^N}  |\omega_n(x,0)|^{2_\alpha^*} dx\notag
\\&=\displaystyle\lim_{n\rightarrow\infty}\frac{\alpha}{N}\displaystyle\int_{\mathbb{R}^N}g(x)|\omega_n(x,0)|^{2_\alpha^*} dx
\nonumber\\&=\frac{\alpha}{N}S^\frac{N}{2\alpha}+o_n(1).
\end{align}

Set
\begin{equation*}
U_n=\frac{\omega_n}{\left(\displaystyle\int_{\mathbb{R}^N}|\omega_n(x,0)|^{2_\alpha^*} dx\right)^\frac{1}{2_\alpha^*}}.
\end{equation*}
Then $\displaystyle\int_{\mathbb{R}^N}|U_n(x,0)|^{2_\alpha^*} dx=1$. From (\ref{e4.3}) we deduce that
\begin{equation*}
\displaystyle\lim_{n\rightarrow\infty}\|U_n\|_{X^\alpha}^2=S,
\end{equation*}
which, together with (\ref{e4.3}) and Corollary \ref{c4.1}, implies that there exists a sequence $\{(x_n,\varepsilon_n)\}\subset\mathbb{R}^N\times\mathbb{R}^+$ such that
\begin{equation}\label{e4.4}
U_n(x,y):=\frac{1}{S^\frac{N-\alpha}{4}}E_\alpha(u_{\varepsilon_n}(x-x_n))+o_n(1).
\end{equation}
Moreover, when $n\rightarrow\infty$, $\{x_n\}\rightarrow\overline{x}$ or go to $\infty$. So we consider two cases here.

Case 1. Suppose that $\{x_n\}\rightarrow\infty$ as $n\rightarrow\infty$. From (\ref{e4.3}) and (\ref{e4.4}) we deduce that
\begin{align}
1&=\frac{\displaystyle\int_{\mathbb{R}^N}g(x)|\omega_n(x,0)|^{2_\alpha^*} dx}{\displaystyle\int_{\mathbb{R}^N}|\omega_n(x,0)|^{2_\alpha^*} dx}+o_n(1)\notag
\\&=\displaystyle\int_{\mathbb{R}^N}g(x)|U_n(x,0)|^{2_\alpha^*} dx+o_n(1)\notag
\\&=S^{-\frac{N}{2\alpha}}\displaystyle\int_{\mathbb{R}^N}g(x)|u_{\varepsilon_n}(x-x_n)|^{2_\alpha^*} dx+o_n(1)\notag
\\&=S^{-\frac{N}{2\alpha}}\displaystyle\int_{\mathbb{R}^N}g(x+x_n)|u_{\varepsilon_n}(x)|^{2_\alpha^*} dx+o_n(1)\notag
\\&=g_\infty,\notag
\end{align}
which contradicts the definition of $g_\infty$.

Case 2. Suppose that $\{x_n\}\rightarrow\overline{x}$ as $n\rightarrow\infty$. We deduce from Corollary \ref{c4.1} that $\varepsilon_n\rightarrow0$ as $n\rightarrow\infty$.  It follows  (\ref{e4.3}
) and (\ref{e4.4}) that
\begin{align}\label{e4.5}
1&=\frac{\displaystyle\int_{\mathbb{R}^N}g(x)|\omega_n(x,0)|^{2_\alpha^*} dx}{\displaystyle\int_{\mathbb{R}^N}|\omega_n(x,0)|^{2_\alpha^*} dx}+o_n(1)\notag
\\&=\displaystyle\int_{\mathbb{R}^N}g(x)|U_n(x,0)|^{2_\alpha^*} dx+o_n(1)\notag
\\&=S^{-\frac{N}{2\alpha}}\displaystyle\int_{\mathbb{R}^N}g(x)|u_{\varepsilon_n}(x-x_n)|^{2_\alpha^*} dx+o_n(1)\notag
\\&=S^{-\frac{N}{2\alpha}}\displaystyle\int_{\mathbb{R}^N}g(\sqrt{\varepsilon_n}x+x_n)|u_1(x)|^{2_\alpha^*} dx+o_n(1)\notag
\\&=g(\overline{x}),
\end{align}
where $u_1(x)=u_\varepsilon(x)$ for $\varepsilon=1$. (\ref{e4.5}) implies that $\overline{x}\in M$.
Moreover, we have
\begin{align}
\Phi(\omega_n)&=\frac{\displaystyle\int_{\mathbb{R}^N} x|\omega_n(x,0)-\omega_\lambda^+(x,0)|^{2_\alpha^*}dx}{\displaystyle\int_{\mathbb{R}^N} |\omega_n(x,0)-\omega_\lambda^+(x,0)|^{2_\alpha^*}dx}\notag
\\&=\frac{\displaystyle\int_{\mathbb{R}^N} x|\omega_n(x,0)|^{2_\alpha^*}dx}{\displaystyle\int_{\mathbb{R}^N} |\omega_n(x,0)|^{2_\alpha^*}dx}+o_\lambda(1),~\text{as}~\lambda\rightarrow0\notag
\\&=\frac{\displaystyle\int_{\mathbb{R}^N} x|U_n(x,0)|^{2_\alpha^*}dx}{\displaystyle\int_{\mathbb{R}^N} |U_n(x,0)|^{2_\alpha^*}dx}+o_\lambda(1)\nonumber\\&=\frac{\displaystyle\int_{\mathbb{R}^N} x|u_{\varepsilon_n}(x-x_n)|^{2_\alpha^*}dx}{\displaystyle\int_{\mathbb{R}^N} |u_{\varepsilon_n}(x-x_n)|^{2_\alpha^*}dx}+o_\lambda(1)\notag
\\&=\frac{\displaystyle\int_{\mathbb{R}^N} (x_n+\sqrt{\varepsilon_n}x)|u_1(x)|^{2_\alpha^*}dx}{\displaystyle\int_{\mathbb{R}^N} |u_1(x)|^{2_\alpha^*}dx}+o_\lambda(1)\notag
\\&\rightarrow \overline{x}\in M~\text{as}~n\rightarrow\infty,\notag
\end{align}
which is a contradiction with (\ref{e4.2}).
\end{proof}

\begin{lemma}\label{l4.4}
There exists $\lambda_\delta>0$ small enough such that if $\lambda\in(0,\lambda_\delta)$ and $\omega\in N_\lambda^-$ with $I_\lambda(\omega)<\frac{\alpha}{N}S^\frac{N}{2\alpha}+\frac{\delta_0}{2}$ ($\delta_0$ is given in Lemma \ref{l4.3}). Then we have $\Phi(\omega)\in M_\delta$.
\end{lemma}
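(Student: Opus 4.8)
The plan is to argue by contradiction. Suppose no such $\lambda_\delta$ exists; then there are sequences $\lambda_n\to 0^+$ and $\omega_n\in N_{\lambda_n}^-$ with $I_{\lambda_n}(\omega_n)<\frac{\alpha}{N}S^{N/(2\alpha)}+\frac{\delta_0}{2}$ but $\Phi(\omega_n)\notin M_\delta$. First I would record the uniform bounds that drive the rest. By Lemma \ref{l2.5} ($I_\lambda$ is coercive on $N_\lambda$, uniformly for small $\lambda$) together with the energy bound, $\{\omega_n\}$ is bounded in $X^\alpha(\mathbb{R}_+^{N+1})$; by \eqref{e2.14} (membership in $N_{\lambda_n}^-$), $\|\omega_n\|_{X^\alpha}$ is bounded away from $0$; and feeding $\lambda_n\to 0$ into the Nehari identity \eqref{e2.10} (using $f_-\le 0$ and $\lambda_n|f_+|_{q^*}S^{-q/2}\|\omega_n\|_{X^\alpha}^q\to 0$) shows $\int_{\mathbb{R}^N}g(x)|\omega_n(x,0)|^{2_\alpha^*}dx$ is bounded away from $0$ and from above. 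Finally, by Theorem \ref{t2.1}(iv), $\|\omega_{\lambda_n}^+\|_{X^\alpha}\to 0$; this is the structural fact that makes $\Phi$ effectively scale-invariant along the sequence.

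Next I would project each $\omega_n$ onto the limit Nehari set $N_\infty^1$ and control its energy there. By Lemma \ref{l3.1}(ii), for every $\mu\in(0,1)$ there is a unique $t_{\omega_n}^1>0$ with $t_{\omega_n}^1\omega_n\in N_\infty^1$ and $I_\infty^1(t_{\omega_n}^1\omega_n)\le(1-\mu)^{-N/(2\alpha)}\big(I_{\lambda_n}(\omega_n)+\tfrac{2-q}{2q}\mu^{q/(q-2)}C\lambda_n^{2/(2-q)}\big)$. Since at $\mu=0$ the prefactor is $1$ and $\frac{\alpha}{N}S^{N/(2\alpha)}+\frac{\delta_0}{2}<\frac{\alpha}{N}S^{N/(2\alpha)}+\delta_0$, I can fix $\mu$ small enough that $(1-\mu)^{-N/(2\alpha)}\big(\frac{\alpha}{N}S^{N/(2\alpha)}+\frac{\delta_0}{2}\big)<\frac{\alpha}{N}S^{N/(2\alpha)}+\delta_0$; then, since $\lambda_n\to 0$, for $n$ large one has $I_\infty^1(t_{\omega_n}^1\omega_n)<\frac{\alpha}{N}S^{N/(2\alpha)}+\delta_0$ and $\lambda_n<\Lambda_\delta$, so Lemma \ref{l4.3} applies to $t_{\omega_n}^1\omega_n$ and yields $\Phi(t_{\omega_n}^1\omega_n)\in M_\delta$. (The slack $\frac{\delta_0}{2}$ rather than $\delta_0$ in the hypothesis is exactly what absorbs the $(1-\mu)^{-N/(2\alpha)}$ inflation.) I would also note $t_{\omega_n}^1=\big(\|\omega_n\|_{X^\alpha}^2/\int_{\mathbb{R}^N}g(x)|\omega_n(x,0)|^{2_\alpha^*}dx\big)^{1/(2_\alpha^*-2)}$ stays in a compact subinterval of $(0,\infty)$ by the bounds from the first paragraph.

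Finally I would transfer the information from $t_{\omega_n}^1\omega_n$ back to $\omega_n$. Because $|\omega_{\lambda_n}^+(x,0)|_{2_\alpha^*}\to 0$ while $|\omega_n(x,0)|_{2_\alpha^*}$ and $|t_{\omega_n}^1\omega_n(x,0)|_{2_\alpha^*}$ are bounded away from $0$ and from above, expanding $|a-b|^{2_\alpha^*}$ and applying H\"older shows
\[
\Phi(\omega_n)=\frac{\int_{\mathbb{R}^N}x\,|\omega_n(x,0)|^{2_\alpha^*}dx}{\int_{\mathbb{R}^N}|\omega_n(x,0)|^{2_\alpha^*}dx}+o_n(1)=\Phi(t_{\omega_n}^1\omega_n)+o_n(1),
\]
the scaling $t_{\omega_n}^1$ cancelling in the degree-zero ratio. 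On the other hand, the proof of Lemma \ref{l4.3}, run on $\{t_{\omega_n}^1\omega_n\}$, in fact gives the stronger conclusion $\Phi(t_{\omega_n}^1\omega_n)\to\overline{x}$ for some $\overline{x}\in M$: after normalization, Corollary \ref{c4.1} forces $t_{\omega_n}^1\omega_n$ to resemble a translated standard bubble $E_\alpha(u_{\varepsilon_n}(\cdot-x_n))$, and the dichotomy of Cases~1--2 in Lemma \ref{l4.3} applies (escape to infinity contradicts $(H_3)$; concentration at $\overline{x}$ forces $g(\overline{x})=1$, i.e. $\overline{x}\in M$, by $(H_4)$). Combining the two displays, $\Phi(\omega_n)\to\overline{x}\in M$, so $\Phi(\omega_n)\in M_\delta$ for $n$ large, contradicting $\Phi(\omega_n)\notin M_\delta$.

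I expect the main obstacle to be precisely this last transfer. Citing Lemma \ref{l4.3} verbatim only produces the open-set membership $\Phi(t_{\omega_n}^1\omega_n)\in M_\delta$, which by itself does not survive the $o_n(1)$ perturbation; one genuinely has to re-use the bubble-decomposition mechanism behind Lemma \ref{l4.3} (and hence the global compactness of Lemma \ref{l4.2} and the scaling of Lemma \ref{l4.1}) to upgrade it to the honest convergence $\Phi(t_{\omega_n}^1\omega_n)\to\overline{x}\in M$. The delicate quantitative point there is ensuring the energy level $I_\infty^1(t_{\omega_n}^1\omega_n)$ is pushed close enough to $\frac{\alpha}{N}S^{N/(2\alpha)}$ for that mechanism to trigger — which is exactly the purpose of the $\frac{\delta_0}{2}$-slack and of choosing $\mu$ (and, if needed, letting $\mu=\mu_n\to 0$ diagonally). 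A secondary, routine point is checking that the barycenter numerator $\int_{\mathbb{R}^N}x\,|\omega_n(x,0)|^{2_\alpha^*}dx$ is finite and converges as claimed, which is immediate once the bubble profile of Corollary \ref{c4.1} and the decay of $\omega_{\lambda_n}^+$ are in hand.
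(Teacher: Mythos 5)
Your proof is correct and follows essentially the same route as the paper: project $\omega$ onto $N_\infty^1$ via Lemma \ref{l3.1}(ii), use the $\delta_0/2$ slack to absorb the $(1-\mu)^{-N/(2\alpha)}$ inflation for a fixed small $\mu$ and small $\lambda$, and invoke Lemma \ref{l4.3}. The paper's own proof is in fact terser --- it stops at ``by (\ref{e4.6}) and Lemma \ref{l4.3} we arrive at the desired result'' without addressing the passage from $\Phi(t_\omega^1\omega)\in M_\delta$ back to $\Phi(\omega)\in M_\delta$, so the transfer step you single out (using $\|\omega_\lambda^+\|_{X^\alpha}\to0$ and the degree-zero structure of the barycenter quotient, upgraded via the bubble decomposition behind Lemma \ref{l4.3} to genuine convergence $\Phi(t_{\omega_n}^1\omega_n)\to\overline{x}\in M$) is a gap in the paper's write-up that you correctly identified and filled, not a defect of your own argument.
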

\begin{proof}
For $\omega\in N_\lambda^-$ with $I_\lambda(\omega)<\frac{\alpha}{N}S^\frac{N}{2\alpha}+\frac{\delta_0}{2}$, we deduce from Lemma \ref{l3.1} $(ii)$ that there exists  a unique $t_\omega^1$ such that $t_\omega^1\omega\in N_\infty^1$ and
  \begin{equation*}
I_\infty^1(t_\omega^1\omega)\leq(1-\mu)^{-\frac{N}{2\alpha}}\left(I_\lambda(\omega)+\frac{2-q}{2q}\mu^\frac{q}{q-2}\lambda^\frac{2}{2-q}C\right)
\end{equation*}
for any $\mu\in(0,1)$.
Thus there exists small $\Lambda_\delta>0$ such that if $\lambda\in(0,\lambda_\delta)$, we have
 \begin{equation}\label{e4.6}
I_\infty^1(t_\omega^1\omega)\leq\frac{\alpha}{N}S^\frac{N}{2\alpha}+\delta_0.
\end{equation}
By (\ref{e4.6}) and Lemma \ref{l4.3},
we arrived at the desired result.
\end{proof}

Set $c_\lambda:=\alpha_\lambda^++\frac{\alpha}{N}S^\frac{N}{2\alpha}-\sigma(\varepsilon_0)$ and
\begin{equation*}
N_\lambda^-(c_\lambda):=\{\omega\in N_\lambda^-;I_\lambda(\omega)\leq c_\lambda\}.
\end{equation*}
%Then $cat_{M_\delta}(M)$ critical points of $I_\lambda$ can be obtained from $N_\lambda^-(c_\lambda)$.
%Firstly, we are going to show that the critical point where $I_\lambda$ is confined
%to $N_\lambda^-$ is actually the critical point where $I_\lambda$ is confined to  $X^\alpha(\mathbb{R}_+^{N+1})$.
%we show that a critical point of $I_\lambda$ restrict on $N_\lambda^-$ is in fact a critical point of $I_\lambda$  in $X^\alpha(\mathbb{R}_+^{N+1})$.
\begin{lemma}\label{l4.5}
If $\omega$ is a critical point of $I_\lambda$ restricted on $N_\lambda^-$, then it is a critical point of $I_\lambda$ in $X^\alpha(\mathbb{R}_+^{N+1})$.
\end{lemma}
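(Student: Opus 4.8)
The plan is to invoke the Lagrange multiplier rule on the constraint manifold $N_\lambda$, exactly as in the proof of Lemma \ref{l2.6}, the only difference being that the notion of a local minimizer is replaced by that of a critical point. First I would record that the functional $\Psi_\lambda$ defined in (\ref{ee2.12}) is nothing but $\Psi_\lambda(\omega)=I'_\lambda(\omega)\omega$ for every $\omega\in X^\alpha(\mathbb{R}_+^{N+1})$, so that
\begin{equation*}
N_\lambda=\{\omega\in X^\alpha(\mathbb{R}_+^{N+1})\backslash\{0\}:\Psi_\lambda(\omega)=0\}.
\end{equation*}
By Lemma \ref{l2.7}, $N_\lambda^0=\emptyset$ for $\lambda$ small, which means $\Psi'_\lambda(\omega)\omega\neq0$ for every $\omega\in N_\lambda$; in particular $\Psi'_\lambda(\omega)\neq0$ in $(X^\alpha(\mathbb{R}_+^{N+1}))^*$. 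Hence $N_\lambda$ is a $C^1$ Banach submanifold of $X^\alpha(\mathbb{R}_+^{N+1})$ of codimension one, and the continuity of the map $\omega\mapsto\Psi'_\lambda(\omega)\omega$ shows that $N_\lambda^-=\{\omega\in N_\lambda:\Psi'_\lambda(\omega)\omega<0\}$ is relatively open in $N_\lambda$.

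Now let $\omega\in N_\lambda^-$ be a critical point of $I_\lambda$ restricted to $N_\lambda^-$. Since $N_\lambda^-$ is open in $N_\lambda$, $\omega$ is also a critical point of $I_\lambda$ restricted to the whole manifold $N_\lambda$, so the Lagrange multiplier theorem supplies some $\theta\in\mathbb{R}$ with
\begin{equation*}
I'_\lambda(\omega)=\theta\,\Psi'_\lambda(\omega)\qquad\text{in }(X^\alpha(\mathbb{R}_+^{N+1}))^*.
\end{equation*}
Testing this identity with $\omega$ itself and using $I'_\lambda(\omega)\omega=\Psi_\lambda(\omega)=0$ (because $\omega\in N_\lambda$) gives $\theta\,\Psi'_\lambda(\omega)\omega=0$. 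But membership in $N_\lambda^-$ means precisely $\Psi'_\lambda(\omega)\omega<0$, so $\theta=0$, and therefore $I'_\lambda(\omega)=0$ in $(X^\alpha(\mathbb{R}_+^{N+1}))^*$; that is, $\omega$ is a critical point of $I_\lambda$ on all of $X^\alpha(\mathbb{R}_+^{N+1})$.

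The argument is short, and the only delicate point — the main obstacle — is the structural input that $N_\lambda^-$ is a genuine $C^1$ manifold on which the multiplier rule applies and on which the multiplier must necessarily vanish; both facts rest on $N_\lambda^0=\emptyset$ (Lemma \ref{l2.7}) together with the sign computation for $\Psi'_\lambda(\omega)\omega$ in (\ref{e2.12}). Consequently, one must keep $\lambda$ below the threshold $\Lambda_1$ — in fact below $\Lambda_3$, as throughout Section 4 — so that this structure is in force; no compactness or estimate beyond what is already collected in Section 2 is needed.
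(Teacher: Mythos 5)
Your argument is correct and is essentially the paper's own proof: both apply the Lagrange multiplier rule on $N_\lambda$, test the identity $I'_\lambda(\omega)=\theta\Psi'_\lambda(\omega)$ with $\omega$, and use $I'_\lambda(\omega)\omega=0$ together with $\Psi'_\lambda(\omega)\omega<0$ to force $\theta=0$. You additionally make explicit the structural facts the paper leaves implicit (that $N_\lambda^0=\emptyset$ guarantees $\Psi'_\lambda(\omega)\neq0$ so the multiplier rule applies, and that $N_\lambda^-$ is relatively open in $N_\lambda$), which is a welcome but not substantively different addition.
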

\begin{proof}
Let $\omega$ be  a critical point of $I_\lambda$ on $N_\lambda^-$, we have
\begin{equation*}
I'_\lambda(\omega)=\tau\Psi'_\lambda(\omega)
\end{equation*}
for some $\tau\in\mathbb{R}$, where $\Psi_\lambda$ is defined in (\ref{ee2.12}). Since $\omega\in N_\lambda^-$, we get
\begin{equation*}
0=I'_\lambda(\omega)\omega=\tau\Psi'_\lambda(\omega)\omega~\, \text{and}~\, \Psi'_\lambda(\omega)\omega<0,
\end{equation*}
which
 implies that $\tau=0$, i.e. $I'_\lambda(u)=0$.
\end{proof}

Denote by $I_{N_\lambda^-}$ the restriction of $I_\lambda$ on $N_\lambda^-$.

\begin{lemma}\label{l4.6}
$I_{N_\lambda^-}$ satisfies the $(PS)$ condition on $N_\lambda^-(c_\lambda)$.
\end{lemma}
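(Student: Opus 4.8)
The plan is to deduce the Palais--Smale condition for the constrained functional $I_{N_\lambda^-}$ on the sublevel $N_\lambda^-(c_\lambda)$ from the unconstrained one already established in Lemma \ref{l2.10}. Let $\{\omega_n\}\subset N_\lambda^-(c_\lambda)$ be a Palais--Smale sequence for $I_{N_\lambda^-}$, so $I_\lambda(\omega_n)\le c_\lambda$ and the differential of the restriction tends to zero. Since $N_\lambda^0=\emptyset$ for $\lambda\in(0,\Lambda_1)$ by Lemma \ref{l2.7}, the set $N_\lambda$ is a $C^{1,1}$ submanifold of $X^\alpha(\mathbb{R}_+^{N+1})$ cut out by the constraint $\Psi_\lambda$ and $N_\lambda^-$ is relatively open in it; by the coercivity of $I_\lambda$ on $N_\lambda$ (Lemma \ref{l2.5}) the sequence $\{\omega_n\}$ is bounded in $X^\alpha(\mathbb{R}_+^{N+1})$, hence $\{\Psi'_\lambda(\omega_n)\}$ is bounded in the dual space. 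The Lagrange multiplier rule then furnishes $\mu_n\in\mathbb{R}$ with $I'_\lambda(\omega_n)=\mu_n\Psi'_\lambda(\omega_n)+o(1)$ in $(X^\alpha(\mathbb{R}_+^{N+1}))^{*}$.

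The decisive step will be to show $\mu_n\to 0$. Testing the previous identity with $\omega_n$ and using $I'_\lambda(\omega_n)\omega_n=0$ (since $\omega_n\in N_\lambda$) gives $\mu_n\,\Psi'_\lambda(\omega_n)\omega_n=o(1)$, so it suffices to bound $\Psi'_\lambda(\omega_n)\omega_n$ uniformly away from $0$. Because $\omega_n\in N_\lambda^-$, identity (\ref{e2.12}) gives $\Psi'_\lambda(\omega_n)\omega_n<0$, and the argument behind (\ref{e2.14}) provides the $\lambda$- and $n$-independent lower bound $\|\omega_n\|_{X^\alpha}\ge c_*:=\left(\frac{2-q}{2_\alpha^*-q}S^{2_\alpha^*/2}\right)^{1/(2_\alpha^*-2)}>0$. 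If, along a subsequence, $\Psi'_\lambda(\omega_n)\omega_n\to 0$, then inserting this into the two forms of (\ref{e2.12}) and repeating the estimates in the proof of Lemma \ref{l2.7} (using also $l(x)\phi_{\omega_n(x,0)}\ge 0$) would force $c_*\le\|\omega_n\|_{X^\alpha}\le\lambda^{1/(2-q)}C+o(1)$, which is impossible once $\lambda$ is small, i.e. in the regime under consideration. Hence there is $\delta_1>0$ with $\Psi'_\lambda(\omega_n)\omega_n\le-\delta_1$ for all $n$, and since $\{\Psi'_\lambda(\omega_n)\}$ is bounded we conclude $\mu_n\to 0$, so $I'_\lambda(\omega_n)\to 0$ in $(X^\alpha(\mathbb{R}_+^{N+1}))^{*}$.

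It then follows that $\{\omega_n\}$ is a genuine $(PS)_c$ sequence for $I_\lambda$ with $c=\lim_n I_\lambda(\omega_n)\le c_\lambda=\alpha_\lambda^++\frac{\alpha}{N}S^{N/2\alpha}-\sigma(\varepsilon_0)<\alpha_\lambda^++\frac{\alpha}{N}S^{N/2\alpha}$, so Lemma \ref{l2.10} yields a subsequence with $\omega_n\to\omega$ strongly in $X^\alpha(\mathbb{R}_+^{N+1})$. To finish I will check that $\omega\in N_\lambda^-$: strong convergence gives $\Psi_\lambda(\omega)=0$ and $\|\omega\|_{X^\alpha}\ge c_*>0$, so $\omega\in N_\lambda=N_\lambda^+\cup N_\lambda^-$ (using $N_\lambda^0=\emptyset$), while $\Psi'_\lambda(\omega_n)\omega_n\to\Psi'_\lambda(\omega)\omega$ together with $\Psi'_\lambda(\omega_n)\omega_n\le-\delta_1$ forces $\Psi'_\lambda(\omega)\omega\le-\delta_1<0$, i.e. $\omega\in N_\lambda^-$. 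This establishes the $(PS)$ condition for $I_{N_\lambda^-}$ on $N_\lambda^-(c_\lambda)$.

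I expect the genuine obstacle to be the uniform separation $\Psi'_\lambda(\omega_n)\omega_n\le-\delta_1<0$ — that is, ruling out that a Palais--Smale sequence on $N_\lambda^-$ drifts toward the (empty) degenerate set $N_\lambda^0$; once this quantitative non-degeneracy is secured, the rest is a routine packaging of the Lagrange multiplier rule with Lemmas \ref{l2.5}, \ref{l2.7} and \ref{l2.10}. A secondary technical point is that one must pass the strict sign inequality to the strong limit, so as to land in the stratum $N_\lambda^-$ rather than merely in $N_\lambda$.
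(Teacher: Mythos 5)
Your proof is correct and follows the same overall route as the paper: write $I'_\lambda(\omega_n)=\mu_n\Psi'_\lambda(\omega_n)+o(1)$, show the multiplier vanishes by bounding $\Psi'_\lambda(\omega_n)\omega_n$ uniformly away from zero, and then reduce to the unconstrained $(PS)_c$ result of Lemma \ref{l2.10}. The one place you diverge is the key sub-step of excluding $\Psi'_\lambda(\omega_n)\omega_n\to 0$: the paper plugs $\Psi'_\lambda(\omega_n)\omega_n=o(1)$ into the identity $I_\lambda(\omega_n)=I_\lambda(\omega_n)-\tfrac{1}{q}I'_\lambda(\omega_n)\omega_n$ to conclude $I_\lambda(\omega_n)\le o_n(1)$, contradicting $\alpha_\lambda^-\ge d_0>0$ from Lemma \ref{l2.8}(ii), whereas you rerun the quantitative norm estimates of Lemma \ref{l2.7} asymptotically to obtain the incompatible bounds $c_*\le\|\omega_n\|_{X^\alpha}\le C\lambda^{1/(2-q)}+o(1)$; both arguments are valid for $\lambda$ in the admissible range, and they trade an energy-level contradiction for a norm-level one. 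Your additional verification that the strong limit actually lands in $N_\lambda^-$ (via the persistence of $\Psi'_\lambda(\omega)\omega\le-\delta_1<0$) is a detail the paper omits but which is needed for the constrained $(PS)$ condition in the strict sense, so it is a worthwhile addition.
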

\begin{proof}
Let $\{\omega_n\}\subset N_\lambda^-(c_\lambda)$ be a $(PS)$ sequence.
There exists a sequence $\{\theta_n\}\subset\mathbb{R}$ such that
\begin{equation*}
 I'_\lambda(\omega_n)=\theta_n\Psi'_\lambda(\omega_n)+o(1).
\end{equation*}
Since $\omega_n\in N_\lambda^-$,  we have
\begin{equation*}
  \Psi'_\lambda(\omega_n)\omega_n<0.
\end{equation*}
There exists a subsequence (still denoted by $\{\omega_n\}$) such that
\begin{equation*}
  \Psi'_\lambda(\omega_n)\omega_n\rightarrow l,\ l\leq0, \ \text{as}\ n \rightarrow \infty.
\end{equation*}

If $l=0$, we deduce from (\ref{e2.10}) and (\ref{e2.12}) that
\begin{align}
 I_\lambda(\omega_n)&=I_\lambda(\omega_n)-\frac{1}{q}I_\lambda'(\omega_n)\omega_n\notag
\\&=\left(\frac{1}{2}-\frac{1}{q}\right)\|\omega_n\|_{X^\alpha}^2+\left(\frac{1}{4}-\frac{1}{q}\right)\displaystyle\int_{\mathbb{R}^N} l(x)\phi_{\omega_n(x,0)} |\omega_n(x,0)|^2dx\notag
\\&-\left(\frac{1}{q}-\frac{1}{2_\alpha^*}\right)\displaystyle\int_{\mathbb{R}^N} g(x)|\omega_n(x,0)|^{2_\alpha^*}dx\notag
\\&=\frac{q-2}{2q}\|\omega_n\|_{X^\alpha}^2+\frac{q-4}{4q}\int_{\mathbb{R}^N} l(x)\phi_{\omega_n(x,0)} |\omega_n(x,0)|^2dx-\frac{2_\alpha^*-q}{2_\alpha^*q}\displaystyle\int_{\mathbb{R}^N} g(x)|\omega_n(x,0)|^{2_\alpha^*}\notag
\\&=\frac{(q-2)}{4q}\|\omega_n\|_{X^\alpha}^2+\frac{(q-2_\alpha^*)(2_\alpha^*-4)}{4q2_\alpha^*}\displaystyle\int_{\mathbb{R}^N} g(x)|\omega_n(x,0)|^{2_\alpha^*}+o_n(1)\nonumber\\&\leq0.\notag
\end{align}
This yields a contradiction with $\alpha_\lambda^->0$ (Lemma \ref{l2.8} $(ii)$). Then $l<0$. Due to  $I'_\lambda(\omega_n)\omega_n=0$, we conclude that $\{\theta_n\}\rightarrow0$ and
\begin{equation*}
I'_\lambda(\omega_n)\rightarrow0, \ \text{as}\ n \rightarrow \infty.
\end{equation*}
By virtue of Lemma \ref{l2.10}, we arrive at the desired result.
\end{proof}

We are now ready to prove Theorem \ref{t1.1}.

\begin{proof}[Proof of Theorem \ref{t1.1}]
Let $\delta,\lambda_\delta>0$ be given as in Lemmas \ref{l4.3} and \ref{l4.4}. To show that  $I_\lambda$ has at least
$cat_{M_\delta}(M)$ critical points in $N_\lambda^-(c_\lambda)$ for $\lambda\in(0,\lambda_\delta)$, for $z\in M$, by Lemma \ref{l3.4} we define
 \begin{equation*}
F(z)=\omega_\lambda^++t_z^-v_{\varepsilon,z}\in N_\lambda^-(c_\lambda).
\end{equation*}
It follows Lemma \ref{l4.4} that $\Phi(N_\lambda^-(c_\lambda))\subset M_\delta$ for $\lambda<\lambda_\delta$.
Define $\xi:[0,1]\times M\rightarrow M_\delta$ by
\begin{equation*}
[0,1]\times M\ni(\theta,z)\longmapsto\Phi\left(\omega_\lambda^++t_z^-v_{(1-\theta)\varepsilon,z}\right),
\end{equation*}
where $\Phi\left(\omega_\lambda^++t_z^-v_{(1-\theta)\varepsilon,z}\right)\in N_\lambda^-(c_\lambda).$ By a straightforward calculation, we have $\xi(0,z)=\Phi\circ F(z)$ and $\displaystyle\lim_{\theta\rightarrow1^-}\xi(\theta,z)=z$.
Hence, $\Phi\circ F$ is homotopic to $j:M\rightarrow M_\delta$. By virtue of Lemma \ref{l4.6} and Propositions \ref{p4.1} and \ref{p4.2}, we obtain that $I_{N_\lambda^-(c_\lambda)}$ has at least $cat_{M_\delta}(M)$ critical points in $N_\lambda^-(c_\lambda)$. According to Lemma \ref{l4.5}, we know that $I_\lambda$ has at least $cat_{M_\delta}(M)$ critical points  in $N_\lambda^-(c_\lambda)$. By an analogous argument as the proof of \cite[Theorem 5.3]{22Lawson}, we can see that system (\ref{e2.7}) admits at least $cat_{M_\delta}(M)$ positive solutions  in $N_\lambda^-(c_\lambda)$.
In view of Theorem \ref{t2.1} and  $N_\lambda^+\cap N_\lambda^-=\emptyset$, we arrive at the desired result.
\end{proof}

\section{\normalsize{Proof of Theorem \ref{t1.2}}}\noindent

Define the energy functional associated with system (\ref{e2.7}) by
\begin{align}
 I_{\lambda,g}(\omega)=&\frac{1}{2}\|\omega\|_{X^\alpha}^2+\frac{1}{4}\displaystyle\int_{\mathbb{R}^N}l(x)\phi_{\omega(x,0)}|\omega(x,0)|^2dx-\frac{\lambda}{q}\displaystyle\int_{\mathbb{R}^N}f(x)|\omega(x,0)|^qdx\nonumber\\&-\frac{1}{2_\alpha^*}\displaystyle\int_{\mathbb{R}^N}g(x)|\omega(x,0)|^{2_\alpha^*}dx,
\end{align}
where $\omega\in X^\alpha(\mathbb{R}_+^{N+1})$.

Set
 \begin{equation*}
 N_{\lambda,g}:=\left\{\omega\in X^\alpha(\mathbb{R}_+^{N+1})\backslash\{0\};、 I'_{\lambda,g}(\omega)\omega=0 \right\}.
\end{equation*}
 Then $\omega\in N_{\lambda,g}$ if and only if
 \begin{equation}
 \|\omega\|_{X^\alpha}^2+\displaystyle\int_{\mathbb{R}^N}l(x)\phi_{\omega(x,0)}|\omega(x,0)|^2dx-\lambda\displaystyle\int_{\mathbb{R}^N}f(x)|\omega(x,0)|^qdx-\displaystyle\int_{\mathbb{R}^N}g(x)|\omega(x,0)|^{2_\alpha^*}dx=0.
\end{equation}
For any $\omega\in N_{\lambda,g}$, we have
\begin{align}\label{e5.3}
I_{\lambda,g}(\omega)&=I_{\lambda,g}(\omega)-\frac{1}{q}I'_{\lambda,g}(\omega)\omega\notag\\
&=\left(\frac{1}{2}-\frac{1}{q}\right)\|\omega\|_{X^\alpha}^2+\left(\frac{1}{4}-\frac{1}{q}\right)\displaystyle\int_{\mathbb{R}^N}l(x)\phi_{\omega(x,0)}|\omega(x,0)|^2dx\notag
\\&~~~~+\left(\frac{1}{q}-\frac{1}{2_\alpha^*}\right)\displaystyle\int_{\mathbb{R}^N}g(x)|\omega(x,0)|^{2_\alpha^*}dx\nonumber\\
&\geq\left(\frac{1}{2}-\frac{1}{q}\right)\|\omega\|_{X^\alpha}^2\nonumber\\
&>0,
\end{align}
which implies that $I_{\lambda,g}$ is coercive and bounded from below on $N_{\lambda,g}$.

Let
 \begin{align}
 \Psi_{\lambda,g}(\omega):=&\|\omega\|_{X^\alpha}^2+\displaystyle\int_{\mathbb{R}^N}l(x)\phi_{\omega(x,0)}|\omega(x,0)|^2dx-\lambda\displaystyle\int_{\mathbb{R}^N}f(x)|\omega(x,0)|^qdx\nonumber\\
&-\displaystyle\int_{\mathbb{R}^N}g(x)|\omega(x,0)|^{2_\alpha^*}dx.
\end{align}
For $\omega\in N_{\lambda,g}$, we have
\begin{align}\label{e5.5}
\Psi'_{\lambda,g}(\omega)\omega\notag
&=2\|\omega\|_{X^\alpha}^2+4\displaystyle\int_{\mathbb{R}^N}l(x)\phi_{\omega(x,0)}|\omega(x,0)|^2dx-q \lambda\displaystyle\int_{\mathbb{R}^N}f(x)|\omega(x,0)|^qdx\nonumber\\
&~~~~-2_\alpha^*\displaystyle\int_{\mathbb{R}^N}g(x)|\omega(x,0)|^{2_\alpha^*}dx\notag
\\&=-2\|\omega\|_{X^\alpha}^2+(4-q) \displaystyle\int_{\mathbb{R}^N}f_\lambda(x)|\omega(x,0)|^qdx-(2_\alpha^*-4)\displaystyle\int_{\mathbb{R}^N}g(x)|\omega(x,0)|^{2_\alpha^*}dx\nonumber\\
&<0.
\end{align}

Define
\begin{equation*}
 \alpha_{\lambda,g}:=\displaystyle\inf_{\omega\in N_{\lambda,g}}I_{\lambda,g}(\omega)\geq0.
\end{equation*}
Then we have
\begin{lemma}\label{l5.1}
There holds
\begin{equation*}
 \alpha_{\lambda,g}\geq d_0>0
\end{equation*}
for some $d_0>0$.
\end{lemma}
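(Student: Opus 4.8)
The plan is to deduce $\alpha_{\lambda,g}\ge d_0>0$ from the already-established lower bound (\ref{e5.3}), namely $I_{\lambda,g}(\omega)\ge\left(\frac{1}{2}-\frac{1}{q}\right)\|\omega\|_{X^\alpha}^2$ for $\omega\in N_{\lambda,g}$, together with a uniform positive lower bound for $\|\omega\|_{X^\alpha}$ on $N_{\lambda,g}$. Since $q>4$, the constant $\frac{1}{2}-\frac{1}{q}$ is positive, so it suffices to prove the norm bound.

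To obtain it I would fix $\omega\in N_{\lambda,g}$ and start from the Nehari identity defining $N_{\lambda,g}$. By $(A_1)$ we have $l\ge 0$, and by Lemma \ref{l2.1}(i) we have $\phi_{\omega(x,0)}\ge 0$, so the Poisson term is nonnegative and
\[
\|\omega\|_{X^\alpha}^2\le \lambda\int_{\mathbb{R}^N}f(x)|\omega(x,0)|^q\,dx+\int_{\mathbb{R}^N}g(x)|\omega(x,0)|^{2_\alpha^*}\,dx.
\]
Here is the one place where the hypotheses of Theorem \ref{t1.2} genuinely differ from those of Theorem \ref{t1.1}: under $(A_2)$ one no longer has $f\in L^{q^*}$, but continuity of $f$ together with the existence of the finite limit $f_\infty$ forces $f\in L^\infty(\mathbb{R}^N)$; likewise $g\le 1$ by $(H_4)$. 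Since $4<q<2_\alpha^*$, in particular $q\in(2,2_\alpha^*)$, the continuous embedding $X^\alpha(\mathbb{R}_+^{N+1})\hookrightarrow L^q(\mathbb{R}^N)$ of Lemma \ref{l2.2} and the Sobolev inequality of Lemma \ref{l2.3} yield
\[
\|\omega\|_{X^\alpha}^2\le \lambda C_1\|\omega\|_{X^\alpha}^q+C_2\|\omega\|_{X^\alpha}^{2_\alpha^*},
\]
with $C_1,C_2>0$ independent of $\omega$ and $\lambda$. Dividing by $\|\omega\|_{X^\alpha}^2>0$ and restricting to $\lambda\le 1$ (admissible, since Theorem \ref{t1.2} only concerns small $\lambda$) gives $1\le C_1\|\omega\|_{X^\alpha}^{q-2}+C_2\|\omega\|_{X^\alpha}^{2_\alpha^*-2}$; as $q-2>0$ and $2_\alpha^*-2>0$, the function $t\mapsto C_1t^{q-2}+C_2t^{2_\alpha^*-2}$ is continuous, strictly increasing, and vanishes at $t=0$, so there is a $\rho_*>0$, depending only on $C_1,C_2,q,N,\alpha$, with $\|\omega\|_{X^\alpha}\ge\rho_*$ for all $\omega\in N_{\lambda,g}$ and all $\lambda\in(0,1]$.

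Combining this with (\ref{e5.3}) we conclude that $I_{\lambda,g}(\omega)\ge\left(\frac{1}{2}-\frac{1}{q}\right)\rho_*^2=:d_0>0$ for every $\omega\in N_{\lambda,g}$, and taking the infimum gives $\alpha_{\lambda,g}\ge d_0>0$. The main (and essentially the only) subtle point is the estimate of $\int_{\mathbb{R}^N}f(x)|\omega(x,0)|^q\,dx$ described above: replacing the $L^{q^*}$ integrability of $f$ available under $(H_2)$ by the mere boundedness of $f$ available under $(A_2)$ is exactly what forces the use of $q>2$, which is guaranteed here by $q>4$; everything else is routine.
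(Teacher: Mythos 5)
Your argument is correct and follows essentially the same route as the paper: both derive $\|\omega\|_{X^\alpha}^2\leq \lambda C\|\omega\|_{X^\alpha}^q+C\|\omega\|_{X^\alpha}^{2_\alpha^*}$ on $N_{\lambda,g}$ (the paper from $\Psi'_{\lambda,g}(\omega)\omega<0$ in \eqref{e5.5}, you directly from the Nehari identity and nonnegativity of the Poisson term — these are interchangeable here), deduce a uniform lower bound on $\|\omega\|_{X^\alpha}$, and conclude via \eqref{e5.3}. Your added care in noting that $(A_2)$ gives $f\in L^\infty$ and in restricting to $\lambda\leq 1$ only makes explicit what the paper leaves implicit.
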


\begin{proof}
From Lemma \ref{l2.2} and (\ref{e5.5}), we have
\begin{align}
2\|\omega\|_{X^\alpha}^2&\leq q\lambda\displaystyle\int_{\mathbb{R}^N}f(x)|\omega(x,0)|^qdx+2_\alpha^*\displaystyle\int_{\mathbb{R}^N}g(x)|\omega(x,0)|^{2_\alpha^*}dx\notag
\\&\leq\lambda C\|\omega\|_{X^\alpha}^q+C\|\omega\|_{X^\alpha}^{2_\alpha^*},
\end{align}
and thus
\begin{equation*}
 C\leq\lambda \|\omega\|_{X^\alpha}^{q-2}+\|\omega\|_{X^\alpha}^{2_\alpha^*-2}
\end{equation*}
for some $C>0$ independent of the choice of $\omega\in N_{\lambda,g}$. Using (\ref{e5.3}) implies the desired result.
\end{proof}

Since $2<4<2_\alpha^*$, we can directly obtain the following results.
\begin{lemma}\label{l5.2}
For each $\omega\in X^\alpha(\mathbb{R}_+^{N+1})\backslash\{0\}$, there exists a unique $t_\omega>0$ such that $t_\omega\omega\in N_{\lambda,g}$ and
\begin{equation*}
I_{\lambda,g}(t_\omega\omega)=\displaystyle\max_{t\geq0}I_{\lambda,g}(t\omega).
\end{equation*}
\end{lemma}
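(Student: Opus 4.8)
The plan is to exploit the fact that the three powers involved in the fibering map are strictly ordered, $2 < 4 < 2_\alpha^*$, which forces the associated real function to have exactly one interior maximum. Fix $\omega\in X^\alpha(\mathbb{R}_+^{N+1})\backslash\{0\}$ and set
\begin{equation*}
h(t):=I_{\lambda,g}(t\omega)=\frac{t^2}{2}\|\omega\|_{X^\alpha}^2+\frac{t^4}{4}\displaystyle\int_{\mathbb{R}^N}l(x)\phi_{\omega(x,0)}|\omega(x,0)|^2dx-\frac{\lambda t^q}{q}\displaystyle\int_{\mathbb{R}^N}f(x)|\omega(x,0)|^qdx-\frac{t^{2_\alpha^*}}{2_\alpha^*}\displaystyle\int_{\mathbb{R}^N}g(x)|\omega(x,0)|^{2_\alpha^*}dx,
\end{equation*}
for $t\ge 0$. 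I would first record the elementary facts: $h(0)=0$; since $q>4>2$, every term except the critical one is of order $o(t^{2_\alpha^*})$ as $t\to\infty$, and the coefficient $\int_{\mathbb{R}^N}g(x)|\omega(x,0)|^{2_\alpha^*}dx$ is strictly positive (by $(H_3)$, $g\ge g_\infty>0$, and $\omega(\cdot,0)\not\equiv 0$), so $h(t)\to-\infty$ as $t\to\infty$; and, again because $q>4>2$, for small $t>0$ the quadratic term $\tfrac{t^2}{2}\|\omega\|_{X^\alpha}^2$ dominates the $-\tfrac{\lambda t^q}{q}\int f|\omega|^q$ term and the quartic Poisson term is nonnegative (Lemma \ref{l2.1}(i)), so $h(t)>0$ for $t$ small. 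Hence $h$ attains a positive global maximum at some $t_\omega>0$ with $h'(t_\omega)=0$, and since $t\omega\in N_{\lambda,g}$ is equivalent to $h'(t)=0$ with $t>0$, this already gives existence.

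For uniqueness I would write $h'(t)=t\,\varphi(t)$ where
\begin{equation*}
\varphi(t)=\|\omega\|_{X^\alpha}^2+t^2\displaystyle\int_{\mathbb{R}^N}l(x)\phi_{\omega(x,0)}|\omega(x,0)|^2dx-\lambda t^{q-2}\displaystyle\int_{\mathbb{R}^N}f(x)|\omega(x,0)|^qdx-t^{2_\alpha^*-2}\displaystyle\int_{\mathbb{R}^N}g(x)|\omega(x,0)|^{2_\alpha^*}dx,
\end{equation*}
so that $t\omega\in N_{\lambda,g}$ for $t>0$ iff $\varphi(t)=0$. The cleanest way to pin down that there is exactly one zero is to use the sign of $\Psi'_{\lambda,g}$ computed in (\ref{e5.5}): a direct computation shows that at any $t>0$ with $\varphi(t)=0$ one has $\varphi'(t)=\frac{1}{t}\Psi'_{\lambda,g}(t\omega)(t\omega)<0$ by (\ref{e5.5}) (here one uses $q<2_\alpha^*$, $q>4$, so that the combination $-2\|t\omega\|^2+(4-q)\lambda\int f|t\omega|^q-(2_\alpha^*-4)\int g|t\omega|^{2_\alpha^*}<0$ since each term is negative). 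Thus every zero of $\varphi$ is a strict downcrossing; combined with $\varphi(0^+)=\|\omega\|_{X^\alpha}^2>0$ and $\varphi(t)\to-\infty$ as $t\to\infty$, this is incompatible with two distinct zeros, so $t_\omega$ is unique. Finally, $h'(t)=t\varphi(t)>0$ on $(0,t_\omega)$ and $<0$ on $(t_\omega,\infty)$, which yields $I_{\lambda,g}(t_\omega\omega)=\max_{t\ge 0}I_{\lambda,g}(t\omega)$ as claimed.

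I do not expect a genuine obstacle here; the only point requiring a little care is the sign bookkeeping in (\ref{e5.5}) — one must verify that $\max\{4,\tfrac{N}{N-2\alpha}\}<q<2_\alpha^*$ makes all three coefficients in $\Psi'_{\lambda,g}(\omega)\omega$ have the same (negative) sign on $N_{\lambda,g}$, which is exactly the inequality displayed in (\ref{e5.5}) and is why the Nehari set is not split into $N^\pm$ in this regime. Everything else is the standard fibering-map argument, and the strict concavity-type behaviour of $\varphi$ at its zeros replaces the usual convexity argument that fails when a subcritical nonconvex term is present.
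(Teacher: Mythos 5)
Your argument is correct and is exactly the standard fibering-map argument that the paper intends: the paper offers no proof at all, merely asserting that the ordering $2<4<2_\alpha^*$ yields the result "directly," and your write-up supplies precisely the missing details, with the key uniqueness step (every zero of $\varphi$ is a strict downcrossing because $\Psi'_{\lambda,g}(t\omega)(t\omega)<0$ on $N_{\lambda,g}$ when $q>4$) being the right one. Only two cosmetic remarks: the correct scaling is $\varphi'(t)=t^{-3}\Psi'_{\lambda,g}(t\omega)(t\omega)$ rather than $t^{-1}$ (the sign conclusion is unaffected), and, like the paper, you implicitly use that the trace $\omega(\cdot,0)$ is not identically zero so that $\displaystyle\int_{\mathbb{R}^N}g(x)|\omega(x,0)|^{2_\alpha^*}dx>0$.
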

\begin{remark}
By Lemma \ref{l5.2}, it is obvious that
\begin{equation*}
\alpha_{\lambda,g}=\displaystyle\inf_{\omega\in X^\alpha(\mathbb{R}_+^{N+1})\backslash\{0\}}\max_{t\geq0}I_{\lambda,g}(t\omega).
\end{equation*}
Furthermore, we have
\begin{equation*}
0<\alpha_{\lambda_1,g}\leq\alpha_{\lambda_2,g}\leq\alpha_{0,g}
\end{equation*}
for  $\lambda_1\geq\lambda_2\geq0$.
\end{remark}

Consider an autonomous problem:% associated with system (\ref{s1.1}):
\begin{equation}
\left\{\begin{array}{ll}\label{e5.7}
(-\Delta)^\alpha u+u= \lambda f_\infty|u|^{q-2}u+g_\infty|u|^{2_\alpha^*-2}u,~ \text{in}~\mathbb{R}^N,\\
u\in H^\alpha(\mathbb{R}^N).
\end{array}\right.
\end{equation}
The harmonic extension of the problem (\ref{e5.7}) is:
\begin{equation}\label{e5.8}
\left\{\begin{array}{ll}
div(y^{1-2\alpha}\nabla\omega)=0,~ &\text{in}~\mathbb{R}_+^{N+1},\\
-\frac{\partial\omega}{\partial\nu}=-\omega+\lambda f_\infty|\omega|^{q-2}\omega+g_\infty|\omega|^{2_\alpha^*-2}\omega,~~&\text{in}~\mathbb{R}^N\times\{0\}.
\end{array}\right.
\end{equation}
The solutions of system (\ref{e5.8}) are precisely critical points of the energy functional defined by
\begin{equation*}
 I_{\lambda,\infty}(\omega)=\frac{1}{2}\|\omega\|_{X^\alpha}^2-\frac{\lambda}{q}\displaystyle\int_{\mathbb{R}^N}f_\infty|\omega(x,0)|^qdx-\frac{1}{2_\alpha^*}\displaystyle\int_{\mathbb{R}^N}g_\infty|\omega(x,0)|^{2_\alpha^*}dx,
\end{equation*}
where $\omega\in X^\alpha(\mathbb{R}_+^{N+1})$.

Define
\begin{equation*}
N_{\lambda,\infty}:=\{\omega\in X^\alpha(\mathbb{R}_+^{N+1})\backslash\{0\};I'_{\lambda,\infty}(\omega)\omega=0\}
\end{equation*}
and
\begin{equation*}
\alpha_{\lambda,\infty}:=\displaystyle\inf_{\omega\in N_{\lambda,\infty}}I_{\lambda,\infty}(\omega).
\end{equation*}
In order to give a precise description for the $(PS)$ condition  of $I_{\lambda,g}$, we recall the well-known concentration-compactness principle \cite{22}.

\begin{proposition}\label{p5.1}
Let $\rho_n(x)\in L^1(\mathbb{R}^N)$ be a non-negative sequence satisfying
\begin{equation*}
\displaystyle\lim_{n\rightarrow\infty}\displaystyle\int_{\mathbb{R}^3}\rho_n(x)dx=l,l>0.
\end{equation*}
Then there exists a subsequence, still denoted by $\{\rho_n(x)\}$, such that one of the following cases occurs.
\begin{description}
  \item[$(I)$] (Compactness) There exists $y_n\in \mathbb{R}^N$ such that for any $\varepsilon>0$ there exists $R>0$ such that
   \begin{equation*}
 \displaystyle\int_{B_R(y_n)}\rho_n(x)dx\geq l-\varepsilon,~~n=1,2,\ldots.
\end{equation*}
  \item[$(II)$] (Vanishing) For any fixed $R>0$, there holds
      \begin{equation*}
\displaystyle\lim_{n\rightarrow\infty}\sup_{y\in\mathbb{R}^N}\displaystyle\int_{B_R(y)}\rho_n(x)dx=0.
\end{equation*}
  \item[$(II)$] (Dichotomy) There exists $\alpha\in(0,l)$ such that for any $\varepsilon>0$, there exists $n_0\geq1$ and $\rho_n^{(1)}(x),\,\rho_n^{(2)}(x)\in L^1(\mathbb{R}^N)$, for $n\geq n_0$ there holds
       \begin{equation*}
|\rho_n-(\rho_n^{(1)}+\rho_n^{(2)})|_1<\varepsilon,~\left|\displaystyle\int_{\mathbb{R}^3}\rho_n^{(1)}(x)dx-\alpha\right|<\varepsilon,~\left|\displaystyle\int_{\mathbb{R}^3}\rho_n^{(2)}(x)dx-(l-\alpha)\right|<\varepsilon
\end{equation*}
and
\begin{equation*}
dist(supp\rho_n^{(1)},supp\rho_n^{(2)})\rightarrow\infty,~\text{as}~n\rightarrow\infty.
\end{equation*}
\end{description}
\end{proposition}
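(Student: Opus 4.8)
The plan is to follow P.-L. Lions' original argument, organized around the L\'{e}vy concentration function. For each $n$ I would introduce the non-decreasing bounded function
\[
Q_n(t)=\sup_{y\in\mathbb{R}^N}\displaystyle\int_{B_t(y)}\rho_n(x)\,dx,\qquad 0\le Q_n(t)\le|\rho_n|_1\longrightarrow l .
\]
By Helly's selection theorem there are a subsequence (not relabelled) and a non-decreasing function $Q:[0,\infty)\to[0,l]$ with $Q_n(t)\to Q(t)$ at every continuity point of $Q$, i.e. for all but countably many $t$. Put $\alpha:=\displaystyle\lim_{t\to\infty}Q(t)\in[0,l]$. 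The whole proof then reduces to distinguishing the three cases $\alpha=l$, $\alpha=0$ and $0<\alpha<l$, which yield compactness, vanishing and dichotomy, respectively.

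If $\alpha=0$ then $Q\equiv0$, so for every fixed $R>0$ we get $\sup_{y}\int_{B_R(y)}\rho_n\le Q_n(R)\to0$, which is precisely the vanishing alternative. If $\alpha=l$, then for each $\varepsilon\in(0,l/2)$ I would choose a continuity point $R_\varepsilon$ with $Q(R_\varepsilon)>l-\varepsilon$, so that for $n$ large there is $y_n^\varepsilon$ with $\int_{B_{R_\varepsilon}(y_n^\varepsilon)}\rho_n>l-\varepsilon$. The key elementary observation is that two balls each carrying more than $l-\varepsilon>l/2$ of a total mass converging to $l$ must eventually intersect, hence their centres stay within a bounded distance; combining this with a diagonal extraction over $\varepsilon=1/k$ produces a single sequence $y_n$ together with radii $\widetilde R_k\to\infty$ such that $\int_{B_{\widetilde R_k}(y_n)}\rho_n>l-1/k$ for $n$ large, which, after enlarging the radii to absorb the finitely many remaining indices $n$, is exactly the compactness alternative.

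The dichotomy case $0<\alpha<l$ is the substantial one. I would first extract, by a slow diagonalization against the Helly limit, a sequence $R_n\to\infty$ with $Q_n(R_n)\to\alpha$: pick continuity points $T_k\uparrow\infty$ with $\alpha-\tfrac1k<Q(T_k)\le\alpha$, then $n_k\uparrow\infty$ with $|Q_n(T_k)-Q(T_k)|<\tfrac1k$ for $n\ge n_k$, and set $R_n=T_k$ on $n_k\le n<n_{k+1}$. Given $\varepsilon\in(0,\alpha)$, choose a continuity point $R$ with $Q(R)>\alpha-\varepsilon$ and, for $n$ large, $y_n$ with $\int_{B_R(y_n)}\rho_n>\alpha-\varepsilon$; since also $\int_{B_R(y_n)}\rho_n\le Q_n(R)\to Q(R)\le\alpha$, this integral lies in $(\alpha-\varepsilon,\alpha+\varepsilon)$ for $n$ large. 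Setting $\rho_n^{(1)}=\rho_n\chi_{B_R(y_n)}$ and $\rho_n^{(2)}=\rho_n\chi_{\mathbb{R}^N\setminus B_{R_n}(y_n)}$, the required estimates follow: $|\!\int\rho_n^{(1)}-\alpha|<\varepsilon$; $\int\rho_n^{(2)}=|\rho_n|_1-\int_{B_{R_n}(y_n)}\rho_n$, which lies within $\varepsilon+o(1)$ of $l-\alpha$ because $\alpha-\varepsilon<\int_{B_{R_n}(y_n)}\rho_n\le Q_n(R_n)\to\alpha$; $|\rho_n-(\rho_n^{(1)}+\rho_n^{(2)})|_1=\int_{B_{R_n}(y_n)\setminus B_R(y_n)}\rho_n\le Q_n(R_n)-\int_{B_R(y_n)}\rho_n<\varepsilon+o(1)$; and $\operatorname{dist}(\operatorname{supp}\rho_n^{(1)},\operatorname{supp}\rho_n^{(2)})\ge R_n-R\to\infty$. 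Running the argument with $\varepsilon/3$ in place of $\varepsilon$ makes the three bounds hold with the stated $\varepsilon$ for $n\ge n_0$, and since $\varepsilon>0$ is arbitrary this is the dichotomy alternative.

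The main obstacle is precisely this last step: one must handle two radii simultaneously — a fixed inner radius $R$, read off the Helly limit so that it already captures almost all of $\alpha$, and a diverging outer radius $R_n$ along which $Q_n(R_n)$ still tends to $\alpha$ — and organize the subsequence extraction so that the centres $y_n$, the inner mass, the outer mass and the annular remainder are all controlled at once for large $n$. Everything else (monotonicity of $Q_n$, Helly's theorem, the overlapping-balls lemma, and the trivial vanishing case) is routine bookkeeping.
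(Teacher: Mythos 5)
Your proposal is the standard Lions argument (L\'{e}vy concentration function $Q_n$, Helly selection, trichotomy on $\alpha=\lim_{t\to\infty}Q(t)$), and it is essentially correct; note, however, that the paper does not prove Proposition 5.1 at all --- it is quoted verbatim as the classical concentration--compactness principle with a citation to Willem's book, so there is no in-paper proof to compare against. Your treatment of the dichotomy case, which is the only genuinely delicate one, is sound: the slow diagonalization producing $R_n\to\infty$ with $Q_n(R_n)\to\alpha$, the choice of a fixed inner radius $R$ from the Helly limit, and the cut-offs $\rho_n^{(1)}=\rho_n\chi_{B_R(y_n)}$, $\rho_n^{(2)}=\rho_n\chi_{\mathbb{R}^N\setminus B_{R_n}(y_n)}$ give exactly the three $\varepsilon$-estimates and the diverging support distance. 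One small caveat in the compactness case: since the hypothesis here is only $\int\rho_n\to l$ rather than $\int\rho_n=l$ for every $n$ (as in Lions' original normalization), the finitely many initial indices with $\int_{\mathbb{R}^N}\rho_n<l-\varepsilon$ cannot be ``absorbed by enlarging the radii'' --- no ball can capture mass that is not there. This is an imprecision inherited from the statement itself; it is repaired by passing to a further subsequence or by reading the compactness alternative as holding for $n$ sufficiently large, and it does not affect the way the proposition is used in the proof of Lemma 5.3.
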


\begin{lemma}\label{l5.3}
$I_{\lambda,g}$ satisfies the $(PS)_c$ condition for  $c\in \left(0,\,\min\left\{\alpha_{\lambda,\infty},\frac{\alpha}{N}S^\frac{N}{2\alpha} \right\}\right)$.
\end{lemma}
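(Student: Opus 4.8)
The plan is to run the standard Br\'ezis--Nirenberg compactness scheme: extract a weakly convergent subsequence, subtract its weak limit, and show the residual vanishes by excluding both concentration at a point (a critical bubble, which would cost energy $\frac{\alpha}{N}S^\frac{N}{2\alpha}$) and escape of mass to infinity (which would cost energy $\alpha_{\lambda,\infty}$). First I would note that any $(PS)_c$ sequence $\{\omega_n\}$ is bounded: since $q>4$,
\[
c+o(1)\big(1+\|\omega_n\|_{X^\alpha}\big)\ \geq\ I_{\lambda,g}(\omega_n)-\frac1q I'_{\lambda,g}(\omega_n)\omega_n\ =\ \Big(\frac12-\frac1q\Big)\|\omega_n\|_{X^\alpha}^2+\Big(\frac14-\frac1q\Big)A(\omega_n)+\Big(\frac1q-\frac1{2_\alpha^*}\Big)\!\int_{\mathbb{R}^N}\! g|\omega_n(x,0)|^{2_\alpha^*}dx,
\]
where all three coefficients are nonnegative and the first is positive. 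Passing to a subsequence, $\omega_n\rightharpoonup\omega$ in $X^\alpha(\mathbb{R}_+^{N+1})$, with $\omega_n(\cdot,0)\to\omega(\cdot,0)$ in $L^r_{\mathrm{loc}}(\mathbb{R}^N)$ for $r\in[2,2_\alpha^*)$ and a.e.

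Next I would pass to the limit in $I'_{\lambda,g}(\omega_n)\varphi\to0$ for $\varphi\in C_c^\infty$, using Lemma \ref{l2.1}(iii)--(iv) for the nonlocal term, the local compactness of Lemma \ref{l2.2} for the subcritical term (recall $2<q<2_\alpha^*$), and the boundedness of $f,g$; this gives $I'_{\lambda,g}(\omega)=0$, so by Lemma \ref{l5.1} either $\omega=0$ or $\omega\in N_{\lambda,g}$, and in both cases $I_{\lambda,g}(\omega)\geq0$. Setting $v_n=\omega_n-\omega$, the Br\'ezis--Lieb lemma together with Lemma \ref{l2.1}(vi) splits the norm, the critical term, the $f$-term and $A$ additively, and combining this with $I_{\lambda,g}(\omega_n)\to c$, $I'_{\lambda,g}(\omega)=0$, $I'_{\lambda,g}(\omega_n)\omega_n=o(1)$ yields the two identities
\[
\frac12\|v_n\|_{X^\alpha}^2+\frac14A(v_n)-\frac{\lambda}{q}\!\int_{\mathbb{R}^N}\! f|v_n(x,0)|^q dx-\frac1{2_\alpha^*}\!\int_{\mathbb{R}^N}\! g|v_n(x,0)|^{2_\alpha^*}dx=c-I_{\lambda,g}(\omega)+o(1),
\]
\[
\|v_n\|_{X^\alpha}^2+A(v_n)-\lambda\!\int_{\mathbb{R}^N}\! f|v_n(x,0)|^q dx-\int_{\mathbb{R}^N}\! g|v_n(x,0)|^{2_\alpha^*}dx=o(1).
\]
It then remains to prove $v_n\to0$ in $X^\alpha(\mathbb{R}_+^{N+1})$.

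I would apply the concentration--compactness principle (Proposition \ref{p5.1}) to the energy density $\rho_n(x)=\int_0^\infty y^{1-2\alpha}|\nabla v_n(x,y)|^2dy+|v_n(x,0)|^2$, so that $\int_{\mathbb{R}^N}\rho_n\to\ell:=\lim\|v_n\|_{X^\alpha}^2$; assume $\ell>0$. In the \emph{vanishing} case a Lions-type vanishing lemma in the extension setting forces $v_n(\cdot,0)\to0$ in $L^r(\mathbb{R}^N)$ for all $r\in(2,2_\alpha^*)$, hence $\int f|v_n(\cdot,0)|^q\to0$ and, by (\ref{e2.5}), $A(v_n)\to0$; then the second identity reads $\|v_n\|_{X^\alpha}^2=\int g|v_n(\cdot,0)|^{2_\alpha^*}dx+o(1)$, and since $g\leq1$ (Remark \ref{r1.1}) and Lemma \ref{l2.3} gives $\|v_n\|_{X^\alpha}^2\geq S\big(\int g|v_n(\cdot,0)|^{2_\alpha^*}dx\big)^{2/2_\alpha^*}$, we obtain $\ell\geq S^\frac{N}{2\alpha}$, and substituting into the first identity yields $c\geq I_{\lambda,g}(\omega)+\frac{\alpha}{N}S^\frac{N}{2\alpha}\geq\frac{\alpha}{N}S^\frac{N}{2\alpha}$, a contradiction. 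In the \emph{compactness} case with bounded centers, $v_n\rightharpoonup0$ and tightness of $\rho_n$ force concentration at a point $x_0$, and a blow-up analysis based on Lemmas \ref{l2.3}--\ref{l2.4} shows the escaping bubble carries energy $\geq\frac{\alpha}{N}g(x_0)^{-\frac{N-2\alpha}{2\alpha}}S^\frac{N}{2\alpha}\geq\frac{\alpha}{N}S^\frac{N}{2\alpha}$ (using $g(x_0)\leq1$), again a contradiction. In the \emph{compactness} case with $|y_n|\to\infty$, put $w_n(x,y)=v_n(x+y_n,y)\rightharpoonup\widetilde w\neq0$; since $g(\cdot+y_n)\to g_\infty$ and $f(\cdot+y_n)\to f_\infty$ while $l(\cdot+y_n)\rightharpoonup^*\ell_0$ with $0\leq\ell_0\leq|l|_\infty$, $\widetilde w$ is a nonzero solution of the autonomous \emph{nonlocal} problem with coefficients $f_\infty,g_\infty,\ell_0$, and a rescaling comparison — projecting $\widetilde w$ radially onto $N_{\lambda,\infty}$ and using $\ell_0\geq0$ and $q>4$ — shows its energy is $\geq\alpha_{\lambda,\infty}$, whence $c\geq\alpha_{\lambda,\infty}$, a contradiction. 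Finally \emph{dichotomy} is excluded in the usual way, by splitting $v_n$ into pieces with diverging supports and additive energies and iterating the above. Hence $\ell=0$, i.e. $\omega_n\to\omega$ strongly.

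The main obstacle is the Poisson term. Because hypothesis $(A_1)$ only gives $l\in L^\infty(\mathbb{R}^N)$ and not $l(x)\to0$, the nonlocal term does not vanish in the translated limit, so the relevant ``problem at infinity'' is genuinely nonlocal with an a priori unknown nonnegative coefficient $\ell_0$, and there is no off-the-shelf global compactness splitting for $I_{\lambda,g}$ (Lemma \ref{l4.2} handles only the pure-critical functional $I^\infty$). Making the escape-to-infinity alternative precise therefore requires the analytical technique announced in the introduction: one has to derive, by a direct energy estimate on $w_n$ and its nonlocal limit, the sharp bound that the energy of any nonzero solution of the $\ell_0$-problem is at least $\alpha_{\lambda,\infty}$ for every admissible $\ell_0\geq0$. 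This, together with the bubble estimate and $I_{\lambda,g}(\omega)\geq0$, is exactly what allows the threshold $c<\min\{\alpha_{\lambda,\infty},\frac{\alpha}{N}S^\frac{N}{2\alpha}\}$ to force compactness.
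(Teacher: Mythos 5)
Your overall architecture is the right one, and you have correctly identified the two energy thresholds ($\frac{\alpha}{N}S^{\frac{N}{2\alpha}}$ for local concentration, $\alpha_{\lambda,\infty}$ for escape to infinity) as well as the fact that $I_{\lambda,g}(\omega)\geq 0$ for the weak limit. The boundedness step, the Br\'ezis--Lieb identities, and the exclusion of vanishing are all sound and match the paper. The organizational difference (you run concentration--compactness on the residual $v_n=\omega_n-\omega$, the paper runs it on $\omega_n$ itself and performs the Br\'ezis--Lieb subtraction only at the very end) is harmless.

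However, there is a genuine gap exactly at the step you yourself flag as the main obstacle, and the route you sketch for it does not work as stated. First, for $l\in L^\infty(\mathbb{R}^N)$ the translates $l(\cdot+y_n)$ need not converge weak-$*$ to a \emph{constant} $\ell_0$; the weak-$*$ limit is in general a function, and since the Poisson term is quadratic in $l$ (it equals $\gamma_s\int\!\!\int|x-y|^{s-N}l(x)l(y)u^2(x)u^2(y)\,dxdy$), weak-$*$ convergence is not even enough to pass to the limit in it. So the claim that $\widetilde w$ solves an \emph{autonomous} limit problem is unjustified, and without that the ``projection onto $N_{\lambda,\infty}$'' comparison has no well-defined target. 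Second, even granting a limit equation, relating the energy of $\widetilde w$ to $c-I_{\lambda,g}(\omega)$ requires precisely the global-compactness splitting for $I_{\lambda,g}$ that you acknowledge does not exist; and your dismissal of dichotomy ``in the usual way'' hides most of the actual work (the annulus estimates controlling the cross terms of the gradient, the $L^2$ trace, and the nonlocal Poisson interaction, which need Lemma \ref{l2.4}(ii)). The paper's proof of Lemma \ref{l5.3} avoids identifying any limit equation: it builds cut-off competitors $U_n,V_n$ supported far out, shows via the annulus estimates that $I'_\infty(V_n)V_n=o(1)$ for the functional $I_\infty$ that \emph{retains} the nonnegative Poisson term with the original $l(x)$, uses $\|V_n\|_{X^\alpha}^2<\|\omega_n\|_{X^\alpha}^2$ to find $t_n<1$ with $t_nV_n\in N_\infty$, and then compares energies through the identity $I_\infty-\frac{1}{q}I'_\infty$ together with $\alpha_{\lambda,\infty}\leq\alpha_\infty$ (which only needs the Poisson term to be $\geq 0$ and $q>4$). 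To make your proof complete you would need to replace your limit-equation argument by this (or an equivalent) quantitative comparison.
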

\begin{proof}
Let $\{\omega_n\}\subset X^\alpha(\mathbb{R}_+^{N+1})$ be a $(PS)_c$ sequence for $I_{\lambda,g}$ with $c\in \left(0,\,\min\left\{\alpha_{\lambda,\infty},\frac{\alpha}{N}S^\frac{N}{2\alpha} \right\}\right)$. From (\ref{e5.3}), $\{\omega_n\}$ is bounded in $X^\alpha(\mathbb{R}_+^{N+1})$.

Set
\begin{equation*}
\rho_n(x)=|\omega_n(x,0)|^2+\phi_{\omega_n(x,0)}|\omega_n(x,0)|^2+\lambda f(x)|\omega_n(x,0)|^q+g(x)|\omega_n(x,0)|^{2_\alpha^*},
\end{equation*}
which belongs to $ L^1(\mathbb{R}^N).$
 We may assume that
 \begin{equation*}
|\rho_n(x)|_1\rightarrow l,\ l\geq0,\ \text{as}~n\rightarrow\infty.
\end{equation*}
Then we claim $l>0$. Otherwise, we get $\|\omega_n\|_{X^\alpha}\rightarrow0$ as $n\rightarrow\infty$, and then
 \begin{equation*}
I_{\lambda,g}(\omega_n)\rightarrow0~\text{as}~n\rightarrow\infty.
\end{equation*}
This obviously contradicts the hypothesis of $c>0$.
%We will apply Proposition \ref{p5.1} to get the compactness of $\{\rho_n\}$ in the following.

Suppose that $\{\rho_n\}$ vanishes. There is an $R>0$ such that
\begin{equation*}
\displaystyle\lim_{n\rightarrow\infty}\sup_{z\in\mathbb{R}^N}\int_{B_R(z)}|\omega_n(x,0)|^2dx=0.
\end{equation*}
By \cite[Lemma 3.3]{17}, we have $\omega_n(x,0)\rightarrow0$ in $L^r(\mathbb{R}^N)$ for $2<r<2_\alpha^*$. Then
\begin{equation*}
\displaystyle\int_{\mathbb{R}^N}f(x)|\omega_n(x,0)|^qdx\rightarrow0,~\text{as}~n\rightarrow\infty.
\end{equation*}
In view of $2<\frac{4N}{N+s}<2_\alpha^*$, we deduce from (\ref{e2.5}) that
\begin{equation*}
\left|\displaystyle\int_{\mathbb{R}^N}l(x)\phi_{\omega_n(x,0)}|\omega_n(x,0)|^2dx\right|\leq C|\omega_n(x,0)|_\frac{4N}{N+s}^4\rightarrow0,~\text{as}~n\rightarrow\infty.
\end{equation*}
Then we have
\begin{equation*}
\|\omega_n\|_{X^\alpha}^2-\displaystyle\int_{\mathbb{R}^N}g(x)|\omega_n(x,0)|^{2_\alpha^*}dx\rightarrow0,~~\text{as}~n\rightarrow\infty.
\end{equation*}

Suppose that
\begin{equation*}
\|\omega_n\|_{X^\alpha}^2\rightarrow l_1~\text{and}~\displaystyle\int_{\mathbb{R}^N}g(x)|\omega_n(x,0)|^{2_\alpha^*}dx\rightarrow l_1,~~\text{as}~n\rightarrow \infty
\end{equation*}
for some $l_1>0$. With the help of Lemma \ref{l2.3}, we get
\begin{equation*}
l_1\geq Sl_1^\frac{2}{2_\alpha^*}~\text{or}~l_1\geq S^\frac{N}{2\alpha}.
\end{equation*}
Consequently,
\begin{align}
c&=I_{\lambda,g}(\omega_n)+o(1)\notag
\\&=\frac{1}{2}\|\omega_n\|_{X^\alpha}^2-\frac{1}{2_\alpha^*}\displaystyle\int_{\mathbb{R}^N}g(x)|\omega_n(x,0)|^{2_\alpha^*}dx+o(1)\notag
\\&=\frac{\alpha}{N}l_1\nonumber\\
&\geq\frac{\alpha}{N}S^\frac{N}{2\alpha},\notag
\end{align}
which yields another contradiction with the definition of $c$.

Secondly, we suppose that the dichotomy occurs. It follows from Proposition \ref{p5.1} that for any $\varepsilon>0$, there exist $\alpha\in(0,l)$, $\{x_n\}\subset\mathbb{R}^N$ and $R_\varepsilon>0$ such that for any $R>R_\varepsilon$ and $\overline{R}>R_\varepsilon$ we have
\begin{equation*}
\displaystyle\liminf_{n\rightarrow\infty}\int_{B_R(x_n)}\rho_n(x)dx\geq\alpha-\varepsilon~\text{and}~\displaystyle\liminf_{n\rightarrow\infty}\displaystyle\int_{\mathbb{R}^N\backslash B_{\overline{R}}(x_n)}\rho_n(x)dx\geq(l-\alpha)-\varepsilon.
\end{equation*}
So there exist $\varepsilon_n\rightarrow0$, $R_n\rightarrow+\infty$ and $\overline{R_n}=4R_n$ such that
\begin{equation}\label{e5.9}
\displaystyle\int_{B_{R_n}(x_n)}\rho_n(x)dx\geq\alpha-\varepsilon_n~\ \text{and}~ \ \displaystyle\int_{\mathbb{R}^N\backslash B_{\overline{R_n}}(x_n)}\rho_n(x)dx\geq(l-\alpha)-\varepsilon_n.
\end{equation}
That is,
\begin{equation}\label{e5.10}
\displaystyle\int_{B_{4R_n}(x_n)\backslash B_{R_n}(x_n)}\rho_n(x)dx\leq2\varepsilon_n.
\end{equation}

Set $\eta_0(s)\in C^\infty(\mathbb{R}_+)$ and
\begin{equation*}
\eta_0(s)=\left\{\begin{array}{ll}
0,~ &\text{if}~s\leq1~\text{or}~s\geq4,\\
1,~~&\text{if}~2\leq s\leq3,
\end{array}\right.
\end{equation*}
and $|\eta'_0(s)|\leq2$.
Let $\eta_n(x,y)=\eta_0\left(\frac{|(x-x_n,y)|}{R_n}\right)$. It follows Lemma \ref{l2.4} $(ii)$ that
\begin{align}
&\displaystyle\int_{\mathbb{R}_+^{N+1}}y^{1-2\alpha}|\nabla\eta_n|^2|\omega_n(x,0)|^2dxdy\notag
\\ \leq& \frac{4}{R_n^2}\displaystyle\int_{B^+_{4R_n}(x_n,0)\backslash B^+_{R_n}(x_n,0)}y^{1-2\alpha}|\omega_n(x,0)|^2dxdy\notag
\\ \leq& \frac{C}{R_n^2}\left(\displaystyle\int_{B^+_{4R_n}(x_n,0)}y^{1-2\alpha}|\omega_n(x,0)|^{2\gamma}dxdy\right)^\frac{1}{\gamma}\left(\displaystyle\int_{B^+_{4R_n}(x_n,0)}y^{1-2\alpha}dxdy\right)^\frac{\gamma-1}{\gamma}\notag
\\
\leq& \frac{C}{R_n^2}\left(\displaystyle\int_{\mathbb{R}_+^{N+1}}y^{1-2\alpha}|\nabla\omega_n|^{2\gamma}dxdy\right)^\frac{1}{\gamma}(R_n^{N-1}R_n^{2-2\alpha})^\frac{\gamma-1}{\gamma}\notag
\\ \leq& CR_n^{-\frac{2}{N-2\alpha+2}}\nonumber\\
\leq & C
\end{align}
for some $C>0$, where $\gamma=1+\frac{2}{N-2\alpha}$. Thus, $\{\eta_n\omega_n\}$ is bounded in $X^\alpha(\mathbb{R}_+^{N+1})$, and
\begin{equation*}
I'_{\lambda,g}(\omega_n)(\eta_n\omega_n)\rightarrow0,~~{as}~n\rightarrow\infty.
\end{equation*}
We further have
\begin{align}\label{e5.12}
&\displaystyle\int_{\mathbb{R}_+^{N+1}}y^{1-2\alpha}\nabla\omega_n\nabla(\eta_n\omega_n)dxdy+\displaystyle\int_{\mathbb{R}^N}l(x)\phi_{\omega_n(x,0)}|\omega_n(x,0)|^2\eta_n(x,0)dx\nonumber\\
&+\displaystyle\int_{\mathbb{R}^N}|\omega_n(x,0)|^2\eta_n(x,0)dx\notag
\nonumber\\ =& \lambda\displaystyle\int_{\mathbb{R}^N}f(x)|\omega_n(x,0)|^q\eta_n(x,0)dx+\displaystyle\int_{\mathbb{R}^N}g(x)|\omega_n(x,0)|^{2_\alpha^*}\eta_n(x,0)dx+o_n(1).
\end{align}

On the other hand, it follows (\ref{e5.10}) that
\begin{align}
\displaystyle\int_{\mathbb{R}^N}g(x)|\omega_n(x,0)|^{2_\alpha^*}\eta_n(x,0)dx&\leq\displaystyle\int_{B_{4R_n}(x_n)\backslash B_{R_n}(x_n)}g(x)|\omega_n(x,0)|^{2_\alpha^*}dx=o_n(1),
\\ \displaystyle\int_{\mathbb{R}^N}f(x)|\omega_n(x,0)|^q\eta_n(x,0)dx&\leq\displaystyle\int_{B_{4R_n}(x_n)\backslash B_{R_n}(x_n)}f(x)|\omega_n(x,0)|^qdx=o_n(1),
\\ \displaystyle\int_{\mathbb{R}^N}l(x)\phi_{\omega_n(x,0)}|\omega_n(x,0)|^2\eta_n(x,0)dx&\leq\displaystyle\int_{B_{4R_n}(x_n)\backslash B_{R_n}(x_n)}l(x)\phi_{\omega_n(x,0)}|\omega_n(x,0)|^2dx=o_n(1),
\end{align}
and
\begin{align}
 \displaystyle\int_{\mathbb{R}^N}|\omega_n(x,0)|^2\eta_n(x,0)dx&\leq\displaystyle\int_{B_{4R_n}(x_n)\backslash B_{R_n}(x_n)}|\omega_n(x,0)|^2dx=o_n(1)\label{e5.16}.
\end{align}
It is not difficult to see that (\ref{e5.12})-(\ref{e5.16}) imply that
\begin{equation}\label{e5.17}
\displaystyle\int_{\mathbb{R}_+^{N+1}}y^{1-2\alpha}\nabla\omega_n\nabla(\eta_n\omega_n)dxdy=o_n(1).
\end{equation}

In view of Lemma \ref{e2.4} $(i)$ and $R_n\rightarrow\infty$ as $n\rightarrow\infty$, we have
\begin{align}\label{e5.18}
&\left|\displaystyle\int_{\mathbb{R}_+^{N+1}}y^{1-2\alpha}\omega_n\nabla\omega_n\nabla\eta_ndxdy\right|\notag
\\ \leq& \frac{C}{R_n}\displaystyle\int_{B^+_{4R_n}(x_n,0)\backslash B^+_{R_n}(x_n,0)}y^{1-2\alpha}|\omega_n\nabla\omega_n|dxdy\notag
\\ \leq& \frac{1}{R_n}\left(\displaystyle\int_{B^+_{4R_n}(x_n,0)\backslash B^+_{R_n}(x_n,0)}y^{1-2\alpha}|\omega_n|^2dxdy\right)^\frac{1}{2}\left(\displaystyle\int_{B^+_{4R_n}(x_n,0)}y^{1-2\alpha}|\nabla\omega_n|^2dxdy\right)^\frac{1}{2}\notag
\\ =& o_n(1).
\end{align}
We can derive from (\ref{e5.17}) and (\ref{e5.18}) that
\begin{equation}\label{e5.19}
\displaystyle\int_{\mathbb{R}_+^{N+1}}y^{1-2\alpha}\nabla\omega_n\nabla(\eta_n\omega_n)dxdy=\displaystyle\int_{\mathbb{R}_+^{N+1}}y^{1-2\alpha}|\nabla\omega_n|^2\eta_ndxdy=o_n(1).
\end{equation}

Set $\varphi_0(s)\in C^\infty(\mathbb{R}_+)$ and
\begin{equation*}
\varphi_0(s)=\left\{\begin{array}{ll}
0,~ &\text{if}~s\leq2,\\
1,~~&\text{if}~s\geq3,
\end{array}\right.
\end{equation*}
and $|\varphi'_0(s)|\leq2$. Let $\varphi_n(x,y)=\varphi_0\left(\frac{|(x-x_n,y)|}{R_n}\right)$ and
\begin{equation*}
U_n(x,y)=\varphi_n(x,y)\omega_n(x,y)~\text{and}~V_n(x,y)=(1-\varphi_n(x,y))\omega_n(x,y).
\end{equation*}
From (\ref{e5.9}) we have
\begin{equation}\label{e5.20}
\displaystyle\int_{\mathbb{R}^N}(|U_n(x,0)|^2+g(x)|U_n(x,0)|^{2_\alpha^*}+\lambda f(x)|U_n(x,0)|^q+l(x)\phi_{\omega_n(x,0)}|U_n(x,0)|^2)dx\geq\alpha-\varepsilon_n
\end{equation}
and
\begin{equation}\label{e5.21}
\displaystyle\int_{\mathbb{R}^N}(|V_n(x,0)|^2+g(x)|V_n(x,0)|^{2_\alpha^*}+\lambda f(x)|V_n(x,0)|^q+l(x)\phi_{\omega_n(x,0)}|V_n(x,0)|^2)dx\geq(l-\alpha)-\varepsilon_n.
\end{equation}
Using (\ref{e5.18}) and (\ref{e5.19}) leads to
\begin{align}\label{e5.22}
&\left|\displaystyle\int_{\mathbb{R}_+^{N+1}}y^{1-2\alpha}\nabla U_n\nabla V_ndxdy\right|\notag
\\ \leq& \left|\displaystyle\int_{B^+_{3R_n}(x_n,0)\backslash B^+_{2R_n}(x_n,0)}y^{1-2\alpha}|\nabla\omega_n|^2\varphi_n(1-\varphi_n)dxdy\right|\notag
\\&~~~~+\left|\frac{1}{R_n}\displaystyle\int_{B^+_{3R_n}(x_n,0)}y^{1-2\alpha}\omega_n\nabla\omega_n\nabla\varphi_n(1-2\varphi_n)dxdy\right|\notag
\\&~~~~~+\left|\frac{1}{R^2_n}\displaystyle\int_{B^+_{3R_n}(x_n,0)\backslash B^+_{2R_n}(x_n,0)}y^{1-2\alpha}|\nabla\varphi_n|^2|\omega_n|^2dxdy\right|\notag
\\ \leq& o_n(1)+\left|\frac{C}{R^2_n}\displaystyle\int_{\mathbb{R}_+^{N+1}}y^{1-2\alpha}|\omega_n|^2dxdy\right|\nonumber\\
=& o_n(1).
\end{align}
This implies that
\begin{equation}\label{e5.23}
\displaystyle\int_{\mathbb{R}_+^{N+1}}y^{1-2\alpha}|\nabla\omega_n|^2dxdy=\displaystyle\int_{\mathbb{R}_+^{N+1}}y^{1-2\alpha}|\nabla U_n|^2dxdy+\displaystyle\int_{\mathbb{R}_+^{N+1}}y^{1-2\alpha}|\nabla V_n|^2dxdy+o_n(1).
\end{equation}

From (\ref{e5.10}) we have
\begin{equation*}
\displaystyle\int_{\mathbb{R}^N}|U_n(x,0)V_n(x,0)|dx\leq\displaystyle\int_{B_{3R_n}(x_n)\backslash B_{2R_n}(x_n)}|\omega_n(x,0)|^2dx=o_n(1),
\end{equation*}
and then
\begin{equation}\label{e5.24}
\displaystyle\int_{\mathbb{R}_+^{N}}|\omega_n(x,0)|^2dx=\displaystyle\int_{\mathbb{R}_+^{N}}|U_n(x,0)|^2dx+\displaystyle\int_{\mathbb{R}_+^{N}}|V_n(x,0)|^2dx+o_n(1).
\end{equation}

Here we need to consider two cases.

Case 1. If $\{x_n\}$ is bounded, it follows from conditions $(A_2)$ and $(H_3)$ that
\begin{align}\label{e5.25}
\displaystyle\int_{\mathbb{R}^N}(f(x)-f_\infty)|V_n(x,0)|^qdx&\leq\displaystyle\sup_{|x-x_n|\geq2R_n}|f(x)-f_\infty|C\|\omega_n\|_{X^\alpha}^q\notag
\\&\leq C\displaystyle\sup_{|x-x_n|\geq2R_n}|f(x)-f_\infty|\nonumber\\&=o_n(1)
\end{align}
and
\begin{align}\label{e5.26}
\displaystyle\int_{\mathbb{R}^N}(g(x)-g_\infty)|V_n(x,0)|^{2_\alpha^*}dx&\leq\displaystyle\sup_{|x-x_n|\geq2R_n}|g(x)-g_\infty|C\|\omega_n\|_{X^\alpha}^{2_\alpha^*}\notag
\\&\leq C\displaystyle\sup_{|x-x_n|\geq2R_n}|g(x)-g_\infty|\nonumber\\&=o_n(1).
\end{align}
From (\ref{e5.10}) and (\ref{e5.26}) we have
\begin{align}\label{e5.27}
&\left|\displaystyle\int_{\mathbb{R}^N}g(x)|\omega_n(x,0)|^{2_\alpha^*-1}V_n(x,0)dx-\displaystyle\int_{\mathbb{R}^N}g_\infty|V_n(x,0)|^{2_\alpha^*}dx\right|\notag
\\&\leq\displaystyle\int_{\mathbb{R}^N}|g(x)-g_\infty||V_n(x,0)|^{2_\alpha^*}dx\nonumber\\&~~~~~~+\displaystyle\int_{\mathbb{R}^N}g(x)|V_n(x,0)|^{2_\alpha^*}[1-(1-\varphi_n(x,0))^{2_\alpha^*-1}]dx\notag
\\&\leq\displaystyle\int_{\mathbb{R}^N}|g(x)-g_\infty||V_n(x,0)|^{2_\alpha^*}dx+\displaystyle\int_{B_{3R_n}(x_n)\backslash B_{2R_n}(x_n)}|\omega_n(x,0)|^{2_\alpha^*}dx\nonumber\\&=o_n(1).
\end{align}
Similarly, by (\ref{e5.10}) and (\ref{e5.25}) we get
\begin{equation}
\left|\displaystyle\int_{\mathbb{R}^N}f(x)|\omega_n(x,0)|^{q-1}V_n(x,0)dx-\displaystyle\int_{\mathbb{R}^N}f_\infty|V_n(x,0)|^qdx\right|=o_n(1).
\end{equation}

Using (\ref{e5.10}) and (\ref{e5.22}), we deduce that
\begin{equation}
\left|\displaystyle\int_{\mathbb{R}_+^{N+1}}y^{1-2\alpha}(\nabla \omega_n\nabla V_n-|\nabla V_n|^2)dxdy\right|\leq\left|\displaystyle\int_{\mathbb{R}_+^{N+1}}y^{1-2\alpha}\nabla U_n\nabla V_ndxdy\right|=o_n(1),
\end{equation}
\begin{align}\label{e5.30}
\left|\displaystyle\int_{\mathbb{R}^N}(\omega_n(x,0)V_n(x,0)-|V_n(x,0)|^2)dx\right|&=\left|\displaystyle\int_{\mathbb{R}^N}U_n(x,0)V_n(x,0)dx\right|\notag
\\&\leq\displaystyle\int_{B_{3R_n}(x_n)\backslash B_{2R_n}(x_n)}|\omega_n(x,0)|^2dx\nonumber\\&=o_n(1),
\end{align}
and
\begin{align}\label{e5.31}
&\left|\displaystyle\int_{\mathbb{R}^N}l(x)\phi_{\omega_n(x,0)}\omega_n(x,0)V_n(x,0)dx-\displaystyle\int_{\mathbb{R}^N}l(x)\phi_{V_n(x,0)}|V_n(x,0)|^2dx\right|\notag
\\ =& \left|\displaystyle\int_{\mathbb{R}^N}l(x)\phi_{\omega_n(x,0)}|V_n(x,0)|^2dx+\displaystyle\int_{\mathbb{R}^N}l(x)\phi_{\omega_n(x,0)}U_n(x,0)V_n(x,0)dx \right.\nonumber \\&
 \left.~~~~~-\displaystyle\int_{\mathbb{R}^N}l(x)\phi_{V_n(x,0)}|V_n(x,0)|^2dx\right|\notag
\\ \leq& \left|\displaystyle\int_{\mathbb{R}^N}l(x)(\phi_{\omega_n(x,0)}-\phi_{V_n(x,0)})|V_n(x,0)|^2dx\right|+\left|\displaystyle\int_{\mathbb{R}^N}l(x)\phi_{\omega_n(x,0)}U_n(x,0)V_n(x,0)dx\right|\notag
\\\leq &\left|\displaystyle\int_{B_{3R_n}(x_n)\backslash B_{2R_n}(x_n)}l(x)\phi_{V_n(x,0)}|V_n(x,0)|^2dx\right|\nonumber
\\&~~~+\left|\displaystyle\int_{B_{3R_n}(x_n)\backslash B_{2R_n}(x_n)}l(x)\phi_{\omega_n(x,0)}|V_n(x,0)|^2dx\right|\notag
\\&~~~+\left|\displaystyle\int_{B_{3R_n}(x_n)}l(x)\phi_{\omega_n(x,0)}U_n(x,0)V_n(x,0)dx\right|\notag
\\ \leq & C\left|\displaystyle\int_{B_{3R_n}(x_n)\backslash B_{2R_n}(x_n)}l(x)\phi_{\omega_n(x,0)}|\omega_n(x,0)|^2dx\right|\nonumber\\
=& o_n(1).
\end{align}
It follows (\ref{e5.27})-(\ref{e5.31}) that
\begin{equation}\label{e5.32}
I'_\infty(V_n)V_n=I'_{\lambda,g}(\omega_n)V_n=o(1),
\end{equation}
where
\begin{align*}
I_\infty(V_n)=&\frac{1}{2}\|V_n\|_{X^\alpha}^2+\frac{1}{4}\displaystyle\int_{\mathbb{R}^N}l(x)\phi_{V_n(x,0)}|V_n(x,0)|^2dx-\frac{\lambda}{q}\displaystyle\int_{\mathbb{R}^N}f_\infty|V_n(x,0)|^qdx\\&-\frac{1}{2_\alpha^*}\displaystyle\int_{\mathbb{R}^N}g_\infty|V_n(x,0)|^{2_\alpha^*}dx.
\end{align*}

Combining (\ref{e5.20})-(\ref{e5.21}) and (\ref{e5.23})-(\ref{e5.24}), we know that $\|V_n\|_{X^\alpha}^2<\|\omega_n\|_{X^\alpha}^2$. Hence, there exists $t_n<1$ such that $t_nV_n\in N_\infty$ and
\begin{align}
\alpha_{\lambda,\infty}\leq\alpha_\infty&\leq I_\infty(t_nV_n)\notag
\\&=I_\infty(t_nV_n)-\frac{1}{q}I'_\infty(t_nV_n)(t_nV_n)\notag
\\&=\left(\frac{1}{2}-\frac{1}{q}\right)\|t_nV_n\|_{X^\alpha}^2+\left(\frac{1}{4}-\frac{1}{q}\right)\displaystyle\int_{\mathbb{R}^N}l(x)\phi_{t_nV_n(x,0)}|t_nV_n(x,0)|^2dx\notag
\\&~~~~~+\left(\frac{1}{q}-\frac{1}{2_\alpha^*}\right)\displaystyle\int_{\mathbb{R}^N}g_\infty|t_nV_n(x,0)|^{2_\alpha^*}dx\notag
\\&<\left(\frac{1}{2}-\frac{1}{q}\right)\|V_n\|_{X^\alpha}^2+\left(\frac{1}{4}-\frac{1}{q}\right)\displaystyle\int_{\mathbb{R}^N}l(x)\phi_{V_n(x,0)}|V_n(x,0)|^2dx\notag
\\&~~~~~+\left(\frac{1}{q}-\frac{1}{2_\alpha^*}\right)\displaystyle\int_{\mathbb{R}^N}g_\infty|V_n(x,0)|^{2_\alpha^*}dx\notag
\\&<\left(\frac{1}{2}-\frac{1}{q}\right)\|\omega_n\|_{X^\alpha}^2+\left(\frac{1}{4}-\frac{1}{q}\right)\displaystyle\int_{\mathbb{R}^N}l(x)\phi_{\omega_n(x,0)}|\omega_n(x,0)|^2dx\notag
\\&~~~~~+\left(\frac{1}{q}-\frac{1}{2_\alpha^*}\right)\displaystyle\int_{\mathbb{R}^N}g(x)|\omega_n(x,0)|^{2_\alpha^*}dx\notag
\\&=I_{\lambda,g}(\omega_n)-\frac{1}{q}I'_{\lambda,g}(\omega_n)(\omega_n)\notag
\\&=c+o_n(1)\nonumber\\&<\alpha_{\lambda,\infty},\notag
\end{align}
where $N_\infty:=\{\omega\in X^\alpha(\mathbb{R}_+^{N+1})\{0\};I'_\infty(\omega)\omega=0\}$ and $\alpha_\infty:=\displaystyle\inf_{\omega\in N_\infty}I_\infty(\omega)$.
This yields a contradiction.

Case 2. If $\{x_n\}$ is unbounded,  we assume that $|x_n|\rightarrow+\infty$ as $n\rightarrow\infty$. According to condition $(H_3)$, for any $\varepsilon>0$ there exists an $R'>0$ such that
\begin{equation*}
|g(x)-g_\infty|<\varepsilon,~~\text{for~all}~|x|\geq R'.
\end{equation*}
Set $3R_n=|x_n|-R'$. Then $R_n\rightarrow+\infty$  and $B_{3R_n}(x_n)\subset\mathbb{R}^N\backslash B_{R'}(0)$ as $n\rightarrow\infty$. So we have
\begin{align}
&\displaystyle\int_{\mathbb{R}^N}(g(x)-g_\infty)|U_n(x,0)|^{2_\alpha^*}dx\notag
\\=& \displaystyle\int_{B_{3R_n}(x_n)}(g(x)-g_\infty)|\varphi_n(x,0) \omega_n(x,0)|^{2_\alpha^*}dx\notag
\\ \leq& \displaystyle\int_{\mathbb{R}^N\backslash B_{R'}(0)}(g(x)-g_\infty)|\omega_n(x,0|^{2_\alpha^*}dx\notag
\\ \leq& C\varepsilon.\notag
\end{align}
Similarly, from condition $(A_1)$  we have
\begin{equation*}
\left|\displaystyle\int_{\mathbb{R}^N}(f(x)-f_\infty)|U_n(x,0)|^qdx\right|\leq C\varepsilon.
\end{equation*}
Similar to the proof of (\ref{e5.32}), we obtain
\begin{equation}\label{e5.33}
I'_\infty(U_n)U_n=\langle I'_{\lambda,g}(\omega_n)U_n+o(1)=o(1).
\end{equation}

Since $\|U_n\|_{X^\alpha}^2<\|\omega_n\|_{X^\alpha}^2$,  there exists $t_n<1$ such that $t_nU_n\in N_\infty$
and\begin{align}
\alpha_{\lambda,\infty}\leq & \alpha_\infty\leq I_\infty(t_nU_n)\notag
\\ =& I_\infty(t_nV_n)-\frac{1}{q}I'_\infty(t_nU_n)(t_nU_n)\notag
\\=& \left(\frac{1}{2}-\frac{1}{q}\right)\|t_nU_n\|_{X^\alpha}^2+\left(\frac{1}{4}-\frac{1}{q}\right)\displaystyle\int_{\mathbb{R}^N}l(x)\phi_{t_nU_n(x,0)}|t_nU_n(x,0)|^2dx\notag
\\&~~~+\left(\frac{1}{q}-\frac{1}{2_\alpha^*}\right)\displaystyle\int_{\mathbb{R}^N}g_\infty|t_nV_n(x,0)|^{2_\alpha^*}dx\notag
\\<& \left(\frac{1}{2}-\frac{1}{q}\right)\|V_n\|_{X^\alpha}^2+\left(\frac{1}{4}-\frac{1}{q}\right)\displaystyle\int_{\mathbb{R}^N}l(x)\phi_{U_n(x,0)}|U_n(x,0)|^2dx\notag
\\&~~~~+\left(\frac{1}{q}-\frac{1}{2_\alpha^*}\right)\displaystyle\int_{\mathbb{R}^N}g_\infty|U_n(x,0)|^{2_\alpha^*}dx\notag
\\<& \left(\frac{1}{2}-\frac{1}{q}\right)\|\omega_n\|_{X^\alpha}^2+\left(\frac{1}{4}-\frac{1}{q}\right)\displaystyle\int_{\mathbb{R}^N}l(x)\phi_{\omega_n(x,0)}|\omega_n(x,0)|^2dx\notag
\\&~~~~+\left(\frac{1}{q}-\frac{1}{2_\alpha^*}\right)\displaystyle\int_{\mathbb{R}^N}g(x)|\omega_n(x,0)|^{2_\alpha^*}dx\notag
\\=& I_{\lambda,g}(\omega_n)-\frac{1}{q}I'_{\lambda,g}(\omega_n)(\omega_n)\notag
\nonumber\\ =& c+o(1)
\nonumber\\<& \alpha_{\lambda,\infty}.\notag
\end{align}
This obviously is a contradiction.

Hence, dichotomy can not happen and  $\{\rho_n(x)\}$ is compact, i.e., there exists $\{x_n\}\subset\mathbb{R}^N$ such that for any $\varepsilon>0$ there is an $R>0$ such that
\begin{equation}\label{e5.34}
\displaystyle\int_{B^c_R(x_n)}\rho_n(x)dx<\varepsilon,
\end{equation}
where $B^c_R(x_n)=\mathbb{R}^N\backslash B_R(x_n)$.

We claim that $\{x_n\}$ is bounded. Otherwise,  we assume that $|x_n|\rightarrow+\infty$ as $n\rightarrow\infty$. Let $|x_n|\geq R+R'$ for large $n$. Then we  have
\begin{align}
&\displaystyle\int_{\mathbb{R}^N}(g(x)-g_\infty)|\omega_n(x,0)|^{2_\alpha^*}dx\notag
\\ =& \displaystyle\int_{B_{R}(x_n)}(g(x)-g_\infty)|\omega_n(x,0)|^{2_\alpha^*}dx+\displaystyle\int_{B^c_{R}(x_n)}(g(x)-g_\infty)|\omega_n(x,0)|^{2_\alpha^*}dx\notag
\\ \leq& C\varepsilon+\displaystyle\int_{B_{R'}(0)}(g(x)-g_\infty)|\omega_n(x,0)|^{2_\alpha^*}dx
\nonumber\\ \leq& C\varepsilon.\notag
\end{align}
Similarly, we get
\begin{equation*}
\displaystyle\int_{\mathbb{R}^N}(f(x)-f_\infty)|\omega_n(x,0)|^qdx\leq C\varepsilon
\end{equation*}
and
\begin{equation*}
I_{\lambda,g}(\omega_n)=I_\infty(\omega_n)+o(1)~\text{and}~\langle I'_\infty(\omega_n)\omega_n=o(1).
\end{equation*}
Furthermore, there exists $\{t_n\}\rightarrow1$ such that $t_n\omega_n\in N_\infty$ and
\begin{equation*}
c=I_{\lambda,g}(\omega_n)+o(1)=I_\infty(t_n\omega_n)\geq \alpha_\infty\geq\alpha_{\lambda,\infty},
\end{equation*}
which contradicts the definition of $c$.

Note that $\{\omega_n\}$ is bounded in $X^\alpha(\mathbb{R}_+^{N+1})$. There exists $\omega\in X^\alpha(\mathbb{R}_+^{N+1})$ such that
$\omega_n\rightharpoonup \omega$ in $X^\alpha(\mathbb{R}_+^{N+1})$.
It follows  Lemma \ref{l2.2} and (\ref{e5.34}) that
\begin{equation*}
\displaystyle\int_{\mathbb{R}^N} f(x)|\omega_n(x,0)|^qdx\rightarrow\displaystyle\int_{\mathbb{R}^N} f(x)|\omega(x,0)|^qdx,~\text{as}~n\rightarrow\infty.
\end{equation*}
Applying H\"{o}lder's inequality, Lemma \ref{l2.1} $(vi)$, Lemmas \ref{l2.2} and (\ref{e2.5}) as well as (\ref{e5.34}), we obtain
\begin{align}
&\left|\displaystyle\int_{\mathbb{R}^N}l(x)\phi_{\omega_n(x,0)}|\omega_n(x,0)|^2dx-\displaystyle\int_{\mathbb{R}^N}l(x)\phi_{\omega(x,0)}|\omega(x,0)|^2dx\right|\notag\\
&\leq|l|_\infty\displaystyle\int_{B_R}\phi_{\omega_n(x,0)-\omega(x,0)}|\omega_n(x,0)-\omega(x,0)|^2dx+c\varepsilon\nonumber\\&\leq C|\omega_n(x,0)-\omega(x,0)|_\frac{4N}{N+s}^4\nonumber\\&\rightarrow0,
~\text{as}~n\rightarrow\infty. \notag
\end{align}

Setting $\Psi_n=\omega_n-\omega$ and using the Br\'{e}zis-Lieb Lemma, we know that
\begin{itemize}
  \item $\|\Psi_n\|_{X^\alpha}^2=\|\omega_n\|_{X^\alpha}^2-\|\omega\|_{X^\alpha}^2+o_n(1)$,
  \item $\displaystyle\int_{\mathbb{R}^N} g(x)|\Psi_n(x,0)|^{2_\alpha^*}dx=\displaystyle\int_{\mathbb{R}^N} g(x)|\omega_n(x,0)|^{2_\alpha^*}dx-\displaystyle\int_{\mathbb{R}^N} g(x)|\omega(x,0)|^{2_\alpha^*}dx+o_n(1)$.
\end{itemize}
According to Lemma \ref{l2.1} $(iii)$, it is easy to see that $I'_{\lambda,g}(\omega)=0$. Thus we have
\begin{equation}\label{e5.35}
\frac{1}{2}\|\Psi_n\|_{X^\alpha}^2-\frac{1}{2_\alpha^*}\displaystyle\int_{\mathbb{R}^N} g(x)|\Psi_n(x,0)|^{2_\alpha^*}dx=c-I_{\lambda,g}(\omega)+o_n(1)
\end{equation}
and
\begin{equation}\label{e5.36}
o(1)=I'_{\lambda,g}(\omega_n)\Psi_n=(I'_{\lambda,g}(\omega_n)-I'_{\lambda,g}(\omega))\Psi_n=\|\Psi_n\|_{X^\alpha}^2-\displaystyle\int_{\mathbb{R}^N} g(x)|\Psi_n(x,0)|^{2_\alpha^*}dx,
\end{equation}
as $n\rightarrow\infty$.

We may suppose that
\begin{equation*}
\|\Psi_n\|_{X^\alpha}^2\rightarrow l_2~\text{and}~\displaystyle\int_{\mathbb{R}^N} g(x)|\Psi_n(x,0)|^{2_\alpha^*}dx\rightarrow l_2, ~\text{as}~n\rightarrow\infty
\end{equation*}
for some $l_2\in[0,+\infty)$. If $l_2=0$, we obtain that $\omega_n\rightarrow\omega$ in $X^\alpha(\mathbb{R}_+^{N+1})$ directly.
If $l_2\neq0$, since $l_2\geq Sl_2^\frac{2}{2_\alpha^*}$, using Lemma \ref{l2.3} and (\ref{e5.35})-(\ref{e5.36}), for $\omega\in N_{\lambda,g}$ we have
 \begin{align}
c&=I_{\lambda,g}(\omega)+\frac{1}{2}l_2-\frac{1}{2_\alpha^*}l_2\geq\left(\frac{1}{2}-\frac{1}{2_\alpha^*}\right)l_2\geq\frac{\alpha}{N}S^\frac{N}{2\alpha},\notag
\end{align}
which contradicts the definition of $c$. Consequently, the only choice is  $l_2=0$, i.e. $\omega_n\rightarrow\omega$ in $X^\alpha \left(\mathbb{R}_+^{N+1}\right)$.
\end{proof}

In the following, we shall give some estimates which are crucially used in the proof of Theorem \ref{t1.2}.

\begin{lemma}
There exists $\lambda_0>0$ such that if $\lambda\in(0,\lambda_0)$, then
\begin{equation*}
\frac{\alpha}{N}S^\frac{N}{2\alpha}<\alpha_{\lambda,\infty}.
\end{equation*}
\end{lemma}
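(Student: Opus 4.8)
The plan is to show that the mountain-pass level $\alpha_{\lambda,\infty}$ of the autonomous problem at infinity exceeds the critical threshold $\frac{\alpha}{N}S^{N/2\alpha}$ once $\lambda$ is small. The key observation is that, because $g_\infty < 1$ (see Remark \ref{r1.1}) and $f_\infty > 0$, when $\lambda = 0$ the functional $I_{0,\infty}(\omega) = \frac12\|\omega\|_{X^\alpha}^2 - \frac{1}{2_\alpha^*}\int_{\mathbb{R}^N} g_\infty|\omega(x,0)|^{2_\alpha^*}dx$ is just the critical Aubin–Talenti functional with the coefficient $g_\infty < 1$, so by the explicit computation in Lemma \ref{l3.5} (with $b = g_\infty$, via Lemma \ref{l3.1}(i)) its mountain-pass level equals
\begin{equation*}
\alpha_{0,\infty} = \frac{\alpha}{N}\, g_\infty^{-\frac{N-2\alpha}{2\alpha}}\, S^{\frac{N}{2\alpha}} > \frac{\alpha}{N} S^{\frac{N}{2\alpha}},
\end{equation*}
the strict inequality coming precisely from $g_\infty < 1$. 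Thus there is a genuine gap $\tau := \alpha_{0,\infty} - \frac{\alpha}{N}S^{N/2\alpha} > 0$ at $\lambda = 0$, and the task reduces to showing $\alpha_{\lambda,\infty}$ depends continuously (from the right) on $\lambda$ at $\lambda = 0$, so that it stays within $\tau/2$ of $\alpha_{0,\infty}$ for small $\lambda$.

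To make this precise I would use the variational characterization $\alpha_{\lambda,\infty} = \inf_{\omega \neq 0}\sup_{t\geq 0} I_{\lambda,\infty}(t\omega)$, valid because $q \in (4,2_\alpha^*)$ guarantees, by the usual fibering-map argument (as in Lemma \ref{l5.2} applied to $I_{\lambda,\infty}$), a unique positive maximizing $t$ on each ray. Fix a near-optimal ray $\omega_0$ for $\alpha_{0,\infty}$, i.e.\ $\sup_{t\geq 0}I_{0,\infty}(t\omega_0) < \alpha_{0,\infty} + \tau/4$. For this fixed $\omega_0$ and any $t$ in the (bounded, away from $0$) range where the sup is attained, $I_{\lambda,\infty}(t\omega_0) = I_{0,\infty}(t\omega_0) - \frac{\lambda}{q}\int_{\mathbb{R}^N} f_\infty |t\omega_0(x,0)|^q dx$, and the subtracted term is bounded by $\lambda\, C(\omega_0)$ uniformly over that range. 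Hence $\alpha_{\lambda,\infty} \leq \sup_{t\geq 0} I_{\lambda,\infty}(t\omega_0) \leq \alpha_{0,\infty} + \tau/4 - 0$, but more importantly this gives only an upper bound in the wrong direction; instead I would argue the reverse: since $f_\infty \geq 0$, we have $I_{\lambda,\infty} \leq I_{0,\infty}$ pointwise, so $\alpha_{\lambda,\infty} \leq \alpha_{0,\infty}$, which is not what is wanted. The correct route is to note $f_\infty > 0$ makes $\alpha_{\lambda,\infty}$ \emph{smaller} than $\alpha_{0,\infty}$, so I must instead bound $\alpha_{\lambda,\infty}$ \emph{from below}.

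For the lower bound, take any $\omega \in N_{\lambda,\infty}$; then from $I_{\lambda,\infty}(\omega) - \frac{1}{2_\alpha^*}I'_{\lambda,\infty}(\omega)\omega$ one gets $I_{\lambda,\infty}(\omega) = \frac{\alpha}{N}\|\omega\|_{X^\alpha}^2 - \lambda\left(\frac1q - \frac{1}{2_\alpha^*}\right)\int f_\infty|\omega(x,0)|^q dx$, and combining with the Sobolev inequality (Lemma \ref{l2.3}) and the Nehari identity, one derives a lower bound $\|\omega\|_{X^\alpha} \geq \kappa > 0$ uniform in $\omega \in N_{\lambda,\infty}$ and in small $\lambda$, as well as an upper bound from coercivity; then Young's inequality absorbs the $\lambda$-term: $\lambda C|\omega(x,0)|_q^q \leq \frac12 \cdot \frac{\alpha}{N}\|\omega\|_{X^\alpha}^2 \cdot(\text{small}) + C\lambda^{\frac{2}{2-q}}$-type estimate, giving $I_{\lambda,\infty}(\omega) \geq (1 - C\lambda^\theta)\,\alpha_{0,\infty}$ for some $\theta > 0$. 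Taking the infimum over $N_{\lambda,\infty}$ and choosing $\lambda_0$ so small that $(1 - C\lambda_0^\theta)\alpha_{0,\infty} > \frac{\alpha}{N}S^{N/2\alpha}$ (possible because $\alpha_{0,\infty} = g_\infty^{-(N-2\alpha)/2\alpha}\frac{\alpha}{N}S^{N/2\alpha}$ is strictly bigger) completes the proof. The main obstacle is the last step: controlling the perturbation $\lambda\int f_\infty|\omega|^q$ uniformly over the Nehari manifold $N_{\lambda,\infty}$ — one needs the uniform lower bound $\|\omega\|_{X^\alpha} \geq \kappa$ together with the coercivity bound to pin $\|\omega\|_{X^\alpha}$ in a compact interval, and then a careful Young-type splitting so that the loss is $o(1)$ as $\lambda \to 0^+$ relative to the leading term $\frac{\alpha}{N}\|\omega\|_{X^\alpha}^2 \geq \frac{\alpha}{N}\kappa^2$.
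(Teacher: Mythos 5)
Your argument is essentially correct, but it takes a different route from the paper. The paper argues by contradiction: assuming $\alpha_{\lambda_n,\infty}\leq\frac{\alpha}{N}S^{\frac{N}{2\alpha}}$ along a sequence $\lambda_n\to 0$, it invokes the attainment of $\alpha_{\lambda_n,\infty}$ by a minimizer $\omega_n$ (citing the existence theorem of He--Zou for the autonomous problem), shows $\lambda_n\int f_\infty|\omega_n(x,0)|^q dx\to 0$ by boundedness, passes to the limits $\|\omega_n\|_{X^\alpha}^2\to l_3$ and $\int g_\infty|\omega_n(x,0)|^{2_\alpha^*}dx\to l_3$, and then uses $l_3\geq S(l_3/g_\infty)^{2/2_\alpha^*}$ together with $g_\infty<1$ to force $\frac{\alpha}{N}l_3\geq\frac{\alpha}{N}S^{\frac{N}{2\alpha}}g_\infty^{-\frac{N-2\alpha}{2\alpha}}>\frac{\alpha}{N}S^{\frac{N}{2\alpha}}$, a contradiction. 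You instead bound $\alpha_{\lambda,\infty}$ from below directly and quantitatively: restricting to $\omega\in N_{\lambda,\infty}$ in a fixed sublevel set (which coercivity pins in a compact norm interval), the Nehari identity plus Sobolev give $\|\omega\|_{X^\alpha}^2\geq(1-O(\lambda))S^{\frac{N}{2\alpha}}g_\infty^{-\frac{N-2\alpha}{2\alpha}}$, and $I_{\lambda,\infty}(\omega)=\frac{\alpha}{N}\|\omega\|_{X^\alpha}^2-\lambda(\frac1q-\frac{1}{2_\alpha^*})\int f_\infty|\omega(x,0)|^q dx$ then exceeds $\frac{\alpha}{N}S^{\frac{N}{2\alpha}}$ for small $\lambda$. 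The two proofs exploit the same two facts (the Sobolev/Nehari comparison with constant $g_\infty<1$, and the smallness of the $\lambda$-perturbation), but yours avoids the contradiction framework and, more substantively, avoids invoking the existence of a ground state for $I_{\lambda,\infty}$, which is the one external ingredient the paper leans on; the paper's version is shorter given that citation. Two cosmetic slips in your write-up: the detour through the upper bound $\alpha_{\lambda,\infty}\leq\alpha_{0,\infty}$ is dead weight, and the exponent $\lambda^{\frac{2}{2-q}}$ you mention belongs to the regime $1<q<2$ and is meaningless here where $q>4$; the correct absorption on a bounded norm interval is simply $\lambda\, C K^q=O(\lambda)$. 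Neither affects the validity of the argument.
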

\begin{proof}
By way of contradiction, otherwise there exists a sequence $\{\lambda_n\}\rightarrow0$ such that
\begin{equation*}
\alpha_{\lambda_n,\infty}\leq\frac{\alpha}{N}S^\frac{N}{2\alpha},~\text{as}~n\rightarrow\infty.
\end{equation*}
For each $\lambda_n>0$, it follows \cite[Theorem 3.6]{17} that there exists $\omega_n\in X^\alpha(\mathbb{R}_+^{N+1})$ such that
\begin{equation}\label{e5.37}
I_{\lambda_n,\infty}(\omega_n)=\alpha_{\lambda_n,\infty}~\text{and}~ I'_{\lambda_n,\infty}(\omega_n)\omega_n=0.
\end{equation}
So we have
 \begin{align}
 \frac{\alpha}{N}S^\frac{N}{2\alpha}&\geq \alpha_{\lambda_n,\infty}\notag\\
&= I_{\lambda_n,\infty}(\omega_n)-\frac{1}{q}I'_{\lambda_n,\infty}(\omega_n)\omega_n\notag\\
&=\left(\frac{1}{2}-\frac{1}{q}\right)\|\omega_n\|_{X^\alpha}^2+\left(\frac{1}{q}-\frac{1}{2_\alpha^*}\right)\displaystyle\int_{\mathbb{R}^N}g_\infty|\omega_n(x,0)|^{2_\alpha^*}dx\notag\\
&\geq\left(\frac{1}{2}-\frac{1}{q}\right)\|\omega_n\|_{X^\alpha}^2,\notag
\end{align}
which implies that $\{\omega_n\}$ is bounded in $ X^\alpha(\mathbb{R}_+^{N+1})$. Using Lemma \ref{l2.2} gives
\begin{equation}\label{e5.38}
\displaystyle\int_{\mathbb{R}^N} \lambda_nf_\infty|\omega_n(x,0)|^qdx\leq \lambda_n C\|\omega_n\|_{X^\alpha}^q\rightarrow0,~\text{as}~n\rightarrow\infty.
\end{equation}

Based on (\ref{e5.37}) and (\ref{e5.38}), we assume that
 \begin{equation*}
\|\omega_n\|_{X^\alpha}^2\rightarrow l_3~\text{and}~\displaystyle\int_{\mathbb{R}^N} g_\infty|\omega_n(x,0)|^{2_\alpha^*}dx\rightarrow l_3, ~\text{as}~n\rightarrow\infty
\end{equation*}
for some $l_3\in[0,+\infty)$. If $l_3=0$, then we get
 \begin{equation*}
\alpha_{0,\infty}=0.
\end{equation*}
 This is impossible since $I_{0,\infty}$ has a mountain pass geometry.
If $l_3\neq0$, in view of $l_3\geq S\left(\frac{l_3}{g_\infty}\right)^\frac{2}{2_\alpha^*}$ and $g_\infty<1$ by Remark \ref{r1.1},  we have
\begin{align}
 \frac{\alpha}{N}S^\frac{N}{2\alpha}&\geq \alpha_{\lambda_n,\infty}\notag\\
&= I_{\lambda_n,\infty}(\omega_n)-\frac{1}{2_\alpha^*}I'_{\lambda_n,\infty}(\omega_n)\omega_n+o_n(1)\notag\\
&=\frac{\alpha}{N}\|\omega_n\|_{X^\alpha}^2+o(1)\notag
\\&=\frac{\alpha}{N}S^\frac{N}{2\alpha}g_\infty^{-\frac{N-2\alpha}{2\alpha}}\nonumber\\&>\frac{\alpha}{N}S^\frac{N}{2\alpha}.\notag
\end{align}
This is a contradiction.
\end{proof}

\begin{lemma}\label{l5.5}
There exist small $\varepsilon_0>0$ and $\sigma(\varepsilon_0)>0$ such that for $\varepsilon\in(0,\varepsilon_0)$ we have
\begin{equation*}
\displaystyle\sup_{t\geq0}I_{\lambda,g}(tv_{\varepsilon,z})<\frac{\alpha}{N}S^\frac{N}{2\alpha}-\sigma(\varepsilon_0)~\text{uniformly~with}~ z\in M,
\end{equation*}
where $v_{\varepsilon,z}$ is defined in (\ref{e3.1}). Furthermore, there exists $t_z>0$ such that
\begin{equation*}
t_zv_{\varepsilon,z}\in N_{\lambda,g}~~\text{for~each}~z\in M.
\end{equation*}
\end{lemma}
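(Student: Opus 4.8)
The plan is to test $I_{\lambda,g}$ along the ray $t\mapsto tv_{\varepsilon,z}$, exactly as in Lemma~\ref{l3.4} but with the bare extremal bubble in place of $\omega_\lambda^++tv_{\varepsilon,z}$, and to show the resulting maximal value stays strictly below the first critical threshold $\frac{\alpha}{N}S^{\frac{N}{2\alpha}}$. Writing
\begin{align*}
I_{\lambda,g}(tv_{\varepsilon,z})&=\frac{t^2}{2}\|v_{\varepsilon,z}\|_{X^\alpha}^2+\frac{t^4}{4}\displaystyle\int_{\mathbb{R}^N}l(x)\phi_{v_{\varepsilon,z}(x,0)}|v_{\varepsilon,z}(x,0)|^2dx\\
&\quad-\frac{\lambda t^q}{q}\displaystyle\int_{\mathbb{R}^N}f(x)|v_{\varepsilon,z}(x,0)|^qdx-\frac{t^{2_\alpha^*}}{2_\alpha^*}\displaystyle\int_{\mathbb{R}^N}g(x)|v_{\varepsilon,z}(x,0)|^{2_\alpha^*}dx,
\end{align*}
I first observe, from (\ref{e3.2})--(\ref{e3.4}) and Lemma~\ref{l3.2}, that $\|v_{\varepsilon,z}\|_{X^\alpha}^2\to S^{\frac{N}{2\alpha}}$ and $\int_{\mathbb{R}^N}g(x)|v_{\varepsilon,z}(x,0)|^{2_\alpha^*}dx\to S^{\frac{N}{2\alpha}}$ as $\varepsilon\to0^+$, uniformly in $z\in M$ --- indeed every $L^p$-norm of $v_{\varepsilon,z}(\cdot,0)$ is $z$-independent, $v_{\varepsilon,z}$ being a translate of $v_{\varepsilon,0}$, and only the weight $g$ introduces a $z$-dependence, controlled by $(H_3)$--$(H_4)$. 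Consequently $\frac{d}{dt}I_{\lambda,g}(tv_{\varepsilon,z})>0$ on a fixed interval $(0,t_0)$ and $I_{\lambda,g}(tv_{\varepsilon,z})<0$ on $[t_1,\infty)$ with $0<t_0<t_1$ independent of $z\in M$ and of small $\varepsilon$ (here $N<4\alpha$ strictly, forced by $q\in(4,2_\alpha^*)$, so $2_\alpha^*>4>2$ and the critical term dominates at infinity). Hence $\sup_{t\ge0}I_{\lambda,g}(tv_{\varepsilon,z})$ is a positive number attained at some $t_z\in[t_0,t_1]$, and by Lemma~\ref{l5.2} this $t_z$ is the unique parameter with $t_zv_{\varepsilon,z}\in N_{\lambda,g}$; this already gives the ``furthermore'' assertion and reduces everything to estimating the supremum.

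Next I would estimate this supremum over the compact range $t\in[t_0,t_1]$. Using (\ref{e3.2}), Lemma~\ref{l3.2}, (\ref{e3.4}) and the identities $\|\omega_\varepsilon\|_{\dot{X}^\alpha}^2=\int_{\mathbb{R}^N}|\omega_\varepsilon(x,0)|^{2_\alpha^*}dx=S^{\frac{N}{2\alpha}}$ of Lemma~\ref{l2.3},
\begin{equation*}
\frac{t^2}{2}\|v_{\varepsilon,z}\|_{X^\alpha}^2-\frac{t^{2_\alpha^*}}{2_\alpha^*}\displaystyle\int_{\mathbb{R}^N}g(x)|v_{\varepsilon,z}(x,0)|^{2_\alpha^*}dx\le\sup_{t\ge0}\left(\frac{t^2}{2}\|\omega_\varepsilon\|_{\dot{X}^\alpha}^2-\frac{t^{2_\alpha^*}}{2_\alpha^*}\displaystyle\int_{\mathbb{R}^N}|\omega_\varepsilon(x,0)|^{2_\alpha^*}dx\right)+O(\varepsilon^{N-2\alpha})=\frac{\alpha}{N}S^{\frac{N}{2\alpha}}+O(\varepsilon^{N-2\alpha}).
\end{equation*}
For the Poisson term I use the elementary bound (\ref{e2.5}), $\int_{\mathbb{R}^N}l(x)\phi_{v_{\varepsilon,z}(x,0)}|v_{\varepsilon,z}(x,0)|^2dx\le C|v_{\varepsilon,z}(\cdot,0)|_{\frac{4N}{N+s}}^4$ (the simple estimate suffices here, unlike in Lemma~\ref{l3.4} where the weak-$L^p$ Young inequality of Lemma~\ref{l3.3} was needed for the cross terms with $\omega_\lambda^+$); this is a nonnegative quantity of order $\varepsilon^{\beta}$ with $\beta\in\{2(N-2\alpha),\,s-N+4\alpha\}$ according as $\frac{4N}{N+s}\le\frac{N}{N-2\alpha}$ or $>$, read off from (\ref{e3.3}) exactly as in the case analysis in the proof of Lemma~\ref{l3.4}. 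Finally, since $f\ge f_\infty>0$ and $q>\frac{N}{N-2\alpha}$, (\ref{e3.3}) is a two-sided estimate, and a direct scaling computation on $B_{r_0/2}$ (on which $\eta\equiv1$, by (\ref{e3.1})) yields $\int_{\mathbb{R}^N}f(x)|v_{\varepsilon,z}(x,0)|^qdx\ge f_\infty\int_{\mathbb{R}^N}|v_{\varepsilon,z}(x,0)|^qdx\ge c\,\varepsilon^{\frac{2N-(N-2\alpha)q}{2}}$, uniformly in $z\in M$. Collecting these, and keeping $t\ge t_0$ in the last (negative) contribution, gives uniformly in $z\in M$
\begin{equation*}
\sup_{t\ge0}I_{\lambda,g}(tv_{\varepsilon,z})\le\frac{\alpha}{N}S^{\frac{N}{2\alpha}}+C\varepsilon^{N-2\alpha}+C\varepsilon^{\beta}-c\lambda\,\varepsilon^{\frac{2N-(N-2\alpha)q}{2}}.
\end{equation*}

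The hard part will be the exponent bookkeeping that makes the last term win. Set $\gamma:=\frac{2N-(N-2\alpha)q}{2}$; then $\gamma<N-2\alpha$ if and only if $(N-2\alpha)q>4\alpha$, and I would check that this, together with the analogous requirement $\gamma<\beta$, is ensured by the standing hypothesis $q>\max\{4,\frac{N}{N-2\alpha}\}$ in the regime $N<4\alpha$. Granting this, $C\varepsilon^{N-2\alpha}+C\varepsilon^{\beta}-c\lambda\varepsilon^{\gamma}=\varepsilon^{\gamma}\bigl(C\varepsilon^{N-2\alpha-\gamma}+C\varepsilon^{\beta-\gamma}-c\lambda\bigr)<0$ for all sufficiently small $\varepsilon$, so fixing such an $\varepsilon_0>0$ and setting $\sigma(\varepsilon_0):=\frac{1}{2}c\lambda\,\varepsilon_0^{\gamma}>0$ we obtain $\sup_{t\ge0}I_{\lambda,g}(tv_{\varepsilon,z})<\frac{\alpha}{N}S^{\frac{N}{2\alpha}}-\sigma(\varepsilon_0)$ uniformly in $z\in M$, which is the claim. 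I expect the only genuinely delicate points to be (i) verifying the lower bound $\int_{\mathbb{R}^N}|v_{\varepsilon,z}(x,0)|^q\,dx\ge c\,\varepsilon^{\gamma}$ (the precise scaling behaviour of the truncated bubble, using $q>\frac{N}{N-2\alpha}$) and (ii) tracking $\beta$ through the dichotomy in (\ref{e3.3}) and confirming $\gamma<\min\{N-2\alpha,\beta\}$; everything else is a routine adaptation of Lemmas~\ref{l3.4} and~\ref{l5.2}.
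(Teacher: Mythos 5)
Your proposal follows the paper's own proof of Lemma \ref{l5.5} essentially step for step: reduce to a compact interval $[t_0,t_1]$, bound the quadratic-plus-critical part by the Sobolev quotient $\frac{\alpha}{N}S^{\frac{N}{2\alpha}}+O(\varepsilon^{N-2\alpha})$ via (\ref{e3.2}) and Lemma \ref{l3.2}, control the Poisson term by $C|v_{\varepsilon,z}(\cdot,0)|_{\frac{4N}{N+s}}^{4}$ with the same dichotomy on $\frac{4N}{N+s}$ versus $\frac{N}{N-2\alpha}$, retain the $f$-term as a negative contribution of order $\lambda\varepsilon^{\gamma}$ with $\gamma=\frac{2N-(N-2\alpha)q}{2}$, and obtain $t_z$ from Lemma \ref{l5.2}. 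So the route and the decomposition are identical to the paper's.

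The one step you defer --- confirming $\gamma<\min\{N-2\alpha,\beta\}$ --- is exactly the step the paper also passes over in silence, and your suspicion that this is the delicate point is justified. The comparison with $\beta$ is harmless: when $\frac{4N}{N+s}>\frac{N}{N-2\alpha}$ it reduces to $q>4-\frac{2s}{N-2\alpha}$, which holds since $q>4$, and in the other case $\beta=2(N-2\alpha)$, so it follows from $\gamma<N-2\alpha$. But $\gamma<N-2\alpha$ is equivalent to $q>\frac{4\alpha}{N-2\alpha}$, and this is implied by the standing hypothesis $q>\max\{4,\frac{N}{N-2\alpha}\}$ only when $N\geq3\alpha$; for $2\alpha<N<3\alpha$ one has $\frac{4\alpha}{N-2\alpha}>\max\{4,\frac{N}{N-2\alpha}\}$, so there are admissible $q$ for which the positive $O(\varepsilon^{N-2\alpha})$ error (coming from the truncation in (\ref{e3.2}) and from $|v_{\varepsilon,z}(\cdot,0)|_2^2$ in (\ref{e3.4})) is not absorbed by $-\lambda C\varepsilon^{\gamma}$. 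Neither your sketch nor the paper's displayed estimate closes that range, so a complete write-up would need either the extra restriction $q>\frac{4\alpha}{N-2\alpha}$ (or $N\geq3\alpha$ if one wants it for all admissible $q$) or a sharper treatment of the $\varepsilon^{N-2\alpha}$ errors. Apart from this point, which you share with the paper, your argument is correct; note also that with the bound $-c\lambda\varepsilon^{\gamma}$ the gap $\sigma$ degenerates as $\varepsilon\to0^+$, so in the end one fixes a single $\varepsilon\in(0,\varepsilon_0)$, as the paper implicitly does.
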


\begin{proof}
Since
\begin{equation*}
\displaystyle\lim_{t\rightarrow0}I_{\lambda,g}(tv_{\varepsilon,z})=\alpha_\lambda^+<0~\, \text{and}\, ~\displaystyle\lim_{t\rightarrow\infty}I_{\lambda,g}(tv_{\varepsilon,z})=-\infty
\end{equation*}
for  $z\in M$ and small $\varepsilon>0$,  there exist small $t_0>0$  and large $t_1>0$ such that
\begin{equation}\label{e5.39}
I_{\lambda,g}(tv_{\varepsilon,z})<\frac{\alpha}{N}S^\frac{N}{2\alpha},~~\text{for}~t\in(0,t_0]\cup[t_1,+\infty).
\end{equation}

We only need to show that
\begin{equation*}
I_{\lambda,g}(tv_{\varepsilon,z})<\frac{\alpha}{N}S^\frac{N}{2\alpha}
\end{equation*}
for $z\in M$ and $t\in[t_0,t_1]$.
By (\ref{e2.5}) and (\ref{e3.2}),  it follows Lemma \ref{l3.2} and  Theorem \ref{t2.1}  that
\begin{align}\label{e5.40}
&I_{\lambda,g}(tv_{\varepsilon,z})\notag
\nonumber \\ \leq& \frac{t^2}{2}\|v_{\varepsilon,z}\|_{X^\alpha}^2-\lambda C\displaystyle\int_{\mathbb{R}^N}f(x)|v_{\varepsilon,z}(x,0)|^qdx-\frac{t^{2_\alpha^*}}{2_\alpha^*}\displaystyle\int_{\mathbb{R}^N} g(x)|v_\varepsilon(x,0)|^{2_\alpha^*}dx+C|v_\varepsilon(x,0)|_\frac{4N}{N+s}^4\notag
\nonumber\\ \leq& \frac{\alpha}{N}\left(\frac{\|v_{\varepsilon,z}\|_{\dot{X}^\alpha}^2}{\left(\displaystyle\displaystyle\int_{\mathbb{R}^N} g(x)|v_{\varepsilon,z}|^{2_\alpha^*}dx\right)^\frac{2}{2_\alpha^*}}\right)^\frac{N}{2\alpha}+C|v_\varepsilon(x,0)|^2+C|v_\varepsilon(x,0)|_\frac{4N}{N+s}^4\nonumber\\&~~~~-\lambda C\displaystyle\int_{\mathbb{R}^N}f(x)|v_{\varepsilon,z}(x,0)|^qdx\notag
\nonumber\\ \leq& \frac{\alpha}{N}\left(\frac{\|\omega_\varepsilon\|_{\dot{X}^\alpha}^2+O(\varepsilon^{N-2\alpha})}{\left(\displaystyle\int_{\mathbb{R}^N}|\omega_\varepsilon(x,0)|^{2_\alpha^*}dx+O(\varepsilon^N)\right)^\frac{2}{2_\alpha^*}}\right)^\frac{N}{2\alpha}+C|v_\varepsilon(x,0)|^2+C|v_\varepsilon(x,0)|_\frac{4N}{N+s}^4\nonumber\\&~~~~-\lambda C\displaystyle\int_{\mathbb{R}^N}|v_{\varepsilon,z}(x,0)|^qdx\notag
\nonumber\\ \leq& \frac{\alpha}{N}S^\frac{N}{2\alpha}+C|v_\varepsilon(x,0)|^2+C|v_\varepsilon(x,0)|_\frac{4N}{N+s}^4-\lambda C\displaystyle\int_{\mathbb{R}^N}|v_{\varepsilon,z}(x,0)|^qdx+O(\varepsilon^{N-2\alpha})
\end{align}
for $z\in M$ and $t\in[t_0,t_1]$.

We need to consider the following two cases.

Case 1. If $\frac{4N}{N+s}>\frac{N}{N-2\alpha}$, it follows (\ref{e3.2})-(\ref{e3.3}) and (\ref{e5.40}) that
\begin{equation*}
I_{\lambda,g}(tv_{\varepsilon,z})\leq\frac{\alpha}{N}S^\frac{N}{2\alpha}+O(\varepsilon^{4\alpha+s-N})+O(\varepsilon^{N-2\alpha})-\lambda C\varepsilon^\frac{2N-(N-2\alpha)q}{2}
\end{equation*}
for  $z\in M$ and small $\varepsilon>0$.

Case 2. If $\frac{4N}{N+s}<\frac{N}{N-2\alpha}$, it follows  (\ref{e3.2})-(\ref{e3.3}) and (\ref{e5.40}) that
\begin{align*}
I_{\lambda,g}(tv_{\varepsilon,z})&\leq\frac{\alpha}{N}S^\frac{N}{2\alpha}+O(\varepsilon^{2(N-2\alpha)})+O(\varepsilon^{N-2\alpha})-\lambda C\varepsilon^\frac{2N-(N-2\alpha)q}{2}\nonumber\\&<\frac{\alpha}{N}S^\frac{N}{2\alpha}-\lambda C\varepsilon^\frac{2N-(N-2\alpha)q}{2}
\end{align*}
for $z\in M$ and small $\varepsilon>0$.

In  either case,  there exists small $\varepsilon_0>0$  such that for $\varepsilon\in(0,\varepsilon_0)$ we have
\begin{equation*}
\displaystyle\sup_{t\geq0}I_{\lambda,g}(tv_{\varepsilon,z})<\frac{\alpha}{N}S^\frac{N}{2\alpha}-\sigma(\varepsilon_0)
\end{equation*}
for  $z\in M$. Moreover, it follows  Lemma \ref{l5.2} that there exists $t_z^->0$ such that
$t_z^-v_{\varepsilon,z}\in N_\lambda^-$
for each $z\in M$.
\end{proof}

Define a continuous map $\Psi:X^\alpha(\mathbb{R}^{N+1}_+)\backslash \{0\}\rightarrow\mathbb{R}^N$ by
\begin{equation*}
X^\alpha(\mathbb{R}^{N+1}_+)\ni w\longmapsto\Psi(\omega):=\frac{\displaystyle\int_{\mathbb{R}^N} x|\omega(x,0)|^{2_\alpha^*}dx}{\displaystyle\int_{\mathbb{R}^N} |\omega(x,0)|^{2_\alpha^*}dx}.
\end{equation*}
Similar to the proof of Lemma \ref{l4.3}, we can obtain the following Lemma.
\begin{lemma}\label{l5.6}
For each $0<\delta<r_0$, there exists $\overline{\delta_0}>0$ such that if $\omega\in N_\infty^1$ and $I_\infty^1(\omega)<\frac{\alpha}{N}S^\frac{N}{2\alpha}+\overline{\delta_0}$, then $\Psi(\omega)\in M_\delta$.
\end{lemma}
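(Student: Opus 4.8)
The plan is to argue by contradiction, following the scheme of the proof of Lemma \ref{l4.3} but in the simpler situation where neither $\omega_\lambda^+$ nor any $\lambda$-dependence enters, so that the barycenter map $\Psi$ can be handled directly. Suppose the conclusion fails for some fixed $\delta\in(0,r_0)$. Then there is a sequence $\{\omega_n\}\subset N_\infty^1$ with $I_\infty^1(\omega_n)<\frac{\alpha}{N}S^\frac{N}{2\alpha}+o_n(1)$ and $\Psi(\omega_n)\notin M_\delta$ for every $n$. First I would exploit the Nehari identity $(I_\infty^1)'(\omega_n)\omega_n=0$ to get
\[
I_\infty^1(\omega_n)=I_\infty^1(\omega_n)-\tfrac{1}{2_\alpha^*}(I_\infty^1)'(\omega_n)\omega_n=\Big(\tfrac{1}{2}-\tfrac{1}{2_\alpha^*}\Big)\|\omega_n\|_{X^\alpha}^2,
\]
so that $\{\omega_n\}$ is bounded in $X^\alpha(\mathbb{R}_+^{N+1})$.

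Next I would invoke Lemma \ref{l3.5}: there is $\{t_n\}\subset\mathbb{R}^+$ with $t_n\omega_n\in N^\infty$, and the chain $\frac{\alpha}{N}S^\frac{N}{2\alpha}\le I^\infty(t_n\omega_n)\le I_\infty^1(t_n\omega_n)\le I_\infty^1(\omega_n)\le\frac{\alpha}{N}S^\frac{N}{2\alpha}+o_n(1)$ forces $t_n=1+o_n(1)$ and
\[
\|\omega_n\|_{X^\alpha}^2,\quad \int_{\mathbb{R}^N}|\omega_n(x,0)|^{2_\alpha^*}dx,\quad \int_{\mathbb{R}^N}g(x)|\omega_n(x,0)|^{2_\alpha^*}dx\ \longrightarrow\ S^\frac{N}{2\alpha}.
\]
Putting $U_n=\omega_n\big(\int_{\mathbb{R}^N}|\omega_n(x,0)|^{2_\alpha^*}dx\big)^{-1/2_\alpha^*}$ one has $|U_n(x,0)|_{2_\alpha^*}=1$ and $\|U_n\|_{X^\alpha}^2\to S$, so Corollary \ref{c4.1} yields a sequence $\{(x_n,\varepsilon_n)\}\subset\mathbb{R}^N\times\mathbb{R}^+$ with
\[
U_n=\frac{1}{S^\frac{N-2\alpha}{4\alpha}}E_\alpha\big(u_{\varepsilon_n}(\cdot-x_n)\big)+o_n(1)\quad\text{in}~\dot{X}^\alpha(\mathbb{R}_+^{N+1}),
\]
where either $x_n\to\bar x$ (and then $\varepsilon_n\to0$) or $|x_n|\to\infty$.

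Then I would distinguish the two cases. If $|x_n|\to\infty$, translating in $\int_{\mathbb{R}^N}g(x)|U_n(x,0)|^{2_\alpha^*}dx$ and using the limits above gives $1=S^{-N/2\alpha}\int_{\mathbb{R}^N}g(x+x_n)|u_{\varepsilon_n}(x)|^{2_\alpha^*}dx+o_n(1)\to g_\infty$, which contradicts $g_\infty<1$ from Remark \ref{r1.1}. If $x_n\to\bar x$, rescaling yields $1=S^{-N/2\alpha}\int_{\mathbb{R}^N}g(\sqrt{\varepsilon_n}\,x+x_n)|u_1(x)|^{2_\alpha^*}dx+o_n(1)\to g(\bar x)$, hence $\bar x\in M$; and finally, since $\int_{\mathbb{R}^N}|\omega_n(x,0)|^{2_\alpha^*}dx\to S^{N/2\alpha}>0$, the single-bubble profile of $U_n$ gives
\[
\Psi(\omega_n)=\frac{\int_{\mathbb{R}^N}x|U_n(x,0)|^{2_\alpha^*}dx}{\int_{\mathbb{R}^N}|U_n(x,0)|^{2_\alpha^*}dx}+o_n(1)=\frac{\int_{\mathbb{R}^N}(x_n+\sqrt{\varepsilon_n}\,x)|u_1(x)|^{2_\alpha^*}dx}{\int_{\mathbb{R}^N}|u_1(x)|^{2_\alpha^*}dx}+o_n(1)\ \longrightarrow\ \bar x\in M\subset M_\delta,
\]
contradicting $\Psi(\omega_n)\notin M_\delta$. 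The one genuinely delicate step is the concentration analysis provided by Corollary \ref{c4.1}, namely checking that after normalization the sequence concentrates on a single standard bubble with no residual mass and that the tails are negligible, so that the quotient defining $\Psi$ converges; the remaining estimates are routine and in fact lighter than those in Lemma \ref{l4.3}, because here no $\omega_\lambda^+$ and no $o_\lambda(1)$ corrections appear.
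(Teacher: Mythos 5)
Your proposal is correct and is essentially the paper's own argument: the paper simply states that Lemma \ref{l5.6} is proved "similar to the proof of Lemma \ref{l4.3}", and your write-up is exactly that adaptation (contradiction, boundedness via the Nehari identity, comparison with $N^\infty$ through Lemma \ref{l3.5}, normalization and Corollary \ref{c4.1}, then the two cases $|x_n|\to\infty$ versus $x_n\to\bar x\in M$), correctly noting that the absence of $\omega_\lambda^+$ makes the barycenter computation for $\Psi$ slightly simpler than for $\Phi$. No gaps.
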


\begin{lemma}\label{l5.7}
There exists small $\overline{\lambda_\delta}>0$  such that if $\lambda\in(0,\overline{\lambda_\delta})$ and $\omega\in N_{\lambda,g}$ with $I_{\lambda,g}(\omega)<\frac{\alpha}{N}S^\frac{N}{2\alpha}+\frac{\overline{\delta_0}}{2}$\ ($\overline{\delta_0}$ is given in Lemma \ref{l5.6}), then $\Phi(\omega)\in M_\delta$.
\end{lemma}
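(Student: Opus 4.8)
The plan is to adapt the argument of Lemma \ref{l4.4} to the setting of Theorem \ref{t1.2}, with $I_{\lambda,g}$ and $N_{\lambda,g}$ in place of $I_\lambda$ and the decomposed Nehari manifold. Given $\omega\in N_{\lambda,g}$ with $I_{\lambda,g}(\omega)<\frac{\alpha}{N}S^\frac{N}{2\alpha}+\frac{\overline{\delta_0}}{2}$, I would first radially project $\omega$ onto the limit manifold $N_\infty^1$: exactly as in Lemma \ref{l3.1}(i) (whose proof uses only $\omega\neq0$, not $\omega\in N_\lambda^-$), the function $t\mapsto I_\infty^1(t\omega)$ attains its maximum at the unique point $t_\omega^1=\left(\frac{\|\omega\|_{X^\alpha}^2}{\int_{\mathbb{R}^N}g(x)|\omega(x,0)|^{2_\alpha^*}dx}\right)^{\frac{1}{2_\alpha^*-2}}$ and $t_\omega^1\omega\in N_\infty^1$. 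Since the barycenter map $\Psi$ defined before Lemma \ref{l5.6} is invariant under the scaling $\omega\mapsto t\omega$ (numerator and denominator both carry the factor $t^{2_\alpha^*}$), it suffices to prove $I_\infty^1(t_\omega^1\omega)<\frac{\alpha}{N}S^\frac{N}{2\alpha}+\overline{\delta_0}$ and then apply Lemma \ref{l5.6} to $t_\omega^1\omega$, which yields $\Psi(\omega)=\Psi(t_\omega^1\omega)\in M_\delta$.

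The key preliminary step is to bound $\|\omega\|_{X^\alpha}$ away from $0$ and $\infty$, and hence $t_\omega^1$, uniformly in $\lambda\in(0,\overline{\lambda_\delta})$ and in all admissible $\omega$. From identity (\ref{e5.3}) and the energy hypothesis, $\|\omega\|_{X^\alpha}^2\leq\frac{2q}{q-2}\left(\frac{\alpha}{N}S^\frac{N}{2\alpha}+\frac{\overline{\delta_0}}{2}\right)=:R^2$. On the other hand, since $\omega\in N_{\lambda,g}$, Lemma \ref{l2.2}, the boundedness of $f$ and $g$, and (\ref{e2.5}) give $\|\omega\|_{X^\alpha}^2\leq\lambda C\|\omega\|_{X^\alpha}^q+C\|\omega\|_{X^\alpha}^{2_\alpha^*}$, so $1\leq\overline{\lambda_\delta}CR^{q-2}+C\|\omega\|_{X^\alpha}^{2_\alpha^*-2}$, and shrinking $\overline{\lambda_\delta}$ forces $\|\omega\|_{X^\alpha}\geq c_0>0$. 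Using the Nehari identity once more, $\int_{\mathbb{R}^N}g(x)|\omega(x,0)|^{2_\alpha^*}dx=\|\omega\|_{X^\alpha}^2+\int_{\mathbb{R}^N}l(x)\phi_{\omega(x,0)}|\omega(x,0)|^2dx-\lambda\int_{\mathbb{R}^N}f(x)|\omega(x,0)|^qdx\geq\|\omega\|_{X^\alpha}^2-\lambda C\|\omega\|_{X^\alpha}^q\geq c_0^2-\lambda CR^q\geq c_0^2/2$ for $\overline{\lambda_\delta}$ small; combined with $\|\omega\|_{X^\alpha}^2\leq R^2$, this makes $t_\omega^1$ bounded above by a constant independent of $\lambda$ and $\omega$.

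Finally I would compare the two functionals along the ray: since $l\geq0$ and $\phi_{\omega(x,0)}\geq0$ (Lemma \ref{l2.1}(i)), $I_\infty^1(t\omega)=I_{\lambda,g}(t\omega)-\frac{t^4}{4}\int_{\mathbb{R}^N}l(x)\phi_{\omega(x,0)}|\omega(x,0)|^2dx+\frac{\lambda t^q}{q}\int_{\mathbb{R}^N}f(x)|\omega(x,0)|^qdx\leq I_{\lambda,g}(t\omega)+\frac{\lambda t^q}{q}\int_{\mathbb{R}^N}f(x)|\omega(x,0)|^qdx$. Evaluating at $t=t_\omega^1$ and using Lemma \ref{l5.2} — which gives $I_{\lambda,g}(\omega)=\max_{t\geq0}I_{\lambda,g}(t\omega)$ since $\omega\in N_{\lambda,g}$ — together with the uniform bounds above and $|f|_\infty<\infty$, one obtains $I_\infty^1(t_\omega^1\omega)\leq I_{\lambda,g}(\omega)+\lambda C<\frac{\alpha}{N}S^\frac{N}{2\alpha}+\frac{\overline{\delta_0}}{2}+\lambda C$; choosing $\overline{\lambda_\delta}$ so small that $\overline{\lambda_\delta}C<\overline{\delta_0}/2$ forces $I_\infty^1(t_\omega^1\omega)<\frac{\alpha}{N}S^\frac{N}{2\alpha}+\overline{\delta_0}$, and Lemma \ref{l5.6} finishes the argument. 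The main obstacle is the second step: keeping $\int_{\mathbb{R}^N}g(x)|\omega(x,0)|^{2_\alpha^*}dx$ bounded away from zero uniformly, which is exactly where the smallness of $\lambda$ is needed (to stop $\|\omega\|_{X^\alpha}$ from collapsing and to absorb the $f$-term); once that is in hand, the rest is the same routine energy comparison already used for Theorem \ref{t1.1}.
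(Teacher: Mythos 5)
Your argument is correct and follows essentially the same route as the paper's own proof: bound $\|\omega\|_{X^\alpha}$ above via (\ref{e5.3}) and below via the Nehari identity for small $\lambda$, project onto $N_\infty^1$ with a uniformly bounded parameter $t_\omega$, compare $I_\infty^1(t_\omega\omega)$ with $I_{\lambda,g}(\omega)$ using the nonnegativity of the Poisson term and $|f|_\infty<\infty$, and conclude via Lemma \ref{l5.6}. Your treatment is in fact slightly more careful than the paper's at two points — you bound $t_\omega^1$ directly from the explicit formula (rather than by contradiction) by keeping $\int_{\mathbb{R}^N}g(x)|\omega(x,0)|^{2_\alpha^*}dx$ away from zero, and you note explicitly that the barycenter map is scale-invariant so that $\Psi(t_\omega^1\omega)\in M_\delta$ implies $\Psi(\omega)\in M_\delta$ — but these are refinements of the same proof, not a different one.
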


\begin{proof}
For $\omega\in N_{\lambda,g}$ with $I_{\lambda,g}(\omega)<\frac{\alpha}{N}S^\frac{N}{2\alpha}+\frac{\overline{\delta_0}}{2}$,
it follows (\ref{e5.3}) and Lemma \ref{l5.1} that
\begin{equation}\label{ee5.41}
0<\|\omega\|_{X^\alpha}\leq C.
\end{equation}
Then there exists  $t_\omega>0$ such that $t_\omega\omega\in N_\infty^1$. Now we can conclude that $t_\omega\leq C$ for some $C>0$ independent of $\omega$. Otherwise, there exists a sequence $\{t_{\omega_n}\}\rightarrow\infty$ as $n\rightarrow\infty$ such that
\begin{equation}\label{e5.42}
\displaystyle\int_{\mathbb{R}^N} g(x)|\omega_n(x,0)|^{2_\alpha^*}dx=t_{\omega_n}^{2-2_\alpha^*}\|\omega_n\|_{X^\alpha}^2\rightarrow0,~\text{as}~n\rightarrow\infty.
\end{equation}

Since $\omega_n\in N_{\lambda,g}$, from (\ref{ee5.41}) and (\ref{e5.42}) we have
\begin{align}
\|\omega_n\|_{X^\alpha}^2&=\lambda\displaystyle\int_{\mathbb{R}^N}f(x)|\omega_n(x,0)|^qdx+\displaystyle\int_{\mathbb{R}^N} g(x)|\omega_n(x,0)|^{2_\alpha^*}dx\notag
 \\&\leq \lambda C\|\omega_n\|_{X^\alpha}^q+\displaystyle\int_{\mathbb{R}^N}g(x)|\omega_n(x,0)|^{2_\alpha^*}dx\rightarrow0, \ \text{as}\ n\rightarrow\infty \ \text{and}\ \lambda\rightarrow0.\notag
\end{align}
This yields a contradiction with  Lemma \ref{l5.1}.
Note that
\begin{align}
\frac{\alpha}{N}S^\frac{N}{2\alpha}+\frac{\overline{\delta_0}}{2}&\geq I_{\lambda,g}(\omega)=\displaystyle\sup_{t\geq0}I_{\lambda,g}(t\omega)\notag
 \\&\geq I_{\lambda,g}(t_\omega\omega)
 \nonumber\\&\geq I_\infty^1(t_\omega\omega)-\frac{\lambda}{q}C\displaystyle\int_{\mathbb{R}^N}f(x)|\omega_n(x,0)|^qdx\notag
  \\&\geq I_\infty^1(t_\omega\omega)-\lambda C\|\omega_n\|_{X^\alpha}^q.
\end{align}
Then
\begin{align}\label{e5.44}
I_\infty^1(t_\omega\omega)&\leq \frac{\alpha}{N}S^\frac{N}{2\alpha}+\frac{\overline{\delta_0}}{2}+\lambda C\|\omega_n\|_{X^\alpha}^q.
\end{align}
From (\ref{ee5.41}) and (\ref{e5.44}),
there exists $\overline{\lambda_\delta}>0$ such that for $\lambda\in(0,\overline{\lambda_\delta})$ we have
\begin{equation*}
I_\infty^1(t_\omega\omega)\leq \frac{\alpha}{N}S^\frac{N}{2\alpha}+\overline{\delta_0}.
\end{equation*}
By virtue of Lemma \ref{l5.6}, we obtain that $\Psi(t_\omega\omega)\in M_\delta$ or $\Psi(\omega)\in M_\delta$.
\end{proof}

Let $\overline{c_\lambda}:=\frac{\alpha}{N}S^\frac{N}{2\alpha}-\sigma(\varepsilon_0)$ and
\begin{equation*}
N_{\lambda,g}(\overline{c_\lambda}):=\left\{\omega\in N_{\lambda,g};I_{\lambda,g}(\omega)\leq \overline{c_\lambda} \right\}.
\end{equation*}
Similar to the proof of Lemma \ref{l4.5}, we have

\begin{lemma}\label{l5.8}
If $\omega$ is a critical point of $I_{\lambda,g}$ restricted on $N_{\lambda,g}$, then it is a critical point of $I_{\lambda,g}$ in $X^\alpha(\mathbb{R}^{N+1}_+)$.
\end{lemma}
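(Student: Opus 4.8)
The plan is to mimic verbatim the argument of Lemma \ref{l4.5}, the only structural difference being that $N_{\lambda,g}$ is not split into sub-manifolds, so that the sign of $\Psi'_{\lambda,g}(\omega)\omega$ is supplied directly by (\ref{e5.5}) rather than by a defining inequality of $N_\lambda^-$. First I would record that $N_{\lambda,g}$ is a $\mathcal{C}^1$ constraint manifold: $\Psi_{\lambda,g}$ is of class $\mathcal{C}^1$ on $X^\alpha(\mathbb{R}_+^{N+1})$ (using Lemma \ref{l2.1}(iv) to differentiate the Poisson term), we have $N_{\lambda,g}=\Psi_{\lambda,g}^{-1}(0)\setminus\{0\}$, and for every $\omega\in N_{\lambda,g}$ the computation (\ref{e5.5}) gives $\Psi'_{\lambda,g}(\omega)\omega<0$; in particular $\Psi'_{\lambda,g}(\omega)\neq 0$, so $0$ is a regular value of $\Psi_{\lambda,g}$ on $X^\alpha(\mathbb{R}_+^{N+1})\setminus\{0\}$.

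Next I would invoke the Lagrange multiplier theorem: if $\omega$ is a critical point of the restriction $I_{\lambda,g}|_{N_{\lambda,g}}$, then there exists $\tau\in\mathbb{R}$ with $I'_{\lambda,g}(\omega)=\tau\,\Psi'_{\lambda,g}(\omega)$ in the dual space. Testing this identity against $\omega$ and using that $\omega\in N_{\lambda,g}$ means $I'_{\lambda,g}(\omega)\omega=0$, one gets $0=\tau\,\Psi'_{\lambda,g}(\omega)\omega$. Since (\ref{e5.5}) yields $\Psi'_{\lambda,g}(\omega)\omega<0$ (here $q>4$, $f_\lambda=\lambda f\ge 0$ by $(A_2)$, and $g\ge g_\infty>0$ by $(H_3)$, so all three terms on the right of (\ref{e5.5}) are strictly negative), I conclude $\tau=0$, hence $I'_{\lambda,g}(\omega)=0$ in $X^\alpha(\mathbb{R}_+^{N+1})$, which is the claim.

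There is essentially no obstacle: the only point needing a line of care is precisely the strict inequality $\Psi'_{\lambda,g}(\omega)\omega<0$, which explains why no decomposition of $N_{\lambda,g}$ is needed when $q>4$ — the entire Nehari manifold plays the role that $N_\lambda^-$ played in the case $1<q<2$, so no nonzero Lagrange multiplier can survive. I would close with the remark that, combined with Lemma \ref{l5.3} and the estimates of Lemmas \ref{l5.5}--\ref{l5.7}, this lemma is what allows one to turn critical points of $I_{\lambda,g}$ constrained to $N_{\lambda,g}(\overline{c_\lambda})$ into genuine (and, via the maximum principle as in Theorem \ref{t2.1}, positive) solutions of system (\ref{e2.7}), which is the exact role Lemma \ref{l5.8} plays in the proof of Theorem \ref{t1.2}.
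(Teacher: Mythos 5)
Your argument is correct and is exactly the paper's: the paper proves Lemma \ref{l5.8} by the remark ``Similar to the proof of Lemma \ref{l4.5}'', i.e.\ the same Lagrange multiplier computation $I'_{\lambda,g}(\omega)=\tau\Psi'_{\lambda,g}(\omega)$, tested against $\omega$ and combined with the strict inequality $\Psi'_{\lambda,g}(\omega)\omega<0$ from (\ref{e5.5}) to force $\tau=0$. Your observation that in the range $q>4$ the whole Nehari manifold plays the role of $N_\lambda^-$ is precisely why no decomposition is needed here.
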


\begin{lemma}\label{l5.9}
$I_{N_{\lambda,g}}$ satisfies the $(PS)$-condition on $N_{\lambda,g}(\overline{c_\lambda})$, where  $I_{N_{\lambda,g}}$ denotes the restriction of $I_{\lambda,g}$ on $N_{\lambda,g}$.
\end{lemma}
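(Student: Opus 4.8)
The plan is to follow the scheme of Lemma~\ref{l4.6}. Let $\{\omega_n\}\subset N_{\lambda,g}(\overline{c_\lambda})$ be a $(PS)$ sequence for the restriction $I_{N_{\lambda,g}}$. First I would observe that, by the coercivity estimate~(\ref{e5.3}), $\{\omega_n\}$ is bounded in $X^\alpha(\mathbb{R}_+^{N+1})$, and that $\|\omega_n\|_{X^\alpha}$ is bounded away from $0$: this follows from the inequality $C\le\lambda\|\omega\|_{X^\alpha}^{q-2}+\|\omega\|_{X^\alpha}^{2_\alpha^*-2}$ derived in the proof of Lemma~\ref{l5.1}, since $q>4>2$. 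Because $\{\omega_n\}$ is bounded, the linear functionals $\{\Psi'_{\lambda,g}(\omega_n)\}$ are uniformly bounded in $(X^\alpha(\mathbb{R}_+^{N+1}))^*$ by H\"older's inequality, Lemma~\ref{l2.2} and~(\ref{e2.5}).

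The central point is to remove the Lagrange multiplier. Since $N_{\lambda,g}$ is a $C^1$ manifold, there is a sequence $\{\theta_n\}\subset\mathbb{R}$ with $I'_{\lambda,g}(\omega_n)=\theta_n\Psi'_{\lambda,g}(\omega_n)+o(1)$ in $(X^\alpha(\mathbb{R}_+^{N+1}))^*$. Testing this identity against $\omega_n$ and using $I'_{\lambda,g}(\omega_n)\omega_n=0$ gives $\theta_n\Psi'_{\lambda,g}(\omega_n)\omega_n=o(1)$. By~(\ref{e5.5}) together with $q>4$, $f(x)\ge f_\infty>0$ and $g(x)\ge g_\infty>0$, one has $\Psi'_{\lambda,g}(\omega_n)\omega_n\le-2\|\omega_n\|_{X^\alpha}^2$, which is bounded away from $0$ by the previous step. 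Hence $\theta_n\to0$, and therefore $I'_{\lambda,g}(\omega_n)\to0$ strongly in $(X^\alpha(\mathbb{R}_+^{N+1}))^*$; that is, $\{\omega_n\}$ is an (unconstrained) $(PS)_c$ sequence for $I_{\lambda,g}$ with $c=\lim_n I_{\lambda,g}(\omega_n)$.

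It then remains to locate the level $c$ inside the compactness range of Lemma~\ref{l5.3}. From $\{\omega_n\}\subset N_{\lambda,g}(\overline{c_\lambda})$ we get $c\le\overline{c_\lambda}=\frac{\alpha}{N}S^{\frac{N}{2\alpha}}-\sigma(\varepsilon_0)<\frac{\alpha}{N}S^{\frac{N}{2\alpha}}$, and, since for $\lambda$ small we already know $\frac{\alpha}{N}S^{\frac{N}{2\alpha}}<\alpha_{\lambda,\infty}$, we obtain $c<\min\{\alpha_{\lambda,\infty},\frac{\alpha}{N}S^{\frac{N}{2\alpha}}\}$. On the other hand, $c\ge\alpha_{\lambda,g}\ge d_0>0$ by Lemma~\ref{l5.1}. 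Thus $c\in\bigl(0,\min\{\alpha_{\lambda,\infty},\frac{\alpha}{N}S^{\frac{N}{2\alpha}}\}\bigr)$, and Lemma~\ref{l5.3} furnishes a subsequence with $\omega_n\to\omega$ strongly in $X^\alpha(\mathbb{R}_+^{N+1})$, which gives the $(PS)$ condition for $I_{N_{\lambda,g}}$ on $N_{\lambda,g}(\overline{c_\lambda})$. The main obstacle is precisely the second step, i.e.\ showing that the constrained $(PS)$ sequence is a genuine $(PS)$ sequence for the free functional; once $\theta_n\to0$ is established, the statement reduces to the already-proved compactness Lemma~\ref{l5.3}.
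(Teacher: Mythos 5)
Your proposal is correct and follows essentially the same route as the paper: extract the Lagrange multiplier sequence, use (\ref{e5.5}) together with the lower bound on $\|\omega_n\|_{X^\alpha}$ from Lemma \ref{l5.1} to force $\theta_n\to0$, and then invoke Lemma \ref{l5.3}. Your version is in fact slightly cleaner, since you argue directly that $\Psi'_{\lambda,g}(\omega_n)\omega_n\le-2\|\omega_n\|_{X^\alpha}^2$ is bounded away from zero (the paper rules out the limit $a=0$ by contradiction) and you explicitly verify that the level $c$ lies in the admissible range $\bigl(0,\min\{\alpha_{\lambda,\infty},\frac{\alpha}{N}S^{\frac{N}{2\alpha}}\}\bigr)$, a step the paper leaves implicit.
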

\begin{proof}
Let $\{\omega_n\}\subset N_{\lambda,g}(\overline{c_\lambda})$ be a $(PS)$ sequence. Then there exists a sequence $\{\theta_n\}\subset\mathbb{R}$ such that
\begin{equation*}
 I'_{\lambda,g}(\omega_n)=\theta_n\Psi'_{\lambda,g}(\omega_n)+o_n(1).
\end{equation*}
It follows (\ref{e5.5}) that
\begin{equation*}
\Psi'_{\lambda,g}(\omega_n)\omega_n<0.
\end{equation*}
So there exists a subsequence, still denoted by $\{\omega_n\}$, such that
\begin{equation*}
\Psi'_{\lambda,g}(\omega_n)\omega_n\rightarrow a\ (a\leq0), ~\text{as}~n\rightarrow\infty.
\end{equation*}

If $a=0$,  from (\ref{e5.5}) we get
\begin{align}
0\leftarrow\Psi'_{\lambda,g}(\omega_n)\omega_n&=-2\|\omega_n\|_{X^\alpha}^2+(4-q) \lambda\displaystyle\int_{\mathbb{R}^N}f(x)|\omega_n(x,0)|^qdx\\&~~~~-(2_\alpha^*-4)\displaystyle\int_{\mathbb{R}^N}g(x)|\omega_n(x,0)|^{2_\alpha^*}dx\notag
\\&\leq-2\|\omega_n\|_{X^\alpha}^2\nonumber\\&\leq0.\notag
\end{align}
Then
\begin{equation*}
 \|\omega_n\|_{X^\alpha}\rightarrow0,~\text{as}~n\rightarrow\infty,
\end{equation*}
which yields a contradiction with Lemma \ref{l5.1}. Thus,  $a<0$. Due to $I'_{\lambda,g}(\omega_n)\omega_n=0$,  we deduce that $\theta_n\rightarrow0$ and
\begin{equation*}
I'_{\lambda,g}(\omega_n)\rightarrow0,~\text{as}~n\rightarrow\infty.
\end{equation*}
By virtue of Lemma \ref{l5.3}, we arrive at the desired result.
\end{proof}

We are left to prove Theorem \ref{t1.2}.

\begin{proof}[Proof of Theorem \ref{t1.2}]
Let $\delta,\,\overline{\lambda_\delta}>0$ be given as in Lemmas \ref{l5.6} and \ref{l5.7}.  For each $z\in M$, let $\overline{F}(z)=t_zv_{\varepsilon,z}$. By Lemma \ref{l5.5}, it belongs to $N_{\lambda,g}(\overline{c_\lambda})$.
It follows Lemma \ref{l5.7} that $\Phi(N_{\lambda,g}(\overline{c_\lambda}))\subset M_\delta$ for $\lambda<\overline{\lambda_\delta}$.

Define $\overline{\xi}:[0,1]\times M\rightarrow M_\delta$ by
\begin{equation*}
[0,1]\times M\ni(\theta,z)\longmapsto\overline{\xi}(\theta,z)=\Phi\left(t_zv_{(1-\theta)\varepsilon,z}\right)\in N_{\lambda,g}^-(\overline{c_\lambda}).
\end{equation*}
The straightforward calculations provide that $\overline{\xi}(0,z)=\Phi\circ \overline{F}(z)$ and $\displaystyle\lim_{\theta\rightarrow1^-}\overline{\xi}(\theta,z)=z$.
Hence $\Psi\circ \overline{F}$ is homotopic to $j:M\rightarrow M_\delta$. By virtue of Lemma \ref{l5.9} with Propositions \ref{p4.1} and \ref{p4.2}, we obtain that $I_{N_{\lambda,g}^-(\overline{c_\lambda})}$ has at least $cat_{M_\delta}(M)$ critical points in $N_{\lambda,g}^-(\overline{c_\lambda})$. Based on Lemma \ref{l5.8}, we know that $I_{\lambda,g}$ has at least $cat_{M_\delta}(M)$ critical points  in $N_{\lambda,g}(\overline{c_\lambda})$. Analogous to the proof of \cite[Theorem 5.3]{20}, we can obtain that system (\ref{e2.7}) admits at least $cat_{M_\delta}(M)$ positive solutions  in $X^\alpha(\mathbb{R}^{N+1}_+)$.
\end{proof}

{\bf Acknowledgments}
The third author would like to thank Professor Jinqiao Duan for fruitfulness discussions. He also feels grateful to the
Department of Mathematics at Georgia Institute of Technology for its hospitality and
generous support during his visit since April 2019.

\end{document}